\newcommand{\bv}{\mbox{\boldmath{$v$}}}
\newcommand{\bzero}{\mbox{\boldmath{$0$}}}
\newtheorem{thm}{Theorem}
\newtheorem{lem}{Lemma}[section]
\newtheorem{prop}{Proposition}[section]
\newtheorem{exam}{Example}
\newtheorem{remark}{Remark}[section]
\newtheorem{definition}{Definition}[section]
\newproof{pf}{Proof}
\numberwithin{equation}{section}
\begin{document}
	\begin{frontmatter}	
		\title{Shape design with phase field methods for structural hemivariational inequalities in contact problems} 
		% \tnoteref{The work was supported in part by the National Key Basic Research Program (2022YFA1004402), the National Natural Science Foundation of China (12071149) and the Science and Technology Commission of Shanghai Municipality (22ZR1421900 and 22DZ2229014).}
		\tnotetext[tnote1]{The work was supported in part by the National Key Basic Research Program (2022YFA1004402), the National Natural Science Foundation of China (12471377, 12401528), the Science and Technology Commission of Shanghai Municipality (22DZ2229014) and the Fundamental Research Funds for the Central Universities (30924010837).}

		\author[ecnu]{Yixin Tan}
		\address[ecnu]{School of Mathematical Sciences, East China Normal University, Shanghai 200241, China}
		\ead{52265500020@stu.ecnu.edu.cn}
		
		\author[njust]{Fang Feng\corref{cor1}}
		\ead{ffeng@njust.edu.cn}
		\address[njust]{School of Mathematics and Statistics, Nanjing University of Science and Technology, Nanjing 210094, China}

		\author[ecnu,kll]{Shengfeng Zhu\corref{cor1}}
		\ead{sfzhu@math.ecnu.edu.cn}
		\address[kll]{Key Laboratory of Ministry of Education \& Shanghai Key Laboratory of Pure Mathematics and Mathematical Practice}
		\cortext[cor1]{Corresponding authors.}
		
		\begin{abstract}
			We develop mathematical models for shape design and topology optimization in structural contact problems involving friction between elastic and rigid bodies. The governing mechanical constraint is a nonlinear, non-smooth, and non-convex hemivariational inequality, which provides a more general and realistic description of frictional contact forces than standard variational inequalities, but is also more challenging due to its non-convexity. For energy-type shape functionals, the Eulerian derivative of the hemivariational inequality is derived through rigorous shape sensitivity analysis. The rationality of a regularization approach is justified by asymptotic analysis, and this method is further applied to handle the non-smoothness of general shape functionals in the sensitivity framework. Based on these theoretical results, a numerical boundary variational method is proposed for shape optimization. For topology optimization, three phase-field algorithms are developed: a gradient-flow phase-field method, a phase-field method with second-order regularization of the cost functional, and a phase-field method coupled with topological derivatives. To the best of our knowledge, these approaches are new for shape design in hemivariational inequalities. Various numerical experiments confirm the accuracy and effectiveness of the proposed shape and topology optimization algorithms.
			
		\end{abstract}
		
		\begin{keyword}
			Hemivariational inequality, shape optimization, topology optimization, phase field method, topological derivative
		\end{keyword}
		
	\end{frontmatter}
	
	\section{Introduction}
	Shape design of elastic contact structures with geometric and mechanical constraints has broad applications in the control and optimization of structural mechanics.
	Processes of contact between deformable bodies frequently occur in various structural and mechanical systems, leading to highly nonlinear behavior due to inequality constraints and complex boundary interactions.
	In recent years, advanced topology and shape optimization algorithms, such as convolution–thresholding and phase-field approaches, have been successfully developed for related physical models, including heat transfer and elasticity problems \cite{CDDX2024}.
	These methods provide efficient numerical frameworks that can be extended to the design and optimization of elastic contact interfaces.
	% In contact mechanics, contact conditions lead to different mathematically variational inequalities, which can be classified mathematically into two main families: \emph{variational} inequalities and \emph{hemivariational} inequalities.
	For modelling of frictional contact in mechanics with conventional variational inequalities, the friction force is the subdifferential of a convex functional. However, the friction force in practice is often very complex, and merely considering convex functional is not enough
	to characterize its properties. Therefore, hemivariational inequalities caused by nonsmooth and nonconvex functionals have attracted much attention during recent years. The study of shape and topology optimization for hemivariational inequalities is significant in, e.g., structural design. To the best of our view, however, research on numerical shape and topology optimization of structural hemivariational inequalities is absent in literature.
	% Variational inequalities have nonlinear and convex structures. From an industrial point of view, the frictional boundary conditions enable a more detailed and accurate description of reality.
	%  Shape optimization of variational inequalities in contact mechanics can be found in
	Shape optimization of variational inequalities in contact mechanics have been studied in
	\cite{Bretin2023,BHOP14,CD21,HOP12,ABCJ2023,BCOR22,LSW,HL,HS, MBV23} (see \cite{LWY} for applications in superconductors). It is worth noting that the convexity of variational inequalities can be utilized for shape sensitivity analysis in shape optimization. In \cite{SZ1988}, a dual formulation method was proposed to overcome the non-smoothness of variational inequalities.
	% Shape optimization of variational inequalities w.r.t. friction boundary can be found in \cite{Bretin2023,BHOP14,CD21,HOP12,ABCJ2023,MBV23}. 
	The	asymptotic analysis of variational inequalities with applications to
	optimum design in elasticity was considered \cite{LDNS17}. Level set methods for shape and topology optimization in variational inequalities were proposed \cite{CD20,Chaudet23}.  The topological derivatives combined with shape optimization algorithm in variational inequalities are used in \cite{FLSS07}. 
	% Topology optimization of variational inequalities with level set methods were studies in \cite{BCOR22, CD20,Chaudet23}. 

	More generally than variational inequalities, hemivariational inequalities first introduced by Panagiotopoulos in the early 1980s \cite{Panagiotopoulos1983} are concerned with more challenging non-smooth and non-convex functionals appearing in contact mechanics. Mathematical modelling, theoretical analysis and numerical methods for hemivariational inequalities are developed recently \cite{DMP2003,HS19,HMP1999,MOS2013,OJB22,NP1995,Panagiotopoulos1993,ZeidIIB}.
	Although playing an essential role for design and control in contact mechanics, to the best of our knowledge, however, numerical modeling on shape and topology optimization for hemivariational inequalities is absent in literature except shape design for Stokes hemivariational inequalities was recently considered \cite{FYM23}.

	Phase field modeling was popularly used in topology optimization \cite{LFLK2025,BC03,BGSSSV12,JLXZ,LXZ,LY22,LYZ25,LZ25,WYZ04,ZWS06,TNK10,QHZ2022,HQZ}. Different from sharp interface descriptions of the level set method \cite{OS,Yaji,YINT10} for shape design \cite{BO}, the phase field method is a physically diffuse interface tracking technique modelling phase transitions in material sciences \cite{CH58,M87}. The structural shape is represented implicitly by a phase field function defined on a fixed design domain \cite{DBH12}. Numerical shape and topological changes can happen during evolutions of the phase field function through a dynamic gradient flow resulting from sensitivity analysis. Generally, second-order Allen-Cahn equation \cite{AC79} and fourth-order Cahn-Hilliard equation \cite{CH58} associated respectively with $L^2$ and $H^{-1}$ gradient flows of some energy are considered to evolve the interface of two materials in topology optimization. Phase field method of Cahn-Hilliard was used to topology optimization of a variational inequality for bodies in unilateral contact with given friction \cite{MK}.
	
	We consider in this paper shape and topology optimization for a structural hemivariational inequality derived from the frictional elastic contact problem. 
	%\ccr{Due to the involvement of non-convex and non-smooth functionals, we can not, like variational inequalities， obtain the sensitivity analysis directly.}
	For the energy cost functional, we derive the Eulerian derivative of the hemivariational inequality. However, for the general cost functional, we have to use the regularization to deal with the non-convex and non-smooth term to derive the adjoint problem. Then, using energy-type functionals as an example, we illustrate the rationale behind the regularization methods. Still, we present the shape sensitivity analysis, based on which a shape optimization algorithm is proposed.
	
	% \ccr{In order to perform shape sensitivity analysis, we have to overcome the non-differentiability	of functionals.} 
	%So, we first regularize the nonconvex and nonsmooth functional and present  
	Furthermore, we present a phase field method with an Allen-Cahn equation for topology optimization of the structural hemivariational inequality. In addition, we consider a phase-field method with second-order regularization of the cost functional and a phase-field method coupled with the topological derivative for the topology optimization. Numerical results verify the effectiveness of the three algorithms.
	
	Hemivariational inequalities present significant mathematical and numerical challenges. The inherent non-smoothness and non-convexity complicate the proofs of existence and uniqueness of solutions. Moreover, traditional sensitivity analysis methods are inapplicable in this context. To overcome these issues, we establish the existence of material derivatives in Lemma 3.2 and derive the corresponding Eulerian derivative in Theorem 1. From a numerical perspective, the problem becomes highly nonlinear after regularization. To tackle this, we employ Newton's method and propose two novel phase field approaches with/without the topological derivative within the framework of topology optimization.

	The rest of the paper is organized as follows. In Section \ref{sec2}, we first introduce a hemivariational inequality in elastic structure and present the existence and uniqueness results for weak solutions to weak forms. Then, a mathematical model in shape design of elastic hemivariational inequalities is built. In Section \ref{sec3}, we develop the sensitivity analysis of the hemivariational inequality for the energy functional. In Section \ref{sec4}, we regularize the model due to non-smoothness and then perform shape sensitivity analysis for the general cost functional. Still, we take energy functional as an example to illustrate the rationality of regularization method. In Section \ref{sec5}, a phase field method is proposed for simultaneous shape and topology optimization of the regularized model problem. Both algorithms with and without topological derivatives are developed. Section \ref{sec8} introduces numerical issues and presents numerical results. Brief conclusions are drawn in the last Section \ref{secfinal}.
	
	\section{Modelling shape design problems in elastic hemivariational inequality}\label{sec2}
	Let us first introduce the generalized (Clarke) directional derivative and the generalized gradient of a local Lipschitz continuous functional (\cite{Clarke1983}). For a normed space $X$, we denote by $\parallel \cdot \parallel_X$, $X^{*}$, and $\langle \cdot, \cdot \rangle$ respectively as its norm, its topological dual, and the duality pairing between $X^{*}$ and $X$. For simplicity in exposition in the following, we always assume $X$ is a Banach space, unless stated otherwise.
	
	\begin{definition}\label{dlipschitz}
		Let $\psi:X \rightarrow \mathbb{R}$ be a local Lipschitz continuous functional on a Banach space $X$.
		The generalized (Clarke) directional derivative of $\psi$ at $x\in X $ in a direction $v\in X$ is defined by
		\[\psi^0(x;v) = \limsup_{y\rightarrow x,\,\varrho \searrow 0}\frac{\psi(y+\varrho  v)-\psi(y)}{\varrho }.\]
		The generalized gradient (subdifferential) of  $ \psi $ at x is defined by
		\[\partial \psi(x) =\{\zeta\in X^*\mid \psi^{0}(x;v) \geq \langle \zeta,v\rangle\ \forall v\in X\}.\]
		In particular, with the generalized subdifferential, one can compute the generalized directional derivative through
		\begin{equation}\label{eq0}
			\psi^0(x;v) = \max\{\langle \xi,v\rangle\,|\,\xi\in \partial \psi(x)\}.
		\end{equation}

		%	The function $\psi : X\rightarrow \mathbb{R}$ is said to be regular in the sense of Clarke
		%	at $x\in X$ if for all $v\in X$, the directional derivative $\psi'(x;v)$ exists and $\psi'(x;v)=\psi^0(x;v)$.
		%	The function $\psi$ is regular in the sense of Clarke on $X$ if it is regular at every point $x\in X$.
	\end{definition}
	From \cite[Proposition 3.23 (i)]{MOS2013}, we have
	\begin{equation}\label{psi1}
		\psi^0(x;-v)=(-\psi)^0(x;v)\quad v\in X.
	\end{equation}
	A locally Lipschitz function $\psi: X\rightarrow \mathbb{R}$ is said to be \emph{regular} in the sense of Clarke at $x\in X$ if for all $v\in X$, the directional derivative $$\psi'(x;v):= \lim\limits_{\lambda\searrow 0}\frac{\psi(x+\lambda v)-\psi(x)}{\lambda}$$ exists and $\psi'(x;v)=\psi^0(x;v)$.
	The function $\psi$ is regular in the sense of Clarke on $X$ if it is regular at every point $x\in X$. From \cite[(i) and (iv) of Proposition 3.35]{MOS2013}, we derive that if $\psi_1, \psi_2$ are regular, 
	\begin{equation}\label{psi2}
		(\psi_1+\psi_2)^0(x;v)=\psi_1^0(x;v)+\psi_2^0(x;v).
	\end{equation}
	% Recall that (cf. \cite[Section 41.3]{Zeidler1985}) an operator $A:X\rightarrow X^*$ is called a potential if there exists a G$\hat{a}$teaux differentiable function $F_A:X\rightarrow \mathbb{R}$ such that $A=F'_A$; In particular, $A\in \mathcal{L}(X,X^*)$ is a potential operator if and only if it is symmetric(cf. \cite[(2.6)]{Han21}):
	% \begin{equation}\label{symm}
		% \langle Av_1,v_2\rangle = \langle Av_2,v_1\rangle\quad \forall \,v_1,v_2\in X.
		% \end{equation}
	% Moreover, under the symmetry condition \eqref{symm}, define a potential functional
	% \[
	% F_A(v)=\frac{1}{2}\langle Av,v\rangle.
	% \]
	
	Let $\Omega\subset\mathbb{R}^d$ ($d=2,3$) be a reference configuration of an isotropic linear elasticity. The boundary denoted by $\partial\Omega$ is made up of four parts: $\Gamma_C$, $\Gamma_N$, $\Gamma_D$ and $\Gamma$ with $\partial\Omega=\overline{\Gamma_C}\cup \overline{\Gamma_N} \cup \overline{\Gamma_D}\cup \overline{\Gamma}$ and ${\Gamma_C}\cap {\Gamma_N} \cap {\Gamma_D} \cap {\Gamma}=\emptyset$, in which the Lebesgue measure of $\Gamma_D$ meas\,($\Gamma_D)>0$. Assume that the body subject to the action of
	a surface traction on $\Gamma_N$ and clamped on $\Gamma_D$ is traction free on $\Gamma$ and in contact with a rigid foundation on $\Gamma_C$ (see Fig. \ref{model_elastic} for illustrations of physical settings). For a vector $\bm{v}$ on the boundary $\partial\Omega$, denote by $v_{\nu}=\bm{v\cdot \nu}$ and ${v_{ \tau}}=\bm v\cdot\bm \tau$
	its normal component and tangential component, respectively. 
	We use $\mathbb{S}^d$ for the space of second-order symmetric tensors on $\mathbb{R}^d$ with inner product $\bm T:\bm U =\sum_{i,j=1}^d T_{ij}U_{ij}$ and corresponding norm $\|\bm U\|_{\mathbb{S}^d}=(\bm U:\bm U)^{\frac{1}{2}},\,\forall\, \bm T, \bm U\in \mathbb{S}^d $.
	For a tensor $\bm{\sigma}\in\mathbb{S}^d$,
	define its normal component and tangential component as $\sigma_{\nu}=\bm{\sigma \nu\cdot\nu}$ and 
	$ \sigma_{\tau}=\bm{\sigma}\bm \nu\cdot \bm \tau$, respectively. Denote by $\mathcal{S}(\bm{\sigma})=\frac{1}{2}(\bm{\sigma} +\bm{\sigma}^T)$ the symmetric part of $\bm{\sigma}$.
	
	Given a body force $\bm{f}\in L^2(\Omega)^d$ and surface load $\bm{g}_N\in L^2(\Gamma_N)^d$, we consider the following bilateral elastic contact problem with friction for elastic displacement $\bm{u}:\Omega\rightarrow\mathbb{R}^d$ at equilibrium
	\begin{equation}\label{Eq:constitute}
		\left\{
		\begin{aligned}
			% \left\{\begin{array}{ll}
				\bm \sigma=\mathbb{C} \bm{\varepsilon}\quad & \quad\text{in}\ \Omega, \\
				{\rm div}\,\bm \sigma +\bm f=\bzero \quad & \quad \text{in}\ \Omega,\\
				\bm u=\bzero \quad  & \quad \text{on}\ \Gamma_D,\\
				\bm \sigma \bm \nu=\bm g_N \quad & \quad\text{on}\ \Gamma_N,\\
				\bm \sigma \bm \nu=\bm 0 \quad & \quad\text{on}\ \Gamma,\\
				u_{\nu}=0,\quad - \sigma_{\tau}\in \partial j_{\tau}(\bm{x},u_{\tau})\quad & \quad\text{on}\ \Gamma_C,
				% \end{array}
			% \right.
		\end{aligned}
		\right.
	\end{equation}
	where $\bm{\varepsilon}=\bm{\varepsilon}(\bm u)$ and $\bm{\sigma}=\bm{\sigma}(\bm u)$ are the strain tensor and stress tensor of the structure, respectively, $\mathbb{C}\colon\Omega\times\mathbb{S}^d\rightarrow\mathbb{S}^d $ is an elasticity operator, and $j_{\tau}$ is a potential functional.
	\begin{figure}[htbp]
		\centering
		\includegraphics[width=6.8cm,height=4.cm]{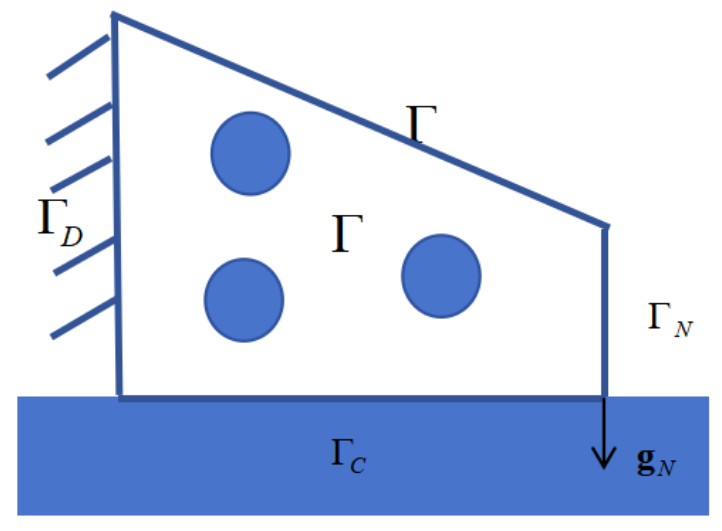}
		\caption{Illustration of problem setting.}\label{model_elastic}
	\end{figure}
	In \eqref{Eq:constitute}, we assume that $\mathbb{C}=(C_{ijkl})_{1\le i,j,k,l\le d}$ is symmetric, bounded and pointwise stable:
	%(cf.\ \cite{Han21}):
	\begin{equation}
		\label{Eq:pofun0}
		\left\{\begin{array}{ll}
			(a)\, C_{ijkl}(\bm x)=C_{jikl}(\bm x)=C_{klij}(\bm x),\quad \bm x\in \mathbb{R}^3,\quad 1\leq i,j,k,l\leq d,\\
			(b)\,  C_{ijkl}\in L^{\infty}_{loc}(\mathbb{R}^3),\quad 1\leq i,j,k,l\leq d,\\
			(c)\,(\mathbb{C}{\bm \varepsilon}):\bm \varepsilon \geq m_{\mathbb{C}}\|\bm \varepsilon\|_{\mathbb{S}^d}^2,\\
		\end{array}
		\right.
	\end{equation}
	where $m_{\mathbb{C}}>0$.
	% \begin{equation}
		% 	\label{Eq:pofun0}
		% 	\left\{\begin{array}{ll}
			% 		(a)\ {\rm there\ exists}\ L_{\mathcal F}>0\ {\rm such\ that\ for\ all}\
			% 		\bm \varepsilon_1,\bm \varepsilon_2\in \mathbb{S}^d,\,{\rm a.e.}\ \bm x\in \Omega,\\
			% 		\qquad \|\mathcal{F}(\bm x, \bm \varepsilon_1)-\mathcal{F}(\bm x, \bm \varepsilon_2)\|_{\mathbb{S}^d}
			% 		\leq L_{\mathcal F}\|\bm \varepsilon_1-\bm \varepsilon_2\|_{\mathbb{S}^d};\\
			% 		(b)\ {\rm there\ exists}\ m_{\mathcal F}>0\ {\rm such\ that\ for\ all}\
			% 		\bm \varepsilon_1,\bm \varepsilon_2\in \mathbb{S}^d,\,{\rm a.e.}\ \bm x\in \Omega,\\
			% 		\qquad (\mathcal{F}(\bm x, \bm \varepsilon_1)-\mathcal{F}(\bm x, \bm \varepsilon_2)):
			% 		(\bm \varepsilon_1-\bm \varepsilon_2)\geq m_{\mathcal F}\|\bm \varepsilon_1-\bm \varepsilon_2\|^2_{\mathbb{S}^d};\\
			% 		(c)\ \mathcal{F}(\cdot,\bm \varepsilon)\ {\rm is\ measurable\ on\ } \Omega \ {\rm for\ all}\
			% 		\bm\varepsilon\in \mathbb{S}^d;\\
			%         (d)\mathcal{F}(\bm x,0)=0\quad a.e.\bm x\in \Omega.
			% 	\end{array}
		% 	\right.
		% \end{equation}
	%
	%We consider a bilateral contact problem with friction on $\Gamma_C$:
	% \begin{equation}\label{Eq:contact}
		% 	u_{\nu}=0,\quad -\bm \sigma_{\tau}\in \partial j_{\tau}(\bm{x},\bm u_{\tau}), 
		% \end{equation}
	For bilateral elastic contact on $\Gamma_C$ of  \eqref{Eq:constitute}, the potential functional $j_{\tau}:\Gamma_C\times \mathbb{R}\rightarrow \mathbb{R}$ is assumed to satisfy
	\begin{equation}
		\label{Eq:pofun1}
		\left\{
		\begin{aligned}
			&(a)\ j_{\tau}(\cdot,  z)~{ \rm is~ measurable ~on~} \Gamma_C{\rm~for ~all}~z \in \mathbb{R}\
			{\rm and}~ j_{\tau}(\cdot,  z_0(\cdot))\in L^1(\Gamma_{C})\ {\rm for\ some\ } z_0
			\in L^2(\Gamma_{C});\\
			&(b)\ j_{\tau}(\bm x, \cdot)~{\rm is ~locally ~Lipschitz ~on~}\mathbb{R}~{\rm for~a.e.}~\bm x\in \Gamma_C;\\
			&(c)\ |\partial j_{\tau}(\bm x, z)|\le\bar{c}_0+\bar{c}_1| z|~{\rm for~a.e.}~\bm x\in \Gamma_C,
			~{\rm for~ all~}~ z\in \mathbb{R} ~~{\rm with} ~\bar{c}_0,\bar{c}_1\geq 0;\\
			&(d)\ j_{\tau}^0(\bm x,z_1; z_2- z_1)+ j_{\tau}^0(\bm x, z_2;z_1- z_2)
			\leq \alpha_{j_{\tau}}| z_1- z_2|^2~{\rm for~a.e.} ~\bm x\in\Gamma_C,\\
			&~~~~{\rm for ~all }~ z_1, z_2 \in \mathbb{R} ~{\rm with}~ \alpha_{j_{\tau}}\geq 0.
		\end{aligned}
		\right.
	\end{equation}
	For notational brevity, $j(\bm x,z)$ and $j_{\tau}^0(\bm x, v; w)$ will be simplified as $j(z)$ and $j_{\tau}^0(v;w)$, respectively. For simplicity, we assume that $j_{\tau}$ is regular and $|\partial j_{\tau}|$ is bounded.Introduce a Hilbert space $Q=L^2(\Omega;\mathbb{S}^d)$ of functions associated with the canonical
	inner product
	$$(\bm \sigma, \bm s)_Q:=\int_\Omega \sigma_{ij}s_{ij},$$
	where Einstein's summation convention is used.  The associated norm is denoted by $\|\cdot\|_Q=(\cdot,\cdot)_Q^{1/2}$.  Let
	$X= \bm{W}:=\{\bm{v}\in H^1(\Omega;\mathbb{R}^d):\bm{v}|_{\Gamma_D}=\bzero \},$
	which is equipped with the norm
	%	\begin{equation}\label{norm}
		$		\|\bv\|_{\bm W}:=\|\bm \varepsilon(\bm v)\|_Q$ for any $\bm v\in\bm W.$
		%	\end{equation}
	Let
	%\begin{equation}\label{V1}
	$\bm V_1=\bm V_1(\Omega)=\{\bm{v}\in \bm W:  v_{\nu}|_{\Gamma_C}=0\}$.
	
	Let $(\cdot,\cdot)_\Omega$, $(\cdot,\cdot)_{\Gamma_C}$, $(\cdot,\cdot)_{\Gamma_N}$, $(\cdot,\cdot)_{\Gamma_D}$, and $(\cdot,\cdot)_{\Gamma}$ denote the inner products in $L^2(\Omega)^d$, $L^2(\Gamma_C)^d$, and $L^2(\Gamma_N)^d$, $L^2(\Gamma_D)^d$, and $L^2(\Gamma)^d$, respectively. Consider a weak form of \eqref{Eq:constitute}: Find $\bm u\in\boldsymbol V_1$ and
	$\xi_{\tau}\in L^2(\Gamma_{C})$ such that
	\begin{equation} \label{varia}
		a(\bm u,\bm v)-(\xi_{\tau}, v_{\tau})_{\Gamma_C}=(\bm f,\bm v)_\Omega+(\bm g_N,\bm v)_{\Gamma_N}
		\quad\forall\,\bm v\in \boldsymbol V_1,
	\end{equation}
	where the bilinear form $a(\bm u, \bm v)=(\mathbb{C}\bm \varepsilon(\bm u),\bm \varepsilon(\bm v))_Q$ and $-\xi_{\tau}\in \partial j_{ \tau}( u_{\tau})$ a.e.\ on $\Gamma_{C}$.
	
	Assume $\alpha_{j_{\tau}}$ in \eqref{Eq:pofun1} and $m_{\mathbb{C} }$ in \eqref{Eq:pofun0} satisfy
	\begin{equation}\label{small}
		\alpha_{j_{\tau}}<\lambda_1 m_{\mathbb{C}},
	\end{equation}
	where $\lambda_1$ is the smallest eigenvalue of the variational problem: Find $\overline{\lambda} \in \mathbb{R}^+$ and $\overline{\bm{u}}\in \boldsymbol V_1$ such that
	\begin{equation*}
		(\bm \varepsilon(\overline{\bm u}), \bm \varepsilon(\bm v))_Q
		= \overline{\lambda}( \overline{u}_{\tau} ,v_{\tau})_{\Gamma_C}\quad\forall\, \bm v\in \boldsymbol V_1.
	\end{equation*}
	The Sobolev trace inequality over $\bm V_1$ takes the form
	\begin{equation}\label{trace}
		\|v_{\tau}\|_{\Gamma_C}\leq \lambda_1^{-1/2}\|\bm v\|_{1},
	\end{equation}
	where $\|\cdot\|_{1}$ denotes a norm of $H^1(\Omega;\mathbb{R}^d)$, which is equivalent to the energy norm $\|\cdot\|_{\bm{W}}$ by Korn's inequality.
	By \eqref{eq0}, we obtain that \eqref{varia} is equivalent to the following hemivariational inequality \cite{FHH19}: Find $\bm u\in \bm V_1$ such that
	\begin{equation}\label{hemi}
		\quad a(\bm u,\bm v)+\hat{J}^0( u_{\tau};v_{\tau})\geq (\bm f,\bm v)_\Omega+(\bm g_N,\bm v)_{\Gamma_N}\quad \forall\,\bm v\in \bm V_1,
	\end{equation}
	where
	$$
	\hat{J}(z):=\int_{\Gamma_C} j_\tau(z), \quad z \in L^2\left(\Gamma_C ; \mathbb{R}^d\right) \quad{\rm and}\quad \hat{J}^0( u_{\tau};v_{\tau})=\int_{\Gamma_C}j_\tau^0(u_\tau;v_\tau).
	$$
	The presence of non-smoothness and non-convexity of functional make theoretical study of existence and uniqueness of solutions be more challenging than traditional variational inequalities. Based on \eqref{Eq:pofun0}, \eqref{Eq:pofun1} and \eqref{trace},  there exists a unique solution to \eqref{hemi}, i.e., \eqref{varia} (\cite[Theorem 3.1]{HSB17}).
	
	% According to \cite[Theorem 3.1]{HSB17}, we obtain uniqueness for the solution of \eqref{hemi}, i.e., \eqref{varia}.
	Referring to \cite{BBK13,HSB17}, a concrete example of $j_{\tau}$ is
	\begin{equation}\label{Eq:potential}
		j_{\tau}( z)=\int_0^{| z|}\mu(t) ,
	\end{equation}
	where  
	$$\mu(t)=(a-b)e^{-\alpha t}+b,$$
	with  $a>b$ and $\alpha$ being non-negative constants. It can be verified that \eqref{Eq:potential} satisfies the assumption \eqref{Eq:pofun1}. 
	Then, the contact condition $- \sigma_{\tau}\in \partial j_{\tau}( u_{\tau})$ in \eqref{Eq:constitute} implies
	$$- \sigma_{\tau}\in\mu(| u_{\tau}|)\partial | u_{\tau}|,$$
	which is equivalent to 
	\begin{equation}\label{physical}
		\left\{
		\begin{aligned}
			& |\sigma_{\tau}|\leq \mu(0), \ &&\text{if}\,\, u_{\tau}= 0,\\ 
			&-\sigma_{\tau}=\mu(| u_{\tau}|)\frac{u_{\tau}}{| u_{\tau}|}, \ &&\text{if}\ 
			u_{\tau}\neq 0.
		\end{aligned}
		\right.
	\end{equation}
	Here, $\mu(t)$ is interpreted as a friction bound function and \eqref{physical} is a version of Coulomb law \cite{MOS2013}.
	
	Assumed that $\Gamma_D$, $\Gamma_C$, and $\Gamma_N$ are fixed and $\Gamma$ is free to move. We consider optimal design of the mechanical structure under frictional contact conditions by minimizing an objective of energy type subject to a volume constraint:
	\begin{equation}\label{Obj}
		\min_{{\rm Vol}(\Omega)=\mathcal{C}}J(\Omega):=\frac{1}{2}a(\bm u,\bm u)-(\bm f,\bm u)_\Omega-(\bm g_N,\bm u)_{\Gamma_N}+\hat{J}(u_{\tau}),
	\end{equation}
	where $\bm{u}$ is the solution of \eqref{hemi}, ${\rm Vol}(\Omega)$ denotes the volume of $\Omega$ and the prescribed constant $\mathcal{C}>0$.

	\section{Shape sensitivity analysis}\label{sec3}
	Let $D\subset \mathbb{R}^d$ be an open bounded domain with smooth boundary. Let $\mathcal{A}=\{\Omega\,|\,\Omega\subset\subset D\}$  be an admissible set of subdomains. Define a shape functional $J:\mathcal{A}\rightarrow \mathbb{R}$ with 
	$\Omega\mapsto J(\Omega)$.
	%\end{definition}
	%\begin{definition}
	Introduce a time parameter $t\in \mathcal{J}:=[0,\tau_0)$ with $\tau_0$ being a positive number. Let $\bm V\in C_0^{1}(D;\mathbb{R}^d)$ be a velocity field. Consider the \emph{perturbation of identity approach} with a mapping $F_t:\overline{D}\rightarrow \overline{D}$ and $F_{t}=I+t\bm V$, where $I$ denotes the identity operator. Set $F_t(\partial D)=\partial D$ and $\Omega_t=F_t(\Omega){\subset\subset D}$. Then the \emph{Eulerian derivative} of $J$ at $\Omega$ in the direction $\bm V$ is defined by 
	\begin{equation}
		dJ(\Omega;\bm V):=\lim_{t\searrow 0}\frac{J(\Omega_t)-J(\Omega)}{t},
	\end{equation}
	if this limit exists. 
	Let $F_t^{-1}$ denote the  inverse of $F_t$ and $\bm{f}\in H^1(D;\mathbb{R}^d)$.
	%, ${\rm D}F_t$ be the Jacobian matrix of $F_t$, ${\rm D}F_t^{-1}$ be the inverse of ${\rm D}F_t$, ${\rm D}F_t^{-T}$ be the transpose of ${\rm D}F_t^{-1}$, and ${\rm det}({\rm D}F_t)$ denote the determinant of ${\rm D}F_t$. %Referring to \cite{FYZ25},  we have the following results.
	\begin{lem}\cite{FYZ25}The following results on shape calculus hold:
		%	%\cite{SZ1992,IKP08, DZ2011}\label{lemma1}
		%	The following regularity properties of the transformation $F_t$ hold:
		%	\begin{equation}\label{iojwe}
			%		\begin{aligned}
				%			&t\mapsto F_t\in C^1(\mathcal{J}, C^{1}(\overline{D};\mathbb{R}^d)) &	t\mapsto F_t^{-1}\in C(\mathcal{J}, C^{1}(\overline{D};\mathbb{R}^d)),\\
				%			& 	t\mapsto {\rm D}F_t\in C^1(\mathcal{J}, C(\overline{D};\mathbb{R}^{d\times d})) &	t\mapsto {\rm D}F_t^{-1}\in C(\mathcal{J}, C(\overline{D};\mathbb{R}^{d\times d})),\\
				%			& 	t\mapsto {\rm D}F_t^{-T}\in C(\mathcal{J}, C(\overline{D};\mathbb{R}^{d\times d}))& 	t\mapsto {\rm det}({\rm D}F_t)\in C^1(\mathcal{J}, C(\overline{D})),
				%		\end{aligned}
			%	\end{equation}
		%	where $F_t^{-1}$ is inverse of $F_t$, ${\rm D}F_t$ is the Jacobian matrix of $F_t$, ${\rm D}F_t^{-1}$ is the inverse of ${\rm D}F_t$, ${\rm D}F_t^{-T}$ is the transpose of ${\rm D}F_t^{-1}$, and ${\rm det}({\rm D}F_t)$ denotes the determinant of ${\rm D}F_t$.
		%	For the topology induced in \eqref{iojwe}, moreover,
		\begin{equation}
			\begin{aligned}
				&(1)\ {\rm D}F_t|_{t=0}=I,\quad &
				(2)\ &\frac{{\rm d}}{{\rm d}t}F_t\Big|_{t=0}=\bm V,\\
				&(3)\ \frac{\rm d}{{\rm d}t}{\rm D}F_t\Big|_{t=0}={\rm D}\bm V=\bigg(\frac{\partial V_i}{\partial x_j}\bigg)_{i,j=1}^d, &(4)\ &\frac{\rm d}{{\rm d} t}({\rm D}F_t^{-1})\Big|_{t=0}=-{\rm D}\bm V,\\
				&(5)\ \frac{\rm d}{{\rm d}t}({\rm D}F_t^{-T})\Big|_{t=0}=-{\rm D}\bm V^T, &(6)\ & \frac{\rm d}{{\rm d}t}\gamma_1(t)\Big|_{t=0}={\rm div}\bm V,
			\end{aligned}
		\end{equation}
		where $\gamma_1(t)={\rm det}({\rm D}F_t)$ with ${\rm det}({\rm D}F_t)$ denoting the determinant of the Jacobian ${\rm D}F_t$.
	\end{lem}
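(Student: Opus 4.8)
The plan is to obtain all six identities by direct differentiation, exploiting that $F_t=I+t\bm V$ is \emph{affine} in $t$ so that $\mathrm{D}F_t=I+t\,\mathrm{D}\bm V$ is affine as well. Items (1)--(3) are then immediate: $\mathrm{D}F_t|_{t=0}=I$, $\frac{\mathrm d}{\mathrm dt}F_t=\bm V$ is constant in $t$, and $\frac{\mathrm d}{\mathrm dt}\mathrm{D}F_t=\mathrm{D}\bm V$ is constant in $t$, each evaluated at $t=0$. For this we only need that $\bm V\in C_0^1(D;\mathbb{R}^d)$, so $\mathrm{D}\bm V$ exists and, together with $\det(\mathrm{D}F_t)$, is continuous; in particular $F_t$ is a diffeomorphism of $\overline D$ for $t$ in a sufficiently small interval $\mathcal J=[0,\tau_0)$, which legitimizes writing $F_t^{-1}$ and differentiating it.

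First I would prove (4) by differentiating the identity $\mathrm{D}F_t\cdot\mathrm{D}F_t^{-1}=I$ in $t$ (using $\mathrm{D}F_t^{-1}=(\mathrm{D}F_t)^{-1}$, the chain rule giving $\mathrm{D}(F_t^{-1})=(\mathrm{D}F_t)^{-1}\circ F_t^{-1}$ up to the usual composition, which at $t=0$ reduces to the matrix inverse): this yields
\begin{equation*}
\Big(\tfrac{\mathrm d}{\mathrm dt}\mathrm{D}F_t\Big)\mathrm{D}F_t^{-1}+\mathrm{D}F_t\Big(\tfrac{\mathrm d}{\mathrm dt}\mathrm{D}F_t^{-1}\Big)=\bm 0,
\end{equation*}
and evaluating at $t=0$ with (1) and (3) gives $\mathrm{D}\bm V+\frac{\mathrm d}{\mathrm dt}(\mathrm{D}F_t^{-1})|_{t=0}=\bm 0$, i.e. (4). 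Then (5) follows by transposing (4), since transposition commutes with $\frac{\mathrm d}{\mathrm dt}$ and $(\mathrm{D}F_t^{-1})^T=\mathrm{D}F_t^{-T}$.

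Next I would prove (6) via Jacobi's formula $\frac{\mathrm d}{\mathrm dt}\det(A(t))=\det(A(t))\,\mathrm{trace}\!\big(A(t)^{-1}A'(t)\big)$ applied to $A(t)=\mathrm{D}F_t$. At $t=0$ this reads
\begin{equation*}
\tfrac{\mathrm d}{\mathrm dt}\gamma_1(t)\big|_{t=0}=\det(I)\,\mathrm{trace}\!\big(I^{-1}\,\mathrm{D}\bm V\big)=\mathrm{trace}(\mathrm{D}\bm V)=\mathrm{div}\,\bm V,
\end{equation*}
using (1), (3), and $\mathrm{trace}(\mathrm{D}\bm V)=\sum_i \partial V_i/\partial x_i$. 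Alternatively, for a purely elementary argument one can expand $\det(I+t\,\mathrm{D}\bm V)=1+t\,\mathrm{trace}(\mathrm{D}\bm V)+O(t^2)$ and read off the linear coefficient.

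The computations here are entirely routine; the only point requiring a modicum of care — and thus the ``main obstacle'' in the narrow sense — is justifying the differentiation of $t\mapsto\mathrm{D}F_t^{-1}$ and of $t\mapsto\det(\mathrm{D}F_t)$, i.e. establishing $C^1$ (indeed smooth) dependence on $t$ near $t=0$. This is handled by noting $\det(\mathrm{D}F_0)=1\neq 0$ and continuity of $t\mapsto\det(\mathrm{D}F_t)$, so the inverse and determinant are smooth functions of the affine family $\mathrm{D}F_t$ on a neighborhood of $t=0$; this is exactly the standard setup for the perturbation-of-identity method and is why the result can simply be cited from \cite{FYZ25}.
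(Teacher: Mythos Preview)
Your proof is correct and entirely standard. The paper itself does not prove this lemma; it merely states it with a citation to \cite{FYZ25}, so there is no paper proof to compare against. Your direct-differentiation approach---using the affinity of $F_t=I+t\bm V$ for (1)--(3), differentiating $\mathrm{D}F_t\,\mathrm{D}F_t^{-1}=I$ for (4)--(5), and Jacobi's formula (or the expansion $\det(I+t\,\mathrm{D}\bm V)=1+t\,\mathrm{trace}(\mathrm{D}\bm V)+O(t^2)$) for (6)---is exactly the classical derivation one finds in shape-optimization textbooks (e.g., Soko\l owski--Zol\'esio \cite{SZ92}).
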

	% Moreover, $J$ is called \emph{shape differentiable} at $\Omega$ if it has a Eulerian derivative of $J$ at $\Omega$ for all $\bm V\in C_0^{\infty}(D;\mathbb{R}^d)$ and the following mapping is linear:
	% \[
	% C_0^{\infty}(\overline{D},\mathbb{R}^d) \rightarrow \mathbb{R}:\quad \bm V\mapsto dJ(\Omega;\bm V).
	% \]
	%  \begin{definition}
		% The shape derivative of a function $\bm u$  is defined as
		% \begin{equation}\label{24}
			%     \bm u^{\prime}(\bm x)=\lim_{t\searrow 0} \frac{\bm u_t(\bm x)- \bm u(\bm x)}{t},
			%  \end{equation}
		%  \end{definition}
	\begin{definition}
		The \emph{material derivative} of a function $\bm u$ is defined as the following limit in a certain space
		\begin{equation}\label{23}
			\dot{\bm u}=\lim_{t\searrow 0} \frac{\bm u_t\circ F_t- \bm u}{t}.
		\end{equation}
	\end{definition}
	% Notice that
	% $$\bm u^{\prime}(\bm x)= \dot{\bm u}(\bm x) - \nabla \bm u(\bm x) \cdot \bm V(\bm x).$$
	
	Assume that the symmetric fourth-order tensor $\mathbb{C}=\{c_{ijkl}\}\ (i,j,k,l=1,\cdots d)$ satisfies $c_{ijkl}(\cdot)\in \mathcal{C}^1(\mathbb{R}^d)$.
	Introduce a function space:
	\begin{equation}
		\bm V_1(\Omega_t):=\{\bm{v}\in H^1(\Omega_t;\mathbb{R}^d):\bm{v}|_{\Gamma_D}=\bzero,v_{\nu}|_
		{\Gamma_{C}}=0\}.
	\end{equation}
	Consider the hemivariational inequality defined on $\Omega_t$: Find $\bm u(\Omega_t)\in \bm V_1(\Omega_t)$ such that
	\begin{equation}\label{eqt}
		\quad \int_{\Omega_t}\mathbb{C}\bm \varepsilon(\bm u_t):\bm \varepsilon(\bm v_t)+\int_{\Gamma_C}j_{\tau}^0( u_{\tau t};v_{\tau t})\geq \int_{\Omega_t}\bm f\circ F_t\cdot\bm v_t+\int_{\Gamma_N}\bm g_N\cdot\bm v_t \quad \forall\,\bm v_t\in \bm V_1(\Omega_t).
	\end{equation}
	Let $C$ be a constant that may take different values under different circumstances. 
	\begin{lem}\label{weak}
		%	The material derivative of $\bm u(\Omega)$ exists and we have
		%	\begin{equation}
			%		\dot{u}(\Omega):=\lim_{t\searrow 0}\frac{u(\Omega_t)\circ F_t-\bm u(\Omega)}{t}.
			%	\end{equation}
		%	with
		%	\[
		%	u^t=u(\Omega_t)\circ F_t.
		%	\]
		It holds that
		\begin{equation}\label{result}
			\|\bm u^t - \bm u\|_{1}\leq Ct,
		\end{equation}
		where $ \bm u^t=\bm u(\Omega_t)\circ F_t$.
		%	And we define
		%	\[
		%	\frac{u(\Omega_t)\circ F_t-\bm u(\Omega)}{t}\rightharpoonup  \dot{u}(\Omega),\quad t\rightarrow 0.
		%	\]
	\end{lem}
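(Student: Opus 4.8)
The plan is to estimate the difference $\bm u^t-\bm u$ by subtracting the weak forms satisfied by the two functions, after transporting everything onto the fixed domain $\Omega$. First I would pull back \eqref{eqt} through $F_t$: using the change of variables $\bm x = F_t(\bm y)$, the transformed gradient $\bm\varepsilon(\bm u_t)\circ F_t$ becomes a symmetrized product $\mathcal{S}(\nabla(\bm u_t\circ F_t)\,\mathrm{D}F_t^{-1})$, the Jacobian $\gamma_1(t)=\det(\mathrm{D}F_t)$ appears in the volume integrals, and the boundary integral over $\Gamma_N$ (which is fixed, hence $F_t$-invariant) stays unchanged while on $\Gamma_C$ the tangential trace $u_{\tau t}\circ F_t$ equals $(\bm u^t)_\tau$ since $\Gamma_C$ is fixed. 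This gives a hemivariational inequality on $\bm V_1(\Omega)$ for $\bm u^t$ with $t$-dependent coefficients $A_t := \gamma_1(t)\,\mathrm{D}F_t^{-1}\,\mathbb{C}\circ F_t\,\mathrm{D}F_t^{-T}$ and right-hand side $\gamma_1(t)\,\bm f\circ F_t$, while \eqref{hemi} is the same inequality with $A_0=\mathbb{C}$, $\gamma_1(0)=1$, $F_0=I$.

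Next I would test the pulled-back inequality for $\bm u^t$ with $\bm v = \bm u - \bm u^t$ and the original inequality \eqref{hemi} for $\bm u$ with $\bm v = \bm u^t - \bm u$, then add the two. The coercivity assumption \eqref{Eq:pofun0}(c) controls the principal term from below by $m_{\mathbb{C}}\|\bm u^t-\bm u\|_{\bm W}^2$ up to an error of the form $\|A_t - A_0\|_{L^\infty}\,\|\bm u^t\|_{1}\,\|\bm u^t-\bm u\|_{1}$; the relaxed monotonicity condition \eqref{Eq:pofun1}(d), together with the trace inequality \eqref{trace}, bounds the sum $j_\tau^0((u^t)_\tau;u_\tau-(u^t)_\tau)+j_\tau^0(u_\tau;(u^t)_\tau-u_\tau)$ from above by $\alpha_{j_\tau}\lambda_1^{-1}\|\bm u^t-\bm u\|_{1}^2$. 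The remaining cross terms come from the difference of the right-hand sides, $(\gamma_1(t)\bm f\circ F_t - \bm f,\,\bm u^t-\bm u)$, which is $O(t)\|\bm u^t-\bm u\|_{1}$ by the regularity $\bm f\in H^1(D;\mathbb R^d)$ and the smoothness of $F_t$. Collecting everything and using the smallness condition \eqref{small}, $\alpha_{j_\tau}<\lambda_1 m_{\mathbb C}$, the quadratic term dominates and one absorbs it, obtaining $(m_{\mathbb C}-\alpha_{j_\tau}\lambda_1^{-1})\|\bm u^t-\bm u\|_{1}^2 \le C\big(t + \|A_t-A_0\|_{L^\infty}\big)\|\bm u^t-\bm u\|_{1}$, provided one first establishes a uniform bound $\|\bm u^t\|_{1}\le C$ for $t$ small, which follows by the same coercivity argument applied with $\bm v = \bm u^t$ alone.

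Finally I would invoke Lemma 1: by parts (1)--(6) the maps $t\mapsto \mathrm{D}F_t^{\pm 1}$, $t\mapsto \gamma_1(t)$ are $C^1$ with derivatives $\pm\mathrm{D}\bm V$, $\mathrm{div}\bm V$ at $t=0$, and since $c_{ijkl}\in\mathcal C^1(\mathbb R^d)$ and $\bm V\in C_0^1(D;\mathbb R^d)$ we get $\|A_t-A_0\|_{L^\infty(\Omega)}\le Ct$ (Taylor expansion of $A_t$ in $t$). Substituting into the estimate and dividing by $\|\bm u^t-\bm u\|_{1}$ yields \eqref{result}. The main obstacle is the first step: because the inequality is a hemivariational inequality rather than a variational inequality, one cannot simply subtract and use monotonicity of a subdifferential; the essential mechanism is that condition \eqref{Eq:pofun1}(d) supplies exactly the "relaxed monotonicity" needed, and it must be combined with the sharp spectral constant $\lambda_1$ from \eqref{trace}--\eqref{small} so that the non-smooth, non-convex boundary term is genuinely dominated by elastic coercivity after cancellation; keeping track of the constants so that the coefficient $m_{\mathbb C}-\alpha_{j_\tau}\lambda_1^{-1}$ stays strictly positive is where the argument could go wrong if the estimates are done loosely.
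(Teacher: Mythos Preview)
Your proposal is correct and follows essentially the same approach as the paper: pull back \eqref{eqt} to $\Omega$ via $F_t$, test the transported inequality with $\bm u-\bm u^t$ and \eqref{hemi} with $\bm u^t-\bm u$, add, then use the relaxed monotonicity \eqref{Eq:pofun1}(d) with the trace constant $\lambda_1^{-1}$ against the coercivity $m_{\mathbb C}$ via \eqref{small}, after first securing the uniform bound $\|\bm u^t\|_1\le C$ by testing with $-\bm u^t$. The only cosmetic difference is that the paper does not package the perturbed bilinear form as a single tensor $A_t$ but instead splits the difference into the separate contributions $(\gamma_1(t)-1)$, $\bm\varepsilon^t-\bm\varepsilon$, and $\mathbb{C}\circ F_t-\mathbb{C}$ (cf.\ \eqref{efwoijewf}--\eqref{common2}); your $A_t$ shorthand is slightly imprecise because of the symmetrization in $\bm\varepsilon^t(\cdot)=\mathcal{S}(\mathrm{D}(\cdot)\,\mathrm{D}F_t^{-1})$, but this does not affect the argument since all that is used is the $O(t)$ deviation of each factor from its value at $t=0$.
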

	\begin{proof}
		% Firstly, we consider the hemivariational inequality in \eqref{hemi}: Find $\bm u=\bm u(\Omega)\in \bm V_1(\Omega)$ such that
		% \begin{equation}\label{eqo}
			% 	 \int_{\Omega}\mathbb{C}\bm \varepsilon(\bm u):\bm \varepsilon(\bm v){\rm d}x+\int_{\Gamma_C}j_{\tau}^0( u_{\tau}; v_{\tau})\,{\rm d}s\geq \int_{\Omega}\bm f\bm v\,{\rm d}x+\int_{\Gamma_N}\bm g_N\bm v\,{\rm d}s\quad\forall\bm v\in \bm V_1(\Omega).
			% \end{equation}
		Let $ \bm u^t=\bm u(\Omega_t)\circ F_t\in \bm V_1(\Omega)$. Choose $\bm v_t=\bm \psi\circ F_t^{-1} $ in \eqref{eqt} for $\bm \psi\in  \bm V_1(\Omega) $, we deduce
		\begin{equation}\label{eqto}
			\int_{\Omega} \gamma_1(t)\mathbb{C}\circ F_t\bm \varepsilon^t(\bm u^t):\bm \varepsilon^t(\bm \psi)+\int_{\Gamma_C}j_{\tau}^0(u^t_{\tau}; \psi_{\tau})\geq \int_{\Omega}\gamma_1(t)\bm f\circ F_t\cdot \bm\psi+\int_{\Gamma_N}\bm g_N \cdot \bm \psi,
		\end{equation}	 
		with
		\[
		\bm \varepsilon^t(\bm u^t)=\mathcal{S}({\rm D}\bm u^t	{\rm D}F_t^{-1}),\quad \gamma_1(t)={\rm det}({\rm D}F_t).
		\]
		Let $\bm \psi =-\bm u^t$ in \eqref{eqto}, we have that $\|\bm u^t\|_{1}\leq C$. Let $\bm v=\bm u^t-\bm u$ in \eqref{hemi} and $\bm \psi =\bm u-\bm u^t$ in \eqref{eqto}. Adding these two inequalities, one gets
		\begin{align}\label{comb}
			&\int_{\Omega}\Big[\mathbb{C}\circ F_t\Big((\gamma_1(t)-1)\bm \varepsilon^t(\bm u^t)+\big(\bm \varepsilon^t(\bm u^t)-\bm\varepsilon(\bm u^t)\big)\Big)+\Big(\mathbb{C}\circ F_t\bm \varepsilon(\bm u^t)-\mathbb{C}\bm \varepsilon(\bm u)\Big)\Big]:\bm \varepsilon(\bm u-\bm u^t)\notag\\
			&+\int_{\Omega}\mathbb{C}\circ F_t\gamma_1(t)\bm \varepsilon^t(\bm u^t):\Big(\bm \varepsilon^t(\bm u-\bm u^t)-\bm\varepsilon(\bm u-\bm u^t)\Big)+\int_{\Gamma_C}j_{\tau}^0( u_{\tau}^t; u_{\tau}- u_{\tau}^t )+\int_{\Gamma_C}j_{\tau}^0(u_{\tau};u_{\tau}^t- u_{\tau} )\notag\\
			&\geq\int_{\Omega}(\gamma_1(t)\bm f\circ F_t-\bm f)\cdot(\bm u-\bm u^t)+\int_{\Gamma_N}\bm g_N \cdot(\bm u-\bm u^t).
		\end{align}
		Notice that
		\begin{align}
			\Big(\mathbb{C}\circ F_t\bm \varepsilon(\bm u^t)-\mathbb{C}\bm \varepsilon(\bm u)\Big):\bm \varepsilon(\bm u-\bm u^t)=\Big((\mathbb{C}\circ F_t-\mathbb{C})\bm \varepsilon(\bm u^t)+\mathbb{C}\bm \varepsilon(\bm u^t)-\mathbb{C}\bm \varepsilon(\bm u)\Big):\bm \varepsilon(\bm u-\bm u^t).
		\end{align}	
		Combining \eqref{Eq:pofun1}(d) and the trace inequality in \eqref{trace}, it yields
		\begin{equation}\label{hj}
			\int_{\Gamma_C}\big(j_{\tau}^0( u_{\tau}^t; u_{\tau}-u_{\tau}^t )+j_{\tau}^0(u_{\tau}; u_{\tau}^t- u_{\tau} )\big)\leq \alpha_{j_{\tau}}\lambda_1^{-1}\|\bm u^t-\bm u\|_{1}^2.
		\end{equation}
		Inserting \eqref{hj} into \eqref{comb}, we obtain by \eqref{Eq:pofun0}
		\begin{equation}\label{import}
			\begin{aligned}
				&\int_{\Omega}\Big[\mathbb{C}\circ F_t\Big((\gamma_1(t)-1)\bm \varepsilon^t(\bm u^t)+\big(\bm \varepsilon^t(\bm u^t)-\bm\varepsilon(\bm u^t)\big)\Big)+(\mathbb{C}\circ F_t-\mathbb{C})\bm \varepsilon(\bm u^t)\Big]:\bm \varepsilon(\bm u-\bm u^t)\\
				&+\int_{\Omega}\mathbb{C}\circ F_t\gamma_1(t)\bm \varepsilon^t(\bm u^t):\big(\bm \varepsilon^t(\bm u-\bm u^t)-\bm\varepsilon(\bm u-\bm u^t)\big)-\int_{\Omega}(\gamma_1(t)\bm f\circ F_t-\bm f)\cdot(\bm u-\bm u^t)\\
				&\quad	
				% -\int_{\Gamma_N}(\bm g_N\circ F_t-\bm g_N)\cdot(\bm u-\bm u^t)
				\geq (m_{\mathbb{C}}-\alpha_{j_{\tau}}\lambda_1^{-1})\|\bm u^t-\bm u\|_{1}^2.
			\end{aligned}
		\end{equation}
		Notice that for \eqref{import}
		\begin{equation}\label{efwoijewf}
			\begin{aligned}
				&\int_{\Omega}\mathbb{C}\circ F_t\big((\gamma_1(t)-1)\bm \varepsilon^t(\bm u^t)\big):\bm \varepsilon(\bm u-\bm u^t)\leq \|\mathbb{C}\circ F_t\|_{L^{\infty}(\Omega)}\|\gamma_1(t)-1\|_{L^{\infty}(\Omega)}\Tilde{C}_1\|\bm u^t\|_{1}\| \bm u^t-\bm u\|_{1},\\
				&\int_{\Omega}(\mathbb{C}\circ F_t-\mathbb{C})\bm \varepsilon(\bm u^t):\bm \varepsilon(\bm u-\bm u^t)\leq \|\mathbb{C}\circ F_t-\mathbb{C}\|_{L^{\infty}(\Omega)}\|\bm u^t\|_{1}\| \bm u^t-\bm u\|_{1},\\
				&\int_{\Omega}\mathbb{C}\circ F_t\gamma_1(t)\bm \varepsilon^t(\bm u^t):\big(\bm\varepsilon^t(\bm u-\bm u^t)-\bm\varepsilon(\bm u-\bm u^t)\big)\leq \Tilde{C}_1\Tilde{C}_2\|\mathbb{C}\circ F_t\gamma_1(t)\|_{L^{\infty}(\Omega)}\|\bm u^t\|_{1}\| \bm u^t-\bm u\|_{1},\\
				&	\int_{\Omega}(\gamma_1(t)\bm f\circ F_t-\bm f)(\bm u^t-\bm u)\leq \big(\|\gamma_1(t)-1\|_{L^{\infty}(\Omega)}\|\bm f\circ F_t\|_{L^{2}(\Omega)}+\| \bm f\circ F_t-\bm f\|_{L^{2}(\Omega)} \big)\|\bm u^t-\bm u\|_{1},
				% &\int_{\Gamma_N}(\bm g_N\circ F_t-\bm g_N)(\bm u-\bm u^t)\leq \|\bm g_N\circ F_t-\bm g_N \|_{1}\|\bm u-\bm u^t\|_{1},
			\end{aligned}
		\end{equation}
		where $\Tilde{C}_1 = \max\{\|{\rm D}F_t^{-1}\|_{L^{\infty}(\Omega)},\|{\rm D}F_t^{-T}\|_{L^{\infty}(\Omega)}\}$ and  $\Tilde{C}_2 = \max\{\|{\rm D}F_t^{-1}-I\|_{L^{\infty}(\Omega)},\|{\rm D}F_t^{-T}-I\|_{L^{\infty}(\Omega)}\}$.
		Since
		\begin{equation}\label{common2}
			\begin{aligned}
				&\Big\|{\rm D}F_t^{-1}\Big\|_{L^{\infty}(\overline{D})}=\Big\|{\rm D}F_t^{-T}\Big\|_{L^{\infty}(\overline{D})}\leq C, \\ &\lim_{t\searrow 0}\Big\|\frac{{\rm D}F_t^{-1}-I}{t}+{\rm D}\bm V\Big\|_{L^{\infty}(\overline{D})}=\lim_{t\searrow 0}\Big\|\frac{{\rm D}F_t^{-T}-I}{t}+{\rm D}\bm V^T\Big\|_{L^{\infty}(\overline{D})}=0,\\
				&\lim_{t\searrow 0}\Big\|\frac{\bm f\circ F_t-\bm f}{t}-{\rm D} \bm f\bm V\Big\|_{L^{2}(\overline{D})}=0,
				% \quad \lim_{t\searrow 0}\Big\|\frac{\bm g_N\circ F_t-\bm g_N}{t}-{\rm D}\bm g_N\bm V\Big\|_{L^{\infty}(\overline{D})}=0,
				\\
				&\lim_{t\searrow 0}\Big\|\frac{\gamma_1(t)-1}{t}-{\rm div}\bm V\Big\|_{L^{\infty}(\overline{D})}=0,\quad  \lim_{t\searrow 0}\Big\|\frac{\mathbb{C}\circ F_t-\mathbb{C}}{t}-\nabla \mathbb{C}\cdot\bm V\Big\|_{L^{\infty}(\overline{D})}=0,
			\end{aligned}
		\end{equation}
		with $\nabla \mathbb{C}\cdot\bm V=\{\nabla c_{ijkl}\cdot \bm V\}\ (i,j,k,l=1,\cdots d)$, which can be used in \eqref{efwoijewf}, we combine with \eqref{small} and the fact $\|\bm u^t\|_{1}\leq C$ to obtain the desired result \eqref{result} from \eqref{import}.
		%		\[
		%		\|\bm u^t-\bm u\|_{\bm V}\leq Ct\quad.
		%		\]
		
	\end{proof}
	
	Let $\bm z^t=({\bm u^t-\bm u})/{t}$. By Lemma \ref{weak}, one gets $\|\bm z^t\|_{1}\leq C$. Then, we have
	\begin{equation}\label{jiojio}
		% \frac{\bm{u}_t\circ F_t-\bm{u}}{t}\rightharpoonup \dot{\bm{u}}\in \widetilde{\bm V} \quad {\rm as}\ t\rightarrow 0^+.
		\bm z^t \rightharpoonup \dot{\bm{u}}\in \bm V_1 \quad {\rm as}\ t\rightarrow 0^+.
	\end{equation}
	Furthermore, if we choose $\gamma \boldsymbol{v}=v_\tau$ for $\boldsymbol{v} \in \boldsymbol{W}$, where $\gamma: H^1(\Omega) \rightarrow L^2\left(\Gamma_C\right)$ is a trace operator, we can obtain strong convergence by compactness of $\gamma$ from (\cite{BS2005}) as 
	\begin{equation}\label{strongconv}
		\frac{u_{\tau}^t- u_{ \tau}}{t}\rightarrow \dot{u}_\tau \text{ in } L^2(\Gamma_C)  \quad {\rm as}\ t\rightarrow 0^+.
	\end{equation}
	
	\begin{lem}\label{Lemma33}
		For $\bm u,\  \bm u^t\in \bm V_1$, there hold as
		\begin{align}
			&\|\bm\varepsilon^t(\bm u^t)-\bm \varepsilon(\bm u^t)\|_{Q}\leq Ct, \label{eq1} \\   %\quad \text{as}\quad t\rightarrow 0,
			&\|\bm\varepsilon(\bm u^t)-\bm \varepsilon(\bm u)\|_{Q}\leq Ct. %\quad \text{as}\quad t\rightarrow 0.
			\label{eq2}
		\end{align}
	\end{lem}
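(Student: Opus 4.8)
The plan is to prove the two estimates \eqref{eq1} and \eqref{eq2} by purely algebraic manipulations of the strain tensors, using the Taylor-type expansions in \eqref{common2} together with the a priori bound $\|\bm u^t\|_1\le C$ and the Lipschitz estimate \eqref{result} from Lemma \ref{weak}. The common thread is that both $\bm\varepsilon^t(\bm u^t)-\bm\varepsilon(\bm u^t)$ and $\bm\varepsilon(\bm u^t)-\bm\varepsilon(\bm u)$ are, up to bounded multiplicative factors and gradients controlled by $\|\bm u^t\|_1\le C$, proportional to quantities that vanish at rate $t$ as $t\searrow 0$.

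For \eqref{eq1}, I would start from the definitions $\bm\varepsilon^t(\bm u^t)=\mathcal S(\mathrm D\bm u^t\,\mathrm DF_t^{-1})$ and $\bm\varepsilon(\bm u^t)=\mathcal S(\mathrm D\bm u^t)$, so that
\begin{equation*}
\bm\varepsilon^t(\bm u^t)-\bm\varepsilon(\bm u^t)=\mathcal S\big(\mathrm D\bm u^t(\mathrm DF_t^{-1}-I)\big).
\end{equation*}
Taking the $Q$-norm and using that $\mathcal S$ is a bounded linear projection gives
$\|\bm\varepsilon^t(\bm u^t)-\bm\varepsilon(\bm u^t)\|_Q\le \|\mathrm DF_t^{-1}-I\|_{L^\infty(\overline D)}\,\|\mathrm D\bm u^t\|_{L^2(\Omega)}$. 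The second factor is bounded by $C\|\bm u^t\|_1\le C$ (Korn's inequality and Lemma \ref{weak}), while the first factor is $\le Ct$ for small $t$ by the expansion $\lim_{t\searrow0}\|(\mathrm DF_t^{-1}-I)/t+\mathrm D\bm V\|_{L^\infty(\overline D)}=0$ in \eqref{common2}, since $\mathrm D\bm V\in L^\infty$ because $\bm V\in C_0^1(D;\mathbb R^d)$. This yields \eqref{eq1}.

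For \eqref{eq2}, since $\bm\varepsilon$ is linear, $\bm\varepsilon(\bm u^t)-\bm\varepsilon(\bm u)=\bm\varepsilon(\bm u^t-\bm u)$, and $\|\bm\varepsilon(\bm u^t-\bm u)\|_Q=\|\bm u^t-\bm u\|_{\bm W}\le C\|\bm u^t-\bm u\|_1\le Ct$ directly by Lemma \ref{weak} (the norm $\|\cdot\|_{\bm W}$ being equivalent to $\|\cdot\|_1$ by Korn). So \eqref{eq2} is essentially an immediate restatement of \eqref{result} in terms of strains. The only point requiring mild care is the uniform-in-$t$ control of the multiplicative constants: one must note that $\|\mathrm DF_t^{-1}\|_{L^\infty(\overline D)}\le C$ uniformly on $\mathcal J$, which is the first line of \eqref{common2}, and that all hidden constants coming from Korn and trace inequalities are $t$-independent because they are taken on the fixed reference domain $\Omega$. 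I do not anticipate a genuine obstacle here; if anything, the main subtlety is bookkeeping—making sure every constant absorbed into $C$ is independent of $t$ and that the $O(t)$ remainder terms in the expansions \eqref{common2} are used with the correct norms ($L^\infty$ for the Jacobian factors, $L^2$ for $\bm f$), so that the products with $\|\bm u^t\|_1\le C$ close cleanly.
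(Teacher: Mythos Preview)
Your proof is correct and follows essentially the same approach as the paper: for \eqref{eq1} the paper also writes $\bm\varepsilon^t(\bm u^t)-\bm\varepsilon(\bm u^t)=\mathcal S(\mathrm D\bm u^t(\mathrm DF_t^{-1}-I))$ and invokes the second line of \eqref{common2} together with $\|\bm u^t\|_1\le C$, while for \eqref{eq2} it simply cites Lemma~\ref{weak}. Your additional remarks on Korn's inequality and the $t$-independence of the constants are fine elaborations but not needed beyond what the paper records.
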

	\begin{proof}
		\begin{equation*}
			\begin{aligned}
				\bm\varepsilon^t(\bm u^t)-\bm \varepsilon(\bm u^t)&=\mathcal{S}({\rm D}\bm u^t{\rm D}F_t^{-1})-\mathcal{S}({\rm D}\bm u^t)=\mathcal{S}({\rm D}\bm u^t({\rm D}F_t^{-1}-I)).%\\
				%\frac{1}{2}\Big[{\rm D}\bm u^t{\rm D}F_t^{-1}+{\rm D}F_t^{-T}({\rm D}\bm u^t)^T \Big]-\frac{1}{2}\Big[{\rm D}\bm u^t+({\rm D}\bm u^t)^T  \Big]\notag\\
				% &=\frac{1}{2}\Big[{\rm D}\bm u^t({\rm D}F_t^{-1}-\mathrm{Id})+({\rm D}F_t^{-T}-\mathrm{Id})({\rm D}\bm u^t)^T \Big].
			\end{aligned}
		\end{equation*}
		Since the second line of \eqref{common2} holds and $\|\bm u^t\|_{1}\leq C$,
		% \begin{align*}
			% 	\lim_{t\searrow 0}\Big\|\frac{{\rm D}F_t^{-1}-\mathrm{Id}}{t}+{\rm D}\bm V\Big\|_{L^{\infty}(\overline{D})}=\lim_{t\searrow 0}\Big\|\frac{{\rm D}F_t^{-T}-\mathrm{Id}}{t}+{\rm D}\bm V^T\Big\|_{L^{\infty}(\overline{D})}=0,
			% \end{align*}
		we can derive \eqref{eq1}. From Lemma \ref{weak}, we obtain \eqref{eq2}.
	\end{proof}
	
	\begin{thm}\label{thm1}
		For the shape functional $J$ in \eqref{Obj} with Lipschitz boundary $\Gamma$, the Eulerian derivative holds as
		\begin{align}\label{Euler1}
			dJ(\Omega;\bm V)=&\int_{\Omega}\bigg[\bigg(\frac{1}{2}\bm \sigma(\bm u):\bm \varepsilon(\bm u)-\bm f\cdot\bm u\bigg){I}-{\rm D}\bm u^T\mathbb{C}\bm \varepsilon(\bm u)\bigg]:{\rm D}\bm V
			+\bigg(\frac{1}{2}\bm \varepsilon(\bm u):\bm \varepsilon(\bm u)\nabla\mathbb{C}-{\rm D}\bm f^T\bm u\bigg)\cdot\bm V.
		\end{align}
		%   with $\mathcal{I}$ being  the identity matrix.
	\end{thm}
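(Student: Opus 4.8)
The idea is to freeze the geometry by pulling everything on $\Omega_t$ back to the fixed domain $\Omega$ through $F_t$, and then to split the increment of $J$ into a \emph{shape} part, carrying the explicit $t$--dependence, which will produce the right-hand side of \eqref{Euler1}, and a \emph{material} part, which will be of order $t^2$ and hence invisible in the limit. Set $\bm u^t:=\bm u(\Omega_t)\circ F_t\in\bm V_1(\Omega)$. Changing variables $\bm x=F_t(\bm y)$ in \eqref{Obj}, and using that $F_t$ is the identity near $\overline{\Gamma_D}\cup\overline{\Gamma_N}\cup\overline{\Gamma_C}$ so that the boundary integrals are unaffected, one gets $J(\Omega_t)=\mathcal{E}_t(\bm u^t)$, where for $\bm w\in\bm V_1(\Omega)$
\[ \mathcal{E}_t(\bm w):=\tfrac12 a_t(\bm w,\bm w)-\ell_t(\bm w)+\hat J(w_\tau),\qquad \ell_t(\bm\psi):=\int_\Omega\gamma_1(t)(\bm f\circ F_t)\cdot\bm\psi+(\bm g_N,\bm\psi)_{\Gamma_N}, \]
\[ a_t(\bm w,\bm\psi):=\int_\Omega\gamma_1(t)(\mathbb C\circ F_t)\bm\varepsilon^t(\bm w):\bm\varepsilon^t(\bm\psi),\quad \gamma_1(t)={\rm det}({\rm D}F_t),\quad \bm\varepsilon^t(\bm w)=\mathcal S({\rm D}\bm w\,{\rm D}F_t^{-1}), \]
$a=a_0$, $\ell=\ell_0$ being the forms of \eqref{varia} and $\mathcal{E}_0(\bm u)=J(\Omega)$. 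Since the nonsmooth term $\hat J(\cdot_\tau)$ has no explicit $t$--dependence,
\[ J(\Omega_t)-J(\Omega)=\underbrace{\bigl[\mathcal{E}_t(\bm u^t)-\mathcal{E}_t(\bm u)\bigr]}_{=:R_t}+\underbrace{\bigl[\tfrac12 a_t(\bm u,\bm u)-\ell_t(\bm u)-\tfrac12 a(\bm u,\bm u)+\ell(\bm u)\bigr]}_{=:S_t}. \]

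The term $S_t$ is a fixed quadratic expression in $\bm u$ with coefficients differentiable at $t=0$, so differentiating under the integral sign using the shape calculus lemma and the limits in \eqref{common2} (notably $\frac{\rm d}{{\rm d}t}\gamma_1|_{t=0}={\rm div}\,\bm V$, $\frac{\rm d}{{\rm d}t}({\rm D}F_t^{-1})|_{t=0}=-{\rm D}\bm V$, $\frac{\rm d}{{\rm d}t}(\mathbb C\circ F_t)|_{t=0}=\nabla\mathbb C\cdot\bm V$, $\frac{\rm d}{{\rm d}t}(\bm f\circ F_t)|_{t=0}={\rm D}\bm f\bm V$, hence $\frac{\rm d}{{\rm d}t}\bm\varepsilon^t(\bm u)|_{t=0}=-\mathcal S({\rm D}\bm u\,{\rm D}\bm V)$) gives
\[ \lim_{t\searrow0}\frac{S_t}{t}=\tfrac12\!\int_\Omega\!({\rm div}\,\bm V)\mathbb C\bm\varepsilon(\bm u):\bm\varepsilon(\bm u)+\tfrac12\!\int_\Omega\!(\nabla\mathbb C\cdot\bm V)\bm\varepsilon(\bm u):\bm\varepsilon(\bm u)-\int_\Omega\!\mathbb C\bm\varepsilon(\bm u):\mathcal S({\rm D}\bm u\,{\rm D}\bm V)-\int_\Omega\!({\rm div}\,\bm V)\bm f\cdot\bm u-\int_\Omega\!({\rm D}\bm f\bm V)\cdot\bm u. \]
Since $\bm\sigma(\bm u)=\mathbb C\bm\varepsilon(\bm u)$ is symmetric we have $\bm\sigma(\bm u):\mathcal S({\rm D}\bm u\,{\rm D}\bm V)=\bm\sigma(\bm u):({\rm D}\bm u\,{\rm D}\bm V)=({\rm D}\bm u^T\bm\sigma(\bm u)):{\rm D}\bm V$; together with $({\rm div}\,\bm V)h=(hI):{\rm D}\bm V$ and $({\rm D}\bm f\bm V)\cdot\bm u=({\rm D}\bm f^T\bm u)\cdot\bm V$, the above rearranges term by term into the right-hand side of \eqref{Euler1}. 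It therefore remains to prove $R_t/t\to0$; I claim in fact $|R_t|\le Ct^2$.

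This last bound is the crux, and it is where the non-convexity bites: the classical argument that the material derivative does not contribute — squeezing $\mathcal{E}_t(\bm u^t)$ between $\mathcal{E}_t(\bm u)$ and $\mathcal{E}_t(\bm u^t)$ via minimality — is unavailable, because non-convexity of $j_\tau$ only makes $\bm u$ and $\bm u(\Omega_t)$ stationary in Clarke's sense. Instead I use the equality form \eqref{varia}. Denote by $\xi_\tau$ and $\xi_\tau^t$ its multipliers on $\Omega$ and (after transport, using $F_t=I$ on $\Gamma_C$) on $\Omega_t$: thus $-\xi_\tau\in\partial j_\tau(u_\tau)$ and $-\xi_\tau^t\in\partial j_\tau(u^t_\tau)$ a.e.\ on $\Gamma_C$, with $a(\bm u,\bm\psi)-(\xi_\tau,\psi_\tau)_{\Gamma_C}=\ell(\bm\psi)$ and $a_t(\bm u^t,\bm\psi)-(\xi_\tau^t,\psi_\tau)_{\Gamma_C}=\ell_t(\bm\psi)$ for all $\bm\psi\in\bm V_1(\Omega)$. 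Testing each identity with $\bm\psi=\bm u^t-\bm u$ and expanding $\tfrac12 a_t(\bm u^t,\bm u^t)-\tfrac12 a_t(\bm u,\bm u)$ in the two symmetric ways produces
\[ R_t=\int_{\Gamma_C}\!\bigl[\xi_\tau^t(u^t_\tau-u_\tau)+j_\tau(u^t_\tau)-j_\tau(u_\tau)\bigr]-\tfrac12 a_t(\bm u^t-\bm u,\bm u^t-\bm u), \]
\[ R_t=\int_{\Gamma_C}\!\bigl[\xi_\tau(u^t_\tau-u_\tau)+j_\tau(u^t_\tau)-j_\tau(u_\tau)\bigr]+\tfrac12 a_t(\bm u^t-\bm u,\bm u^t-\bm u)+O(t^2), \]
where the $O(t^2)$ collects $|(a_t-a)(\bm u,\bm u^t-\bm u)|+|(\ell_t-\ell)(\bm u^t-\bm u)|\le Ct\|\bm u^t-\bm u\|_1\le Ct^2$, using \eqref{common2} and $\|\bm u^t-\bm u\|_1\le Ct$ from Lemma~\ref{weak}. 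For a.e.\ $\bm s\in\Gamma_C$, set $b=u_\tau(\bm s)$, $a=u^t_\tau(\bm s)$; Lebourg's mean value theorem for the locally Lipschitz $z\mapsto j_\tau(\bm s,z)$ (assumption \eqref{Eq:pofun1}(b)) gives $c=b+\theta(a-b)$ with $\theta\in(0,1)$ and $\zeta\in\partial j_\tau(c)$ with $j_\tau(a)-j_\tau(b)=\zeta(a-b)$. The two integrands above are then $(\xi_\tau+\zeta)(a-b)$ and $(\xi_\tau^t+\zeta)(a-b)$; combining $-\xi_\tau\in\partial j_\tau(b)$ (resp.\ $-\xi_\tau^t\in\partial j_\tau(a)$) with $\zeta\in\partial j_\tau(c)$ through the relaxed monotonicity \eqref{Eq:pofun1}(d) applied to $(b,c)$ (resp.\ $(a,c)$) and using that $c$ lies on the segment, one gets $(\xi_\tau+\zeta)(a-b)\ge-\alpha_{j_\tau}|a-b|^2$ and $(\xi_\tau^t+\zeta)(a-b)\le\alpha_{j_\tau}|a-b|^2$. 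Integrating over $\Gamma_C$ and invoking the trace inequality \eqref{trace} with Lemma~\ref{weak},
\[ -\alpha_{j_\tau}\lambda_1^{-1}\|\bm u^t-\bm u\|_1^2-Ct^2\;\le\;R_t\;\le\;\alpha_{j_\tau}\lambda_1^{-1}\|\bm u^t-\bm u\|_1^2\;\le\;Ct^2, \]
so $R_t=O(t^2)$ and $R_t/t\to0$; combined with the limit for $S_t/t$ this yields \eqref{Euler1}.

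I expect Step~3 — the estimate $|R_t|\le Ct^2$ — to be the main obstacle, since it is exactly where the lack of an energy-minimization (squeezing) structure must be compensated; the point is that the small-constant assumption \eqref{Eq:pofun1}(d), which already underlies well-posedness of \eqref{hemi}, is precisely what lets (via Lebourg's theorem) the non-smooth increments $j_\tau(u^t_\tau)-j_\tau(u_\tau)$ be absorbed against the multiplier terms from both sides. An alternative route is to pass to the limit directly in the two hemivariational inequalities using $\bm z^t\rightharpoonup\dot{\bm u}$ from \eqref{jiojio} and the strong convergence $t^{-1}(u^t_\tau-u_\tau)\to\dot u_\tau$ in $L^2(\Gamma_C)$ from \eqref{strongconv}, but one still has to invoke \eqref{Eq:pofun1}(d) — or the convergence $\xi_\tau^t\to\xi_\tau$ — to eliminate the surviving Clarke directional-derivative terms.
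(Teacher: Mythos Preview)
Your argument is correct and takes a genuinely different route from the paper's own proof. The paper does \emph{not} show that the material part is $O(t^2)$; instead it computes the limit of each of the four pieces of $J(\Omega_t)-J(\Omega)$ separately (the analogues of your $S_t$ \emph{and} $R_t$ mixed together), obtaining an intermediate formula that still contains the material derivative $\dot{\bm u}$ through $a(\bm u,\dot{\bm u})$, $(\bm f,\dot{\bm u})_\Omega$, $(\bm g_N,\dot{\bm u})_{\Gamma_N}$ and $\hat J^0(u_\tau;\dot u_\tau)$. To eliminate these, the paper tests \eqref{hemi} with $\bm v=\pm\dot{\bm u}$ and uses the Clarke \emph{regularity} of $j_\tau$ (via \eqref{psi1}--\eqref{psi2}) to turn the two resulting inequalities into the single equality $a(\bm u,\dot{\bm u})+\hat J^0(u_\tau;\dot u_\tau)=(\bm f,\dot{\bm u})_\Omega+(\bm g_N,\dot{\bm u})_{\Gamma_N}$, which then cancels all $\dot{\bm u}$-terms.

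Your decomposition $J(\Omega_t)-J(\Omega)=R_t+S_t$ with $R_t=O(t^2)$ is cleaner in that it never lets the material derivative appear, and your two-sided squeeze via Lebourg's mean value theorem combined with the relaxed monotonicity \eqref{Eq:pofun1}(d) is a nice substitute for the energy-minimization squeeze that fails in the non-convex setting. A notable byproduct is that your proof uses only \eqref{Eq:pofun1}(b),(d), the multiplier form \eqref{varia}, and Lemma~\ref{weak}; it does \emph{not} rely on Clarke regularity of $j_\tau$, whereas the paper's cancellation step does. On the other hand, the paper's route makes the mechanism transparent---the $\dot{\bm u}$-contribution vanishes precisely because $\bm u$ is a ``critical point'' of the energy in the Clarke sense---and it also shows, as a by-product, that $\frac{1}{t}[\hat J(u_\tau^t)-\hat J(u_\tau)]\to\hat J^0(u_\tau;\dot u_\tau)$, which may be of independent interest.
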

	
	\begin{proof}
		By definition of the Eulerian derivative,
		\begin{align}
			dJ(\Omega;\bm V)=&\lim_{t\searrow 0}\frac{J(\Omega_t)-J(\Omega)}{t}=\frac{1}{2}\mathrm{I}-\mathrm{II}+\mathrm{III}-\mathrm{IV},
		\end{align}
		with
		\begin{align*}
			{\mathrm{I} }:&= \lim_{t\searrow 0} \frac{1}{t}\bigg[\int_{\Omega_t} \mathbb{C}\bm \varepsilon(\bm u_t): \bm \varepsilon(\bm u_t)-\int_{\Omega} \mathbb{C}\bm \varepsilon(\bm u): \bm \varepsilon(\bm u)\bigg]\\
			{\mathrm{II}}:&= \lim_{t\searrow 0} 
			\frac{1}{t}\bigg[\int_{\Omega_t} \bm f\cdot\bm u_t-\int_\Omega \bm f\cdot\bm u\bigg]\\%\frac{\int_{\Omega_t}\bm f(\bm x_t)\bm u_t(\bm x_t)-\int_{\Omega}\bm f(\bm x)\bm u(\bm x)}{t}\\
			{\mathrm{III}}: &= \lim_{t\searrow 0} \int_{\Gamma_C}\frac{j_{\tau}(u_{\tau t})-j_{\tau}( u_{\tau })}{t}\\
			{\mathrm{IV}}:&= \lim_{t\searrow 0}\frac{1}{t}(\bm {g},\bm{u}_t-\bm{u})_{\Gamma_N}.
		\end{align*}
		Notice that
		\begin{align*}
			%	{\mathrm{I} }:&= \lim_{t\searrow 0} \frac{\int_{\Omega_t} \mathbb{C}\bm \varepsilon(\bm u_t(\bm x_t)): \bm \varepsilon(\bm u_t(\bm x_t))\mathrm{d} x_t-\int_{\Omega} \mathbb{C}\bm \varepsilon(\bm u(\bm x)): (\bm \varepsilon(\bm u(\bm x))\mathrm{d} x}{t}\\
			{\mathrm{I} }:&= \lim_{t\searrow 0} \frac{1}{t}\bigg[\int_{\Omega}\gamma_1(t) \mathbb{C}\circ F_t\bm \varepsilon^t(\bm u^t):\bm \varepsilon^t(\bm u^t)- \int_{\Omega} \mathbb{C}\bm \varepsilon(\bm u): \bm \varepsilon(\bm u)\bigg]\\
			& = {\mathrm{I}_1}+{\mathrm{I}}_2+{\mathrm{I}_3}+{\mathrm{I}_4}\\
		\end{align*}
		with
		\begin{align*}
			&{\mathrm{I}_1}:=\lim_{t\searrow 0}\frac{\int_{\Omega}\big[(\gamma_1(t)-1)\mathbb{C}\circ F_t\bm \varepsilon^t(\bm u^t)+\mathbb{C}\circ F_t\big(\bm \varepsilon^t(\bm u^t)-\bm \varepsilon(\bm u^t)+ \bm \varepsilon(\bm u^t)-\bm \varepsilon(\bm u)\big)\big]:\bm \varepsilon(\bm u^t)}{t}\\
			&{\mathrm{I}_2}:=\lim_{t\searrow 0}\frac{\int_{\Omega}(\gamma_1(t)\mathbb{C}\circ F_t \bm \varepsilon^t(\bm u^t):(\bm \varepsilon^t(\bm u^t)-\bm \varepsilon(\bm u^t))}{t}\\
			&{\mathrm{I}_3}:=\lim_{t\searrow 0}\frac{\int_{\Omega}(\mathbb{C}\circ F_t-\mathbb{C})\bm \varepsilon(\bm u):\bm \varepsilon(\bm u^t)}{t}\\
			&{\mathrm{I}_4}:=\lim_{t\searrow 0}\frac{\int_{\Omega}\mathbb{C}\bm \varepsilon(\bm u):\big(\bm \varepsilon(\bm u^t)-\bm \varepsilon(\bm u)\big)}{t}.
		\end{align*}
		On one hand,
		\begin{align*}
			&{\mathrm{I}_1-\Big(\int_\Omega\mathbb{C}\bm \varepsilon(\bm u):\bm \varepsilon(\bm u){\rm div}\bm V-\mathbb{C}\bm\varepsilon(\bm u):\mathcal{S}({\rm D}\bm u{\rm D}\bm V)+\mathbb{C} \bm \varepsilon(\dot{\bm u}):\bm \varepsilon(\bm u)}\Big)= \mathcal{Q}_1+\mathcal{Q}_2+\mathcal{Q}_3,
		\end{align*}
		where
		\begin{align*}
			&\mathcal{Q}_1:=\lim_{t\searrow 0}\int_{\Omega}{\widetilde{\gamma_1(t)}\mathbb{C}\circ F_t\bm \varepsilon^t(\bm u^t):\bm \varepsilon(\bm u^t)}-\int_\Omega\mathbb{C}\bm \varepsilon(\bm u):\bm \varepsilon(\bm u){\rm div}\bm V,\\
			& \mathcal{Q}_2:=\lim_{t\searrow 0}\frac{\int_{\Omega}  \mathbb{C}\circ F_t(\bm \varepsilon^t(\bm u^t)-\bm \varepsilon(\bm u^t)): \bm \varepsilon(\bm u^t)
			}{t}+\int_\Omega\mathbb{C}\bm\varepsilon(\bm u):\mathcal{S}({\rm D}\bm u{\rm D}\bm V),\\
			%\frac{1}{2}({\rm D}\bm u{\rm D}\bm V+({\rm D}\bm u{\rm D}\bm V)^T)\mathrm{d}x\\
			&\mathcal{Q}_3:=\lim_{t\searrow 0}\frac{\int_{\Omega}\mathbb{C} \circ F_t( \bm \varepsilon(\bm u^t)-\bm \varepsilon(\bm u) ):\bm \varepsilon(\bm u^t)}{t}-\int_\Omega\mathbb{C}\varepsilon(\dot{\bm u}):\bm \varepsilon(\bm u),
		\end{align*}
		with $\widetilde{\gamma_1(t)}=(\gamma_1(t)-1)/t$.
		Here, by \eqref{common2} and Lemma \ref{Lemma33}, one can get
		\begin{align*}
			|\mathcal{Q}_1|=&	\bigg|\lim_{t\searrow 0}\int_\Omega\widetilde{\gamma_1(t)}\mathbb{C}\circ F_t\bm \varepsilon^t(\bm u^t):(\bm \varepsilon(\bm u^t)-\bm \varepsilon(\bm u))+\lim_{t\searrow 0}\int_\Omega\widetilde{\gamma_1(t)}\mathbb{C}\circ F_t(\bm \varepsilon^t(\bm u^t)-\bm \varepsilon(\bm u^t)):\bm \varepsilon(\bm u)\\
			&+\lim_{t\searrow 0}\int_\Omega\widetilde{\gamma_1(t)}\mathbb{C}\circ F_t(\bm \varepsilon(\bm u^t)-\bm \varepsilon(\bm u)):\bm \varepsilon(\bm u)+\lim_{t\searrow 0}\int_\Omega\widetilde{\gamma_1(t)}(\mathbb{C}\circ F_t-\mathbb{C})\bm \varepsilon(\bm u):\bm \varepsilon(\bm u)\\
			&+\lim_{t\searrow 0}\int_\Omega\big(\widetilde{\gamma_1(t)}-{\rm div}\bm V\big)\mathbb{C}\bm \varepsilon(\bm u):\bm \varepsilon(\bm u)\bigg|	=0,
			%&\quad\Longrightarrow 0\quad \text{as}\quad t\rightarrow 0.
		\end{align*}
		where each term in the absolute value can be shown to be 0. Similarly, we have%by \eqref{eq1} and \eqref{eq2}, 
		\[|Q_2|=0,\quad |Q_3|=0.\]
		Then, we derive
		\begin{equation}\label{i1}
			{\mathrm I}_1=\int_\Omega\mathbb{C}\bm \varepsilon(\bm u):\bm \varepsilon(\bm u){\rm div}\bm V-\mathbb{C}\bm\varepsilon(\bm u):\mathcal{S}({\rm D}\bm u{\rm D}\bm V)%\frac{1}{2}({\rm D}\bm u{\rm D}\bm V+({\rm D}\bm u{\rm D}\bm V)^T)
			+\mathbb{C} \bm \varepsilon(\dot{\bm u}):\bm \varepsilon(\bm u).%\mathrm{d}x.
		\end{equation}
		Notice that
		\begin{align*}
			&\quad {\mathrm{I}_2}+\int_{\Omega}\mathbb{C}\bm \varepsilon(\bm u):\mathcal{S}({\rm D}\bm u {\rm D}\bm V)\\%\frac{1}{2}\Big(({\rm D}\bm u {\rm D}\bm V+({\rm D}\bm u {\rm D}\bm V)^T\Big)\mathrm{d} x\\
			&= \lim_{t\searrow 0} \int_{\Omega}\frac{1}{t}{\Big(\gamma_1(t) \mathbb{C}\circ F_t\bm \varepsilon^t(\bm u^t)-\mathbb{C}\bm \varepsilon^t(\bm u^t)+\mathbb{C}\bm \varepsilon^t(\bm u^t)-\mathbb{C}\bm \varepsilon(\bm u^t)+\mathbb{C}\bm \varepsilon(\bm u^t)-\mathbb{C}\bm \varepsilon(\bm u)\Big):(\bm \varepsilon^t(\bm u^t)-\bm \varepsilon(\bm u^t))}\\
			&\quad\quad+\int_{\Omega}\mathbb{C}\bm \varepsilon(\bm u):\Big[ \frac{\bm \varepsilon^t(\bm u^t)-\bm \varepsilon(\bm u^t)}{t}+\mathcal{S}({\rm D}\bm u {\rm D}\bm V)\Big] = \mathcal{R}_1+\mathcal{R}_2+\mathcal{R}_3, %\frac{1}{2}({\rm D}\bm u {\rm D}\bm V+({\rm D}\bm u {\rm D}\bm V)^T\Big]\mathrm{d} x\\
			%& = \mathcal{R}_1+\mathcal{R}_2+\mathcal{R}_3,
		\end{align*}
		where
		\begin{align*}
			&\mathcal{R}_1:=\lim_{t\searrow 0}\frac{\int_{\Omega}(\gamma_1(t) \mathbb{C}\circ F_t\bm \varepsilon^t(\bm u^t)-\mathbb{C}\bm \varepsilon^t(\bm u^t)):(\bm \varepsilon^t(\bm u^t)-\bm \varepsilon(\bm u^t))}{t}+\frac{\int_{\Omega}(\mathbb{C}\bm \varepsilon^t(\bm u^t)-\mathbb{C}\bm \varepsilon(\bm u^t):(\bm \varepsilon^t(\bm u^t)-\bm \varepsilon(\bm u^t))}{t},\\
			&\mathcal{R}_2:=\lim_{t\searrow 0}\frac{\int_{\Omega}(\bm \varepsilon(\bm u^t)-\bm \varepsilon(\bm u)):(\bm \varepsilon^t(\bm u^t)-\bm \varepsilon(\bm u^t))}{t},\\
			&\mathcal{R}_3:=\lim_{t\searrow 0}\int_{\Omega}\mathbb{C}\bm \varepsilon(\bm u):\bigg[\frac{\bm \varepsilon^t(\bm u^t)-\bm \varepsilon(\bm u^t)}{t}+\mathcal{S}({\rm D}\bm u {\rm D}\bm V)\bigg].
		\end{align*}
		By \eqref{common2} and Lemma \ref{Lemma33}, we derive 
		\begin{equation}\label{r1}
			\mathcal{R}_1=0, \quad \mathcal{R}_2=0,\quad 	\mathcal{R}_3=0.%\quad {\text as}\quad t\rightarrow 0.
		\end{equation}
		Then
		\begin{equation}\label{i2}
			{\mathrm{I}_2}=-\int_{\Omega}\mathbb{C}\bm \varepsilon(\bm u):\mathcal{S}({\rm D}\bm u {\rm D}\bm V).%\mathrm{d} x.
		\end{equation}
		By simple calculation, we have
		\begin{align}\label{i3}
			\Big|{\mathrm{I}_3}-\int_{\Omega}\nabla \mathbb{C}\cdot\bm V\bm \varepsilon(\bm u):\bm \varepsilon(\bm u)\Big|=\Big|\lim_{t\searrow 0}
			&\int_\Omega\Big[\frac{\mathbb{C}\circ F_t-\mathbb{C}}{t}\bm \varepsilon(\bm u)-\nabla\mathbb{C}\cdot\bm V \bm \varepsilon(\bm u)\Big]:\bm \varepsilon(\bm u^t)\notag\\
			&+\int_{\Omega}\nabla\mathbb{C}\cdot\bm V \bm \varepsilon(\bm u):(\bm \varepsilon(\bm u^t)-\bm \varepsilon(\bm u))\Big|=0,
		\end{align}
		and
		\begin{equation}\label{i4}
			\Big|{\mathrm{I}_4}- \int_\Omega\mathbb{C}\varepsilon(\dot{\bm u}):\bm \varepsilon(\bm u)\Big|
			= \Big|\lim_{t\searrow 0}\int_{\Omega}\mathbb{C}\bm \varepsilon(\bm u):\Big(\frac{\bm \varepsilon(\bm u^t)-\bm \varepsilon(\bm u)}{t}- \bm \varepsilon(\dot{\bm u})\Big)\Big|=0.
		\end{equation}
		By \eqref{i1}, \eqref{i2}, \eqref{i3} and \eqref{i4}, we have
		\begin{equation}\label{I}
			\mathrm{I}=\int_{\Omega}\mathbb{C}\bm \varepsilon(\bm u):\bm \varepsilon(\bm u){\rm div}\bm V+\int_{\Omega}2\mathbb{C}\bm \varepsilon(\bm u):\big[\bm \varepsilon(\dot{\bm u})-\mathcal{S}({\rm D}\bm u {\rm D}\bm V)\big]+\int_{\Omega}\nabla \mathbb{C}\cdot\bm V\bm \varepsilon(\bm u):\bm \varepsilon(\bm u).
		\end{equation}
		Similarly, we can derive
		\begin{equation}\label{II}
			\mathrm{II}=\int_{\Omega}\big({\rm D}\bm f\bm V\cdot \bm u+\bm f\cdot\dot{\bm u}+\bm f\cdot{\bm u}{\rm div} \bm V\big).
		\end{equation}
		Notice that
		\begin{align*}
			&\bigg|{\mathrm{III}} -\int_{\Gamma_C}j_{\tau}^0( u_{\tau};\dot{ u}_{ \tau})\bigg|
			=\bigg| \lim_{t\searrow 0}\int_{\Gamma_C} \frac{j_{\tau}(u_{\tau}^t)-j_{\tau}(u_{\tau })}{t}-\int_{\Gamma_C}j_{\tau}^0( u_{\tau};\dot{u}_{\tau})
			\bigg|\\
			\leq& \bigg|\lim_{t\searrow 0} \int_{\Gamma_C}\frac{j_{\tau}( u_{\tau}+t\dot{ u}_{\tau})-j_{\tau}( u_{\tau })}{t}-\int_{\Gamma_C}j_{\tau}^0(u_{\tau};\dot{ u}_{ \tau})\bigg|+\bigg|\lim_{t\searrow 0} \int_{\Gamma_C}\frac{j_{\tau}(u_{\tau}+t\frac{ u_{\tau}^t- u_{\tau}}{t})-j_{\tau}( u_{\tau }+t\dot{ u}_{\tau})}{t}\bigg|=0,
		\end{align*}
		where \eqref{Eq:pofun1} (b) and \eqref{strongconv} have been used.
		Then, we have
		\begin{equation}\label{III}
			\mathrm{III}=\int_{\Gamma_C}j_{\tau}^0(u_{\tau};\dot{ u}_{\tau}).
		\end{equation}
		Furthermore, since there is no deformation occurring on the boundary $\Gamma_N$,
		% \begin{align*}
			% 	&\bigg|{\mathrm{IV}} -\int_{\Gamma_N}\bm g_N\cdot \dot{\bm u}\bigg|
			% 	=\bigg|\lim_{t\searrow 0}\frac{\int_{\Gamma_N}\bm g_N(\bm x)\bm u_t(\bm x)-\int_{\Gamma_N}\bm g_N(\bm x)\bm u(\bm x)}{t}-\int_{\Gamma_N}\bm g_N\cdot \dot{\bm u}
			% 	\bigg|\\
			% 	=& \bigg|\lim_{t\searrow 0} \frac{\int_{\Gamma_N}\bm g_N(\bm x)(\bm u_t(\bm x)-\bm u(\bm x))}{t}-\int_{\Gamma_N}\bm g_N\cdot \dot{\bm u}\bigg|=0.
			% \end{align*}
		\begin{align*}
			&\bigg|{\mathrm{IV}} -\int_{\Gamma_N}\bm g_N\cdot \dot{\bm u}\bigg|\leq C\lim_{t\searrow 0}  \|\bm{z}^t-\dot{\bm{u}} \|_1\|\bm g_N\|_{L^2(\Gamma_N)^d}=0,
		\end{align*}
		where \eqref{jiojio} and the trace theorem have been used. Therefore, we have
		\begin{equation}\label{IV}
			\mathrm{IV}=\int_{\Gamma_N}\bm g_N\cdot \dot{\bm u}.
		\end{equation}
		Combining \eqref{I}, \eqref{II}, \eqref{III} and \eqref{IV}, we derive
		\begin{equation}\label{mid}
			\begin{aligned}
				dJ(\Omega;\bm V)=&\frac{1}{2}\int_{\Omega}\mathbb{C}\bm \varepsilon(\bm u):\bm \varepsilon(\bm u){\rm div}\bm V+\int_{\Omega}\mathbb{C}\bm \varepsilon(\bm u):\big[\bm \varepsilon(\dot{\bm u})-\mathcal{S}({\rm D}\bm u{\rm D}\bm V)\big]+\int_{\Gamma_C}j_{\tau}^0(u_{\tau};\dot{ u}_{\tau})\\
				&+\frac{1}{2}\int_{\Omega}\nabla \mathbb{C}\cdot\bm V\bm \varepsilon(\bm u):\bm \varepsilon(\bm u)-\int_{\Omega}\big({\rm D}\bm f\bm V\cdot \bm u+\bm f\cdot\dot{\bm u}+\bm f\cdot{\bm u}{\rm div} \bm V\big)-\int_{\Gamma_N}\bm g_N\cdot\dot{\bm u}.
			\end{aligned}
		\end{equation}
		
		On the other hand, we choose $\bm v=\dot{\bm u}$ and $\bm v=-\dot{\bm u}$ in  hemivariational inequality \eqref{hemi}. Then, we derive
		\begin{equation} \label{varia2}
			(\mathbb{C} {\bm \varepsilon}(\bm u),{\bm \varepsilon}(\bm \dot{\bm u}))_{\Omega}+ \hat{J}^0\left(u_\tau ; \dot{u}_\tau\right)
			-(\bm f,\dot{\bm u})_{\Omega}-(\bm g_N ,\dot{\bm u})_{\Gamma_N}\geq 0,
		\end{equation}
		%where
		%$$
		%\hat{J}(z):=\int_{\Gamma_C} j_\tau(z) d s, \quad z \in L^2\left(\Gamma_C ; \mathbb{R}^d\right) .
		%$$
		and 
		\begin{equation} \label{varia22}
			(\mathbb{C} {\bm \varepsilon}(\bm u),{\bm \varepsilon}(-\bm \dot{\bm u}))_{\Omega}+\hat{J}^0\left(u_\tau ;-\dot{u}_\tau\right)
			+(\bm f,\dot{\bm u})_{\Omega} +(\bm g_N ,\dot{\bm u})_{\Gamma_N}\geq 0.
		\end{equation}
		respectively. By \eqref{psi1} and \eqref{psi2}, we have
		\begin{equation}\label{equiv}
			\hat{J}^0\left(u_\tau ;-\dot{u}_\tau\right)=-\hat{J}^0\left(u_\tau ; \dot{u}_\tau\right)
		\end{equation}
		Inserting \eqref{equiv} into \eqref{varia22}, we derive
		\begin{equation} \label{varia2b}
			(\mathbb{C} {\bm \varepsilon}(\bm u),{\bm \varepsilon}(\bm \dot{\bm u}))_{\Omega}+\hat{J}^0\left(u_\tau ; \dot{u}_\tau\right)
			-(\bm f,\dot{\bm u})_{\Omega} -(\bm g_N ,\dot{\bm u})_{\Gamma_N}\leq 0.
		\end{equation}
		Combining \eqref{varia2} and \eqref{varia2b}, we have
		\begin{equation} \label{result1}
			(\mathbb{C} {\bm \varepsilon}(\bm u),{\bm \varepsilon}(\bm \dot{\bm u}))_{\Omega}+\hat{J}^0\left(u_\tau ; \dot{u}_\tau\right)
			-(\bm f,\dot{\bm u})_{\Omega} -(\bm g_N ,\dot{\bm u})_{\Gamma_N}=0.
		\end{equation}
		Inserting \eqref{result1} into \eqref{mid} implies that \eqref{Euler1} holds.
		% \begin{align}
			% 	dJ(\Omega;\bm V)=&\int_{\Omega}(\frac{1}{2}\mathbb{C}\bm \varepsilon(\bm u):\bm \varepsilon(\bm u)-\bm f\bm u)\mathbb{I}:{\rm D}\bm V\mathrm{d} x-\int_{\Omega}{\rm D}\bm u^T\mathbb{C}\bm \varepsilon(\bm u):{\rm D}\bm V\mathrm{d} x\notag\\
			% 	&-\int_{\Omega}{\rm D}\bm f^T\bm u:\bm V\mathrm{d} x+\int_{\Omega}\frac{1}{2}\nabla\mathbb{C}\cdot\bm V\bm \varepsilon(\bm u):\bm \varepsilon(\bm u)\mathrm{d} x.
			% \end{align}

	\end{proof}

	\section{Shape optimization based on regularization and shape sensitivity analysis}\label{sec4}
	\subsection{Regularization on the shape functional of energy type in \eqref{Obj}}\label{sec4.1}
	In order to design a gradient-type shape optimization algorithm, we need to differentiate the state w.r.t. shape. The non-smoothness difficulty caused by the state and the corresponding potential functional $j_{ \tau}(u_{ \tau})$ can be overcome by regularization. Then a standard method is applied to the approximating differentiable case. Shape sensitivity analysis for variational inequalities was performed in \cite{CD20,CD21,Chaudet23}. 
	% Referring to \cite{Maury2016},
	For any small fixed $\epsilon>0$, a smoothing function of $|s_1|$ can be given by
	\begin{equation}\label{smooth}
		\theta( s_1,\epsilon)=\left\{%
		\begin{aligned}
			&| s_1|,\quad{\rm if }\ |s_1|> \epsilon,\\
			&\Lambda(\epsilon;s_1),\quad{\rm if } \ | s_1| \leq  \epsilon,
		\end{aligned}
		\right.
	\end{equation}
	where $$\Lambda(\epsilon;s_1)=-\frac{1}{8\epsilon^3}| s_1|^4+\frac{3}{4\epsilon}\bm | s_1|^2+\frac{3\epsilon}{8}.$$ Define
	\begin{equation}\label{smooth-potential}
		j_{\tau,\epsilon}(u_{\tau})=\int_0^{\theta(u_{\tau},\epsilon)} \mu(t)\, \mathrm{d} t.
	\end{equation}
	By similar arguments as in \cite[Lemma 3.1]{MAJ17}, $j_{\tau,\epsilon}$ is twice Fr\'{e}chet differentiable. The dependence of the regularization term $\theta\left(s_1, \epsilon\right)$ on the small parameter $\epsilon$ can be described as follows: When $\epsilon$ is relatively large, $\theta\left(s_1, \epsilon\right)$ and $j_{\tau, \epsilon}\left(u_\tau\right)$ vary smoothly with the input $u_\tau$, primarily showing a linear relationship. However, as $\epsilon$ decreases, $\theta\left(s_1, \epsilon\right)$ becomes steeper near $u_\tau=0$, and the polynomial part significantly influences the numerical solution. This indicates that with smaller $\epsilon$, the regularization term exerts a stronger constraint on the stability and smoothness of the solution.
	
	% Note that $j_{\tau,\varepsilon}( u_{\tau})$ is a regularization of $j_{ \tau}( u_{\tau})$ in \eqref{Eq:potential}, it is reasonable to assume that
	% \begin{equation}\label{assu}
		% 	j_{ \tau,\varepsilon}( u_{\tau})\rightarrow j_{ \tau}( u_{\tau}) \, \text{ satisfy the property in \eqref{Eq:pofun0} and \eqref{exist} as $\varepsilon\rightarrow 0$}. 
		% \end{equation}
	Then, we consider the objective in \eqref{Obj} subject to an approximated regularized system
	\begin{equation}\label{State}
		\left\{
		\begin{aligned}
			&-\text{div}\,\bm \sigma(\bm u_{\epsilon})=\bm f &\text{in} &\,\,\Omega,\\
			&\bm \sigma(\bm u_{\epsilon}) \bm \nu = \bm g_N &\text{on}  &\,\,\Gamma_N,\\
			&\bm \sigma(\bm u_{\epsilon}) \bm \nu = \bm 0 &\text{on}  &\,\Gamma,\\
			&\bm  u_{\epsilon} =\bm 0 &\text{on}  &\,\Gamma_D,\\
			&u_{\nu,\epsilon} =0 &\text{on}  &\,\Gamma_C,\\
			&- \sigma_{\tau}( u_{\tau,\epsilon})=\partial j_{\tau,\epsilon}( u_{\tau,\epsilon}) &\text{on}  &\,\Gamma_C,
		\end{aligned}
		\right.
	\end{equation}
	where %the friction condition is equivalent to
	%\small{ 
		\begin{equation}\label{smeq}
			\partial j_{\tau,\epsilon}( u_{\tau,\epsilon})\\=\left\{
			\begin{aligned}
				&(a-b)\frac{ u_{\tau,\epsilon}}{| u_{ \tau,\epsilon}|}e^{-\alpha|u_{ \tau,\epsilon}|}+b\frac{ u_{\tau,\epsilon}}{| u_{\tau,\epsilon}|}, \quad{\rm if}\ | u_{ \tau,\epsilon}|>\epsilon,\\
				&(a-b)\bigg(-\frac{ u_{\tau,\epsilon}^3 }{2\epsilon^3}+\frac{3u_{ \tau,\epsilon}}{2\epsilon}\bigg)e^{-\alpha\Lambda(\epsilon;u_{ \tau,\epsilon})}+b\bigg(-\frac{u_{ \tau,\epsilon}^3}{2\epsilon^3}+\frac{3 u_{ \tau,\epsilon}}{2\epsilon}\bigg),\quad{\rm if}\  | u_{\tau,\epsilon}|\leq  \epsilon.
			\end{aligned}
			\right.
		\end{equation}
		%}
	% It holds that
	% \begin{equation}\label{eq:include}
		%     \partial j_{\tau,\epsilon}(u_{\tau})\in \partial j_{\tau}(u_{\tau})\quad \text{as}\quad\varepsilon\rightarrow 0.
		% \end{equation}
	The variational form of \eqref{State} reads: Find $\bm u_{\epsilon}\in \bm V_1$ such that
	\begin{equation} \label{varias}
		a(\bm u_{\epsilon},\bm v)+ (\partial  j_{ \tau,\epsilon}( u_{\tau,\epsilon}),v_{\tau})_{\Gamma_C}= (\bm f,\bm v)_{\Omega}+(\bm g_N,\bm v)_{\Gamma_N}
		\quad\forall\,\bm v\in \bm V_1,
	\end{equation}
	which is equivalent to the following minimization problem
	\begin{equation}\label{mini}
		%E^{\varepsilon}(\bm u)=
		\min_{\bm v\in \bm V_1}\ \frac{1}{2}a(\bm v,\bm v)+\int_{\Gamma_C}j_{\tau,\epsilon}(v_{\tau})-(\bm f,\bm v)_{\Omega}-(\bm g_N,\bm v)_{\Gamma_N}.%E^{\epsilon}(\bm v),
		% \min \bigg \{ E^{\varepsilon}(\bm u)=\frac{1}{2}a(\bm u,\bm u)-(\bm f,\bm u)+\int_{\Gamma_C}j_{\tau,\varepsilon}(u_{\tau})\,ds-\int_{\Gamma_N}\bm g_N\cdot\bm u\,ds. \bigg \},
	\end{equation}
	% where
	% \[
	% E^{\epsilon}(\bm v):=\frac{1}{2}a(\bm v,\bm v)+\int_{\Gamma_C}j_{\tau,\epsilon}(v_{\tau})\,{\rm d}s-(\bm f,\bm v)_{L^2(\Omega)^d}-(\bm g_N,\bm v)_{L^2(\Gamma_N)^d}.
	% \]
	% By \cite[Theorem 3.1]{HSB17} and \cite[Theorem 3.5]{Han20}, the following result holds for \eqref{varias} and \eqref{potential}.
	% \begin{lem}
		% 	Under the assumptions \eqref{Eq:pofun0} and \eqref{assu}, \eqref{varias} is equivalent to \eqref{potential}. Moreover, \eqref{varias} has a unique solution, which is the unique minimizer of \eqref{potential}. 
		% \end{lem}
	We assume that $j_{\tau,\epsilon}$ satisfies \eqref{Eq:pofun1} $(d)$. 
	%The operator $A$ defined in \eqref{potential} is potential. 
	According to \cite[Theorem 4.3]{Han21}, there exists a unique minimizer $\bm u_\epsilon\in \bm V_1$ to \eqref{mini}, i.e., \eqref{varias}. 
	%\subsection{Adjoint problem }
	%The weak form of the regularized problem \eqref{State} is to find $\bm{u} \in \bm V_1$ such that
	%\begin{equation} \label{State_weak}
	%   (\mathbb{C} \bm{\varepsilon}(\bm u),\bm{\varepsilon}(\bm v) )_Q  +( \partial j_{\tau,\epsilon}( u_{\tau}), v_{\tau} )_{\Gamma_C}= (\bm f,\bm v)_{\Omega}+(\bm g_N,\bm v)_{\Gamma_N} \quad \forall\, \bm v \in \bm V_1.
	%\end{equation}
	To illustrate the rationale behind regularization, we provide the following asymptotic analysis.
	\begin{thm} \label{thm2}
		The solution $\bm u_{\epsilon}$ in \eqref{varias} converges to $\bm u$ in \eqref{varia} strongly in $\bm V_1$ as $\epsilon\rightarrow 0^+$.
	\end{thm}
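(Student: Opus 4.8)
The plan is to follow the standard energy-comparison strategy for singular perturbations of variational problems, adapted to the regularized hemivariational setting. First I would establish a uniform bound $\|\bm u_\epsilon\|_1\le C$ independent of $\epsilon$. This is obtained by taking $\bm v=\bm u_\epsilon$ in \eqref{varias}, using the coercivity $(\mathbb{C}\bm\varepsilon(\bm u_\epsilon),\bm\varepsilon(\bm u_\epsilon))_Q\ge m_{\mathbb C}\|\bm u_\epsilon\|_{\bm W}^2$ from \eqref{Eq:pofun0}(c), the growth bound $|\partial j_{\tau,\epsilon}|\le \bar c_0+\bar c_1|u_{\tau,\epsilon}|$ together with the trace inequality \eqref{trace}, and the smallness assumption \eqref{small} to absorb the $\bar c_1$-term; the data terms $(\bm f,\bm u_\epsilon)_\Omega$ and $(\bm g_N,\bm u_\epsilon)_{\Gamma_N}$ are controlled by Cauchy--Schwarz and the trace theorem. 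Hence, up to a subsequence, $\bm u_\epsilon\rightharpoonup \bm w$ weakly in $\bm V_1$ for some $\bm w\in\bm V_1$, and by compactness of the trace operator $\gamma:H^1(\Omega)\to L^2(\Gamma_C)$ one has $u_{\tau,\epsilon}\to w_\tau$ strongly in $L^2(\Gamma_C)$.

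Next I would pass to the limit in the variational identity to identify $\bm w=\bm u$. Rewriting \eqref{varias} as $a(\bm u_\epsilon,\bm v)+(\partial j_{\tau,\epsilon}(u_{\tau,\epsilon}),v_\tau)_{\Gamma_C}=(\bm f,\bm v)_\Omega+(\bm g_N,\bm v)_{\Gamma_N}$, the bilinear term passes to the limit by weak convergence. For the nonsmooth term, note that $\partial j_{\tau,\epsilon}(u_{\tau,\epsilon})$ is bounded in $L^2(\Gamma_C)$ (by the growth condition and the uniform bound), so along a further subsequence $\partial j_{\tau,\epsilon}(u_{\tau,\epsilon})\rightharpoonup \chi$ weakly in $L^2(\Gamma_C)$. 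Using the explicit form \eqref{smeq} of $\partial j_{\tau,\epsilon}$, the strong convergence $u_{\tau,\epsilon}\to w_\tau$ in $L^2(\Gamma_C)$, and the fact that the regularization $\theta(\cdot,\epsilon)$ converges to $|\cdot|$ with $\theta_s(\cdot,\epsilon)$ converging to the sign function away from the origin, a Mosco-type / upper-semicontinuity argument for the Clarke subdifferential (the closedness of the graph of $\partial j_\tau$ under strong $\times$ weak convergence, which holds because $j_\tau$ is regular) yields $-\chi\in\partial j_\tau(w_\tau)$ a.e.\ on $\Gamma_C$. Thus $\bm w$ together with $\xi_\tau=\chi$ solves \eqref{varia}; by the uniqueness result cited after \eqref{hemi} (\cite[Theorem 3.1]{HSB17}), $\bm w=\bm u$, and since the limit is independent of the subsequence the whole family converges weakly.

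Finally I would upgrade weak to strong convergence in $\bm V_1$. Using $m_{\mathbb C}\|\bm u_\epsilon-\bm u\|_{\bm W}^2\le a(\bm u_\epsilon-\bm u,\bm u_\epsilon-\bm u)=a(\bm u_\epsilon,\bm u_\epsilon)-a(\bm u_\epsilon,\bm u)-a(\bm u,\bm u_\epsilon)+a(\bm u,\bm u)$, I substitute $\bm v=\bm u_\epsilon$ in \eqref{varias} and $\bm v=\bm u$ in the limit identity \eqref{varia} to express $a(\bm u_\epsilon,\bm u_\epsilon)$ and $a(\bm u,\bm u)$ in terms of the data and the boundary terms. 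The data terms converge by weak convergence; the cross terms $a(\bm u_\epsilon,\bm u)\to a(\bm u,\bm u)$ and $a(\bm u,\bm u_\epsilon)\to a(\bm u,\bm u)$ by weak convergence; and the boundary terms $(\partial j_{\tau,\epsilon}(u_{\tau,\epsilon}),u_{\tau,\epsilon})_{\Gamma_C}\to(\chi,u_\tau)_{\Gamma_C}$ because $\partial j_{\tau,\epsilon}(u_{\tau,\epsilon})\rightharpoonup\chi$ weakly while $u_{\tau,\epsilon}\to u_\tau$ strongly in $L^2(\Gamma_C)$. This forces $\|\bm u_\epsilon-\bm u\|_{\bm W}\to 0$, which by Korn's inequality is equivalent to strong convergence in $\bm V_1$.

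The main obstacle is the middle step: controlling the nonsmooth/nonconvex boundary term in the limit. Unlike the convex (variational inequality) case where monotonicity of the subdifferential gives the identification essentially for free, here one must exploit the regularity of $j_\tau$ and the strong $L^2(\Gamma_C)$ convergence of the traces to show that the weak limit $\chi$ of $\partial j_{\tau,\epsilon}(u_{\tau,\epsilon})$ lands in $-\partial j_\tau(u_\tau)$; this requires a careful graph-closedness argument for the Clarke subdifferential together with the pointwise convergence $\partial j_{\tau,\epsilon}\to\partial j_\tau$ implied by the explicit formulas \eqref{smeq} and the structure of $\theta(\cdot,\epsilon)$.
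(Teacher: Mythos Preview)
Your outline is correct, but the paper's proof takes a substantially more direct route that bypasses weak compactness, limit identification, and the final energy upgrade entirely. The paper first rewrites the regularized equation \eqref{varias} as a hemivariational inequality, $a(\bm u_\epsilon,\bm v)+\int_{\Gamma_C}j^0_{\tau,\epsilon}(u_{\tau,\epsilon};v_\tau)\ge(\bm f,\bm v)_\Omega+(\bm g_N,\bm v)_{\Gamma_N}$, then tests this with $\bm v=\bm u-\bm u_\epsilon$ and tests \eqref{hemi} with $\bm v=\bm u_\epsilon-\bm u$; adding the two yields
\[
a(\bm u-\bm u_\epsilon,\bm u-\bm u_\epsilon)\le\int_{\Gamma_C}\bigl(j^0_{\tau,\epsilon}(u_{\tau,\epsilon};u_\tau-u_{\tau,\epsilon})+j^0_\tau(u_\tau;u_{\tau,\epsilon}-u_\tau)\bigr).
\]
After inserting and subtracting $j^0_\tau(u_{\tau,\epsilon};u_\tau-u_{\tau,\epsilon})$, the relaxed-monotonicity hypothesis \eqref{Eq:pofun1}(d) together with \eqref{trace} bounds the $j^0_\tau$--only part by $\alpha_{j_\tau}\lambda_1^{-1}\|\bm u-\bm u_\epsilon\|_1^2$, which is absorbed into coercivity via \eqref{small}. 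The remaining residual $\int_{\Gamma_C}\bigl(j^0_{\tau,\epsilon}(u_{\tau,\epsilon};\cdot)-j^0_\tau(u_{\tau,\epsilon};\cdot)\bigr)$ is handled pointwise from the explicit formula \eqref{smeq}: it vanishes where $|u_{\tau,\epsilon}|>\epsilon$ and is argued to be nonpositive where $|u_{\tau,\epsilon}|\le\epsilon$, which already forces $\|\bm u-\bm u_\epsilon\|_1\to 0$. The key difference is that the paper leverages assumption \eqref{Eq:pofun1}(d) as a quantitative surrogate for monotonicity to obtain a one-step strong-convergence estimate, whereas your three-step compactness argument does not use (d) at all; the price you pay is precisely the delicate identification of the weak limit of $\partial j_{\tau,\epsilon}(u_{\tau,\epsilon})$ inside $\partial j_\tau(u_\tau)$, which the paper's approach avoids.
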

	\begin{proof}
		We consider the regularized formulation of \eqref{hemi}: Find $\bm u_{\epsilon}\in \bm V_1$ such that
		\begin{equation}\label{hemi1}
			\quad a(\bm u_{\epsilon},\bm v)+\int_{\Gamma_{C}}j_{\tau,\epsilon}^0(u_{ \tau,\epsilon}; v_{\tau})  \geq (\bm f,\bm v)_{\Omega}+(\bm g_N,\bm v)_{\Gamma_N}\quad \forall\,\bm v\in \bm V_1.
		\end{equation}
		Taking $\bm v= \bm u-\bm u_{\epsilon}$ in \eqref{hemi1} and $\bm v= \bm u_{\epsilon}-\bm u$ in \eqref{hemi} and adding the two resulting inequalities, we derive
		\begin{equation}\label{eq:mid}
			\begin{aligned}
				&a(\bm u-\bm u_{\epsilon},\bm u-\bm u_{\epsilon})\\
				\leq &\int_{\Gamma_{C}}\Big(j_{ \tau,\epsilon}^0( u_{\tau,\epsilon}; u_{\tau}- u_{ \tau,\epsilon})+j^0_{\tau}( u_{ \tau}; u_{ \tau,\epsilon}- u_{ \tau}) \Big)\\
				=  &\int_{\Gamma_{C}}\Big(j_{ \tau}^0( u_{\tau,\epsilon}; u_{\tau}- u_{ \tau,\epsilon})+j^0_{\tau}( u_{ \tau}; u_{ \tau,\epsilon}- u_{ \tau}) \Big)+\Big(j_{ \tau,\epsilon}^0( u_{\tau,\epsilon}; u_{\tau}- u_{ \tau,\epsilon})-j_{ \tau}^0( u_{\tau,\epsilon}; u_{\tau}- u_{ \tau,\epsilon}) \Big).
			\end{aligned}
		\end{equation}
		By \eqref{Eq:pofun1}(d) and the trace inequality \eqref{trace}, we obtain
		\begin{equation}\label{eq:mid2}
			\begin{aligned}
				\int_{\Gamma_{C}}\Big(j_{ \tau}^0( u_{\tau,\epsilon}; u_{\tau}- u_{ \tau,\epsilon})+j^0_{\tau}( u_{ \tau}; u_{ \tau,\epsilon}- u_{ \tau}) \Big)\,\,{\rm d}s 
				&\leq \alpha_{j_{\tau}}\|  u_{ \tau,\epsilon}- u_{ \tau}\|^2_{\Gamma_C}\\
				&\leq \alpha_{j_{\tau}} \lambda_1^{-1} \|\bm u- \bm u_{\epsilon}\|_{1}^2.
			\end{aligned}
		\end{equation}
		Inserting \eqref{eq:mid2} into \eqref{eq:mid}, we have
		\begin{align}
			a(\bm u-\bm u_{\epsilon},\bm u-\bm u_{\epsilon})\leq \int_{\Gamma_C}\Big(j_{ \tau,\epsilon}^0( u_{\tau,\epsilon}; u_{\tau}- u_{ \tau,\epsilon})-j_{ \tau}^0( u_{\tau,\epsilon}; u_{\tau}- u_{ \tau,\epsilon}) \Big)\,{\rm d}s + \alpha_{j_{\tau}} \lambda_1^{-1} \|\bm u- \bm u_{\epsilon}\|_{1}^2.
		\end{align}
		Applying the coercivity of the bilinear form $a(\cdot,\cdot)$
		in \eqref{Eq:pofun0}(c), we derive
		\begin{align}\label{resi}
			(m_{\mathbb{C}}-\alpha_{j_{\tau}}\lambda_1^{-1} )\|\bm u- \bm u_{\epsilon}\|_{1}^2 \le \int_{\Gamma_C}\Big(j_{ \tau,\epsilon}^0( u_{\tau,\epsilon}; u_{\tau}- u_{ \tau,\epsilon})-j_{ \tau}^0( u_{\tau,\epsilon}; u_{\tau}- u_{ \tau,\epsilon}) \Big).
		\end{align}
		From \eqref{smeq}, we have
		\begin{enumerate}
			\item If $|u_{\tau,\epsilon}|>\epsilon$,  we obtain immediately
			\[
			j_{ \tau,\epsilon}^0( u_{\tau,\epsilon}; u_{\tau}- u_{ \tau,\epsilon})-j_{ \tau}^0( u_{\tau,\epsilon}; u_{\tau}- u_{ \tau,\epsilon})=0.
			\]
			\item If $|u_{\tau,\epsilon}|<\epsilon$.
			As $\epsilon\searrow 0$, we have
			\begin{equation}\label{derive}
				j_{ \tau,\epsilon}^0( u_{\tau,\epsilon}; u_{\tau}- u_{ \tau,\epsilon})-j_{ \tau}^0( u_{\tau,\epsilon}; u_{\tau}- u_{ \tau,\epsilon})=(\partial j_{\tau,\epsilon}(u_{\tau,\epsilon}), u_{\tau}-u_{\tau,\epsilon})-j_{ \tau}^0( u_{\tau,\epsilon}; u_{\tau}- u_{ \tau,\epsilon})\leq 0.
			\end{equation}
		\end{enumerate}
		Now, by the smallness condition $m_{\mathbb{C}}>\alpha_{j_{\tau}}\lambda_1^{-1}$ and \eqref{resi}, we have
		$$\lim_{\epsilon\searrow 0}\|\bm u- \bm u_{\epsilon}\|_{1}=0.$$
		% \[
		% \lim_{\epsilon\rightarrow 0} \bm u_{\epsilon}=\bm u.
		% \]
	\end{proof}
	
	Define a Lagrangian
	\begin{align}
		\mathcal{L}(\Omega,\bm v,\bm w)&=\frac{1}{2}a(\bm v,\bm v)+\int_{\Gamma_C}j_{\tau,\epsilon}(v_{\tau})-(\bm f,\bm v)_\Omega-(\bm g_N,\bm v)_{\Gamma_N}\\\nonumber
		& \quad	+( \mathbb{C}\bm{\varepsilon}(\bm v),\bm{\varepsilon}(\bm w))_Q + ( \partial j_{\tau,\epsilon}(v_{\tau}),w_{\tau} )_{\Gamma_C}  -(\bm f,\bm w)_\Omega-(\bm g_N,\bm w)_{\Gamma_N},
	\end{align}
	where $\bm v,\bm w\in H^1(\mathbb{R}^d)^d$. The saddle point of $\mathcal{L}$ is characterized by 
	\begin{align}
		\frac{\partial \mathcal{L}}{\partial \bm w}(\Omega,\bm u_{\epsilon},\bm p)\delta \bm w=\frac{\partial \mathcal{L}}{\partial \bm v}(\Omega,\bm u_{\epsilon},\bm p)\delta \bm v=0,
	\end{align}
	for any $\delta \bm w,\delta \bm v\in H^1(\mathbb{R}^d)^d$. Then, we derive the following adjoint problem
	\begin{equation}\label{Adjoint1}
		\left\{
		\begin{aligned}
			&-\text{div}\,\bm \sigma(\bm p)=\text{div}\,\bm \sigma(\bm u_{\epsilon})+\bm f &\text{in} &\,\,\Omega,\\
			&\bm \sigma(\bm p) \bm \nu = \bm 0 &\text{on}  &\,\,\Gamma_N\cup \Gamma ,\\
			%	&\bm \sigma \bm \nu = \bm 0 &\text{on}  &\,\Gamma,\\
			&\bm p=\bm 0 &\text{on}  &\,\Gamma_D,\\
			&p_{\nu} =0 &\text{on}  &\,\Gamma_C,\\
			& \sigma_{\tau}( p_{\tau})= - j''_{\tau,\epsilon}(u_{\tau,\epsilon})  p_{ \tau} &\text{on}  &\,\Gamma_C.
		\end{aligned}
		\right.
	\end{equation}
	According to \eqref{State}, we have $\text{div}\,\bm \sigma(\bm u_{\epsilon})+\bm f=0$. 
	% The variational form of \eqref{Adjoint1} reads: Find $\bm p\in \bm V_1$ such that
	% \begin{equation} \label{Adjoint2}
		% 	a(\bm p,\bm v)+ ( j''_{ \tau,\epsilon}( u_{\tau,\epsilon})p_{\tau},v_{\tau})_{\Gamma_C}= 0
		% 	\quad\forall\,\bm v\in \bm V_1.
		% \end{equation}
	Note that $\bm p = \bzero$ is the solution of \eqref{Adjoint1}. Then, we have
	% which is equivalent to 
	% \begin{equation}\label{Adjoint2}
		% 	\left\{
		% 	\begin{aligned}
			% 		&-\text{div}\,\bm \sigma(\bm p+\bm u_{\epsilon})=\bm f &\text{in} &\,\,\Omega,\\
			% 		&\bm \sigma(\bm p+\bm u_{\epsilon}) \bm \nu = \bm 0 &\text{on}  &\,\, \Gamma ,\\
			% 		&\bm \sigma(\bm p+\bm u_{\epsilon}) \bm \nu = \bm g_N &\text{on}  &\,\Gamma_N,\\
			% 		&\bm p+\bm u_{\epsilon}=\bm 0 &\text{on}  &\,\Gamma_D,\\
			% 		&p_{\nu}+u_{\nu,\epsilon} =0 &\text{on}  &\,\Gamma_C,\\
			% 		& \sigma_{\tau}( p_{\tau})+\sigma_{\tau}(u_{\tau,\epsilon})= -\partial j'_{\tau,\epsilon}(u_{\tau,\epsilon})  p_{ \tau}-\partial j_{\tau,\varepsilon}(u_{\tau,\epsilon}) &\text{on}  &\,\Gamma_C.
			% 	\end{aligned}
		% 	\right.
		% \end{equation}
	% Compare \eqref{State} and \eqref{Adjoint2}, we derive $\bm p = \bzero$. 
	\begin{equation}\label{cost_smooth}
		\mathcal{L}(\Omega,\bm u_{\epsilon},\bm p) = \frac{1}{2}a(\bm u_{\epsilon},\bm u_{\epsilon})+\int_{\Gamma_C}j_{\tau,\epsilon}(u_{\tau,\epsilon})-(\bm f,\bm u_{\epsilon})_\Omega-(\bm g_N,\bm u_{\epsilon})_{\Gamma_N}\equiv J_\epsilon(\Omega)
	\end{equation}
	as an approximation of objective $J(\Omega)$ in \eqref{Obj}.
	By the similar technique in Theorem \ref{thm1}, we can derive the following Eulerian derivative:
	\begin{align}\label{Euler2}
		dJ_\epsilon(\Omega;\bm V)=&\int_{\Omega}\bigg[\bigg(\frac{1}{2}\mathbb{C}\bm \varepsilon(\bm u_{\epsilon}):\bm \varepsilon(\bm u_{\epsilon})-\bm f\cdot\bm u_{\epsilon}\bigg)I-{\rm D}\bm u_{\epsilon}^T\mathbb{C}\bm \varepsilon(\bm u_{\epsilon})\bigg]:{\rm D}\bm V+\bigg(\frac{1}{2}\bm \varepsilon(\bm u_{\epsilon}):\bm \varepsilon(\bm u_{\epsilon})\nabla\mathbb{C}-{\rm D}\bm f^T\bm u_{\epsilon}\bigg)\cdot\bm V.
	\end{align}
	
	Now, we provide an asymptotic analysis for Eulerian derivatives of the regularization approach in shape optimization problem governed by hemivariational inequality. 
	\begin{prop}
		It holds that $dJ_\epsilon(\Omega;\bm V)$ \eqref{Euler2} converges to $dJ(\Omega;\bm V)$ \eqref{Euler1} as $\epsilon\searrow 0$.
	\end{prop}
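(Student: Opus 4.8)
The plan is to deduce the claim directly from the strong convergence $\bm u_{\epsilon}\to\bm u$ in $\bm V_1$ furnished by Theorem~\ref{thm2}. The decisive observation is that \eqref{Euler2} is obtained from \eqref{Euler1} by the substitution $\bm u\mapsto\bm u_{\epsilon}$ (recall $\bm\sigma(\bm u)=\mathbb{C}\bm\varepsilon(\bm u)$), and that the integrand is, up to the fixed bounded coefficients $\mathrm{D}\bm V$, $\nabla\mathbb{C}$, $\bm f$ and $\mathrm{D}\bm f$, at most quadratic in the pair $(\bm u,\mathrm{D}\bm u)$. Hence I would write the difference $dJ_{\epsilon}(\Omega;\bm V)-dJ(\Omega;\bm V)$ as a finite sum of integrals over $\Omega$: the quadratic contributions
\[
\int_{\Omega}\Big(\mathbb{C}\bm\varepsilon(\bm u_{\epsilon}):\bm\varepsilon(\bm u_{\epsilon})-\mathbb{C}\bm\varepsilon(\bm u):\bm\varepsilon(\bm u)\Big)\,q,
\qquad
-\int_{\Omega}\Big(\mathrm{D}\bm u_{\epsilon}^{T}\mathbb{C}\bm\varepsilon(\bm u_{\epsilon})-\mathrm{D}\bm u^{T}\mathbb{C}\bm\varepsilon(\bm u)\Big):\mathrm{D}\bm V,
\]
where $q$ is one of the bounded scalars $\tfrac12\,\mathrm{div}\,\bm V$ or $\tfrac12\,\nabla\mathbb{C}\cdot\bm V$, together with the two linear remainders $-\int_{\Omega}\bm f\cdot(\bm u_{\epsilon}-\bm u)\,\mathrm{div}\,\bm V$ and $-\int_{\Omega}\mathrm{D}\bm f^{T}(\bm u_{\epsilon}-\bm u)\cdot\bm V$.

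For the quadratic differences I would use a polarization identity and the major symmetry of $\mathbb{C}$, namely
\[
\mathbb{C}\bm\varepsilon(\bm u_{\epsilon}):\bm\varepsilon(\bm u_{\epsilon})-\mathbb{C}\bm\varepsilon(\bm u):\bm\varepsilon(\bm u)
=\mathbb{C}\big(\bm\varepsilon(\bm u_{\epsilon})-\bm\varepsilon(\bm u)\big):\big(\bm\varepsilon(\bm u_{\epsilon})+\bm\varepsilon(\bm u)\big),
\]
and likewise $\mathrm{D}\bm u_{\epsilon}^{T}\mathbb{C}\bm\varepsilon(\bm u_{\epsilon})-\mathrm{D}\bm u^{T}\mathbb{C}\bm\varepsilon(\bm u)=(\mathrm{D}\bm u_{\epsilon}-\mathrm{D}\bm u)^{T}\mathbb{C}\bm\varepsilon(\bm u_{\epsilon})+\mathrm{D}\bm u^{T}\mathbb{C}\big(\bm\varepsilon(\bm u_{\epsilon})-\bm\varepsilon(\bm u)\big)$, so that the vanishing factor $\bm\varepsilon(\bm u_{\epsilon})-\bm\varepsilon(\bm u)$ is always paired with a uniformly bounded one. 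Applying Cauchy--Schwarz and the boundedness of $\mathbb{C}$, of $\nabla\mathbb{C}$ (legitimate under the standing assumption $c_{ijkl}\in\mathcal{C}^{1}(\mathbb{R}^{d})$), of $\mathrm{D}\bm V\in L^{\infty}$, and of $\bm f,\mathrm{D}\bm f\in L^{2}(D)$, each such term is bounded by a constant times $\|\bm u_{\epsilon}-\bm u\|_{1}\,(\|\bm u_{\epsilon}\|_{1}+\|\bm u\|_{1})$ or by a constant times $\|\bm u_{\epsilon}-\bm u\|_{L^{2}}$; here I would first record the uniform bound $\|\bm u_{\epsilon}\|_{1}\le C$, which follows either from the strong convergence of Theorem~\ref{thm2} or from testing \eqref{varias} with $\bm v=\bm u_{\epsilon}$ and using coercivity \eqref{Eq:pofun0}(c) together with the growth bound \eqref{Eq:pofun1}(c). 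Since $\|\bm u_{\epsilon}-\bm u\|_{1}\to0$ by Theorem~\ref{thm2} (hence $\|\bm\varepsilon(\bm u_{\epsilon})-\bm\varepsilon(\bm u)\|_{Q}\to0$ by Korn's inequality and $\|\bm u_{\epsilon}-\bm u\|_{L^{2}}\to0$ by the Sobolev embedding), every term tends to zero, which yields $dJ_{\epsilon}(\Omega;\bm V)\to dJ(\Omega;\bm V)$.

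The point I want to stress is that no genuinely new difficulty arises at this stage: the hard analytic work --- the uniform $O(t)$ Lipschitz estimate of Lemma~\ref{weak} and, above all, the strong $\bm V_1$-convergence of the regularized states in Theorem~\ref{thm2} --- has already been carried out, and what remains is a continuous-dependence argument for a fixed domain $\Omega$ and a fixed velocity field $\bm V$. The only places requiring mild care are the bilinear factorizations above and the verification that the coefficient $\nabla\mathbb{C}\cdot\bm V$ really belongs to $L^{\infty}(\Omega)$, which is precisely the content of the $\mathcal{C}^{1}$-hypothesis on $\mathbb{C}$.
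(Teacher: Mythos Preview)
Your proof is correct and follows essentially the same approach as the paper: both arguments invoke Theorem~\ref{thm2} for the strong convergence $\bm u_{\epsilon}\to\bm u$ in $H^1$, decompose $dJ_\epsilon-dJ$ into quadratic and linear pieces, factor each quadratic difference so that the vanishing factor $\bm\varepsilon(\bm u_{\epsilon})-\bm\varepsilon(\bm u)$ appears against a uniformly bounded one, and conclude via Cauchy--Schwarz and the uniform $H^1$-bound on $\bm u_{\epsilon}$. The only cosmetic difference is that you use the symmetric polarization $\mathbb{C}a\!:\!a-\mathbb{C}b\!:\!b=\mathbb{C}(a-b)\!:\!(a+b)$ whereas the paper uses the telescoping $\mathbb{C}a\!:\!(a-b)+\mathbb{C}(a-b)\!:\!b$, and you spell out a few regularity points (Korn, Sobolev embedding, $\nabla\mathbb{C}\in L^\infty$) that the paper leaves implicit.
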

	\begin{proof}
		By Theorem \ref{thm2}, we have $\bm u_{\epsilon}\rightarrow \bm u\,$ as $\epsilon\searrow 0$. By a direct derivation, we have
		\begin{align}
			|	dJ_\epsilon(\Omega;\bm V)-dJ(\Omega;\bm V)|=&\Big|\int_{\Omega}\Big[\frac{1}{2}\mathbb{C}\bm \varepsilon(\bm u_{\epsilon}):\bm \varepsilon(\bm u_{\epsilon})-\frac{1}{2}\mathbb{C}\bm \varepsilon(\bm u):\bm \varepsilon(\bm u)+(\bm f\bm u-\bm f\bm u_{\epsilon})\Big]I:{\rm D}\bm V\notag\\
			&-\int_{\Omega}\Big[{\rm D}\bm u_{\epsilon}^T\mathbb{C}\bm \varepsilon(\bm u_{\epsilon})-{\rm D}\bm u^T\mathbb{C}\bm \varepsilon(\bm u)\Big]:{\rm D}\bm V
			-\int_{\Omega}\Big[{\rm D}\bm f^T\bm u_{\epsilon}-{\rm D}\bm f^T\bm u\Big]\cdot\bm V\notag\\
			&+\int_{\Omega}\Big[\frac{1}{2}\bm \varepsilon(\bm u_{\epsilon}):\bm \varepsilon(\bm u_{\epsilon})\nabla\mathbb{C}-\frac{1}{2}\bm \varepsilon(\bm u):\bm \varepsilon(\bm u)\nabla\mathbb{C}\Big]\cdot\bm V\Big|.
		\end{align}
		By the boundedness of $\bm u_{\epsilon}$ and $\bm u$ in $H^1(\Omega;\mathbb{R}^d)$ and $\bm u_{\epsilon}\rightarrow \bm u$ as $\epsilon\searrow 0$ in $H^1(\Omega;\mathbb{R}^d)$, we derive
		\begin{align*}
			\Big|\mathbb{C}\bm \varepsilon(\bm u_{\epsilon}):\bm \varepsilon(\bm u_{\epsilon})-\mathbb{C}\bm \varepsilon(\bm u):\bm \varepsilon(\bm u)\Big|=\Big|\mathbb{C}\bm \varepsilon(\bm u_{\epsilon}):\bm \varepsilon(\bm u_{\epsilon}-\bm u)+\mathbb{C}\bm \varepsilon(\bm u_{\epsilon}-\bm u):\bm \varepsilon(\bm u)\Big|\rightarrow 0\quad{\rm as}\ \epsilon\searrow 0.
		\end{align*}
		Similarly, we obtain 
		\[
		|	dJ_\epsilon(\Omega;\bm V)-dJ(\Omega;\bm V)|\rightarrow 0\quad{\rm as}\ \epsilon\searrow 0.
		\]
	\end{proof}

	%By Theorem \ref{thm2}, we  have $\bm u_{\epsilon}\rightarrow \bm u\,$ as $\epsilon\rightarrow 0$. Then, we can derive $d\mathcal{L}(\Omega;\bm V)$ in \eqref{Euler2} convergence to , which indicates that the regularization approach is reasonable when dealing with shape optimization problems. 
	
	\begin{remark}
		It should be noted that the strict sensitivity analysis in Theorem \ref{thm1} is only applicable to energy functional of \eqref{Obj}. For a general shape functional, deriving an Eulerian derivative is not as straightforward as it is for the energy functional. Considering the rationality of regularization in the above derivation, we consider using regularization methods to handle the non-smooth term in a general shape functional.
	\end{remark}
	
	\subsection{General shape functional}
	We consider shape design model problem of elastic structures in hemi-variational inequalities:
	% (cf. \cite{CD20} for design of structural variational inequalities and \cite{FYM23} for shape optimization of Stokes flows in hemi-variational inequalities)
	\begin{equation}\label{J}
		\min_{{\rm Vol}(\Omega)=C} J(\bm u) = \int_{\Omega} m(\bm u)  + \int_{\partial \Omega} r(\bm u) ,
	\end{equation}
	where $\bm u$ satisfies \eqref{Eq:constitute} and $m(\cdot),r(\cdot):\mathbb{R}^d\rightarrow\mathbb{R}$ with their derivatives denoted respectively by $m'$ and $r'$ are Lipschitz continuous. Assume, moreover, that for all $\bm u, \bm v \in \mathbb{R}^d$,
	\begin{align*}
		|m(\bm u)|\le C_1(1+|\bm u|^2),\quad |r(\bm u)|\leq C_1(1+|\bm u|^2),\\
		|m'(\bm u)\cdot \bm v|\leq C_1|\bm u\cdot \bm v|,\quad |r'(\bm u)\cdot \bm v|\leq C_1|\bm u\cdot \bm v|,
	\end{align*}
	for some constants $C_1>0$, where $|\cdot|$ denotes the Euclidean norm. Notice that  this general shape functional in \eqref{J} was studied for shape design in variational inequalities \cite{CD20,CD21,MAJ17}.
	
	We use the Lagrange multiplier method to deal with the mechanical constraint \eqref{varias} as a smoothing approximation to that in \eqref{J}. Define 
	%a Lagrangian
	\begin{equation}
		\begin{aligned}
			L(\Omega,\bm v,\bm q)= J(\bm{v}) + ( \mathbb{C}\bm{\varepsilon}(\bm v),\bm{\varepsilon}(\bm q))_Q + ( \partial j_{\tau,\epsilon}(v_{\tau}),q_{\tau} )_{\Gamma_C}  -(\bm f,\bm q)_{\Omega}-(\bm g_N,\bm q)_{\Gamma_N}.
			% -\int_{\Gamma_D} \bigg(\mathbb{C} \bm{\varepsilon}(\bm v) \bm \nu \cdot \bm q +\mathbb{C} \bm{\varepsilon}(\bm q) \bm \nu \cdot \bm v\bigg) \,{\rm d}s\\
			% & \quad-\int_{\Gamma_C}\Big( \mathbb{C} \bm{\varepsilon}(\bm v) \bm \nu \cdot\bm \nu q_{\nu} +\mathbb{C}  \bm{\varepsilon}(\bm q) \bm \nu \cdot\bm \nu v_{\nu}\Big) \,{\rm d}s.
		\end{aligned}
	\end{equation}
	At a saddle point, the directional derivative of the Lagrangian in a direction $\bm \psi\in \bm V_1$ w.r.t. $\bm q$ vanishes as 
	\begin{equation*}
		\begin{aligned}
			\frac{\partial L}{\partial \bm q}(\Omega,\bm u_{\epsilon},\bm p_\epsilon)(\bm \psi)=& (\mathbb{C}  \bm{\varepsilon}(\bm u_{\epsilon}),\bm{\varepsilon}(\bm \psi) )_Q +(\partial j_{\tau,\epsilon}(u_{\tau,\epsilon}) , \psi_{\tau})_{\Gamma_C}-(\bm f,\bm \psi )_\Omega - (\bm g_N , \bm \psi)_{\Gamma_N}  \\
			% &- \int_{\Gamma_D}\bigg(\mathbb{C} \bm \varepsilon(\bm u) \bm \nu \cdot \bm \psi + \mathbb{C} \bm \varepsilon(\bm \psi) \bm \nu \cdot \bm u  \bigg) { \rm ds} -\int_{\Gamma_C}\Big( \mathbb{C}  \bm{\varepsilon}(\bm u) \bm \nu \cdot\bm \nu \psi_{\nu} +\mathbb{C} \bm{\varepsilon}(\bm \psi) \bm \nu \cdot\bm \nu u_{\nu}\Big) \,{\rm d}s. \notag\\
			=&(-\big(\text{div}\, \mathbb{C} \bm{\varepsilon}(\bm u_{\epsilon})+\bm f \big) , \bm \psi )_{\Omega}+(\mathbb{C} \bm{\varepsilon}(\bm u_{\epsilon}) \bm \nu -\bm g_N),\bm \psi)_{\Gamma_N}
			- (\mathbb{C}  \bm{\varepsilon}(\bm \psi)\bm \nu , \bm u )_{\Gamma_D} \\
			&+(\mathbb{C}  \bm{\varepsilon}(\bm u_{\epsilon}) \bm \nu ,\bm \psi )_{\Gamma} +\big( \mathbb{C}  \bm{\varepsilon}(\bm u_{\epsilon}) \bm \nu \cdot \bm \tau  +\partial j_{\tau,\epsilon}(u_{\tau,\epsilon}),  \psi_{\tau} \big)_{\Gamma_C} \\
			%-\mathbb{C}  \bm{\varepsilon}(\bm \psi) \bm \nu \cdot\bm \nu u_{\nu}
			=&0,
		\end{aligned}
	\end{equation*}
	which implies the regularized state problem (\ref{State}).
	The derivative of the Lagrangian w.r.t. $\bm v$ in a direction $\bm \psi\in \bm V_1$ at the saddle point reads:
	\begin{equation}
		\begin{aligned}
			\frac{\partial L}{\partial \bm v}(\Omega,\bm u_{\epsilon},\bm p_\epsilon)(\bm \psi)
			=&( m'(\bm u_{\epsilon}), \bm \psi)  +
			(r'(\bm u_{\epsilon}), \bm \psi) + (\bm{\sigma}(\bm \psi) ,\bm{\varepsilon}(\bm p_\epsilon) )_Q+ (j''_{\tau,\epsilon}(u_{\tau,\epsilon})p_{\epsilon,\tau}, \psi_{\tau})_{\Gamma_C}   
			% -\int_{\Gamma_D} \bigg( \mathbb{C}  \bm{\varepsilon}(\bm \psi)\bm \nu \cdot \bm p + \mathbb{C} \bm{\varepsilon}(\bm p)\bm \nu \cdot \bm \psi \bigg) \,\,{\rm d}s \\
			% & + \int_{\Gamma_C}\Big( j''_{\tau,\epsilon}(u_{\tau}) \psi_{\tau}  p_{\tau} \,\,-\mathbb{C}  \bm{\varepsilon}(\bm \psi) \bm \nu \cdot\bm \nu p_{\nu} -\mathbb{C}  \bm{\varepsilon}(\bm p) \bm \nu \cdot\bm \nu \psi_{\nu} 
			\\
			=& \big(-\text{div}( \bm{\sigma}(\bm p_\epsilon)) + m'(\bm u_{\epsilon}), \bm \psi\big)_\Omega + ( \bm{\sigma}(\bm p_\epsilon)\bm \nu +r'(\bm u_{\epsilon}), \bm \psi)_{\Gamma_N} +( \bm{\sigma}(\bm p_\epsilon)\bm \nu +r'(\bm u_{\epsilon}), \bm \psi)_{\Gamma_D}
			% \bigg(\mathbb{C} \bm{\varepsilon}(\bm p)\bm \nu+ r'(\bm u) \cdot \bm \psi -  \mathbb{C} \bm{\varepsilon}(\bm \psi)\bm \nu \cdot \bm p \bigg)  
			\\
			&+  \big(  \bm{\sigma}(\bm p_\epsilon)\bm \nu \cdot \bm \tau  + j''_{\tau,\epsilon}( u_{\tau,\epsilon})  p_{\tau} +r'( \bm u_{\epsilon})\cdot\bm \tau, \psi_{\tau}\big)_{\Gamma_C}+\big(\bm{\sigma}(\bm p_\epsilon) \bm \nu \cdot\bm \nu +r'(\bm u_{\epsilon}) \cdot\bm \nu , \psi_{\nu}  \big)_{\Gamma_C} \\
			&+( \bm{\sigma}(\bm p_\epsilon)\bm \nu + r'(\bm u_{\epsilon}), \bm \psi)_\Gamma 
			=0,
		\end{aligned}
	\end{equation}
	which implies the adjoint system
	\begin{equation} \label{Adjiont_pro}
		\left\{
		\begin{aligned}
			-\text{div}\, \bm \sigma(\bm p_\epsilon) & = -m'(\bm u_{\epsilon}) \qquad &&\text{ in } \Omega, \\
			\bm \sigma(\bm p_\epsilon) \bm \nu &= -r'(\bm u_{\epsilon}) \qquad &&\text{ on } \Gamma_N \cup \Gamma, \\
			\bm p_\epsilon & = \bm{0} \qquad &&\text{ on } \Gamma_D, \\
			% \bm \sigma(\bm p_\epsilon) \bm \nu &=  -r'(\bm u)  \qquad &&\text{ on } \Gamma \\
			p_{\epsilon,\nu} & =0  \qquad &&\text{ on } \Gamma_C, \\
			-\sigma_{ \tau}( p_{\epsilon,\tau})- j''_{\tau,\epsilon}(u_{\tau,\epsilon})  p_{\epsilon,\tau}  & =r'( \bm u_{\epsilon})\cdot\bm \tau  &&\text{ on } \Gamma_C,
		\end{aligned}
		\right.
	\end{equation}
	where
	\begin{equation}
		j''_{\tau,\epsilon}( u_{\tau,\epsilon}) =\left\{
		\begin{aligned}
			&-\alpha(a-b)e^{-\alpha| u_{\tau,\epsilon}|}, \quad{\rm if}\ | u_{\tau,\epsilon}|>\epsilon,\\
			&(a-b)\Big(-\frac{3 \vert u_{\tau,\epsilon} \vert^2}{2\epsilon^3}+\frac{3}{2\epsilon}-\frac{\alpha \vert  u_{\tau,\epsilon} \vert^6}{4\epsilon^6}+\frac{3\alpha \vert u_{ \tau,\epsilon} \vert^4}{2\epsilon^4}-\frac{9 \alpha \vert  u_{ \tau,\epsilon} \vert^2}{4\epsilon^2}\Big)e^{-\alpha \Lambda(\epsilon;u_{\tau,\epsilon})}\\
			&\quad+b\bigg(-\frac{3\vert  u_{ \tau,\epsilon} \vert^2}{2\epsilon^3}+\frac{3}{2\epsilon}\bigg), \quad{\rm if}\  | u_{\tau,\epsilon}|\leq  \epsilon.
		\end{aligned}
		\right.
	\end{equation}
	The weak form of \eqref{Adjiont_pro} reads: Find $\bm p_\epsilon \in \bm V_1$ such that
	\begin{equation}\label{Adjoint}
		\begin{aligned}
			(\mathbb{C}  \bm{\varepsilon}(\bm p_\epsilon) ,\bm{\varepsilon}(\bm v) )_Q + ( j''_{\tau,\epsilon}( u_{\tau,\epsilon})p_{\epsilon,\tau},v_{\tau} )_{\Gamma_C} = -( m'(\bm u_{\epsilon}), \bm v )&_{\Omega}-
			( r'(\bm u_{\epsilon}), \bm v)_{\Gamma_N}-
			( r'(\bm u_{\epsilon}), \bm v)_{\Gamma} \\
			&- (r'(\bm u_{\epsilon})\cdot \bm \tau,v_{\tau})_{\Gamma_C}\quad \forall\, \bm{v}\in \bm V_1.
		\end{aligned}
	\end{equation}
	% For later use, we now present the corresponding adjoint problem for the different cost functional in the following remark.
	In the following text, $\bm u_{\epsilon}$ and $\bm p_{\epsilon}$ are simplified respectively as $\bm u$ and $\bm p$  for notational simplicity.
	\begin{remark}
		For compliance 
		$$J(\Omega)= \int_{\Gamma_N} \bm g_N \cdot \bm u ,$$
		% The adjoint problem for the shape optimization is
		the associated adjoint state $\bm p \in \bm V_1$ for shape optimization is defined as weak solution of
		\begin{equation}\label{Adjiont_compli}
			(\mathbb{C}  \bm{\varepsilon}(\bm p) ,\bm{\varepsilon}(\bm v) )_Q + ( j''_{\tau,\epsilon}( u_{\tau})p_{\tau},v_{\tau} )_{\Gamma_C} =-(\bm g_N, \bm v)_{\Gamma_N} \quad \forall\, \bm{v}\in V_1.
		\end{equation}
	\end{remark}
\begin{lem} \label{lem4}
Let $f \in W^{1,1}\left(\mathbb{R}^{d}\right)$. Then $F(\Omega)=\int_{\Omega} f$ is shape differentiable at any open set  $\Omega$ and its Eulerian derivative reads for $\bm V \in W^{1,\infty}\left(\mathbb{R}^{d}; \mathbb{R}^{d}\right)$: $$dF(\Omega;\bm V)=\int_{\Omega} \operatorname{div} (f \bm V). $$
If, in addition, $\Omega$ is bounded and Lipschitz,
$$ dF(\Omega;\bm V)=\int_{\partial \Omega} f V_\nu,$$
where $V_\nu=\bm V \cdot \bm \nu$.
\end{lem}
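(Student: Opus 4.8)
The plan is to transport the integral onto the fixed reference domain $\Omega$ through the change of variables $y = F_t(x)$, differentiate the resulting integrand at $t=0$, and then pass to the boundary form by the Gauss--Green formula. Since $\bm V \in W^{1,\infty}(\mathbb{R}^d;\mathbb{R}^d)$, for $t$ small $F_t = I + t\bm V$ is a bi-Lipschitz homeomorphism of $\mathbb{R}^d$ with $\gamma_1(t) := \det({\rm D}F_t) > 0$, so by the area formula
\[
F(\Omega_t) = \int_{\Omega_t} f = \int_\Omega (f\circ F_t)\,\gamma_1(t),
\]
and therefore
\[
\frac{F(\Omega_t) - F(\Omega)}{t} = \int_\Omega \frac{f\circ F_t - f}{t}\,\gamma_1(t) + \int_\Omega f\,\frac{\gamma_1(t) - 1}{t}.
\]

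For the second integral I would use that $(\gamma_1(t)-1)/t \to \operatorname{div}\bm V$ uniformly on $\overline D$ (Lemma~3.1(6) and the estimate \eqref{common2}): since this family is bounded in $L^\infty$ for small $t$ and $f\in L^1$, dominated convergence gives $\int_\Omega f\,(\gamma_1(t)-1)/t \to \int_\Omega f\operatorname{div}\bm V$. The heart of the matter is the first integral, for which the claim is
\[
\frac{f\circ F_t - f}{t} \longrightarrow \nabla f\cdot\bm V \quad\text{in } L^1(\mathbb{R}^d)\ \text{as } t\searrow 0.
\]
To prove this I would first record, for $f\in C_c^1(\mathbb{R}^d)$, the elementary identity
\[
\frac{f(F_t(x)) - f(x)}{t} = \int_0^1 \nabla f\big(x + s t\,\bm V(x)\big)\cdot\bm V(x)\,{\rm d}s ,
\]
and extend it to $f\in W^{1,1}(\mathbb{R}^d)$ by density, noting that both sides are continuous in $f$ with respect to the $W^{1,1}$ norm, uniformly for small $t$, because ${\rm D}F_t = I + t\,{\rm D}\bm V$ with ${\rm D}\bm V\in L^\infty$ forces the composition operator $g\mapsto g\circ F_t$ to be bounded on $L^1$ with norm $\le \|\det {\rm D}F_t^{-1}\|_{L^\infty}\le C$. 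From this representation,
\[
\bigl\|\tfrac{f\circ F_t - f}{t} - \nabla f\cdot\bm V\bigr\|_{L^1} \le \|\bm V\|_{L^\infty}\int_0^1 \big\|\nabla f(\cdot + s t\,\bm V) - \nabla f\big\|_{L^1}\,{\rm d}s ,
\]
and the inner $L^1$-norm tends to $0$ as $t\searrow 0$, uniformly in $s\in[0,1]$, by the standard continuity of $L^1$-functions under the near-identity changes of variables $x\mapsto x + s t\,\bm V(x)$ (approximate $\nabla f$ by a compactly supported continuous function, and use $\sup_{s\in[0,1]}\|s t\,\bm V\|_{L^\infty}\le t\|\bm V\|_{L^\infty}\to 0$ together with the uniform Jacobian bound); dominated convergence in $s$ then closes the claim. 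Combining with $\gamma_1(t)\to 1$ uniformly, the first integral converges to $\int_\Omega \nabla f\cdot\bm V$, and adding the two limits yields $dF(\Omega;\bm V) = \int_\Omega(\nabla f\cdot\bm V + f\operatorname{div}\bm V) = \int_\Omega \operatorname{div}(f\bm V)$.

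Finally, if $\Omega$ is in addition bounded and Lipschitz, then $f\bm V\in W^{1,1}(\Omega;\mathbb{R}^d)$ and the Gauss--Green formula on Lipschitz domains applies, giving $\int_\Omega \operatorname{div}(f\bm V) = \int_{\partial\Omega} f\,\bm V\cdot\bm\nu = \int_{\partial\Omega} f\,V_\nu$. I expect the main obstacle to be the $L^1$-convergence of $(f\circ F_t-f)/t$: since $f$ is only $W^{1,1}$ --- in contrast with the $H^1$ regularity exploited elsewhere in the paper for $\bm f\circ F_t$ --- pointwise differentiation of $f\circ F_t$ is unavailable, so one must argue through the segment representation and a density/translation-continuity argument, keeping the composition and translation operators under control uniformly in $t$ via the Jacobian bounds of Lemma~3.1.
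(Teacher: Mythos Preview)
Your argument is correct and is essentially the standard proof of this classical shape-calculus result. Note, however, that the paper does not actually prove Lemma~\ref{lem4}: it is stated without proof as a known fact (in the spirit of, e.g., Sokołowski--Zolésio or Henrot--Pierre), and is then used together with Lemma~\ref{lem5} in the derivation of Proposition~4.2. Your transport-to-reference-domain decomposition, the $L^1$ convergence of $(f\circ F_t-f)/t$ via the segment representation and density, and the final appeal to Gauss--Green on Lipschitz domains are exactly the ingredients of the textbook proof, and the care you take with the $W^{1,1}$ regularity (rather than $H^1$) is appropriate. There is nothing to compare against in the paper itself.
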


\begin{lem} \label{lem5}
Let $\Omega$  be class  $\mathcal{C}^{2}$. Let $g \in W^{2,1}\left(\mathbb{R}^{d}\right)$. For $\bm V \in \mathcal{C}^{1, \infty}\left(\mathbb{R}^{d};\mathbb{R}^{d}\right)$, then
$$G(\Omega):=\int_{\partial \Omega} g$$
has an Eulerian derivative
$$
dG(\Omega;\bm V)=\int_{\partial \Omega}\left(\frac{\partial g}{\partial \bm \nu}+\kappa g\right)V_\nu,
$$
where the mean curvature $\kappa={\rm div}\, \bm \nu$.
\end{lem}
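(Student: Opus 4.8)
The plan is to transport the boundary integral to the fixed surface $\partial\Omega$ through the perturbation-of-identity map $F_t=I+t\bm V$, differentiate the resulting tangential Jacobian at $t=0$, and then reduce the outcome to the claimed expression by means of the tangential divergence theorem on the closed $\mathcal{C}^2$ hypersurface $\partial\Omega$.

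First I would apply the tangential change-of-variables formula to get
$$G(\Omega_t)=\int_{F_t(\partial\Omega)}g=\int_{\partial\Omega}(g\circ F_t)\,\omega(t)\,\mathrm{d}s,\qquad \omega(t)=\gamma_1(t)\,\big|{\rm D}F_t^{-T}\bm\nu\big|,\quad \gamma_1(t)=\det({\rm D}F_t).$$
Because $\bm V\in\mathcal{C}^{1,\infty}$, for small $t$ the map $F_t$ is a diffeomorphism, $\omega(t)$ is bounded above and bounded below away from zero uniformly on $\partial\Omega$, and $t\mapsto\omega(t)$ is differentiable. Using the shape-calculus identities $\tfrac{\mathrm{d}}{\mathrm{d}t}\det({\rm D}F_t)\big|_{t=0}=\operatorname{div}\bm V$ and $\tfrac{\mathrm{d}}{\mathrm{d}t}{\rm D}F_t^{-T}\big|_{t=0}=-{\rm D}\bm V^{T}$ recorded above, a direct computation gives $\omega'(0)=\operatorname{div}\bm V-({\rm D}\bm V\,\bm\nu)\cdot\bm\nu=:\operatorname{div}_{\Gamma}\bm V$, the tangential divergence of $\bm V$ along $\partial\Omega$. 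Since $g\in W^{2,1}(\mathbb{R}^d)$ has an $L^1$ trace on $\partial\Omega$ with tangential gradient in $L^1(\partial\Omega)$ and the difference quotients of $g\circ F_t$ are dominated uniformly in $t$, I would pass to the limit under the integral — justified by the uniform bounds on $\omega(t)$ and ${\rm D}F_t$ together with dominated convergence — to arrive at
$$dG(\Omega;\bm V)=\int_{\partial\Omega}\big(\nabla g\cdot\bm V+g\,\operatorname{div}_{\Gamma}\bm V\big)\,\mathrm{d}s.$$

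The next step is the tangential divergence theorem: on the closed $\mathcal{C}^2$ hypersurface $\partial\Omega$ (which has empty boundary) one has $\int_{\partial\Omega}\operatorname{div}_{\Gamma}\bm w\,\mathrm{d}s=\int_{\partial\Omega}\kappa\,(\bm w\cdot\bm\nu)\,\mathrm{d}s$ for any $\mathcal{C}^1$ field $\bm w$ defined near $\partial\Omega$, where $\kappa=\operatorname{div}\bm\nu$. Taking $\bm w=g\bm V$, and using the product rule $\operatorname{div}_{\Gamma}(g\bm V)=\nabla_{\Gamma}g\cdot\bm V+g\,\operatorname{div}_{\Gamma}\bm V$ together with the orthogonal splitting $\nabla g=\nabla_{\Gamma}g+(\partial g/\partial\bm\nu)\,\bm\nu$, I get $\int_{\partial\Omega}g\,\operatorname{div}_{\Gamma}\bm V=\int_{\partial\Omega}\kappa\,g\,V_\nu-\int_{\partial\Omega}\nabla_{\Gamma}g\cdot\bm V$; inserting this into the last display and noting $(\nabla g-\nabla_{\Gamma}g)\cdot\bm V=(\partial g/\partial\bm\nu)\,V_\nu$ yields exactly $dG(\Omega;\bm V)=\int_{\partial\Omega}\big(\partial g/\partial\bm\nu+\kappa g\big)V_\nu$. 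An equivalent route, at the cost of a density/extra-regularity argument, sidesteps the tangential calculus: extend $\bm\nu$ to a $\mathcal{C}^1$ field $\bm N$ near $\partial\Omega$ (for instance $\bm N=\nabla d$ with $d$ the signed distance, cut off away from $\partial\Omega$), write $G(\Omega)=\int_{\partial\Omega}(g\bm N)\cdot\bm\nu=\int_{\Omega}\operatorname{div}(g\bm N)$ by the classical divergence theorem, apply Lemma~\ref{lem4} with $f=\operatorname{div}(g\bm N)$, and use $\bm N|_{\partial\Omega}=\bm\nu$ and $\operatorname{div}\bm N|_{\partial\Omega}=\kappa$.

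The main obstacle is the tangential differential calculus underlying the first two steps: showing that $\omega(t)$ is differentiable with $\omega'(0)=\operatorname{div}_{\Gamma}\bm V$ and that the limit genuinely passes under the surface integral requires care with the surface measure and with the fact that, $g$ being only $W^{2,1}$, the boundary quantities $\partial g/\partial\bm\nu$ and $\nabla_{\Gamma}g$ live merely in $L^1(\partial\Omega)$; likewise the tangential divergence theorem must be justified at the $\mathcal{C}^2$ level of $\partial\Omega$, which is precisely why this regularity is assumed. Once these ingredients are in place, the remaining manipulations — the product rule, the normal/tangential splitting of $\nabla g$, and the algebra producing $\partial g/\partial\bm\nu+\kappa g$ — are routine.
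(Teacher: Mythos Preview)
The paper does not supply a proof of this lemma; it is stated as a standard result from shape optimization theory (of the type found in Sokołowski--Zolésio \cite{SZ92} or Allaire--Dapogny--Jouve \cite{GCF2021}) and is used later as a black box in the proof of Proposition~4.2. So there is no ``paper's own proof'' to compare against.

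Your argument is correct and is in fact the classical derivation: pull back by $F_t$, identify the derivative of the surface Jacobian as the tangential divergence $\operatorname{div}_\Gamma\bm V$, and then invoke the tangential Stokes formula on the closed $\mathcal{C}^2$ surface to convert $\int_{\partial\Omega}(\nabla g\cdot\bm V+g\,\operatorname{div}_\Gamma\bm V)$ into $\int_{\partial\Omega}(\partial g/\partial\bm\nu+\kappa g)V_\nu$. Your alternative route via $G(\Omega)=\int_\Omega\operatorname{div}(g\bm N)$ and Lemma~\ref{lem4} is also standard and, as you note, trades the tangential calculus for a density/regularity argument on $g\bm N$. Either way, the regularity caveats you flag --- passage to the limit under the surface integral with $g\in W^{2,1}$ traces, and the validity of the tangential divergence theorem at the $\mathcal{C}^2$ level --- are exactly the points where the hypotheses are consumed, and your identification of them is accurate.
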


% We defined the considered shapes belong the admissible set
% $$\mathcal{U}_{ad}:=\{\Omega \subset D \text{ is bounded and Lipschitz} \}.$$
The augmented Lagrangian method is used to deal with the volume constraint in \eqref{J}. Let
\begin{equation}\label{calJ}
\mathcal{J}(\Omega)=  \int_{\Omega}m(\bm u) + \int_{\partial \Omega } r( \bm u)+\ell ({\rm Vol}(\Omega) - C) + \frac{\gamma}{2}({\rm Vol}(\Omega) -C)^2
\end{equation}
with $\ell$ and $\gamma$ being a Lagrange multiplier being a penalty factor, respectively.

\begin{prop} Let assumptions in Lemma 1 and Lemma 2 hold. Then, $\mathcal{J}(\Omega)$ \eqref{calJ} 
is shape differentiable at any shape $\Omega \in \mathcal{A}$, and its Eulerian derivative
\begin{equation}\label{eqcon}
	d\mathcal{J}(\Omega;\bm V) = \int_{\Gamma}  \bigg(m(\bm u)+\mathbb{C} \bm{\varepsilon}(\bm u):\bm{\varepsilon}(\bm p)-\bm f \cdot \bm p + \frac{\partial r(\bm u)}{\partial \bm \nu} + \kappa r(\bm u) + \ell + \gamma ({\rm Vol}(\Omega) -C) \bigg) V_\nu.
\end{equation}
\end{prop}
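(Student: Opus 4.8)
The plan is to combine a C\'{e}a--type Lagrangian argument with the Hadamard boundary formulas of Lemma~\ref{lem4} and Lemma~\ref{lem5}. First I would split $\mathcal{J}(\Omega)=J(\Omega)+\ell\,({\rm Vol}(\Omega)-C)+\frac{\gamma}{2}({\rm Vol}(\Omega)-C)^2$ with $J(\Omega)=\int_{\Omega}m(\bm u)+\int_{\partial\Omega}r(\bm u)$, and handle the volume--constraint part first: since ${\rm Vol}(\Omega)=\int_{\Omega}1$ and the constant $1$ lies trivially in $W^{1,1}(\mathbb{R}^d)$, Lemma~\ref{lem4} gives $d\,{\rm Vol}(\Omega;\bm V)=\int_{\partial\Omega}V_\nu$. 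Because $\Gamma_D$, $\Gamma_C$ and $\Gamma_N$ are held fixed, the admissible velocity fields satisfy $V_\nu=0$ on $\partial\Omega\setminus\Gamma$, so $d\,{\rm Vol}(\Omega;\bm V)=\int_{\Gamma}V_\nu$, and by the chain rule the two volume terms contribute $(\ell+\gamma({\rm Vol}(\Omega)-C))\int_{\Gamma}V_\nu$ to $d\mathcal{J}(\Omega;\bm V)$, which is precisely the corresponding part of \eqref{eqcon}.

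For $J(\Omega)$ I would use the Lagrangian $L(\Omega,\bm v,\bm q)$ introduced above. Since $\bm u=\bm u_{\epsilon}$ solves the regularized state problem \eqref{varias}, the constraint terms of $L$ vanish when $\bm v=\bm u$, so $L(\Omega,\bm u,\bm p)=J(\bm u)=J(\Omega)$ for every $\bm q$, in particular for the adjoint state $\bm q=\bm p$ solving \eqref{Adjoint}. Pulling everything back to the fixed reference configuration via $F_t$ (so that the trial and test spaces no longer move) and differentiating at $t=0$, the chain rule yields
\[
dJ(\Omega;\bm V)=\partial_{\Omega}L(\Omega,\bm u,\bm p;\bm V)+\frac{\partial L}{\partial\bm v}(\Omega,\bm u,\bm p)[\dot{\bm u}]+\frac{\partial L}{\partial\bm q}(\Omega,\bm u,\bm p)[\dot{\bm p}],
\]
where $\partial_{\Omega}L$ denotes the explicit geometric shape derivative obtained with $\bm u,\bm p$ frozen. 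The last term is the residual of the state equation \eqref{varias} tested against $\dot{\bm p}$ and hence vanishes; the middle term is the residual of the adjoint equation \eqref{Adjoint} tested against $\dot{\bm u}$ and hence also vanishes, provided $\dot{\bm u}\in\bm V_1$. Therefore $dJ(\Omega;\bm V)=\partial_{\Omega}L(\Omega,\bm u,\bm p;\bm V)$.

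It then remains to differentiate each term of $L(\Omega,\bm u,\bm p)$ with $\bm u$ and $\bm p$ frozen, which is exactly what Lemmas~\ref{lem4} and~\ref{lem5} supply. The volume integrals $\int_{\Omega}m(\bm u)$, $(\mathbb{C}\bm{\varepsilon}(\bm u),\bm{\varepsilon}(\bm p))_Q=\int_{\Omega}\mathbb{C}\bm{\varepsilon}(\bm u):\bm{\varepsilon}(\bm p)$ and $-(\bm f,\bm p)_{\Omega}=-\int_{\Omega}\bm f\cdot\bm p$ give, by Lemma~\ref{lem4} together with $V_\nu=0$ off $\Gamma$, the boundary term $\int_{\Gamma}(m(\bm u)+\mathbb{C}\bm{\varepsilon}(\bm u):\bm{\varepsilon}(\bm p)-\bm f\cdot\bm p)V_\nu$; the surface integral $\int_{\partial\Omega}r(\bm u)$ gives, by Lemma~\ref{lem5}, the term $\int_{\Gamma}(\partial r(\bm u)/\partial\bm\nu+\kappa\,r(\bm u))V_\nu$ with $\kappa={\rm div}\,\bm\nu$; and the surface terms supported on the fixed parts, namely $(\partial j_{\tau,\epsilon}(u_\tau),p_\tau)_{\Gamma_C}$ on $\Gamma_C$ and $-(\bm g_N,\bm p)_{\Gamma_N}$ on $\Gamma_N$, contribute nothing because $V_\nu\equiv 0$ there. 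Adding these to the volume--constraint contribution obtained above reproduces \eqref{eqcon}.

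The main obstacle is the rigorous justification of the cancellation of the material--derivative terms, i.e. of the identity $dJ(\Omega;\bm V)=\partial_{\Omega}L(\Omega,\bm u,\bm p;\bm V)$. This requires, on the one hand, the existence of the material derivatives $\dot{\bm u}$ and $\dot{\bm p}$ in $\bm V_1$: since $j_{\tau,\epsilon}$ is twice Fr\'{e}chet differentiable, this follows for the regularized problem from an implicit--function--theorem argument (or from a repetition of the estimates in Lemma~\ref{weak}), and $\dot{\bm u},\dot{\bm p}\in\bm V_1$ because the pieces $\Gamma_D$ and $\Gamma_C$ carrying the essential conditions are kept fixed by $F_t$. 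On the other hand, it requires enough smoothness of $\bm u$ and $\bm p$ --- guaranteed by elliptic regularity for the regularized problem, using that $\Omega$ is of class $\mathcal{C}^2$ and the data are smooth --- so that $m(\bm u)$, $\mathbb{C}\bm{\varepsilon}(\bm u):\bm{\varepsilon}(\bm p)$ and $\bm f\cdot\bm p$ lie in $W^{1,1}(\mathbb{R}^d)$ and $r(\bm u)$ lies in $W^{2,1}(\mathbb{R}^d)$, these being precisely the hypotheses of Lemma~\ref{lem4} and Lemma~\ref{lem5}. Once these points are settled, the remaining computation is routine.
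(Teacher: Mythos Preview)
Your proposal is correct and follows essentially the same C\'{e}a--Lagrangian strategy as the paper: freeze $\bm u$ and $\bm p$, use the state and adjoint equations to kill the material--derivative contributions, and apply Lemmas~\ref{lem4} and~\ref{lem5} to the remaining explicit geometric terms. The only cosmetic differences are that the paper folds the volume--penalty terms into the Lagrangian $\mathcal{L}$ rather than treating them separately, and it first writes the Hadamard integrals over all of $\partial\Omega$ (including $\Gamma_N$, $\Gamma_C$, $\Gamma_D$) before invoking $V_\nu=0$ on the fixed pieces, whereas you impose $V_\nu|_{\partial\Omega\setminus\Gamma}=0$ from the outset; your discussion of the regularity needed to justify the cancellation is more explicit than the paper's, which simply appeals to a chain--rule theorem in the literature.
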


\begin{proof}
Introduce a Lagrangian for the physical constraint as
% \begin{equation}
	% \begin{aligned}
		%      \mathcal{L}(\Omega,\bm u,\bm p)=& \int_{\Omega}m(\bm u) \mathrm{~d} \bm x+ \int_{\partial \Omega } r( \bm u) \,\,{\rm d}{s} + ( \mathbb{C}  \bm{\varepsilon}(\bm u),\bm{\varepsilon}(\bm p))_Q + (\partial j_{\tau,\epsilon}(u_{\tau}),  p_{\tau} )_{L^2(\Gamma_C)} \\
		%      &  -(\bm f , \bm p )_{L^2(\Omega)^d} - ( \bm g_N , \bm p )_{L^2(\Gamma_N)^d} + \ell ({\rm Vol}(\Omega) - C) + \frac{\gamma}{2}({\rm Vol}(\Omega) -C)^2.
		% \end{aligned}
	% \end{equation}
\begin{equation}
	\begin{aligned}
		\mathcal{L}(\Omega,\bm u,\bm p)=& \mathcal{J}(\Omega)+ ( \mathbb{C}  \bm{\varepsilon}(\bm u),\bm{\varepsilon}(\bm p))_Q + (\partial j_{\tau,\epsilon}(u_{\tau}),  p_{\tau} )_{\Gamma_C}  -(\bm f , \bm p )_{\Omega} - ( \bm g_N , \bm p )_{\Gamma_N}.
	\end{aligned}
\end{equation}
The Eulerian derivative can be obtained by differentiating
\begin{equation*}
	\mathcal{J}(\Omega)=\mathcal{L}(\Omega,\bm u(\Omega),\bm p(\Omega)).
\end{equation*}
By the chain rule theorem (cf. \cite{GCF2021}), it reduces to the partial derivative of $\mathcal{L}$ w.r.t. $\Omega$ in the direction $\bm V$:
\begin{equation*}
	d\mathcal{J}(\Omega)(\bm V)=\frac{\partial \mathcal{L}}{\partial \Omega} (\Omega,\bm u(\Omega), \bm p(\Omega) ).
\end{equation*}
According to Lemma~\ref{lem4} and Lemma~\ref{lem5}, we obtain
\begin{equation}
	\begin{aligned}
		d\mathcal{J}(\Omega;\bm V) =& \int_{\partial \Omega} \big( m(\bm u)+\mathbb{C}  \bm{\varepsilon}(\bm u):\bm{\varepsilon}(\bm p)- \bm f \cdot \bm p + \ell + \gamma ({\rm Vol}(\Omega) -C) \big) V_\nu\\%\mathrm{~d} s  \\
		&- \int_{\Gamma_N} \bigg(\frac{\partial (\bm g_N \cdot \bm p) }{\partial \bm \nu}+ \kappa \bm g_N \cdot \bm p \bigg) V_\nu  + \int_{\partial \Omega} \bigg( \frac{\partial r(\bm u)}{\partial \bm \nu} + \kappa r(\bm u) \bigg) V_\nu \\% \mathrm{~d} s \\
		& + \int_{\Gamma_C} \bigg(\frac{\partial (\partial j_{\tau,\epsilon}( u_{\tau})p_{\tau})}{\partial \bm \nu}+ \kappa (\partial j_{\tau,\epsilon}( u_{\tau}) p_{\tau} ) \bigg) V_\nu.%\mathrm{~d} s.
	\end{aligned}
\end{equation}
Taking into account the boundary condition $\bm u=\bm p=0$ on $\Gamma_D$ implies that
\begin{equation}\label{joijw}
	\begin{aligned}
		d\mathcal{J}(\Omega;\bm V) =& \int_{\Gamma_N} \bigg(m(\bm u)+ \mathbb{C}  \bm{\varepsilon}(\bm u):\bm{\varepsilon}(\bm p) -\bm f \cdot \bm p - \frac{\partial (\bm g_N \cdot \bm p)}{\partial \bm \nu}- \kappa (\bm g_N \cdot \bm p) \bigg) V_\nu    \\
		& + \int_{\Gamma_C} \bigg(m(\bm u)+ \mathbb{C} \bm{\varepsilon}(\bm u):\bm{\varepsilon}(\bm p) - \bm f \cdot \bm p + \frac{\partial (\partial j_{\tau,\epsilon}( u_{\tau}) p_{\tau})}{\partial \bm \nu}+ \kappa (\partial j_{\tau,\epsilon}(u_{\tau}) p_{\tau} ) \bigg) V_\nu  \\
		&+ \int_{\Gamma}  \big(m(\bm u)+\mathbb{C}  \bm{\varepsilon}(\bm u):\bm{\varepsilon}(\bm p)-\bm f \cdot \bm p \big) V_\nu - \int_{\Gamma_D} \big( m(\bm u)+ \mathbb{C}  \bm{\varepsilon}(\bm u):\bm{\varepsilon}(\bm p) \big) V_\nu  \\
		&+ \int_{\partial \Omega} \bigg (\ell + \gamma ({\rm Vol}(\Omega) -C) + \frac{\partial r(\bm u)}{\partial \bm \nu} + \kappa r(\bm u) \bigg) V_\nu.
	\end{aligned}
\end{equation}
Since $\Gamma_C,\Gamma_D$ and $\Gamma_N$ do not deform, \eqref{joijw} allows \eqref{eqcon} to hold.
% \begin{equation}
	%     \begin{aligned}
		%         d\mathcal{J}(\Omega)(\bm V) = \int_{\Gamma}  \bigg(m(\bm u)+\mathbb{C}  \bm{\varepsilon}(\bm u):\bm{\varepsilon}(\bm p)-\bm f \cdot \bm p + \ell + \gamma ({\rm Vol}(\Omega) -C)+ \frac{\partial r(\bm u)}{\partial \bm \nu} + \kappa r(\bm u) \bigg) V_\nu\mathrm{~d} s .
		%     \end{aligned}
	% \end{equation}
\end{proof}

To increase smoothness as well as extend velocity, we introduce a $H^{1}$-gradient flow \cite{GLZ}: Find $\mathcal{V} \in H^1(D;\mathbb{R}^d)$ such that $\forall\, \bm V \in H^1(\Omega;\mathbb{R}^d)$
\begin{equation}\label{Smoothing}
\int_{\Omega} \omega {\rm D}\mathcal{V} : {\rm D}\bm V +\mathcal{V} \cdot \bm V = - dJ(\Omega;\bm V),
\end{equation} 
where $\omega >0$ is diffusion parameter. The domain deformations are realized by moving meshes after discretizations. A suitable time step size $\tau_n>0$ is set to ensure objective to decrease.

In addition, the Uzawa scheme is employed for updating the Lagrange multiplier as
\begin{equation}
\begin{aligned}\label{Eq30}
	& \ell_{n+1}=\ell_n + \gamma_n({\rm Vol}(\Omega)-C),\\
	& \gamma_{n+1}=\rho_\gamma  \gamma_n,\quad n=0,1,2,\cdots,
\end{aligned}
\end{equation}
where $\gamma_n>0$ and a increasing factor $\rho_\gamma > 1$ is introduced for better numerical performance. An upper bound of $\rho_\gamma$ can be set to avoid numerical instabilities. The algorithm stops if the iteration attains the maximal number denoted by $N_m$ or the stopping criteria holds:
\begin{equation}\label{stop}
\frac{\vert J(\Omega_{n+1})-J(\Omega_n) \vert}{|J(\Omega_n)|} < {\rm Tol},
\end{equation}
where Tol denotes the error tolerance. The whole shape optimization algorithm for a structural hemivariational inequality is organized in Algorithm 1.
\begin{algorithm}[htbp]
\DontPrintSemicolon
\SetAlgoLined
\KwIn {$N_m,\rho,a,b,\alpha,\epsilon,C$}
initialization ($n\leftarrow 0$): $\Omega_0$, $\ell_0,\gamma_0$\;
\While {\rm algorithm does not stop}{
	Solve the regularized hemivariational inequality \eqref{varias}.\;
	% \For {$i\leftarrow 0$ \KwTo $N_{n}$}{
		% Using Newton-Raphson method to solve nonlinear state problem $\eqref{State_weak}$\;
		% \If{$\text{stop iteration condition is satisfied}$}{
			% break\;
			% }
		% }
	Solve adjoint problem \eqref{Adjoint}. \;
	Solve the gradient flow \eqref{Smoothing}.\;
	Update Lagrange multiplier and penalty factor by \eqref{Eq30}. \;
	$\Omega_{n+1} \gets \Omega_{n} + \tau_n \mathcal{V}$.\;
	$n\leftarrow n+1$.\;
}
\KwOut {Final shape}
\caption{Shape optimization of a structural hemivariational inequality}\label{Agl1}
\end{algorithm}

\section{Topology optimization with two phase field methods}\label{sec5}

\subsection{Phase field method (I)}
Consider the elastic structure represented by the phase field function as shown in Fig. \ref{Phase_Feild}. The elasticity system for the state is defined over the entire domain $D$ using the ersatz material method. $\Omega_0$ is assumed to be filled with a weak material to mimic void to avoid singularities in the stiffness matrix and $\Omega_1$ denotes the elastic material whose elasticity tensor is $A$. In addition, interpolation is required for the material in the diffuse interface $\xi$.
\begin{figure}[htbp]
\centering
\includegraphics[width=10cm,height=4cm]{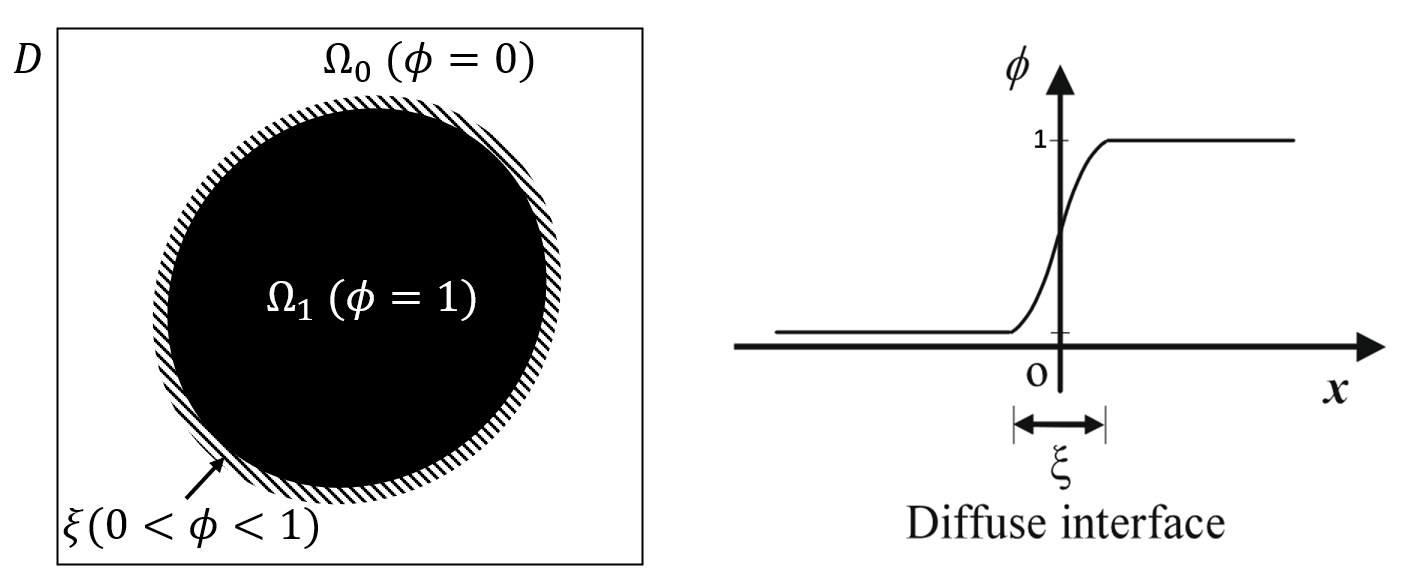}
\caption{Implicit domain representation by a phase field function.}\label{Phase_Feild}
\end{figure}
The subdomain representation in $D$ is then formulated as (see Fig. \ref{Phase_Feild} for illustration)
\begin{equation}
\left \{ \begin{aligned}
	&\phi(\bm x)=1, &&\text{ if } \bm x \in \Omega_1, \\
	& 0< \phi(\bm x) <1, && \text{ if } \bm  x \in \xi,  \\
	& \phi(\bm x) = 0, && \text{ if } \bm x \in \Omega_0,
\end{aligned} \right.
\end{equation}
where $(\Omega_1 \setminus \xi) \supset \Omega$ and $(\Omega_0 \cup \xi) \supset D \setminus \Omega$. The Van der Waals free energy of the system is given by
\begin{equation}
\mathcal M(\phi)= \int_{D} \bigg(\frac{\varsigma  }{2} \vert \nabla \phi \vert^2 + \frac{1}{\varsigma} \psi(\phi)\bigg){\rm d}x,
\end{equation}
where $\varsigma >0$ is a diffusive parameter. The first term represents the interaction energy term of the field in mean field theory. The second term represents a double-well potential with $\psi'(0)=\psi'(1)=0$. From \cite{TNK10}, the change of the phase field function w.r.t. time is assumed to be linearly dependent upon the direction in which the free energy function is minimized:
\begin{equation}\label{Allen-Cahn}
{\partial_t\phi} = - \mathcal{N}(\phi)  \mathcal{M}'(\phi),
\end{equation}
where $\mathcal N(\phi)$ is a mobility. Take, e.g, $\mathcal N(\phi)\equiv 1$ for simplicity.

%Consider a set of admissible shapes with fixed volume $C$ in $D$ as
%\begin{equation}
%    \mathcal{U}_{ad}:=\{\Omega \subset D\, \vert \, \Omega \subset \mathbb{R}^{d}, {\rm Vol}(\Omega) =C  \}.
%\end{equation}
The boundary $\partial D$ of the working domain is partitioned into four disjoint  parts
\[
\partial D=\partial D_{D}\cup \partial D_N\cup\partial D_C\cup \partial D_0,
\]
where the non-intersecting boundaries $\partial D_D$, $\partial D_C$, $\partial D_N$ and $\partial D_0$ correspond to Dirichlet boundary, the contact boundary, the non-homogeneous Neumann and homogeneous Neumann boundary conditions, respectively. Furthermore, it satisfies
{$\Gamma_D\subset \partial D_D,\, \Gamma_C\subset \partial D_C$  and $\Gamma_N\cup\Gamma=\partial D_N\cup \partial D_0$}.

Let the interpolation function $k(\phi)=\min(\phi^p,k_{\rm min})$ with $0<k_{\rm min}\ll 1$ and positive integer $p$ (typically $p=3$). We set a virtual physical property $\mathcal{F}^*$ of the entire domain by
\begin{equation}
\mathcal{F}^{*}(\phi)=\left \{ \begin{aligned}
	& \mathbb{C}, &&\text{ in }   \Omega_1, \\
	& k(\phi) \mathbb{C}, && \text{ in }  \xi,  \\
	& k_{\rm min} \mathbb{C}, && \text{ in }  \Omega_0.
\end{aligned} \right.
\end{equation}
Let 
$
\bm V_D=\{\bm v \in H^1(D;\mathbb{R}^d):\bm v|_{\partial D_D}=\bm 0,\,v_{\nu}|_{\partial D_C}=0 \}.
$
Then the weak form of state reads: Find $\bm u \in \bm V_D$ such that
\begin{equation} \label{State_weak1}
\int_D\mathcal F^{*}(\phi) \bm{\varepsilon}(\bm u):\bm{\varepsilon}(\bm v) +\int_{\partial D_C} \partial j_{\tau,\epsilon}( u_{\tau})v_{\tau} = \int_D \bm f \cdot \bm v +\int_{\partial D_N}  \bm g_N \cdot \bm v \quad \forall\, \bm v \in \bm V_D.
\end{equation}
The weak form of adjoint is to find $\bm p\in \bm V_D$ such that
\begin{equation}\label{Phase_Adjoint}
\begin{aligned}
	\int_D\mathcal F^{*}(\phi) \bm{\varepsilon}(\bm p):\bm{\varepsilon}(\bm v) + \int_{\partial D_C} j''_{\tau,\epsilon}( u_{\tau})  p_{\tau} v_{\tau}= -&\int_{D} m'(\bm u)\cdot \bm v -
	\int_{\partial D_N\cup \partial D_0} r'(\bm u) \cdot \bm v \\
	&-\int_{\partial D_C} r'(\bm u)\cdot \bm \tau v_{\tau} \quad \forall \,\bm{v}\in \bm V_D.
\end{aligned}
\end{equation}   
\begin{remark}
For compliance 
\[ J(\phi)= \int_{\partial D_N} \bm g_N \cdot \bm u,
\] 
the associated adjoint state $\bm p \in \bm V_D$ satisfies:
\begin{align}\label{Phase_Adjoint_J1}
	\int_D\mathcal F^{*}(\phi) \bm{\varepsilon}(\bm p):\bm{\varepsilon}(\bm v) +  \int_{\partial D_C}j''_{\tau,\epsilon}( u_{\tau})  p_{\tau}v_{\tau} = -
	\int_{\partial D_N} \bm g_N \cdot \bm v \quad \forall \,\bm{v}\in \bm V_D.   
\end{align}   
For the energy 
\begin{equation}
	J(\phi) = \frac{1}{2}\int_{D}  \mathcal F^{*}(\phi) \bm \varepsilon(\bm u):\bm \varepsilon(\bm u)+ \int_{\partial D_C} j_{\tau,\epsilon}(u_{\tau}) - \int_{\partial D_N} \bm g_N \cdot \bm u + \int_{D} \bm f \cdot \bm u,
\end{equation}
we obtain the following system for adjoint
\begin{equation} \label{Adjiont_pro1}
	\left\{
	\begin{aligned}
		-{{\rm div}}\,( \mathcal F^{*}(\phi) \bm\varepsilon(\bm p) ) & ={\rm{div}}\,(\mathcal F^{*}(\phi) \bm\varepsilon(\bm u) ) +\bm f  \quad &&{\rm{ in }}\ D \\
		\bm \sigma(\bm p) \bm \nu &= 0\quad &&{\rm{ on }} \ \partial D_N \cup \partial D_0 \\
		\bm p & = \bm{0} \quad &&{\rm{ on }}\ \partial D_D \\
		% \bm \sigma(\bm p) \bm \nu &=  -r'(\bm u)  \qquad &&\text{ on } \Gamma \\
		p_{\nu} & =0  \quad &&{\rm{ on }}\ \partial D_C \\
		\sigma_{ \tau}( p_{ \tau}) +j''_{\tau,\epsilon}(u_{\tau})  p_{ \tau}& = 0  &&{\rm{ on }}\ \partial D_C.
	\end{aligned}
	\right.
\end{equation}
\end{remark}

The phase field function evolves with a pseudo-time $t$ in the interval $[0,T]$ with $T>0$. The evolution equation is formulated as
\begin{equation}\label{Phase_EV}
\left \{ \begin{aligned}
	& {\partial_t \phi}=\kappa_1 \Delta \phi - \psi'(\phi), \ &&{\rm in}\ [0,T]\times D, \\
	&\phi(0,\cdot)=\phi_0 \quad &&{\rm in}\ D,\\
	& \frac{\partial \phi}{\partial \bm \nu} = 0, &&\text{ on } \partial D, 
\end{aligned} \right.
\end{equation}
where $\kappa_1$ is a positive diffusion coefficient and $\phi_0$ is an initial design. %The velocity is determined by the difference between the minima of the double well potential and the mean curvature of the diffuse interface. 
To move the diffuse interface in the direction in which the objective decreases, we set the double well potential $\psi(\phi)$ to satisfy the following conditions:
\begin{equation}\label{Phase_Condition}
\psi(0)=0, \,\, \psi(1)=h_1 J'(\phi_{t_1}), \,\, \psi'(0)=\psi'(1)=0, \, (h_1>0),
\end{equation}
where $J'(\phi)$ is the sensitivity of the objective w.r.t. $\phi$, $\phi_{t_1}$ is the value of $\phi$ at time $t_1$ with $t_1=0$ and the constant $h_1>0$. Let us set \cite{TNK10}
\begin{equation}\label{Phase_EV1}
\psi(\phi)= \mathcal{H}(\bm x) w(\phi) + \mathcal{G}(\bm x) g(\phi),
\end{equation}
where
\begin{equation}\label{Phase_EV2}
w(\phi)=\phi^2(1-\phi^2), \,\, g(\phi)=\phi^3 (6 \phi^2 -15 \phi +10),
\end{equation}
$\mathcal{H}(\bm x)$ determines the height of the wall of the double well potential, which affects the thickness of the diffuse interface. Set $\mathcal{H}(\bm x)=\frac{1}{4}$ here. Choose $\mathcal{G}(\bm x)= h_1 J'(\phi_{t_1})$. In order to avoid complex parameter settings, we first perform the following normalization:
\begin{equation}\label{Phase_EV3}
\mathcal{G}(\bm x) = \eta \frac{J'(\phi_{t_1})}{\| J'(\phi_{t_1}) \|_{L^2(D)}}.
\end{equation}
Substituting Eqs. \eqref{Phase_EV1}-\eqref{Phase_EV3} into \eqref{Phase_EV}, we obtain
\begin{equation} \label{Phase_Evolution}
{\partial_t}\phi = \kappa_1 \Delta \phi + \phi(1-\phi)\bigg( \phi - \frac{1}{2} -30 \eta \frac{J'(\phi_{t_1})}{\| J'(\phi_{t_1}) \|_{L^2(D)}} (1-\phi)\phi \bigg).
\end{equation}

For better stability in temporal discretization, we consider a ``semi-implicit'' scheme, i.e., implicit discretization for diffusion \cite{QHZ2022} and the special discretization \cite{QHZ2022} for reaction:
\begin{equation}\label{semiimplicit}
\frac{\phi_{n}^{l+1}-\phi_{n}^{l}}{\Delta t}= \kappa_1 \Delta \phi_{n}^{l+1} + \left \{
\begin{aligned}
	\phi_{n}^{l+1}(1-\phi_{n}^{l})y(\phi_{n}^{l}) \quad &\text{ for } y(\phi_{n}^{l}) \leq 0, \\
	\phi_{n}^{l}(1-\phi_{n}^{l+1})y(\phi_{n}^l) \quad &\text{ for } y(\phi_{n}^{l}) > 0, \\
\end{aligned}
\right.
\end{equation}
where $\Delta t>0$ is a suitable time step, $l =0,1,... \mathcal{T} -1$ with $ \mathcal{T} =[T/\Delta t]$, and
\begin{equation}
y(\phi_n^l)=\phi_n^l - \frac{1}{2} - 30 \eta \frac{J'(\phi_{t_1})}{\| J'(\phi_{t_1}) \|_{L^2(D)}} (1-\phi_n^l)\phi_n^l
\end{equation}
with $n$ and $l$ being the outer iteration and inner iteration indexes, respectively. Noting that the box condition $[0,1]$ might fail to be satisfied in numerical computation, we resort to a projection step
\begin{equation}
\phi_n^{l+1} \leftarrow \min \{ \max \{0,\phi_n^{l+1} \}
,1 \}.
\end{equation}

\subsubsection{Sensitivity analysis}
We derive the Fréchet derivative of the objective in the elastic hemivariational inequality w.r.t. the phase field function (see \cite{TNK10} for compliance of linear elasticity).
The general objective is defined as \eqref{J} and $\vert \Omega \vert \approx \int_{D} \phi $. Let $\mathcal{J}(\phi)=J(\bm{u}(\phi))$.
\begin{prop}
The sensitivity holds as
\begin{equation*}
	\mathcal{J}'(\phi) = \mathcal {F^{*}}'(\phi) \bm{\varepsilon}(\bm u):\bm{\varepsilon}(\bm p).
	%+\ell+ \gamma \Lambda(\phi).
\end{equation*}
\end{prop}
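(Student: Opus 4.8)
The plan is a standard adjoint-based (C\'ea--Lagrangian) sensitivity computation, exploiting the fact that after regularization the potential $j_{\tau,\epsilon}$ is twice Fr\'echet differentiable with bounded derivatives, so the control-to-state map $\phi\mapsto\bm u(\phi)$ defined by the state weak form \eqref{State_weak1} is well posed and differentiable on $\bm V_D$. First I would introduce the Lagrangian
$$
\mathcal{L}(\phi,\bm v,\bm q)=J(\bm v)+\int_D \mathcal{F}^{*}(\phi)\bm\varepsilon(\bm v):\bm\varepsilon(\bm q)+\int_{\partial D_C}\partial j_{\tau,\epsilon}(v_\tau)\,q_\tau-\int_D\bm f\cdot\bm q-\int_{\partial D_N}\bm g_N\cdot\bm q,
$$
and observe that $\partial_{\bm q}\mathcal{L}=0$ reproduces the state problem \eqref{State_weak1}, while $\partial_{\bm v}\mathcal{L}=0$ reproduces the adjoint problem \eqref{Phase_Adjoint}. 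Since $\mathcal{J}(\phi)=\mathcal{L}(\phi,\bm u(\phi),\bm p(\phi))$ and both partial derivatives vanish at the state/adjoint pair, the chain rule collapses $\mathcal{J}'(\phi)$ to the partial derivative $\partial_\phi\mathcal{L}(\phi,\bm u,\bm p)$; as only the stiffness term carries an explicit dependence on $\phi$, this equals $\mathcal{F^{*}}'(\phi)\bm\varepsilon(\bm u):\bm\varepsilon(\bm p)$.

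To keep the argument self-contained I would equivalently differentiate the state equation directly. Writing $\bm u'$ for the directional derivative $\bm u'(\phi)\delta\phi$, differentiation of \eqref{State_weak1} in direction $\delta\phi$ gives the tangent problem
$$
\int_D \mathcal{F}^{*}(\phi)\bm\varepsilon(\bm u'):\bm\varepsilon(\bm v)+\int_{\partial D_C}j''_{\tau,\epsilon}(u_\tau)u'_\tau v_\tau=-\int_D \mathcal{F^{*}}'(\phi)\,\delta\phi\,\bm\varepsilon(\bm u):\bm\varepsilon(\bm v)\qquad\forall\,\bm v\in\bm V_D.
$$
Choosing $\bm v=\bm p$ in this identity and $\bm v=\bm u'$ in the adjoint weak form \eqref{Phase_Adjoint}, the volume and contact bilinear terms coincide by the symmetry of $\mathbb{C}$ (hence of $\mathcal{F}^{*}$) and the commutativity of the scalar factor $j''_{\tau,\epsilon}$; subtracting the two relations leaves $\langle J'(\bm u),\bm u'\rangle=\int_D \mathcal{F^{*}}'(\phi)\,\delta\phi\,\bm\varepsilon(\bm u):\bm\varepsilon(\bm p)$. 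Since $\mathcal{J}'(\phi)\delta\phi=\langle J'(\bm u),\bm u'(\phi)\delta\phi\rangle$ by the chain rule, this is exactly the claimed identity, with sensitivity density $\mathcal{F^{*}}'(\phi)\bm\varepsilon(\bm u):\bm\varepsilon(\bm p)$; in particular it is supported in the diffuse interface $\xi$, where $\mathcal{F^{*}}'(\phi)=k'(\phi)\mathbb{C}$, and vanishes in $\Omega_0\cup\Omega_1$ where $\mathcal{F}^{*}$ is constant.

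The main obstacle is the rigorous justification that $\phi\mapsto\bm u(\phi)$ is differentiable with $\bm u'(\phi)\delta\phi$ solving the tangent problem above. This rests on the implicit function theorem applied to \eqref{State_weak1}: the left-hand side of the tangent problem is precisely the linearization of the state operator, and it is coercive and invertible on $\bm V_D$ because $j''_{\tau,\epsilon}\ge-\alpha_{j_{\tau}}$ (from assumption \eqref{Eq:pofun1}(d) for $j_{\tau,\epsilon}$) together with the coercivity constant $m_{\mathbb{C}}$ of \eqref{Eq:pofun0}(c) and the trace inequality \eqref{trace} yield a coercivity analogous to the smallness condition \eqref{small}; twice Fr\'echet differentiability of $j_{\tau,\epsilon}$ and Lipschitz dependence of $\mathcal{F}^{*}$ on $\phi$ then give $C^1$ regularity of the state, and the adjoint problem \eqref{Phase_Adjoint} is well posed for the same reason. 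One should also check that the tracking-type functional $J$ in \eqref{J} is differentiable in $\bm u$ with $\langle J'(\bm u),\cdot\rangle$ equal to the right-hand side of \eqref{Phase_Adjoint}, which follows from the growth bounds on $m'$ and $r'$ together with the $H^1$ and trace embeddings.
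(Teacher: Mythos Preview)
Your proposal is correct and follows essentially the same route as the paper: introduce the Lagrangian, differentiate the state equation \eqref{State_weak1} in the direction $\delta\phi$ with test function $\bm p$, pair this against the adjoint \eqref{Phase_Adjoint} tested with $\bm u'$, and use symmetry of the bilinear forms to eliminate the tangent state and recover $\langle\mathcal{J}'(\phi),\delta\phi\rangle=\int_D\mathcal{F^{*}}'(\phi)\,\delta\phi\,\bm\varepsilon(\bm u)\!:\!\bm\varepsilon(\bm p)$. Your discussion of the implicit function theorem, the coercivity of the linearized operator via \eqref{Eq:pofun0}(c), \eqref{Eq:pofun1}(d), \eqref{trace}, and the smallness condition \eqref{small} is more thorough than the paper's own proof, which simply assumes the differentiability of $\phi\mapsto\bm u(\phi)$ without justification.
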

\begin{proof}
First, the Gâteaux derivative of the objective in a direction $\theta$ is
\begin{equation} \label{Phase_de1}
	\begin{aligned}
		\left \langle \mathcal{J}'(\phi),\theta \right \rangle%&=\int_{D} m'(\bm u) \left \langle \bm u'(\phi),\theta \right \rangle {\rm d}x + \int_{\partial D} r'(\bm u)  \left \langle \bm u'(\phi),\theta \right \rangle {\rm ds }\\
		%+\ell \theta+ \gamma (\int_{D} \phi \, {\rm d} \bm x -C)  \theta
		&=\int_{D} m'(\bm u)\cdot \bm z  + \int_{\partial D} r'(\bm u)\cdot \bm z,
		%+\ell \theta + \gamma (\int_{D} \phi \, {\rm d} \bm x -C) \theta,
	\end{aligned}
\end{equation}
where $\bm z = \left \langle \bm u'(\phi), \theta \right \rangle \in \bm V_D$. Using the state \eqref{State_weak1}, we define a Lagrangian
\begin{equation}
	\begin{aligned}
		\mathfrak{L} (\phi,\bm u,\bm p):=& \int_{D} m(\bm u) + \int_{\partial D} r(\bm u) + \int_{D} \mathcal F^{*}(\phi) \bm{\varepsilon}(\bm u):\bm{\varepsilon}(\bm p)  +\int_{\partial D_C} \partial j_{\tau,\epsilon}( u_{\tau})  p_{\tau}\\
		&- \int_{D} \bm f \cdot \bm p -\int_{\partial D_N} \bm g_N \cdot \bm p
		%+ \ell \Lambda(\phi)+ \frac{\gamma}{2}\Lambda^2(\phi),
	\end{aligned}
\end{equation}
with $\bm u$ and $\bm p$  being the displacements of state and adjoint state, respectively. Then, the derivative can be expressed as
\begin{equation} \label{Lagrain_Pha}
	\left \langle \mathcal{J}'(\phi), \theta \right \rangle= \left \langle \frac{\partial \mathfrak{L} }{\partial \phi} (\phi,\bm u,\bm p), \theta \right \rangle + \left \langle \frac{\partial \mathfrak{L} }{\partial \bm u} (\phi,\bm u,\bm p),\left \langle \bm u'(\phi),\theta \right \rangle \right \rangle.
\end{equation}
For the second term on the right-hand side, it holds that $\forall \bm z\in \bm V_D$
\begin{equation} \label{Second_term}
	\begin{aligned}
		\left \langle \frac{\partial \mathfrak{L} }{\partial \bm u} (\phi,\bm u,\bm p),\bm z \right \rangle =& \int_{D} m'(\bm u)\cdot \bm z+\int_{\partial D} r'(\bm u)\cdot\bm z+\int_{D} \mathcal F^{*}(\phi) \bm{\varepsilon}(\bm p):\bm{\varepsilon}(\bm z)+\int_{\partial D_C} j''_{\tau,\epsilon}(u_{\tau})  p_{\tau}  z_{\tau}= 0
	\end{aligned}
\end{equation}
using \eqref{Phase_Adjoint}. Thus, the second term on the right-hand side of Eq. \eqref{Lagrain_Pha} vanishes. Then, we have from \eqref{Lagrain_Pha}
\begin{equation}
	\left \langle J'(\bm u),\theta \right \rangle=\left \langle \frac{\partial \mathfrak{L} }{\partial \phi} (\phi,\bm u,\bm p),\theta \right \rangle.
\end{equation}
Then, computing the derivative of $\eqref{State_weak1}$ w.r.t. $\phi$ in the direction $\theta$ and setting $\bm v=\bm p$ in \eqref{State_weak1} imply that
% \begin{equation}
	%     \int_{D} \mathcal {F^{*}}' (\phi) \bm{\varepsilon}(\bm u):\bm{\varepsilon}(\bm p) {\rm d}x+ \int_{D} \mathcal F^{*}(\phi) \bm{\varepsilon}(\left \langle \bm u'(\phi),\theta \right \rangle):\bm{\varepsilon}(\bm p){\rm d}x+ \int_{\partial D_C} j''_{\tau,\epsilon} ( u_{\tau} )\left \langle u_{\tau}'(\phi),\theta_{\tau} \right \rangle  p_{\tau} {\rm d}s  = 0,
	% \end{equation}
\begin{equation}\label{comob}
	\int_{D} \mathcal {F^{*}}' (\phi) \bm{\varepsilon}(\bm u):\bm{\varepsilon}(\bm p)  + \int_{D} \mathcal F^{*}(\phi) \bm{\varepsilon}(\bm z):\bm{\varepsilon}(\bm p)  + \int_{\partial D_C} j''_{\tau,\epsilon} ( u_{\tau} ) z_{\tau}  p_{\tau}  = 0,
\end{equation}
where $\bm z=\left \langle \bm u'(\phi),\theta \right \rangle$. Combining \eqref{comob} with \eqref{Second_term}, we derive
\begin{equation}\label{Phase_de}
	\int_{D} \mathcal {F^{*}}'(\phi) \bm{\varepsilon}(\bm u):\bm{\varepsilon}(\bm p) =\int_{D} m'(\bm u)\cdot \bm z + \int_{\partial D} r'(\bm u) \cdot\bm z.
\end{equation}
Substituting $\eqref{Phase_de}$ into $\eqref{Phase_de1}$ allows the conclusion to hold.
\end{proof}
Analogous to \eqref{calJ}, we use the augmented Lagrangian method for volume constraint. The Lagrangian reads:
\[\mathcal{I}(\phi)=\mathcal{J}(\phi)+ \ell\Sigma(\phi) + \frac{\gamma}{2}\Sigma^2(\phi),\]
where $\Sigma(\phi) =\int_{D} \phi {\rm d}x -C$. Then
\[
\mathcal{I}'(\phi) = \mathcal {F^{*}}'(\phi) \bm{\varepsilon}(\bm u):\bm{\varepsilon}(\bm p)+\ell+ \gamma \Sigma(\phi).
\]
Now we present Algorithm \ref{Agl2} of phase field method (I).
% \section{Topological derivative} 
% In the following, we will employ the method of topological derivatives for shape optimization.

\subsection{Phase field method (II)}
We now consider a second-order regularization \cite{PIJ2023,PIJ2023S} of the cost functional in the phase field method to facilitate the convergence of the gradient flow equations. %Let $D \subset \mathbb{R}^d$ be a bounded domain with a Lipschitz boundary. 
Define $W$ as a double-well potential as
\begin{equation}
W(\varphi)=\frac{1}{2}\left(\varphi^2-1\right)^2.
\end{equation}
For any $\epsilon > 0$ and $\gamma > 0$, the functional $\mathcal{G}_\epsilon: L^1(D) \to \mathbb{R}$ is defined by
\begin{equation}
\mathcal{G}_\epsilon(\varphi) = \frac{1}{2} \int_{\Omega} \frac{1}{\epsilon} \left(-\epsilon \Delta \varphi + \frac{1}{\epsilon} W'(\varphi) \right)^2  + \frac{\gamma}{2} \int_{\Omega} \left(\frac{\epsilon}{2} |\nabla \varphi|^2 + \frac{1}{\epsilon} W(\varphi) \right),
\end{equation}
if $\varphi \in W^{2,2}(D)$. Otherwise, if $\varphi \in L^1(D) \setminus W^{2,2}(D)$, the functional $\mathcal{G}_\epsilon(\varphi)$ is assigned the value $\infty$.

Consider the domain $D$ containing an unknown inclusion $\Omega \subset \subset  D$. The aim is to determine the optimal shape of $\Omega$ such that an objective  $J$ is minimized. The objective $J$ depends on the geometry of $\Omega$ and a physical field $u$, which is defined either within $D$, inside the inclusion $\Omega$, or in the exterior domain $\Omega_0 = D \setminus \overline{\Omega}$.

To represent the unknown inclusion $\Omega$, a design variable $\rho: D \to \{-1, 1\}$ is introduced. This design variable is defined as:
\begin{equation}
\rho = 2 \chi_1 - 1 \quad \text{in } D,
\end{equation}
where $\chi_1$ is the characteristic function of $\Omega$:
\begin{equation}
\chi_1 =
\begin{cases} 
	1, & \text{if } x \in \Omega, \\
	0, & \text{if } x \in D \setminus \Omega.
\end{cases}
\end{equation}
Under this definition, $\rho = 1$ in the inclusion $\Omega$ and $\rho = -1$ in the exterior region $\Omega_0 = D\setminus \overline{\Omega}$.

The phase field method (II) is based on replacing the discontinuous design variable $\rho$ with a continuous phase field function $\varphi: D \to \mathbb{R}$, which serves as an order parameter. In the phase field framework, the interface separating the material components is represented approximately as a narrow transition region around the level set ${\varphi = 0}$, referred to as the diffusive interface. The regions $\Omega$ and $\Omega_0$, corresponding to different materials, are approximately described by the subdomains ${\varphi > 0}$ and ${\varphi < 0}$, respectively.

For sufficiently smooth vector fields $\varphi$ and compactly supported smooth perturbations $\delta \varphi$, the $L^2$-gradient $d \mathcal{G}_\epsilon$ of the functional $\mathcal{G}_\epsilon$ is defined through the variational derivative by the relation:
\begin{equation}
\partial_{\hat{\lambda }}\mathcal{G}_\epsilon(\varphi +\hat{ \lambda }\delta \varphi) \big|_{\hat{\lambda}=0} = \int_{D} d \mathcal{G}_\epsilon \, \delta \varphi.
\end{equation}
Through explicit computation, the $L^2$-gradient $d \mathcal{G}_\epsilon$ is determined as:
\begin{equation}
d \mathcal{G}_\epsilon = -\frac{1}{\epsilon} \Delta \boldsymbol{\beta}_\epsilon + \frac{1}{\epsilon^3} W^{\prime\prime}(\varphi) \boldsymbol{\beta}_\epsilon + \frac{\gamma}{2 \epsilon} \boldsymbol{\beta}_\epsilon,
\end{equation}
where %$\boldsymbol{\beta}_\epsilon$ is defined by
\begin{equation}\label{mue}
\boldsymbol{\beta}_\epsilon=W^{\prime}(\varphi)-\epsilon^2 \Delta \varphi,
\end{equation}
and $W^{\prime\prime}(\varphi)$ %represents the second derivative of the double-well potential $W(\phi)$, 
characterizes the curvature of the potential landscape.
\subsubsection{Gradient flow}
The relaxed optimization problem is then given by
\begin{equation}\label{Eqr22}
\begin{aligned}
	& \inf _{\varphi}\left\{\mathcal{J}^*(\varphi)=\int_{\partial D_N} \boldsymbol{g}_N \cdot \boldsymbol{u} +\widetilde{\eta} \mathcal{G}_\epsilon(\varphi)\right\}, \\
	& \text { subject to }
	\int_D \rho(\varphi)\mathbb{C} \bm\varepsilon(\boldsymbol{u}): \bm\varepsilon(\boldsymbol{v})+\int_{\partial D_C}\partial j_{\tau, \epsilon}(u_\tau) v_\tau=\int_{D}\boldsymbol{f} \cdot\boldsymbol{v}+\int_{\partial D_N}\boldsymbol{g}_N\cdot \boldsymbol{v} \quad \forall \boldsymbol{v} \in \bm{V}_D,
\end{aligned}
\end{equation}
where $\widetilde{\eta}>0$ is a constant.
Then% the derivative of th in the direction $\theta$ is
\begin{equation}\label{Eqtw22}
\begin{aligned}
	\left\langle \mathcal{J}^{*\prime}(\varphi), \theta\right\rangle=\int_{\partial D_N} \boldsymbol{g}_N \left\langle\boldsymbol{u}^{\prime}(\varphi), \theta\right\rangle +\widetilde{\eta} \partial \mathcal{G}_\epsilon= \int_{\partial D_N} \boldsymbol{g}_N\cdot \boldsymbol{v} +\widetilde{\eta} \partial \mathcal{G}_\epsilon,
\end{aligned}
\end{equation}
where $\boldsymbol{v}=\left\langle\boldsymbol{u}^{\prime}(\varphi), \theta\right\rangle$. Using the linear elasticity state problem from \eqref{Eqr22}, we define a Lagrangian \begin{equation}
\begin{aligned}
	\Tilde{L}(\varphi, \boldsymbol{u}, \boldsymbol{p})=\int_{\partial D_N} \boldsymbol{g}_N \cdot \boldsymbol{u} +\widetilde{\eta} \mathcal{G}_\epsilon(\varphi)+
	\int_D \rho(\varphi) \mathbb{C}\varepsilon(\boldsymbol{u}): \varepsilon(\boldsymbol{p}) +\int_{\partial D_C}\partial j_{\tau, \epsilon}(u_\tau) p_\tau-\int_{D}\boldsymbol{f}\cdot \boldsymbol{p}-\int_{\partial D_N}\boldsymbol{g}_N\cdot \boldsymbol{p},
\end{aligned}
\end{equation}
where $\boldsymbol{u}$ is the displacement and $\boldsymbol{p}$ is the adjoint state. The derivative can be expressed as
\begin{equation}\label{Eqrt22}
\left\langle \mathcal{J}^{*\prime}(\varphi), \theta\right\rangle=\bigg\langle\frac{\partial \Tilde{L}}{\partial \varphi}(\varphi, \boldsymbol{u}, \boldsymbol{p}), \theta\bigg\rangle+\bigg\langle\frac{\partial \Tilde{L}}{\partial \boldsymbol{u}}(\varphi, \boldsymbol{u}, \boldsymbol{p}),\left\langle\boldsymbol{u}^{\prime}(\varphi), \theta\right\rangle\bigg\rangle.
\end{equation}
Consider the case where the second term is zero. Replacing $\left\langle\boldsymbol{u}^{\prime}(\varphi), \theta\right\rangle$ with $\boldsymbol{v}$, the second term is
\begin{equation}\label{Eqrrt22}
\begin{aligned}
	\bigg\langle\frac{\partial \Tilde{L}}{\partial \boldsymbol{u}}(\varphi, \boldsymbol{u}, \boldsymbol{p}), \boldsymbol{v}\bigg\rangle=\int_{\partial D_N} \boldsymbol{g}_N \cdot \boldsymbol{v} +\int_D \rho(\varphi) \mathbb{C} \varepsilon(\boldsymbol{v}): \varepsilon(\boldsymbol{p}) +\int_{\partial D_C} j^{''}_{\tau, \epsilon}(u_\tau) v_{\tau}p_\tau=0.
\end{aligned}
\end{equation}
In the case $\boldsymbol{p}$ satisfies the adjoint equation, the second term of \eqref{Eqrt22} can be ignored. On the other hand, the derivative of the state equation w.r.t. $\varphi$ in the direction $\theta$ is
\begin{equation}\label{Eqrrty22}
\begin{aligned}
	&\int_D \langle \rho^{\prime}(\varphi),\theta \rangle \mathbb{C}\varepsilon(\boldsymbol{u}): \varepsilon(\boldsymbol{p})
	+\int_D \rho(\varphi) \mathbb{C}\varepsilon(\left\langle\boldsymbol{u}^{\prime}(\varphi), \theta\right\rangle): \varepsilon(\boldsymbol{p})
	+\int_{\partial D_C}j^{''}_{\tau, \epsilon}(u_\tau)(\left\langle\boldsymbol{u}^{\prime}(\varphi), \theta\right\rangle_{\tau}) p_\tau\\&=\int_D \rho^{\prime}(\varphi) \mathbb{C}\varepsilon(\boldsymbol{u}): \varepsilon(\boldsymbol{p})
	+\int_D \rho(\varphi)\mathbb{C} \varepsilon(\boldsymbol{v}): \varepsilon(\boldsymbol{p})
	+\int_{\partial D_C}j^{''}_{\tau, \epsilon}(u_\tau)v_{\tau} p_\tau=0,
\end{aligned}
\end{equation}
where we set $\boldsymbol{\psi}=\boldsymbol{p}$.
If we compare this with \eqref{Eqrrt22} and \eqref{Eqrrty22}, we have
\begin{equation}\label{Eqwr22}
\int_{\partial D_N} \boldsymbol{g}_N \cdot \boldsymbol{v}=\int_D \rho^{\prime}(\varphi) \mathbb{C}\varepsilon(\boldsymbol{u}): \varepsilon(\boldsymbol{p}).
\end{equation}
Substituting \eqref{Eqwr22} into \eqref{Eqtw22}, the gradient of the objective is
\begin{equation}
\mathcal{J}^{*\prime}(\varphi)=\rho^{\prime}(\varphi)\mathbb{C} \varepsilon(\boldsymbol{u}): \varepsilon(\boldsymbol{p}) +\widetilde{\eta} \partial \mathcal{G}_\epsilon.
\end{equation}

The gradient flow of the Willmore functionals has been studied in \cite{PR69,QC38,MY44}, where the $L^2$ gradient flow equation for the cost functional is expressed as:
\begin{equation}
\epsilon^2 \partial_t \varphi =\epsilon \rho^{\prime}(\varphi) \mathbb{C}\varepsilon(\boldsymbol{u}): \varepsilon(\boldsymbol{p})+ \Delta \boldsymbol{\beta}_\epsilon - \frac{1}{\epsilon^2} W^{\prime \prime}(\varphi) \boldsymbol{\beta}_\epsilon - \frac{\gamma}{2} \boldsymbol{\beta}_\epsilon,
\end{equation}
with $\boldsymbol{\beta}_\epsilon$ being defined as in \eqref{mue}. For simplicity, we set $\epsilon = 1$ from \cite{PIJ2023S} and reformulate the equation as:
\begin{equation}\label{noep}
\begin{aligned}
	\partial_t \phi =\rho^{\prime}(\varphi)\mathbb{C} \varepsilon(\boldsymbol{u}): \varepsilon(\boldsymbol{p})+ \Delta \boldsymbol{\beta} - W^{\prime\prime}(\varphi) \boldsymbol{\beta} - \frac{\gamma}{2} \boldsymbol{\beta}, \quad \boldsymbol{\beta} = W^{\prime}(\varphi) - \Delta \varphi \quad \text{in } D \times (0, T). \\
	\nabla \varphi \cdot \bm \nu = \nabla \boldsymbol{\beta} \cdot \bm \nu = 0 \quad \text{on } (0, T) \times \partial D, \quad \varphi|_{t=0} = \varphi_0 \quad \text{in } D.
\end{aligned}
\end{equation}
Equation \eqref{noep} establishes a well-posed initial-boundary value problem for a weakly nonlinear fourth-order parabolic problem. The global existence and uniqueness of strong solutions for this system, as well as for more general phase field models, have been rigorously shown in \cite{PP23}. See Algorithm \ref{Agl3} for solving the topology optimization model in structural hemivariational inequalities. Algorithm \ref{Agl3} associated with Willmore functional is new in literature for topology optimization.

\subsection{Phase field method (I) coupling with topological derivative}\label{sec7}
To mitigate the initial design-dependent issue, the topological derivative can be combined with the phase field method \cite{TNK10}. Similar ideas can be found in the level set method \cite{TZ2023,ADJT2005}. Consider an open and bounded domain $D \subset \mathbb{R}^d$ ($d=2,3$) subject to a non-smooth perturbation confined in a small region $\omega_{\zeta }(\hat{\bm{x}})=\hat{\bm{x}}+\zeta  \omega$ of size $\zeta>0$, such that $\overline{\omega_\zeta } \subset D$, where $\hat{\bm{x}}$ is an arbitrary point of $D$ and $\omega$ represents a fixed domain in $\mathbb{R}^d$. See sketch in Fig. \ref{topological_derivative}. 
\begin{figure}[htbp]
\centering
\includegraphics[width=11cm,height=4cm]{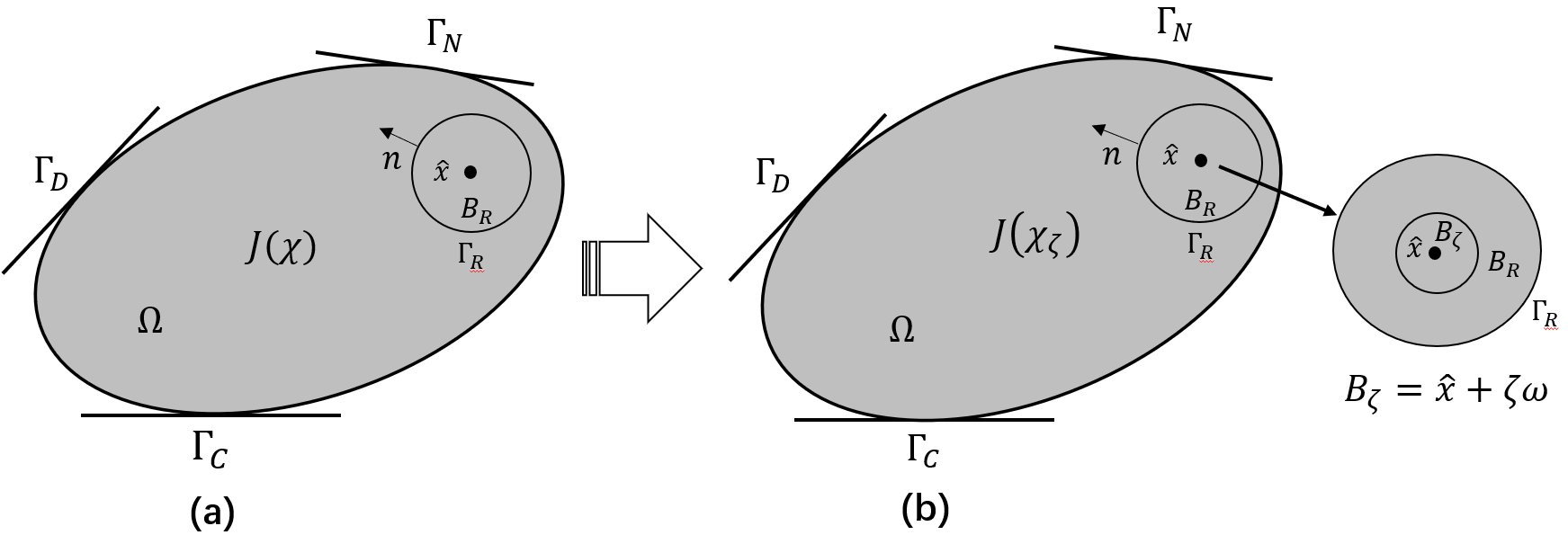}
\caption{Illustration of topological derivative.}\label{topological_derivative}
\end{figure}
We introduce a characteristic function $\chi = \chi(\bm{x})$ for $\bm{x} \in \mathbb{R}^d$, associated with the reference domain, namely $\chi:=1_{D}$, such that
%\begin{equation} \label{Eq_TP_1}
${\rm Vol} (D) = \int_{\mathbb{R}^d} \chi{\rm d}x$.
%\end{equation}
In the case of a perforation, $\chi_{\zeta }(\hat{\bm{x}}):=1_{D}-1_{\omega_{\zeta }(\hat{\bm{x}})}$ and the
perturbed domain is obtained as $D_{\zeta }=D \textbackslash \omega_{\zeta }$. We assume that a given shape
functional $J(\chi_{\zeta }(\hat{\bm{x}}))$, associated with the topologically perturbed domain, admits a topological asymptotic expansion of the form 
\begin{equation} \label{Eq_TP_2}
J(\chi_{\zeta }(\hat{\bm{x}}))= J(\chi)+f(\zeta )d_TJ(\hat{\bm{x}})+R(\zeta ),
\end{equation}
where $J(\chi)$ is the shape functional associated with the reference domain, $f(\zeta )$ is a positive first-order correction function, which decreases monotonically such that $f(\zeta ) \rightarrow 0$ as $\zeta  \searrow 0$, and $R(\zeta )$ is the remainder term, i.e., $R(\zeta ) / f(\zeta ) \rightarrow 0$ with $\zeta  \searrow 0$. The function $\hat{\bm{x}} \rightarrow d_T J(\hat{\bm{x}})$ is recognized as the \emph{topological derivative} of $J$ at $\hat{\bm{x}}$. In addition, after rearranging \eqref{Eq_TP_2} and taking the limit $\zeta  \searrow 0$, we have the general definition for the topological derivative, namely
\begin{equation} \label{Eq_TP_3}
d_T J(\hat{\bm{x}}):= \lim_{\zeta  \searrow 0} \frac{J(\chi_{\zeta }(\hat{\bm{x}}))-J(\chi)}{f(\zeta )}.
\end{equation}

The topological derivative can be derived for elastic contact problems under given friction. We consider to minimize under the volume constraint the energy of the contact problem:
% \min_D \bigg \{ \bigg \}
\begin{equation}\label{jtop}
J(\phi) =\frac{1}{2} \int_{D}  \mathcal F^{*}(\phi) \bm \varepsilon(\bm u):\bm \varepsilon(\bm u)+ \int_{\partial D_C} j_{\tau,\epsilon}(u_{\tau}) - \int_{\partial D_N} \bm g_N \cdot \bm u,
\end{equation}
where $\bm{u}$ satisfies \eqref{State_weak1} with $\bm f\equiv0$. 

We obtain the topological derivative (see derivations in Appendix)
\begin{equation} \label{Topological}
d_{T} J(\bm{x})= - \frac{1}{2} \mathbb{P}_r \bm \sigma(\bm u(\bm{x})):\bm\varepsilon( \bm u(\bm{x})), \,\, \forall \bm{x} \in D,
\end{equation}
where a fourth-order isotropic polarization tensor
\begin{equation}\label{jiojw}
\mathbb{P}_r = \frac{\pi (1-r)}{1+ra_2}\bigg( (1+a_2)\mathbb{I} + \frac{1}{2}(a_1 - a_2) \frac{1-r}{1+ra_1} I \otimes I \bigg).
\end{equation}
In \eqref{jiojw}, $r =\hat{\epsilon}$, $0<\hat{\epsilon}\ll 1$ is the contrast and $\mathbb{I}$ is a fourth-order identity tensor, $I$ is a second-order identity tensor, and the parameters
\begin{equation}\label{a1a2}
a_1 = \frac{\lambda +\mu}{\mu} \quad \text{ and } \quad a_2 = \frac{\lambda +3\mu}{\lambda +\mu},
\end{equation}
where the Lam\'e coefficients $$\lambda=\frac{E \nu}{(1+\nu)(1-2 \nu)} \quad{\rm and}\quad \mu=\frac{E}{2(1+\nu)}$$ with $E$ and $\nu$ being the Young's modulus and Poisson's ratio, respectively. We introduce holes ( using weak material to replace holes) into the domain $D$ with the topological derivative \cite{ADJT2005}. The total volume of the holes made at each stage is limited to 1\% of the total volume of the design domain. 

For the algorithm based on topological derivative, according to the phase field evolution equation, we replace the sensitivity $J'$ in \eqref{Phase_Evolution} with topological derivative and consider to solve
\begin{equation} \label{Phase_Topological}
{\partial_t} \phi= \kappa_1 \Delta \phi + \phi(1-\phi)\bigg( \phi - \frac{1}{2} -30 \eta \frac{d_{T} J(\bm x)}{\| d_{T} J(\bm x) \|_{L^2(D)}} (1-\phi)\phi \bigg).
\end{equation}
A similar space-time discretization scheme was used as for the phase field method in Section \ref{sec4.1}. Notice that topological derivative has been already applied in the phase field method \cite{TNK10}, the level set method \cite{SH2006,He} and 3D discrete variable sensitivity analysis method \cite{SCL24} for structural topology optimization. However, our treatment with topological derivative is different from those in literature (e.g., \cite{TNK10,SH2006,He,SCL24}). Based on the phase field evolution equation \eqref{Phase_Topological}, it has new features of creating holes as well as introducing materials from numerical experiences (see numerical examples in next Section). 

The algorithms for topology optimization without and with topological derivatives are presented in the forms of Algorithm \ref{Agl3} (phase field method (II)) and Algorithm \ref{Agl4} (phase field method (I)), respectively. Both algorithms adopt a similar stopping rule as Algorithm \ref{Agl2}.

\begin{algorithm}[htbp]
\DontPrintSemicolon
\SetAlgoLined
\KwIn{$N_m, \mathcal{T}, \rho, a, b, \alpha, \epsilon, C, r, \kappa_1, \varsigma, \eta$}

\textbf{Initialization:} Set iteration counter $n \leftarrow 0$. Initialize phase field variable $\ell_0$ and Lagrange multiplier $\gamma_0$.\;

\While{\rm stopping criterion is not satisfied}{
	Solve the regularized hemivariational inequality \eqref{State_weak1} to obtain the current state.\;
	
	Compute the adjoint variables by solving \eqref{Phase_Adjoint}.\;
	
	Solve the phase field evolution equation \eqref{Phase_Evolution} and update $\phi$.\;
	
	Compute the new Lagrange multiplier using equation \eqref{Eq30}.\;
	
	Use the new phase field $\phi$ to update the elasticity tensor $\mathcal{F}^*(\phi)$.\;
	
	$n \leftarrow n+1$\;
}

\KwOut{Final topology represented by phase field function $\phi$}
\caption{Topology Optimization of a Structural Hemivariational Inequality Using Phase Field Method (I)}
\label{Agl2}
\end{algorithm}

\begin{algorithm}[htbp]
\DontPrintSemicolon
\SetAlgoLined
\KwIn{$N_m, \mathcal{T}, \rho, a, b, \alpha, \epsilon, C, \gamma$}

\textbf{Initialization:} \\
Set iteration counter $n \leftarrow 0$.\;
Initialize phase field variable $\ell_0$ and Lagrange multiplier $\gamma_0$.\;

\While{\rm stopping criterion is not satisfied}{
	Obtain state variable by solving the regularized hemivariational inequality \eqref{State_weak1}.\;
	
	Compute the adjoint variable by solving the adjoint equation \eqref{Phase_Adjoint}.\;
	
	Solve the phase field evolution equation \eqref{noep} to update $\phi$.\;
	
	Compute the new Lagrange multiplier using equation \eqref{Eq30}.\;
	
	$n \leftarrow n+1$\;
}

\KwOut{Final topology represented by phase field function $\phi$}
\caption{Topology Optimization of a Structural Hemivariational Inequality Using Phase Field Method (II)}
\label{Agl3}
\end{algorithm}

\begin{algorithm}[htbp]
\DontPrintSemicolon
\SetAlgoLined
\KwIn{$N_m, \mathcal{T}, \rho, a, b, \alpha, \epsilon, \hat{\epsilon}, C, r, \kappa_1, \varsigma, \eta$}

\textbf{Initialization:} \\
Set iteration counter $n \leftarrow 0$.\;
Initialize phase field variable $\ell_0$ and Lagrange multiplier $\gamma_0$.\;

\While{\rm stopping criterion is not satisfied}{
	Obtain the state variable by solving the regularized hemi-variational inequality \eqref{State_weak1}.\;
	
	Solve the equation \eqref{Phase_Topological} incorporating topological derivatives \eqref{Topological} to update $\phi$.\;
	
	Compute the new Lagrange multiplier using equation \eqref{Eq30}.\;
	
	Incorporate the new phase field $\phi$ into the elasticity tensor $\mathcal{F}^*(\phi)$.\;
	
	$n \leftarrow n+1$\;
}

\KwOut{Final topology represented by phase field function $\phi$}
\caption{Topology Optimization of a Structural Hemivariational Inequality by the Phase Field Method with Topological Derivative}
\label{Agl4}
\end{algorithm}
%\vskip 0.2cm

\section{Numerical experiments} \label{sec8} 
Up to now, we have not discussed spatial discretizations. It is worth noting that the discrete formulation of the original hemivariational inequality \eqref{hemi}
involves non-smooth and non-convex term $j_{\tau}$. We used the regularization method to approximate it by $j_{\tau,\varepsilon}$ in \eqref{smooth-potential}. Compared to the convex function of variational inequalities, the expression of this term is often more complex, which leads to strong nonlinearity after regularization and more complex computation as well.
Newton's method is used for regularized nonlinear state problems of hemivariational inequality. We discretize and solve the state problem  \eqref{State} or \eqref{State_weak1}, adjoint problem \eqref{Adjoint} or \eqref{Phase_Adjoint}, gradient flow \eqref{Smoothing}, and phase field equations by the \emph{finite element method}. 
Lagrange linear finite element is used after 2D triangulation and 3D tetrahedral mesh generation with Gmsh. 

We choose step size for stability and decrease of the objective:
$\tau_n = 5\times 10^{-3}{h}/\|\mathcal{V}\|_{L^\infty(D)}$, where $h$ is maximum mesh size. For topology optimization methods, the time step $\Delta t$ is chosen similarly by $\mathcal{V}=J'(\phi)$ and $\mathcal{V}=d_{T} J(\bm x)$ associated without and with topological derivative, respectively. Set $E=1.0$ and $\nu=0.3$. Set the coefficients $a=4\times 10^{-3},\ b=2\times 10^{-3} \text{ and } \alpha=100$. Numerical experiments are implemented using FreeFem++ \cite{HF12}. The applied force $\bm g_N=(0,-0.3)^{\rm T}$ and $\bm g_N=(0,0,-1)^{\rm T}$ for 2D and 3D, respectively. The Newton iteration stops when the norm for the relative difference of two consecutive iterates is less than $10^{-6}$. Set $\mathcal{T}=10$ and $\omega=0.01$. Assume that the body force $\bm{f} \equiv 0$. For topology optimization with a random initial design below, we generate uniformly distributed random numbers in the interval [0, 1] to form a random initial phase field function. 

\subsection{Numerical results by shape optimization Algorithm \ref{Agl1}}
We aim to minimize the compliance functional under volume constraint. using Algorithm \ref{Agl1}, where the compliance $J(\Omega) = \int_{\Gamma_N} \bm g_N \cdot \bm u.$ % is defined as:
% \begin{equation} \label{Compliance}
%     J(\Omega) = \int_{\Gamma_N} \bm g_N \cdot \bm u.
% \end{equation}
The quality of the mesh may deteriorate due to deformations. To maintain numerical stability and accuracy, the mesh is adjusted and optimized every five iterations to ensure a well-shaped discretization of the current domain.

The algorithm parameters are set as follows. The initial Lagrange multiplier is chosen as $\ell_0 = 0.01$, and the initial penalty factor is set to $\gamma_0 = 0.02$. The volume update factor is given by $\rho = 1.02$. Set the maximum number of iterations $N_m = 200$. The stopping criterion is determined by the tolerance $\text{Tol} = 10^{-3}$. The target volume is prescribed as $C = 0.95$ to guide the optimization process.

\begin{exam}\label{Ex1}
\textbf{Single Load.} 
Consider an initial design domain that is a square region $(0,1)^2$ with a circular hole of radius $0.2$ centered at $(0.5,0.5)$. A vertical force is applied at the midpoint of the top boundary of the domain. The left and right sides of the design domain are fully fixed, preventing any displacement. Boundary conditions are imposed as follows: a potential contact boundary condition is enforced on the bottom side of the domain, while a free boundary condition is applied to the inner surface of the circular hole. A schematic representation of the problem setup is provided in Fig. \ref{fig3} (a). The optimization process involves iterative shape evolution, where topological changes occur as the structure adapts to minimize the objective function. The intermediate and final shapes obtained during this evolution are illustrated in Figs. \ref{fig3} (b)-(d). The final deformed structure, which results from the applied loading conditions, is also depicted. The convergence behavior of the objective function throughout the optimization is presented in Fig. \ref{fig3} (e), demonstrating a consistent decay. Additionally, the volume constraint is well maintained after convergence, as shown in Fig. \ref{fig3} (f), ensuring that the optimization respects the prescribed material usage.
\end{exam}

\begin{figure}[htbp]
\begin{minipage}[h]{0.385\linewidth}
	\centering
	\includegraphics[height=4.05cm,width=4.1cm]{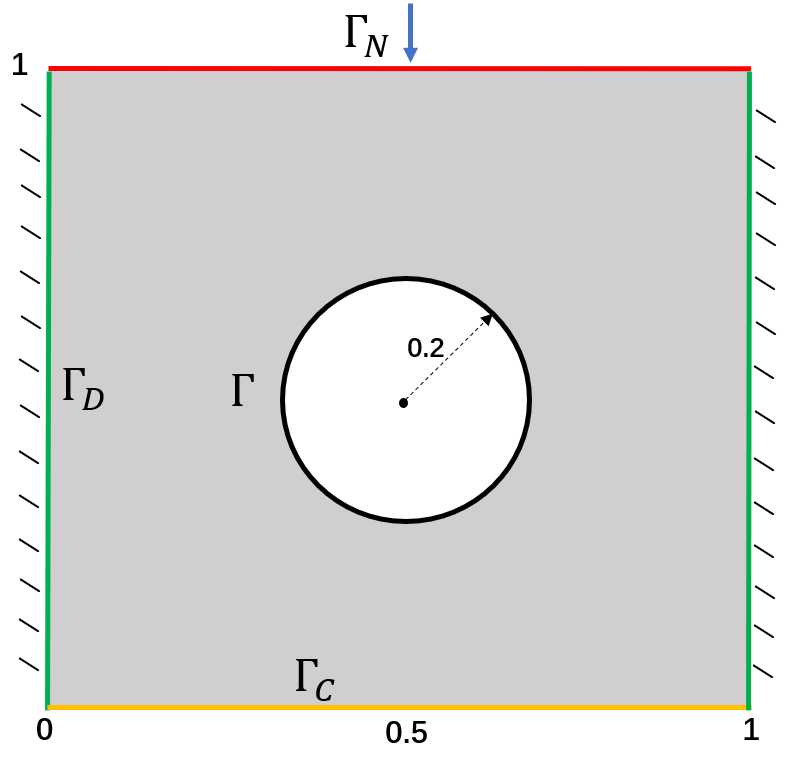} 
	\centerline{(a) Problem setting}
\end{minipage}
\hfill
\begin{minipage}{0.3\linewidth}
	\centering
	\includegraphics[height=3.6cm,width=3.6cm]{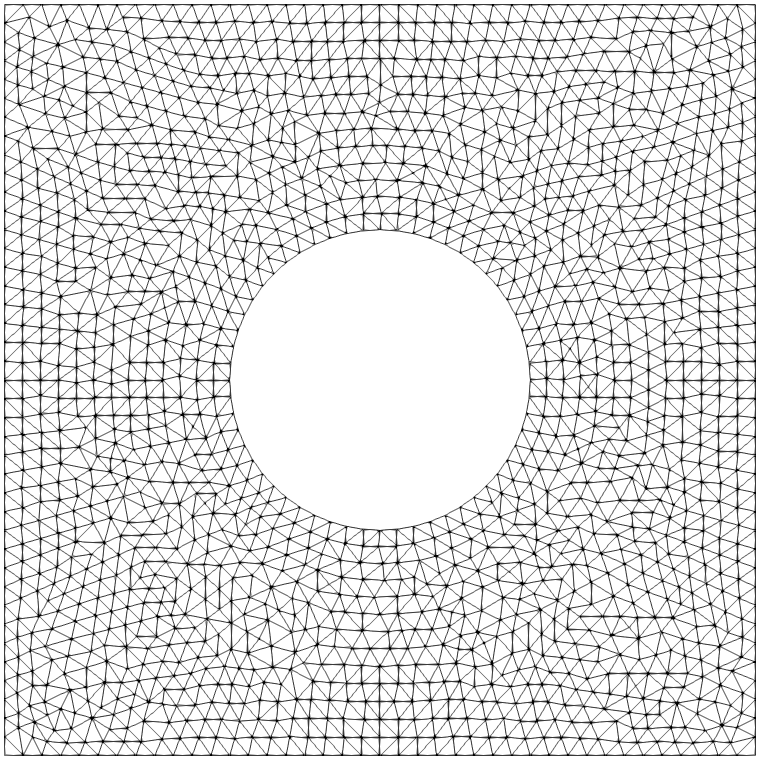} 
	\centerline{(b) Initial shape}
\end{minipage}
\hfill
\begin{minipage}{0.3\linewidth}
	\centering
	\includegraphics[height=3.6cm,width=3.6cm]{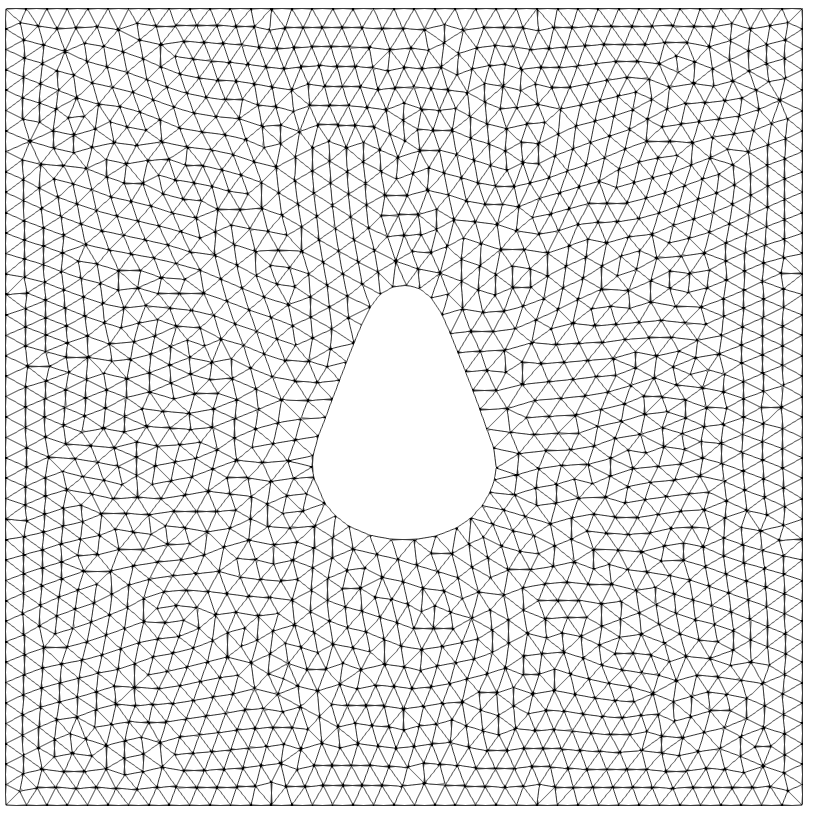} 
	\centerline{(c) Result shape}
\end{minipage}
\hfill
\begin{minipage}{0.31\linewidth}
	\centering
	\includegraphics[height=4.cm,width=4.cm]{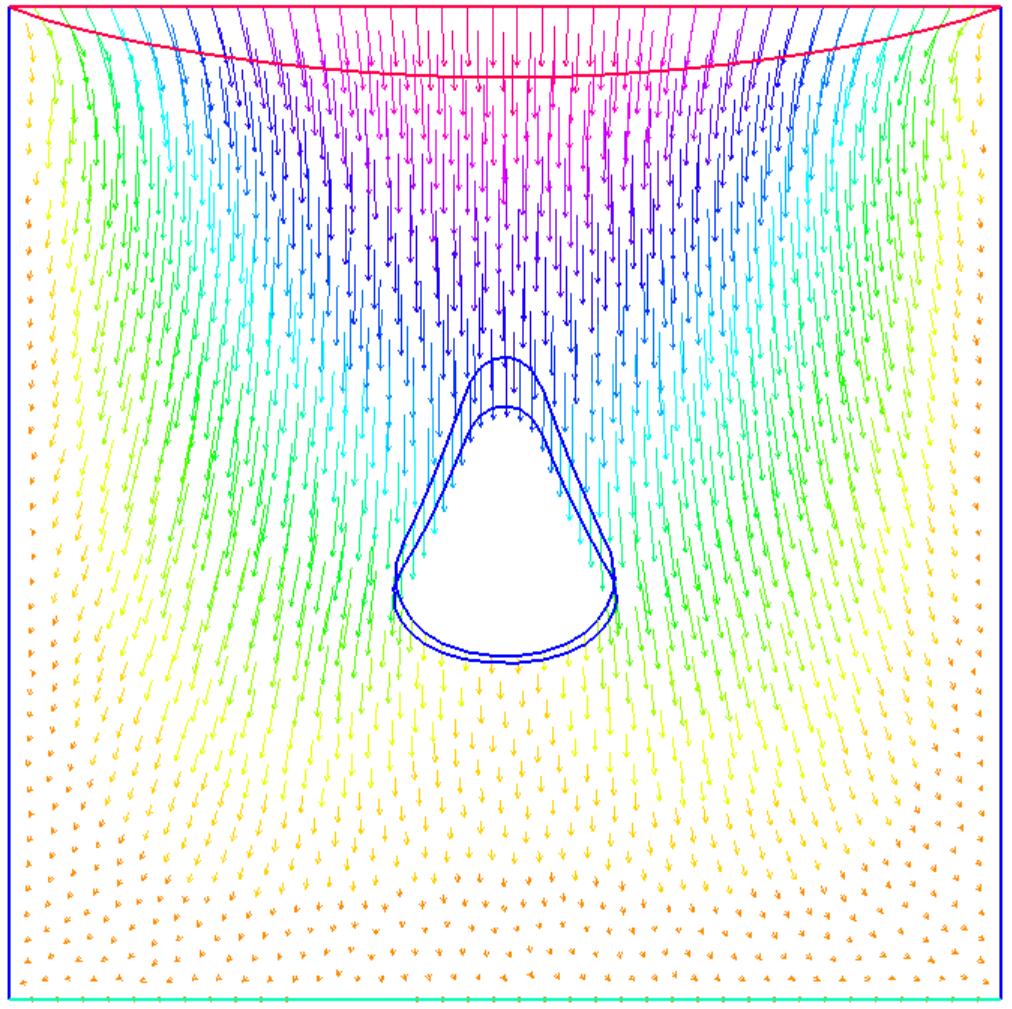} 
	\centerline{(d) Displacement of elastic structure}
\end{minipage}
\hfill
\begin{minipage}{0.31\linewidth}
	\centering
	\includegraphics[height=5cm,width=6cm]{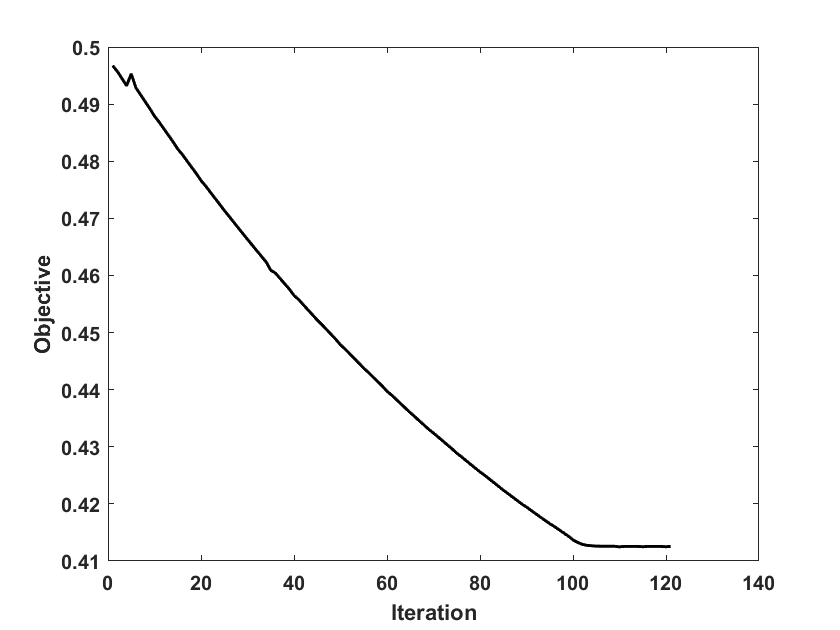} 
	\centerline{(e) Convergence of objective}
\end{minipage}
\hfill
\begin{minipage}{0.31\linewidth}
	\centering
	\includegraphics[height=5cm,width=6cm]{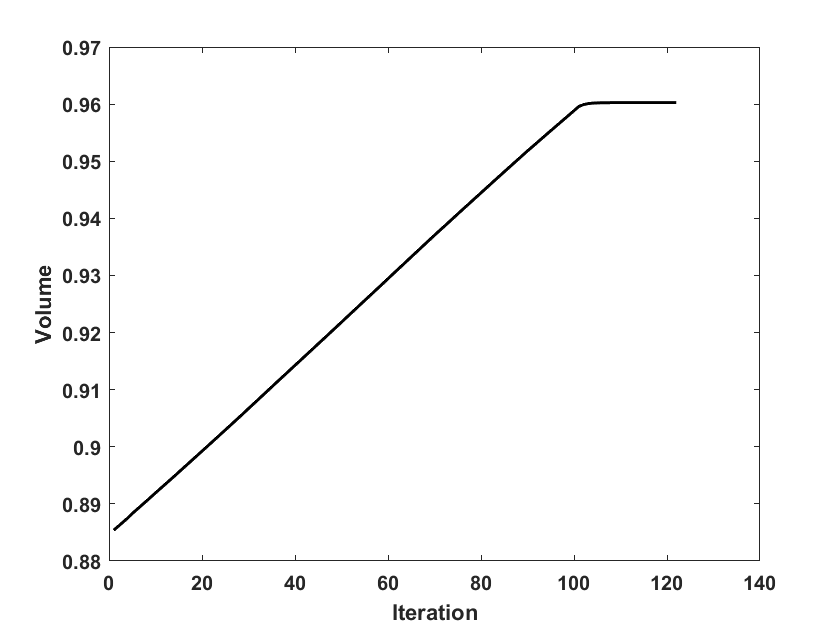} 
	\centerline{(f) Convergence of volume}
\end{minipage}
\caption{Example \ref{Ex1}: problem setting and numerical results.}\label{fig3}
\end{figure}

\begin{exam}\label{Ex2}
\textbf{Multiple Loads.} 
This problem follows the same initial setup as Example \ref{Ex1}, with the design domain being a square region $(0,1)^2$ containing a circular hole of radius $0.2$ centered at $(0.5,0.5)$. However, instead of a single vertical force, identical vertical forces are applied at three distinct locations along the upper boundary, as illustrated in Fig. \ref{fig4} (a). These additional load points introduce a more complex structural response compared to Example \ref{Ex1}. The optimization process starts from the same initial design as in Fig. \ref{fig3} (b). As the structure evolves to minimize the objective function while satisfying the volume constraint, notable differences in the final topology emerge. In particular, the inner boundary of the optimized structure appears rounder than that observed in Example \ref{Ex1}, as shown in Fig. \ref{fig4} (b). This indicates that the redistribution of forces influences the final shape of the optimized design.
The convergence behavior of the optimization is presented in Fig. \ref{fig4} (c) and Fig. \ref{fig4} (d), which depict the evolution of the objective function and volume fraction, respectively. These plots demonstrate that the optimization process achieves stable convergence while maintaining the prescribed volume constraint.
\end{exam}

\begin{figure}[htbp]
\begin{minipage}[h]{0.45\linewidth}
	\centering
	\includegraphics[height=4.05cm,width=4.1cm]{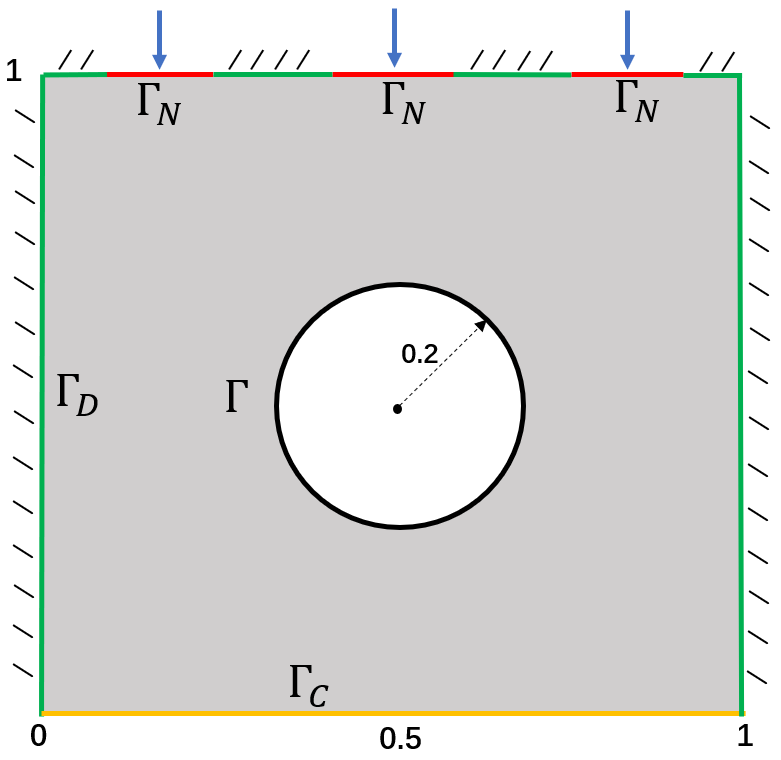} 
	\centerline{(a) Problem setting}
\end{minipage}
\hfill
% \begin{minipage}[h]{0.3\linewidth}
	% \centering
	% \includegraphics[height=4.5cm,width=4.5cm]{exam1_Initial_Shape.png} 
	% \centerline{(b) Initial shape}
	% \end{minipage}
% \hfill
\begin{minipage}[h]{0.45\linewidth}
	\centering
	\includegraphics[height=3.7cm,width=3.7cm]{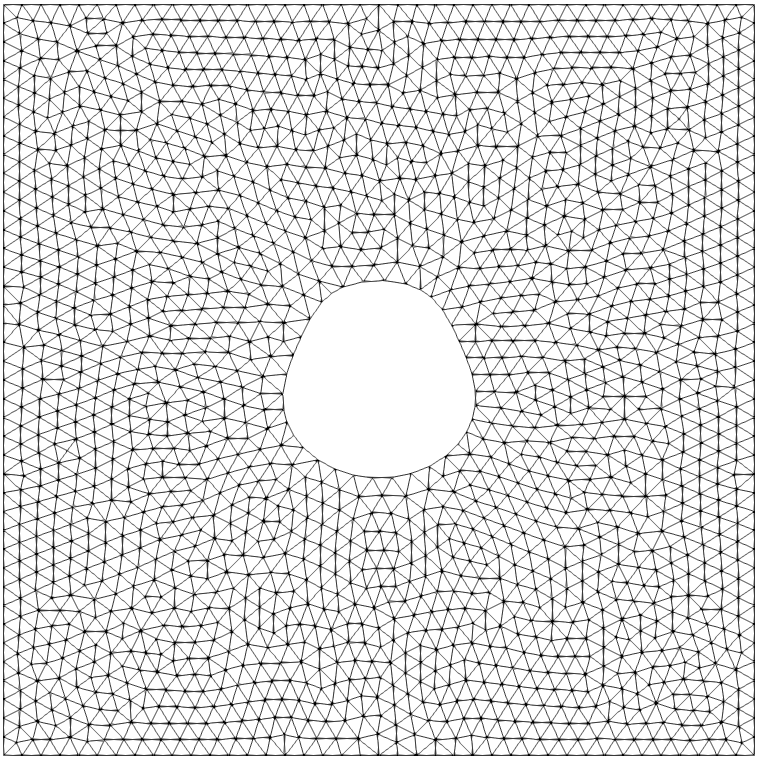} 
	\centerline{(b) Result shape}
\end{minipage}
\hfill
\begin{minipage}[h]{0.45\linewidth}
	\centering
	\includegraphics[height=5cm,width=7cm]{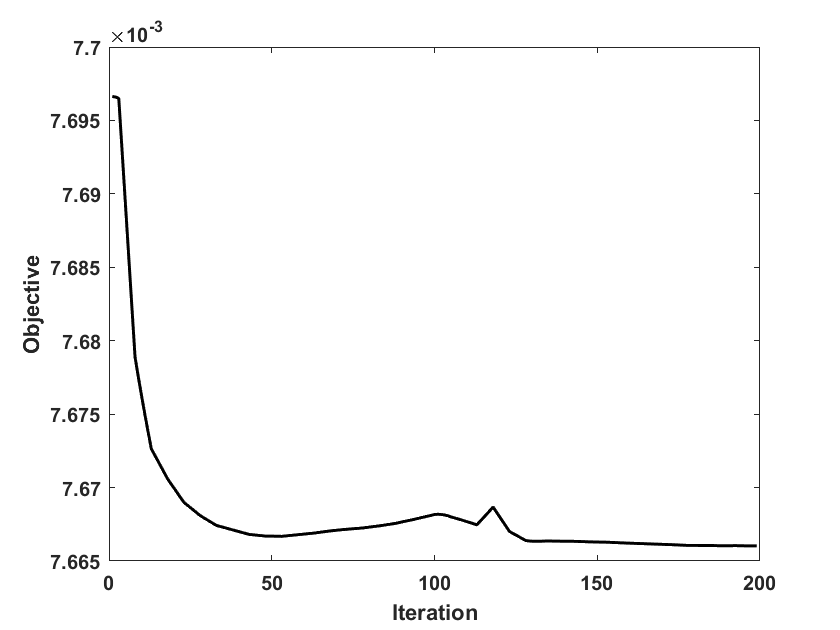} 
	\centerline{(c) Convergence history of objective}
\end{minipage}
\hfill
\begin{minipage}[h]{0.45\linewidth}
	\centering
	\includegraphics[height=5cm,width=7cm]{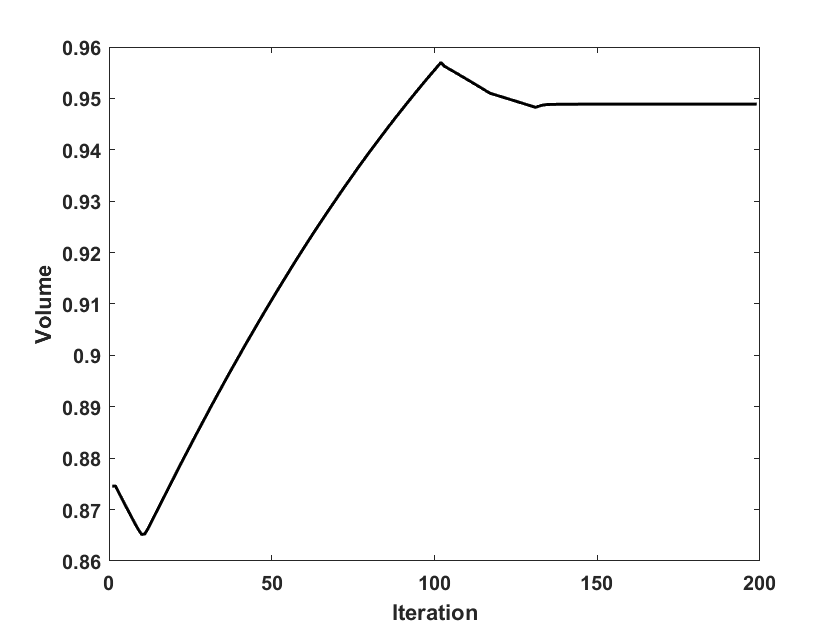} 
	\centerline{(d) Convergence history of volume}
\end{minipage}
\caption{Example \ref{Ex2}: problem setting and numerical results.}\label{fig4}
\end{figure}

\subsection{Numerical results of phase field methods without topological derivative}
We consider minimizing compliance $J(\Omega) = \int_{\partial_{D_N}} \bm g_N \cdot \bm u$ under volume constraints using Algorithm \ref{Agl2}. For Case A of Example \ref{Ex3}, we utilize Algorithm \ref{Agl3} to perform the numerical computations.% \begin{equation} \label{Compliance}
%     J(\Omega) = \int_{\partial_{D_N}} \bm g_N \cdot \bm u.
% \end{equation}
The parameters are set as follows. We take $k_{\min} = 10^{-5}$, $p = 3$, $\eta = 20$, and $\kappa_1 = 10^{-5}$. For 2D cases, we set $\varsigma = 10^{-3}$, $\ell_0 = 0$, $\gamma_0 = 20$, and $\rho = 1.05$. For 3D cases, we set $\varsigma = 10^{-2}$, $\ell_0 = 0$, $\gamma_0 = 0.1$, and $\rho = 1.02$. The evolution steps of the phase-field function are set to $\mathcal{T} = 10$. The target volume fraction is computed by
$V_f = {C}/ {{\rm Vol}(D)}$, where ${\rm Vol}(D)$ denotes the total volume of the design domain.

\begin{exam}\label{Ex3}
\textbf{Single Load: Cantilever.} 
\textbf{Case A}: 
Consider the design domain $D = (0,2) \times (0,1)$. A portion of the bottom boundary located at the center is in contact with a rigid foundation as illustrated in Fig. \ref{fig5} (a). 
Set $V_f = 0.32$, and $N_m = 300$. A fine triangular mesh resolution with mesh size $h = 0.01$ is employed.
The optimization process starts from a randomly generated initial design. The final optimized topologies obtained using two different phase field methods are shown in Fig. \ref{fig5} (c) for Algorithm \ref{Agl2} and in Fig. \ref{fig5} (d) for Algorithm \ref{Agl3}. The convergence history of the objective and volume fraction, presented in Figs. \ref{fig5} (e)-(f), demonstrates the effectiveness of both phase field methods. Furthermore, the results indicate that the volume constraint is well maintained throughout the optimization process.

\textbf{Case B}: 
{\rm Set $D = (0,2) \times (0,1)$, with the problem setting illustrated in Fig. \ref{fig66} (a). Choose $V_f = 0.35$. 
	Starting from an initial configuration depicted in Fig. \ref{fig66} (b), the optimized structure by Algorithm \ref{Agl3} is shown in Fig. \ref{fig66} (c). Additionally, we test the initial phase field function is a constant as displayed in Fig. \ref{fig66} (d), which yields a final design shown in Fig. \ref{fig66} (e). 
	The convergence histories of the objective and volume fraction are presented in Fig. \ref{fig66} (f) and Fig. \ref{fig66} (g), respectively. These results further validate the robustness of the optimization approach and its ability to achieve stable convergence while satisfying the prescribed volume constraint.}
\end{exam}

\begin{figure}[htbp]
\begin{minipage}[h]{0.48\linewidth}
	\centering
	\includegraphics[height=3.cm,width=6.cm]{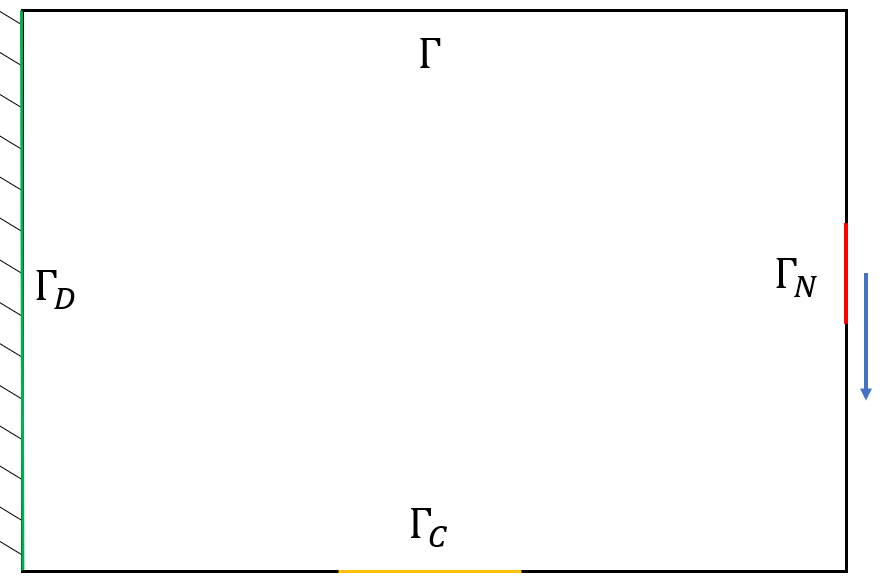} 
	\centerline{(a) Problem setting}
\end{minipage}
\hfill
% \begin{minipage}[h]{0.48\linewidth}
	% \centering
	% \includegraphics[height=4cm,width=7.5cm]{exam3_Mesh.png} 
	% \centerline{(b) Mesh size $h=0.01$}
	% \end{minipage}
% \hfill
\begin{minipage}[h]{0.48\linewidth}
	\centering
	\includegraphics[height=3.cm,width=5.8cm]{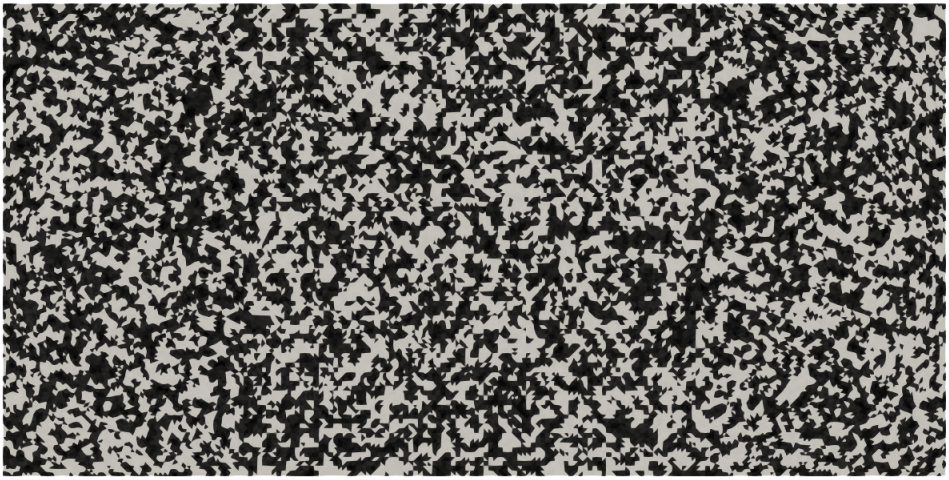} 
	\centerline{(b) Initial design}
\end{minipage}
%\hfill
%\begin{minipage}[h]{0.48\linewidth}
%\centering
%\includegraphics[height=3.cm,width=5.8cm]{exam3_Iter10.png} 
%\centerline{(c) Iteration=30 }
%\end{minipage}
\hfill
\begin{minipage}[h]{0.48\linewidth}
	\centering
	\includegraphics[height=3.cm,width=5.8cm]{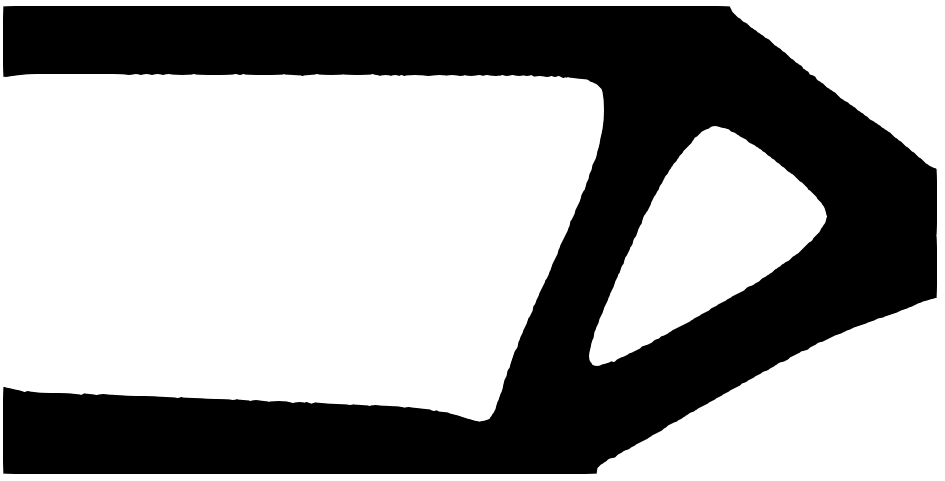} 
	\centerline{(c) Result by the phase field method (I)}
\end{minipage}
\hfill
\begin{minipage}[h]{0.48\linewidth}
	\centering
	\includegraphics[height=3.cm,width=5.8cm]{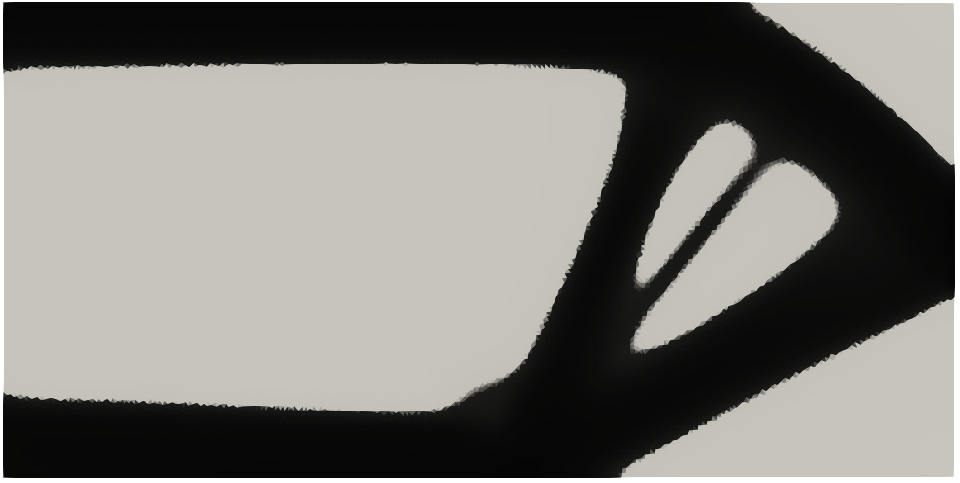} 
	\centerline{(d) {Result with the phase field method (II) }}
\end{minipage}
%\hfill
%\begin{minipage}[h]{0.48\linewidth}
%\centering
%\includegraphics[height=4.5cm,width=7.2cm]{exam3_Obj1.png} 
%\centerline{(e) Objective}
%\end{minipage}
%\hfill
%\begin{minipage}[h]{0.48\linewidth}
%\centering
%\includegraphics[height=4.5cm,width=7.2cm]{exam3_Volume_Frac.png} 
%\centerline{(f) Volume Fraction}
%\end{minipage}
\hfill
\begin{minipage}[h]{0.48\linewidth}
	\centering
	\includegraphics[height=4.5cm,width=7.2cm]{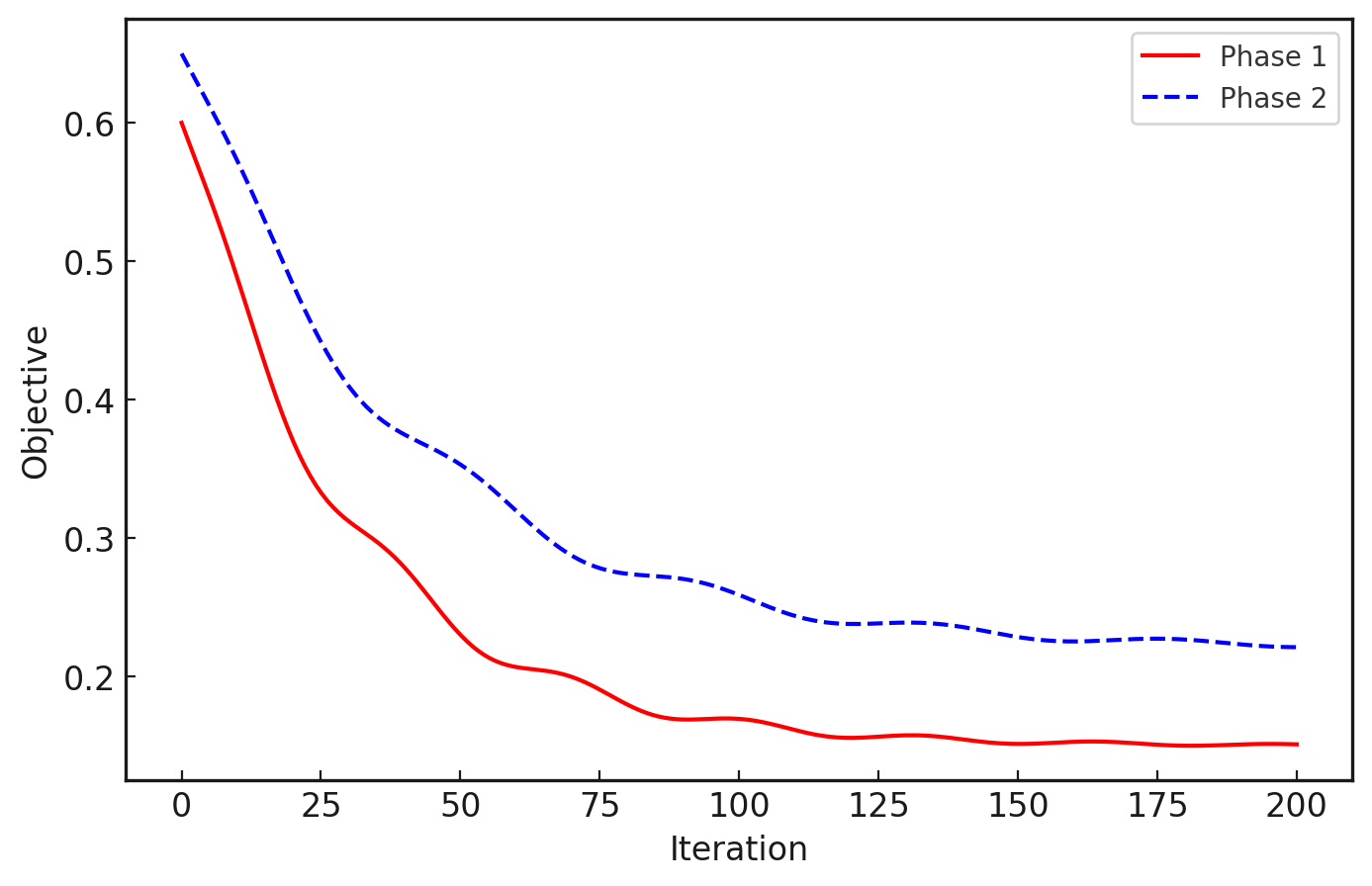} 
	\centerline{(e) {Objective}}
\end{minipage}
\hfill
\begin{minipage}[h]{0.48\linewidth}
	\centering
	\includegraphics[height=4.5cm,width=7.2cm]{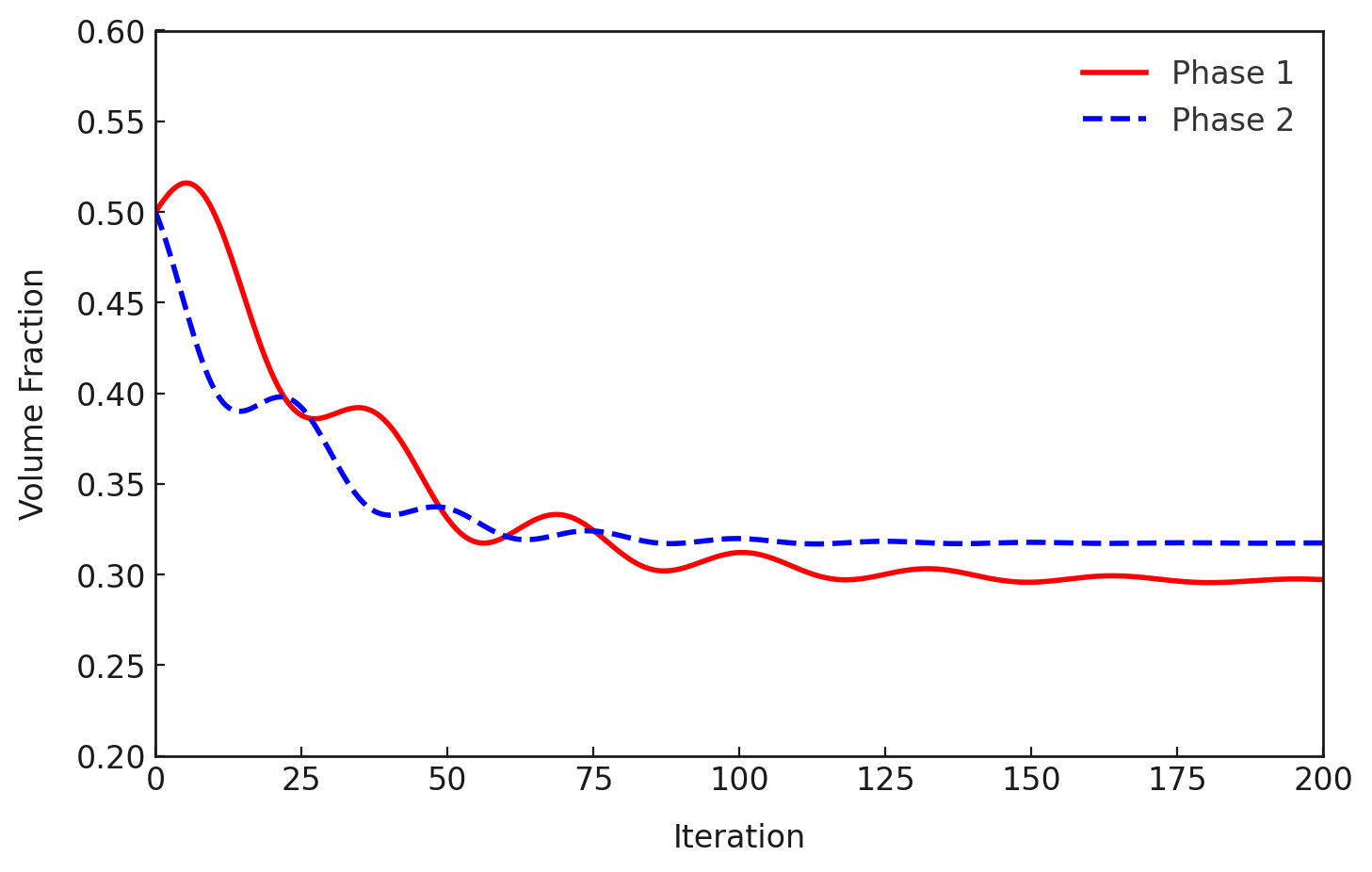} 
	\centerline{(f) Volume fraction}
\end{minipage}
\caption{Example \ref{Ex3}: Case A: problem setting and numerical results with different phase field methods.}\label{fig5}
\end{figure}
%%%%%%%%%%%%%%%%%%%%%%%%%%%%%%%%%%%%%%%%%%%%%%%%%%%%%%%%%%%%%%%%%

\begin{figure}[htbp]
\begin{minipage}[!h]{0.32\linewidth}
	\centering
	\includegraphics[height=2.7cm,width=5cm]{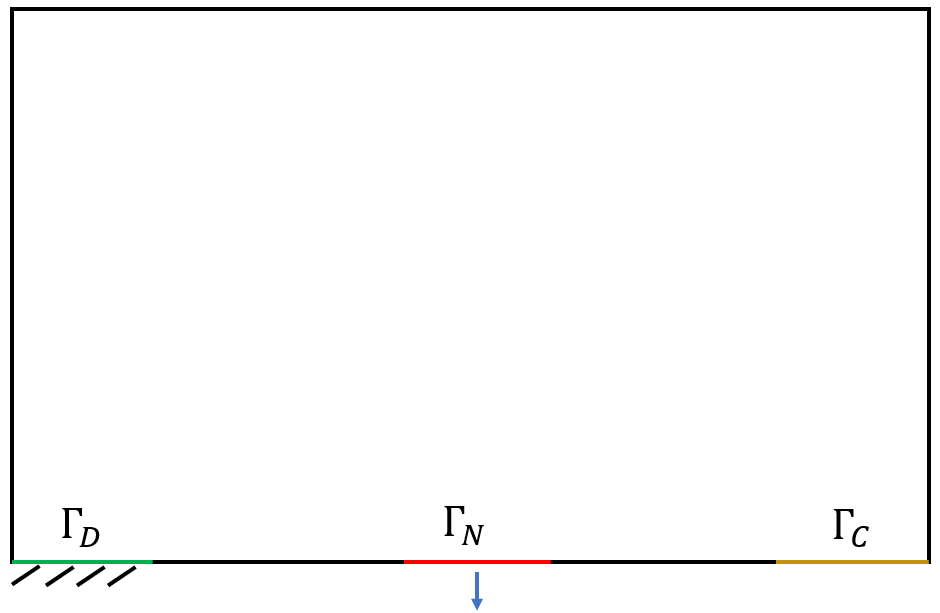} 
	\centerline{(a) Problem settings}
\end{minipage}
\hfill
\begin{minipage}[h]{0.3\linewidth}
	\centering
	\includegraphics[height=2.5cm,width=5cm]{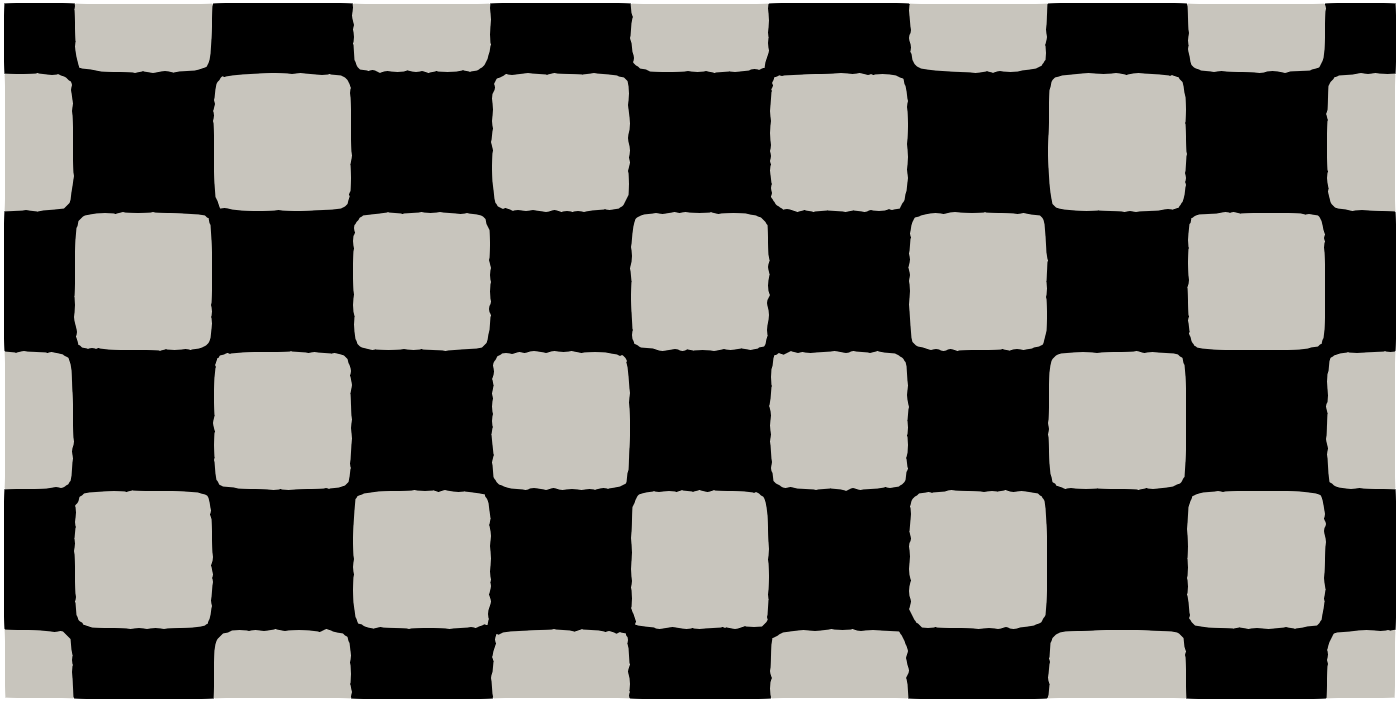} 
	\centerline{(b) Initial design 1}
\end{minipage}
\hfill
\begin{minipage}[h]{0.3\linewidth}
	\centering
	\includegraphics[height=2.5cm,width=5cm]{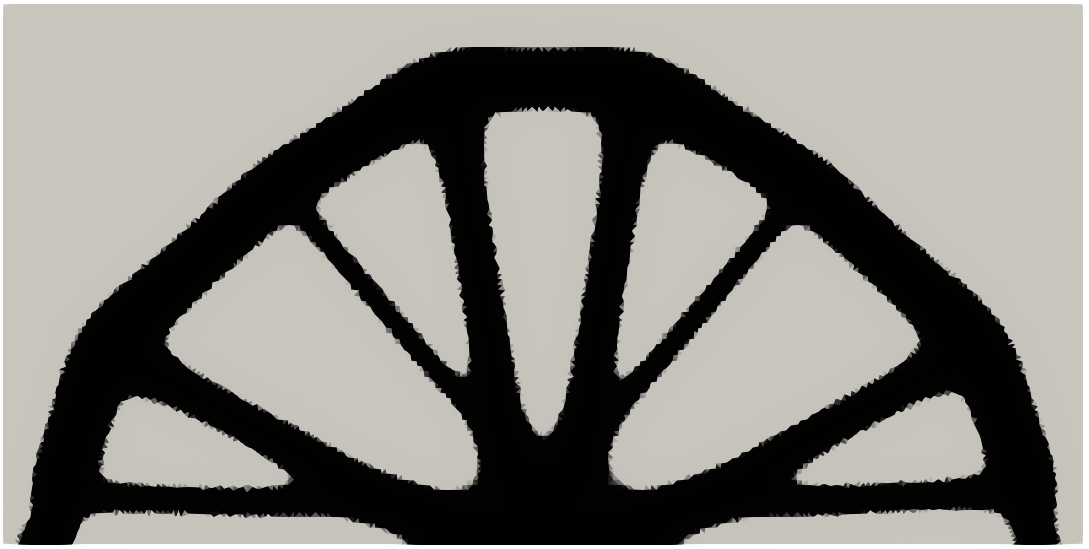} 
	\centerline{(c) Result 1}
\end{minipage}
\hfill
\begin{minipage}[h]{0.45\linewidth}
	\centering
	\includegraphics[height=2.5cm,width=5cm]{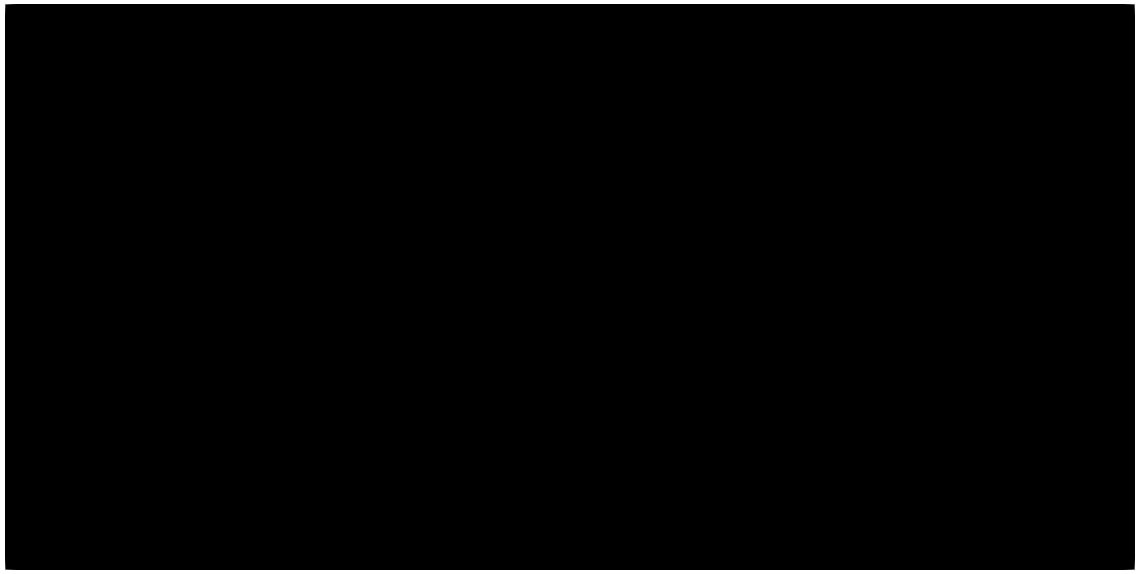} 
	\centerline{(d) Initial design 2}
\end{minipage}
\hfill
\begin{minipage}[h]{0.45\linewidth}
	\centering
	\includegraphics[height=2.5cm,width=5cm]{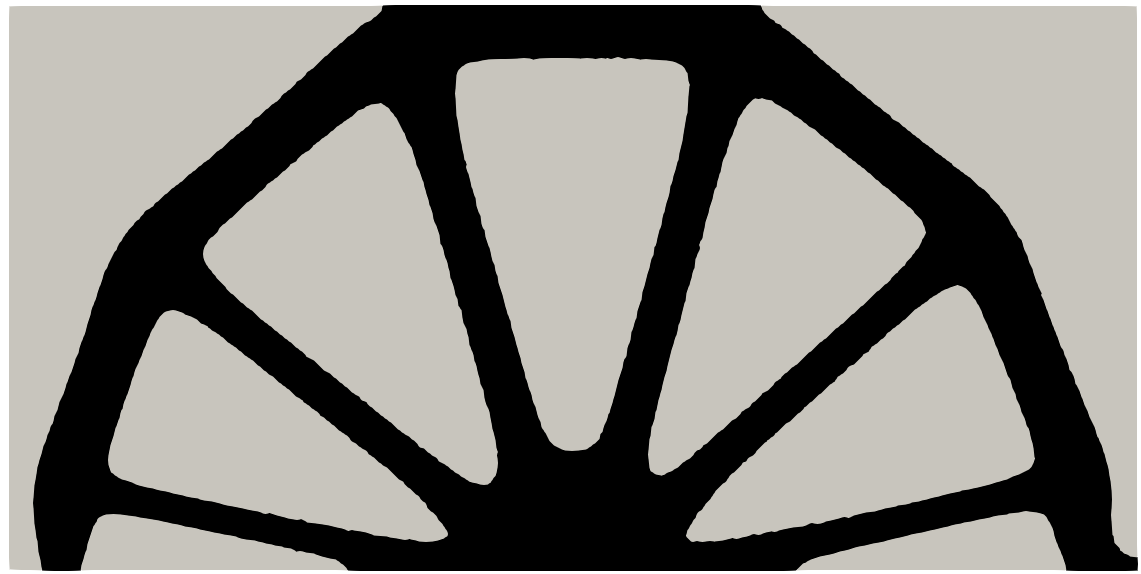} 
	\centerline{(e) Result 2}
\end{minipage}
\hfill
\begin{minipage}[h]{0.48\linewidth}
	\centering
	\includegraphics[height=4cm,width=7.5cm]{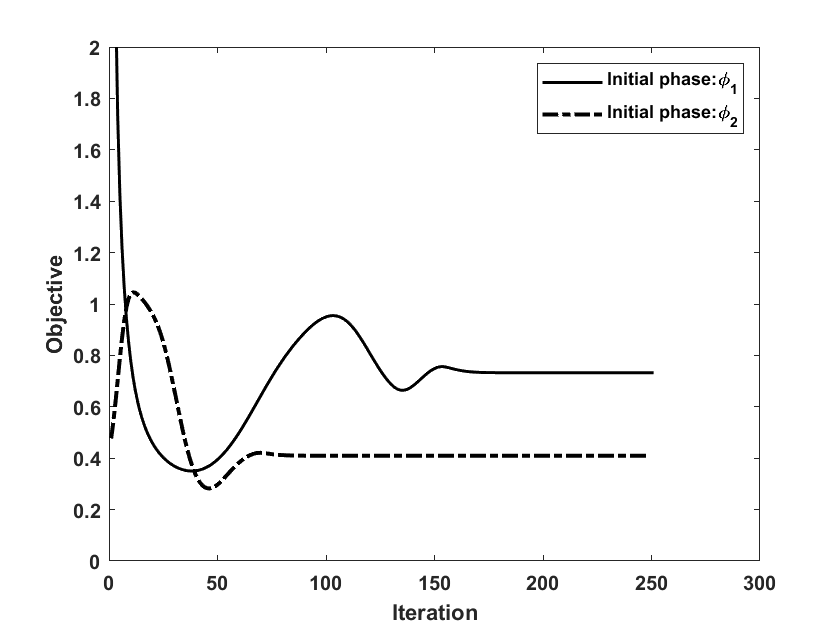} 
	\centerline{(f) Objective}
\end{minipage}
\hfill
\begin{minipage}[h]{0.48\linewidth}
	\centering
	\includegraphics[height=4cm,width=7.5cm]{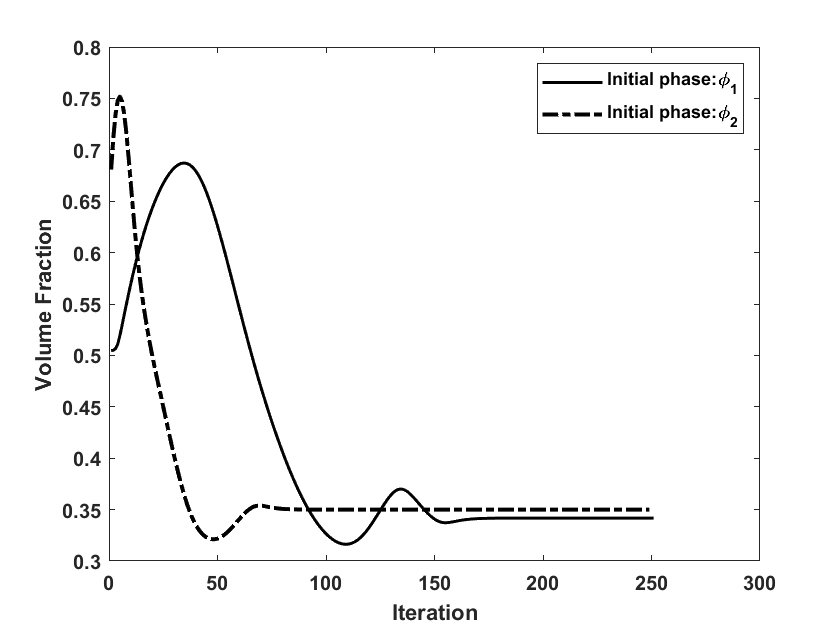} 
	\centerline{(g) Volume fraction}
\end{minipage}
\caption{Example \ref{Ex3}: Case B: problem setting and numerical results.}\label{fig66}
\end{figure}

\begin{exam}\label{Ex4}
\textbf{Multiple loads.} {\rm Consider the same domain as Example 3 (using Algorithm \ref{Agl2}). The left and right boundaries are fixed. The bottom boundary is in contact with a rigid foundation. Apply the same vertical force $\bm g_N$ to the three small parts on the left, right, and center of the top boundary (see Fig. \ref{fig7} (a) for illustration). Set $N_m=250$ and $V_f=0.45$. The optimized design and convergence history of objective and volume fraction are shown in Fig. \ref{fig7}.}
\end{exam}

\begin{figure}[htbp]
\begin{minipage}[h]{0.48\linewidth}
	\centering
	\includegraphics[height=3.6cm,width=6.2cm]{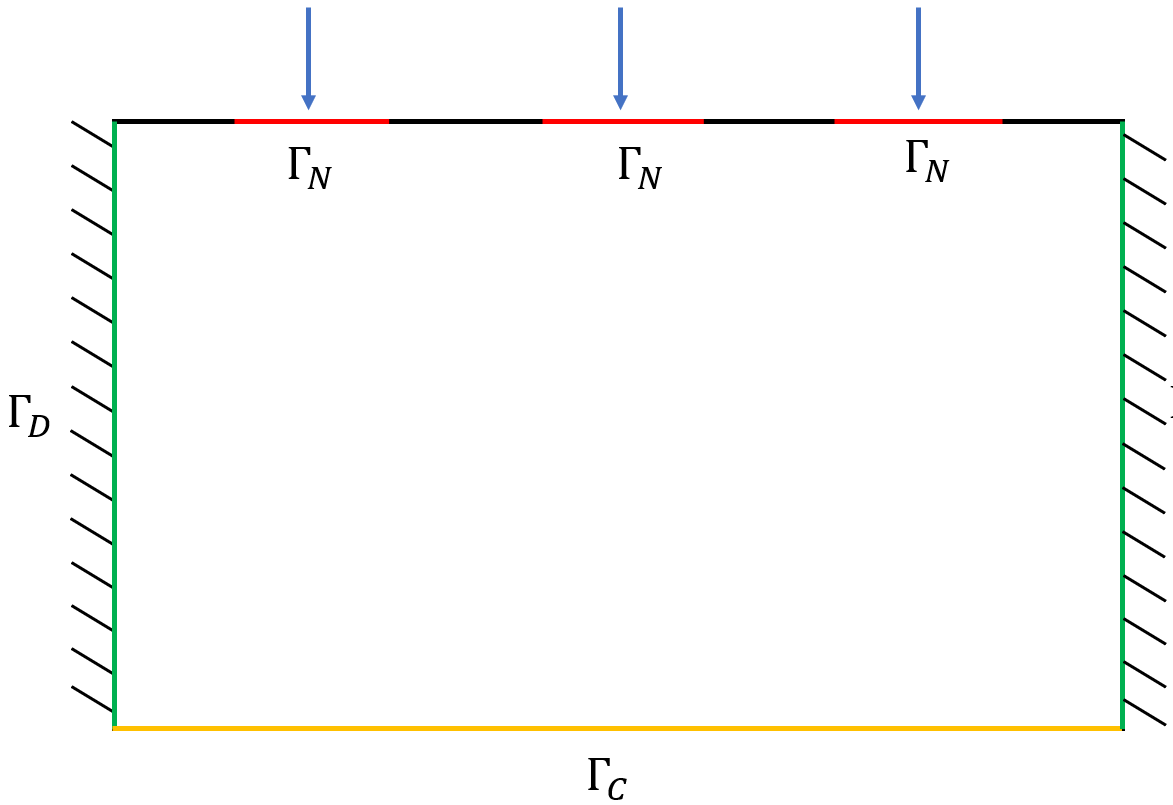} 
	\centerline{(a) Problem settings}
\end{minipage}
%\hfill
%\begin{minipage}[h]{0.48\linewidth}
%\centering
%\includegraphics[height=4cm,width=7.5cm]{exam5_Iter10.png} 
%\centerline{(b) Iteration=10}
%\end{minipage}
%\hfill
%\begin{minipage}[h]{0.48\linewidth}
%\centering
%\includegraphics[height=4cm,width=7.5cm]{exam5_Iter40.png} 
%\centerline{(b) Iteration=40}
%\end{minipage}
\hfill
\begin{minipage}[h]{0.48\linewidth}
	\centering
	\includegraphics[height=2.85cm,width=5.7cm]{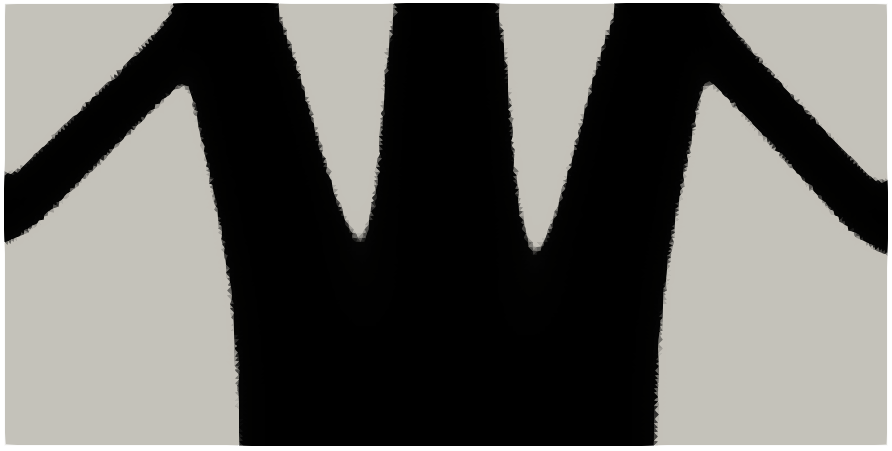} 
	\centerline{(b) Optimized shape}
\end{minipage}
\hfill
\begin{minipage}[h]{0.48\linewidth}
	\centering
	\includegraphics[height=4.5cm,width=7.2cm]{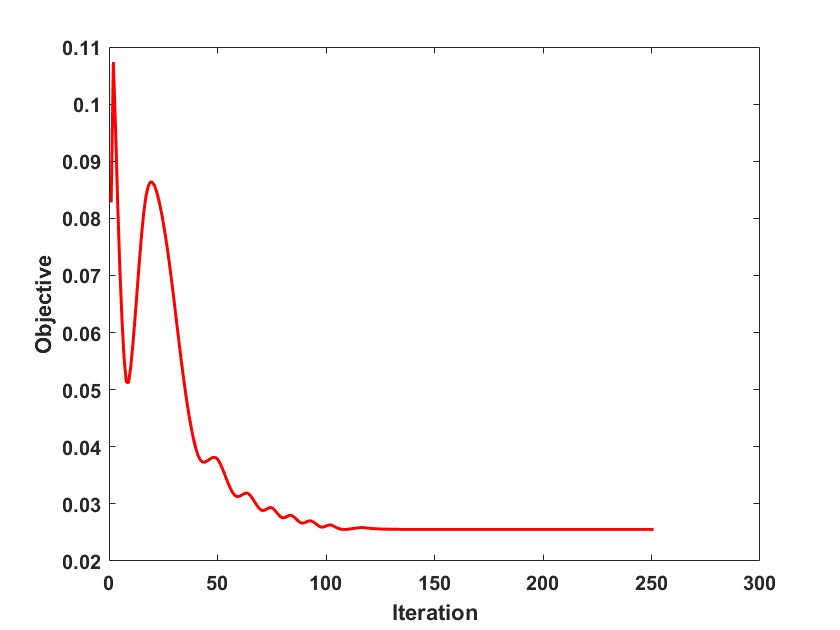} 
	\centerline{(c) Objective}
\end{minipage}
\hfill
\begin{minipage}[h]{0.48\linewidth}
	\centering
	\includegraphics[height=4.5cm,width=7.2cm]{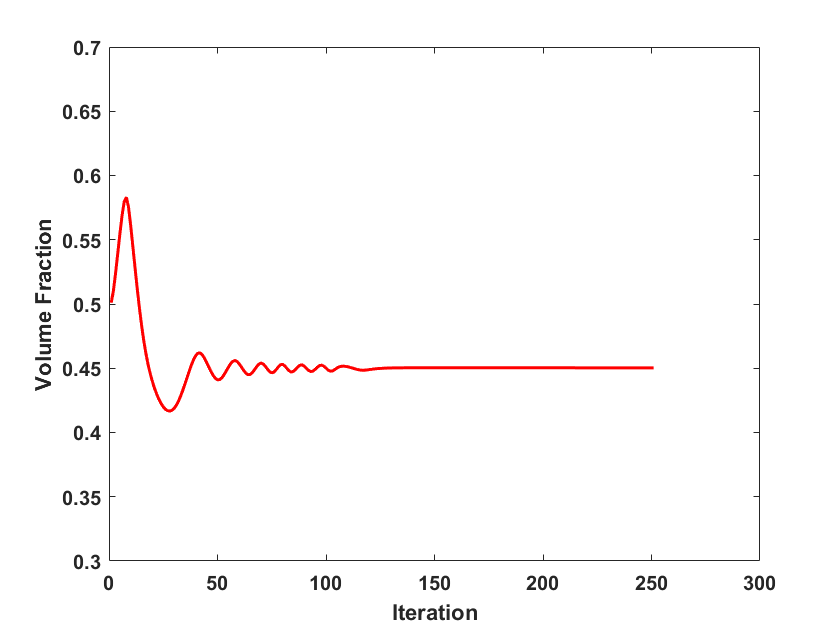} 
	\centerline{(d) Volume fraction}
\end{minipage}
\caption{Example \ref{Ex4}: problem setting and numerical results.}\label{fig7}
\end{figure}

\begin{exam}\label{Ex5}
\textbf{L-shaped cantilever in 2D.} {\rm Set $D$ to be L-shaped domain $(0,2)^2 \setminus ( [1,2)\times (0,1])$ (see Fig. \ref{fig8} (a)). %The left boundary of the domain is fixed and the bottom boundary is in contact with a rigid foundation. A load is applied on the middle of the right boundary, while the rest boundaries are free (Fig. \ref{fig8} (a)). 
	Set $N_m=200$ and $V_f=0.4$. Triangulation is shown in Fig. \ref{fig8} (b). A random initial phase field function with uniform distribution in $[0,1]$ shown in Fig. \ref{fig8} (c) evolves to the final optimized result (using Algorithm 2) in Fig. \ref{fig8} (e). The other initial phase field function taking a constant value $0.5$ converges to a design Fig. \ref{fig8} (f). The convergence histories of objective and volume fraction are shown in Fig. \ref{fig8} (e) and Fig. \ref{fig8} (f), respectively. The comparisons show that two cases coincide with each other.}
\end{exam}

\begin{figure}[htbp]
\begin{minipage}[!h]{0.32\linewidth}
	\centering
	\includegraphics[height=4.1cm,width=4.cm]{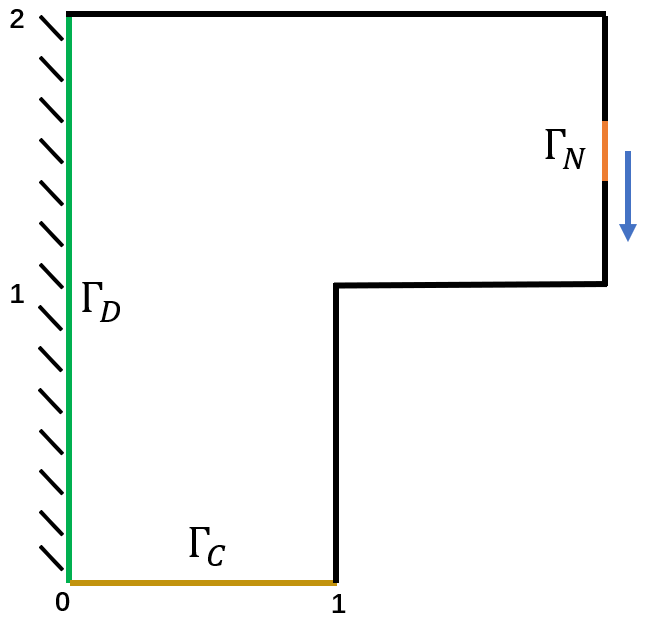} 
	\centerline{(a) Problem setting}
\end{minipage}
\hfill
\begin{minipage}[h]{0.32\linewidth}
	\centering
	\includegraphics[height=3.8cm,width=3.8cm]{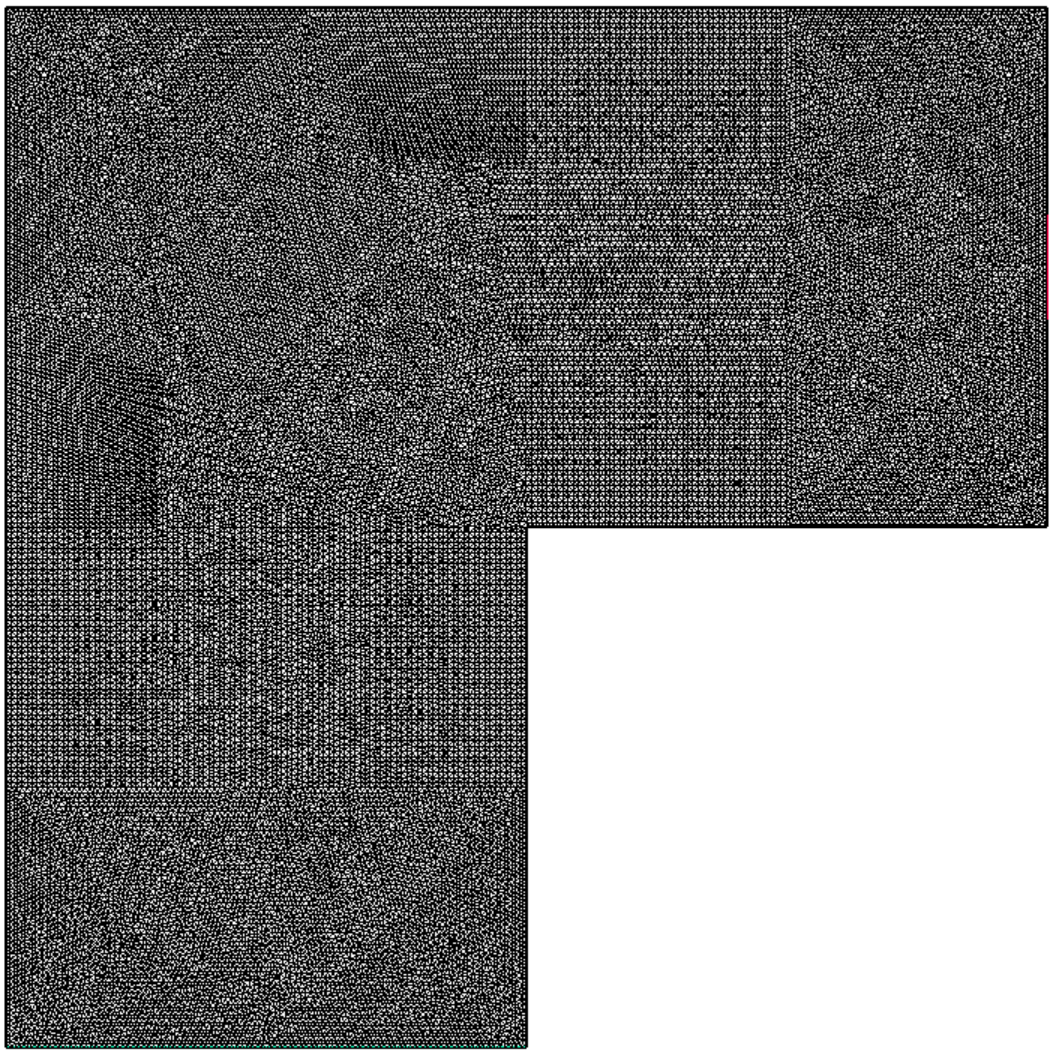} 
	\centerline{(b) Mesh with $h=0.01$ }
\end{minipage}
\hfill
\begin{minipage}[h]{0.32\linewidth}
	\centering
	\includegraphics[height=3.8cm,width=3.8cm]{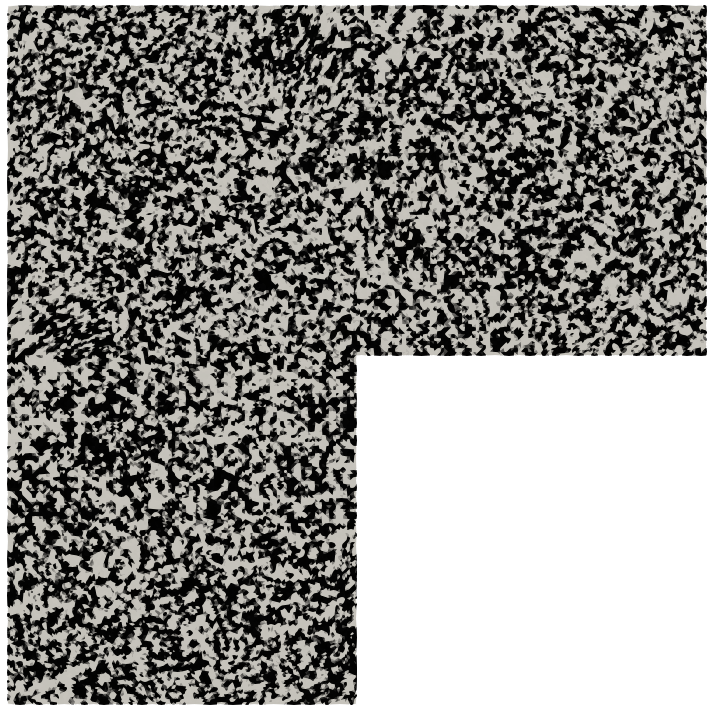} 
	\centerline{(c) Initial design}
\end{minipage}
\hfill
\begin{minipage}[h]{0.32\linewidth}
	\centering
	\includegraphics[height=3.8cm,width=3.8cm]{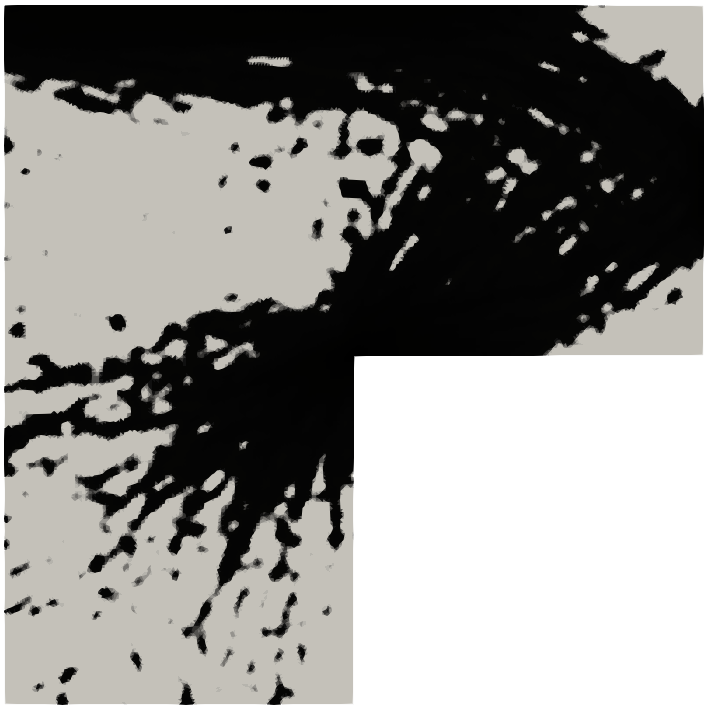} 
	\centerline{(d) Iteration=10}
\end{minipage}
\hfill
\begin{minipage}[h]{0.32\linewidth}
	\centering
	\includegraphics[height=3.8cm,width=3.8cm]{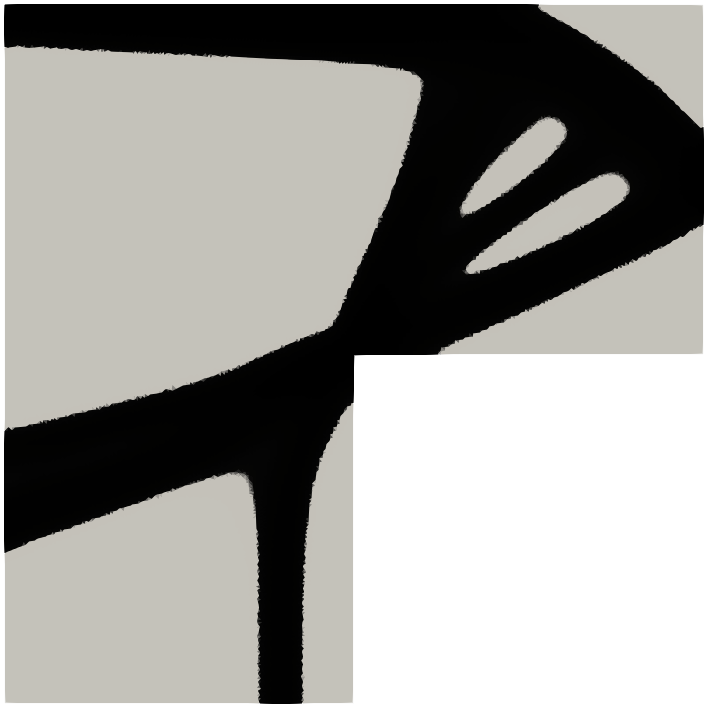} 
	\centerline{(e) Final design}
\end{minipage}
\hfill
\begin{minipage}[h]{0.32\linewidth}
	\centering
	\includegraphics[height=3.8cm,width=3.8cm]{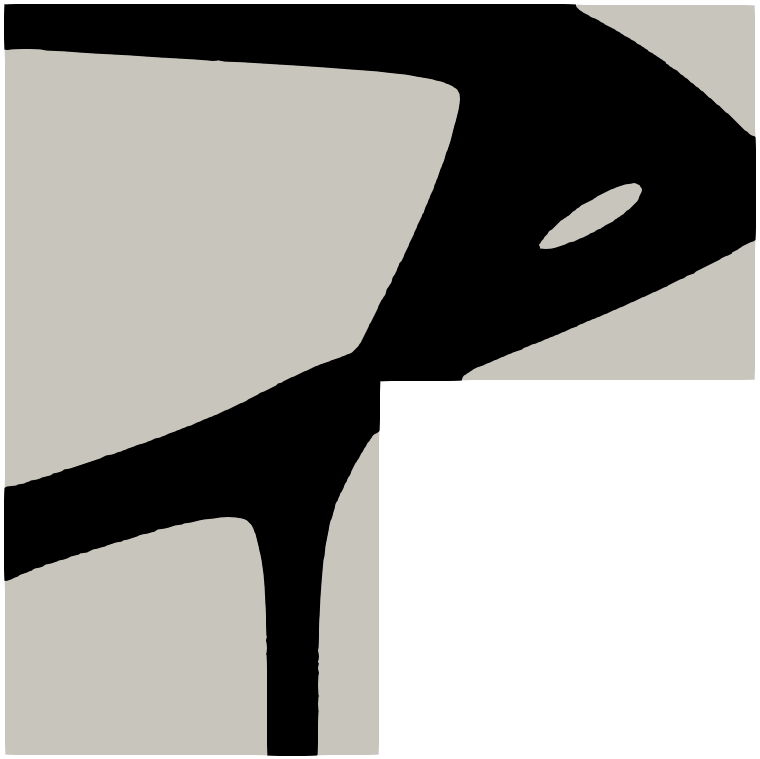} 
	\centerline{(f) Final design from initial $\phi=0.5$}
\end{minipage}
\hfill
\begin{minipage}[h]{0.44\linewidth}
	\centering
	\includegraphics[height=5cm,width=7cm]{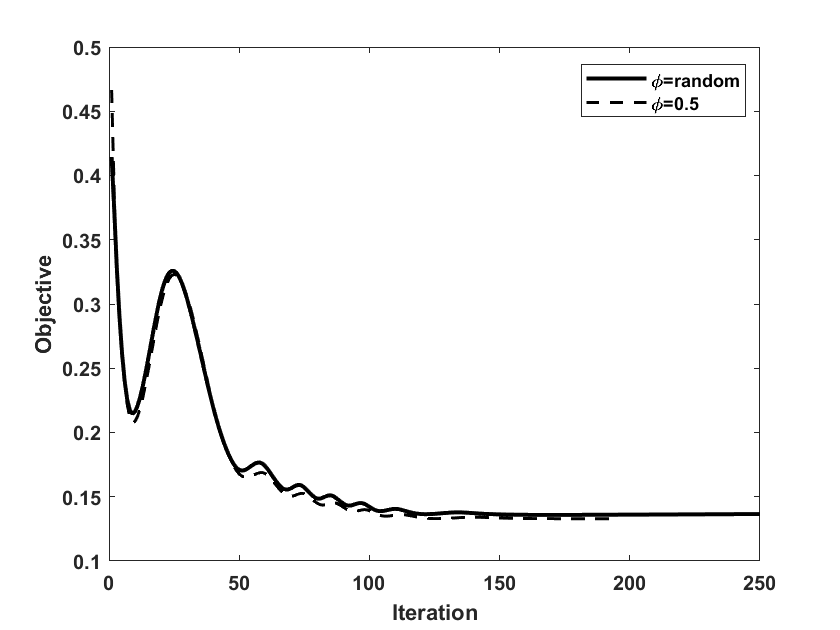} 
	\centerline{(g) Objective}
\end{minipage}
\hfill
\begin{minipage}[h]{0.45\linewidth}
	\centering
	\includegraphics[height=5cm,width=7cm]{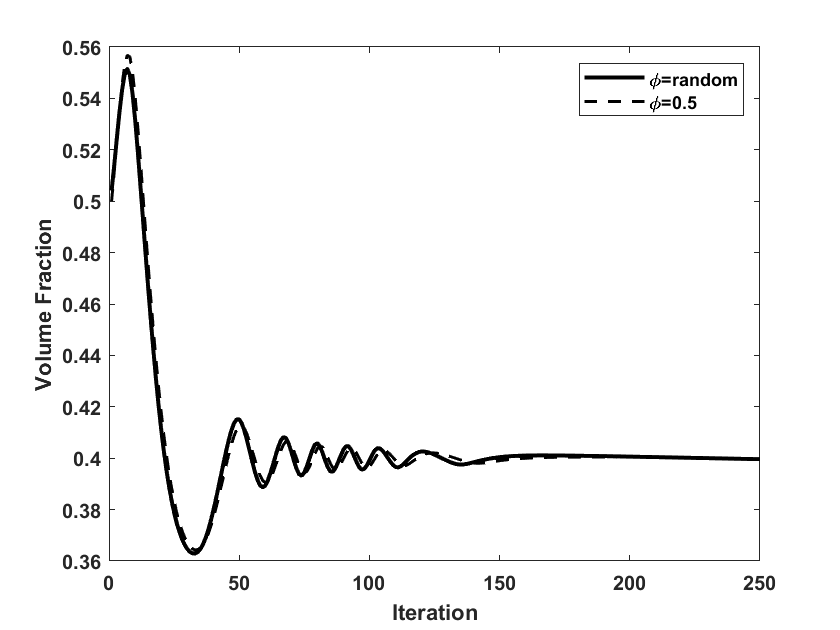} 
	\centerline{(h) Volume fraction}
\end{minipage}
\caption{Example \ref{Ex5}: problem setting and numerical results.}\label{fig8}
\end{figure}

\subsection{Numerical results with Algorithm \ref{Agl4}}
We consider to use Algorithm \ref{Agl4} with topological derivatives to solve the problem \eqref{Obj} of minimizing an energy functional, which represents the total potential energy of the system and is influenced by material distribution and external forces. 
\begin{exam}\label{Ex6}
\textbf{Cube Cantilever in 3D.} 
{\rm Set $D = (0,2) \times (0,1) \times (0,1)$, which extends the 2D cantilever problem in Fig. \ref{fig5}(a) to a three-dimensional setting as illustrated in Fig. \ref{fig9}(a). $N_m=300$ is the maximum number of iterations. $13911$ tetrahedrons are used for mesh elements. Set $V_f = 0.2$. 
	The optimization process begins with a randomly initialized design and undergoes iterative refinement towards an optimized topology shown in Fig. \ref{fig9}. %Throughout this evolution, significant topological changes occur as the structure adapts to the applied loads and constraints. The optimization process progressively enhances the structural efficiency by redistributing material to regions that contribute most to stiffness while removing redundant material.
	%The convergence behavior of the optimization is analyzed through the evolution histories of both the objective functional and the volume fraction. These are presented in Fig. \ref{fig9}, demonstrating the algorithm's effectiveness in achieving a stable and well-converged solution that satisfies both performance and volume constraints.
}
\end{exam}

\begin{figure}[htbp]
\begin{minipage}[h]{0.3\linewidth}
	\centering
	\includegraphics[height=2.7cm,width=4.9cm]{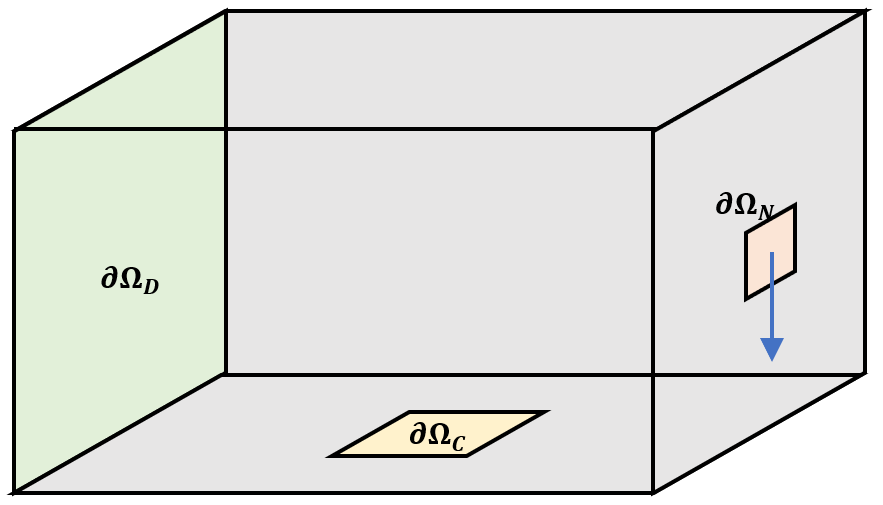} 
	\centerline{(a) Problem setting}
\end{minipage}
\hfill
\begin{minipage}[h]{0.3\linewidth}
	\centering
	\includegraphics[height=3.15cm,width=4.9cm]{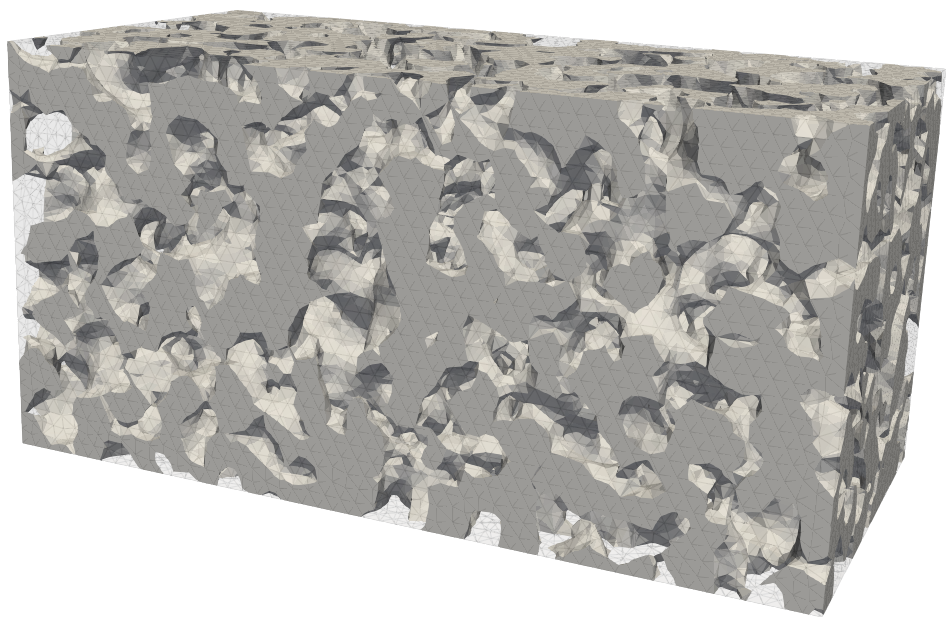} 
	\centerline{(b) Initial design}
\end{minipage}
\hfill
\begin{minipage}[h]{0.3\linewidth}
	\centering
	\includegraphics[height=3.15cm,width=4.9cm]{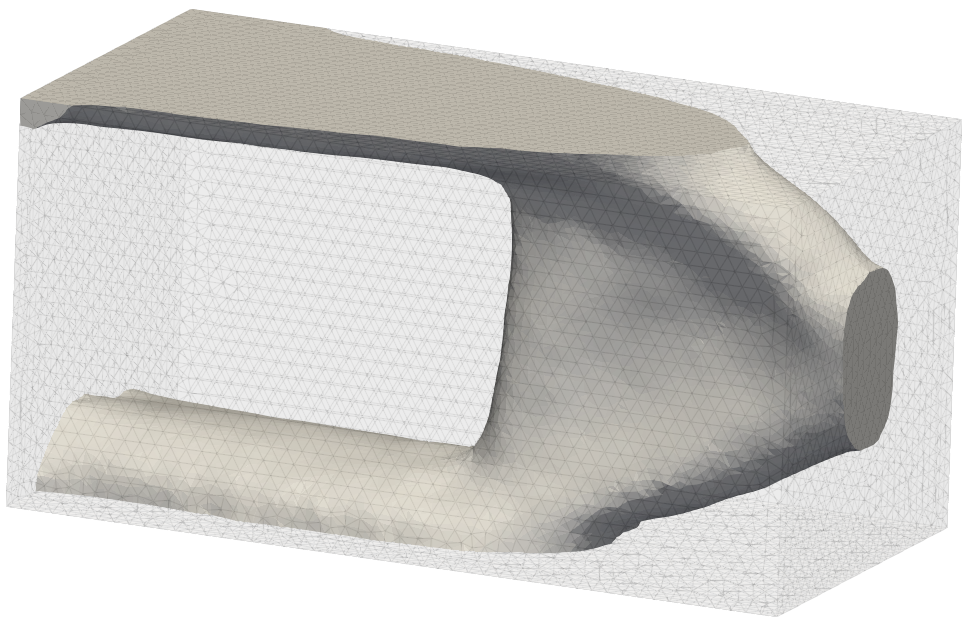} 
	\centerline{(c) Final design}
\end{minipage} 
% \hfill
% \begin{minipage}[h]{0.32\linewidth}
	% \centering
	% \includegraphics[height=3.5cm,width=5.5cm]{exam6_3D_Result1.png} 
	% \centerline{(d) Final shape}
	% \end{minipage}
% \hfill
% \begin{minipage}[h]{0.3\linewidth}
	% \centering
	% \includegraphics[height=3.15cm,width=3.6cm]{exam6_3D_Result3.png} 
	% \centerline{(d) Another view}
	% \end{minipage}
% % \hfill
% % \begin{minipage}[h]{0.47\linewidth}
	% % \centering
	% % \includegraphics[height=3.5cm,width=5.5cm]{exam6_3D_Result2.png} 
	% % \centerline{(f) Final shape}
	% % \end{minipage}
% \hfill
% \begin{minipage}[h]{0.32\linewidth}
	% \centering
	% \includegraphics[height=4.cm,width=5.6cm]{exam6_Obj1.png} 
	% \centerline{(e) Objective}
	% \end{minipage}
% \hfill
% \begin{minipage}[h]{0.32\linewidth}
	% \centering
	% \includegraphics[height=4.cm,width=5.6cm]{exam6_Volume_Frac.png} 
	% \centerline{(f) Volume fraction}
	% \end{minipage}
\caption{Example \ref{Ex6}: problem setting and numerical results.}\label{fig9}
\end{figure}

\begin{exam}\label{Ex7}
\textbf{$L$-Shaped Cantilever in 3D.} 
{\rm We consider to generalizes the 2D case in Example \ref{Ex5} to a 3D L-shaped cantilever, where $D= (0,2) \times (0,1) \times (0,2) \setminus ([1,2] \times [0,1] \times [0,1])$. The domain boundary consists of four types of regions, as illustrated in Fig. \ref{fig10}(a). The Dirichlet boundary, denoted by $\partial \Omega_D$, represents the region where displacements are fully constrained. The inhomogeneous Neumann boundary, denoted by $\partial \Omega_N$, is the region where external forces are applied. The contact boundary $\partial \Omega_C$, represents the portion of the structure that is in contact with a rigid base. The remaining boundaries are free to optimize. Set $V_f = 0.35$ and $N_m=200$, and $20867$ tetrahedron elements. The final optimized structure along with numerical results presented in Fig. \ref{fig10} demonstrate the convergence behavior.}
\end{exam}

\begin{figure}[htbp]
\begin{minipage}[!h]{0.32\linewidth}
	\centering
	\includegraphics[height=4.cm,width=4.cm]{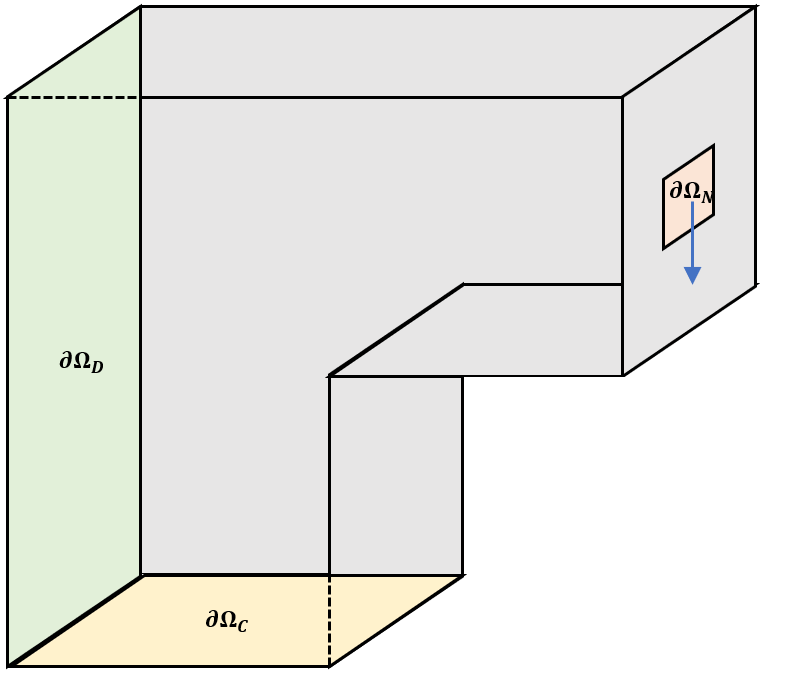} 
	\centerline{(a) Problem setting}
\end{minipage}
\hfill
\begin{minipage}[h]{0.32\linewidth}
	\centering
	\includegraphics[height=4.cm,width=4.cm]{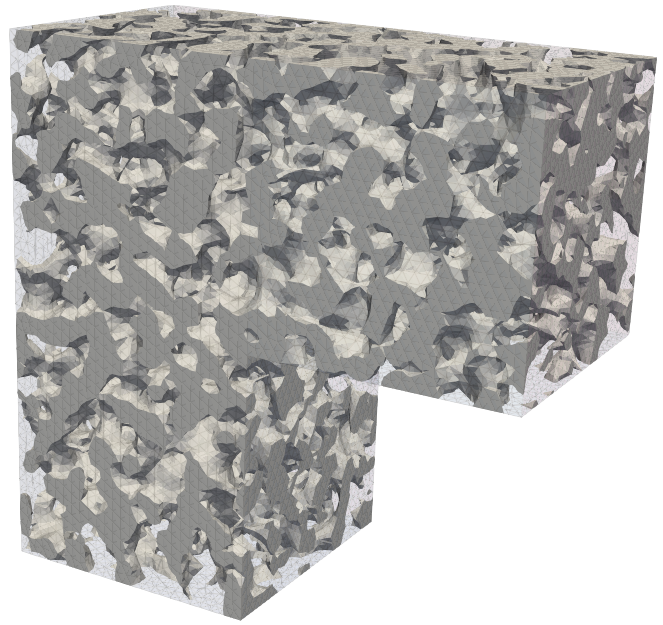} 
	\centerline{(b) Initial design}
\end{minipage}
\hfill
\begin{minipage}[h]{0.32\linewidth}
	\centering
	\includegraphics[height=4.cm,width=4.cm]{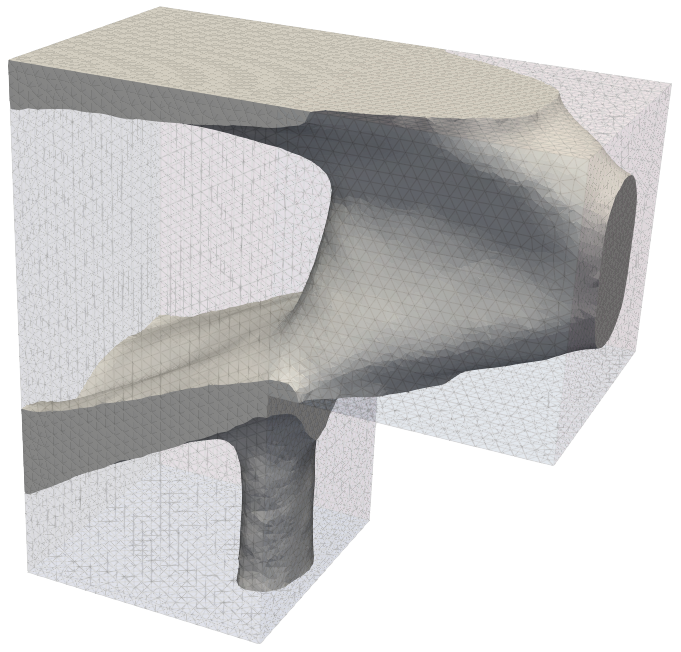} 
	\centerline{(c) Final design}
\end{minipage}
\hfill
\begin{minipage}[h]{0.32\linewidth}
	\centering
	\includegraphics[height=3.6cm,width=3.6cm]{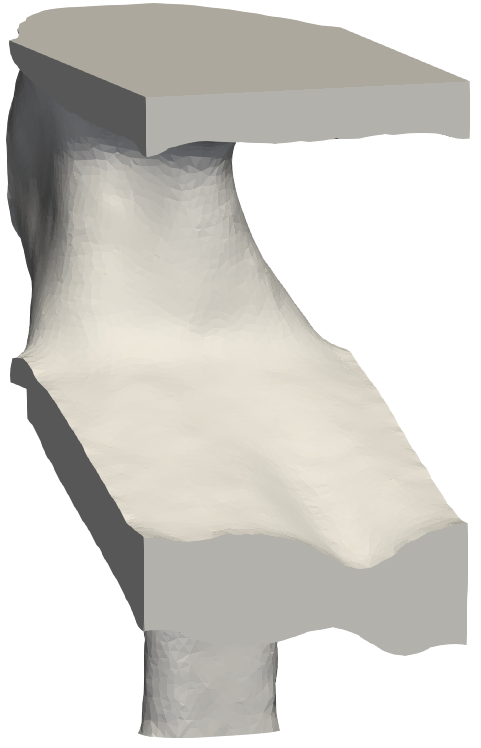} 
	\centerline{(d) Another view}
\end{minipage}
\hfill
\begin{minipage}[h]{0.32\linewidth}
	\centering
	\includegraphics[height=4.cm,width=5.6cm]{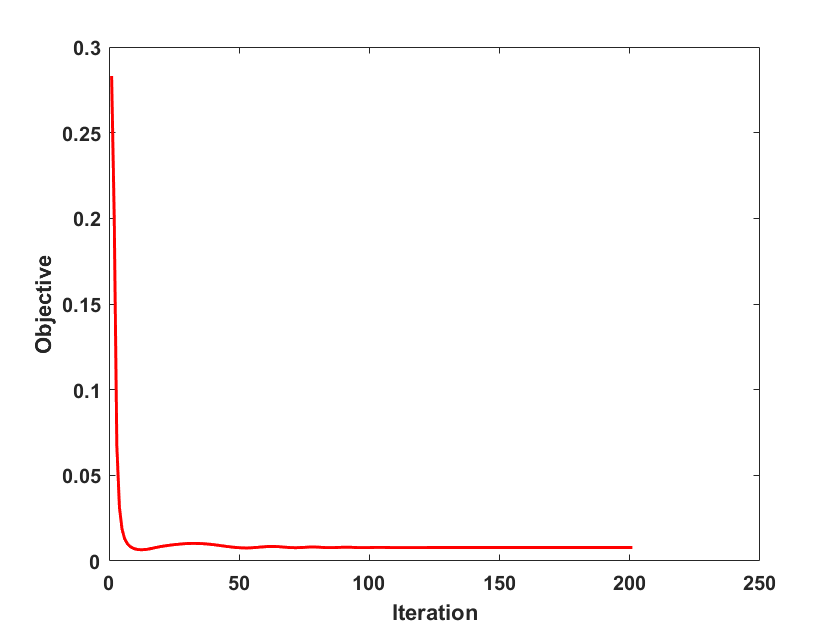} 
	\centerline{(e) Objective}
\end{minipage}
\hfill
\begin{minipage}[h]{0.32\linewidth}
	\centering
	\includegraphics[height=4.cm,width=5.6cm]{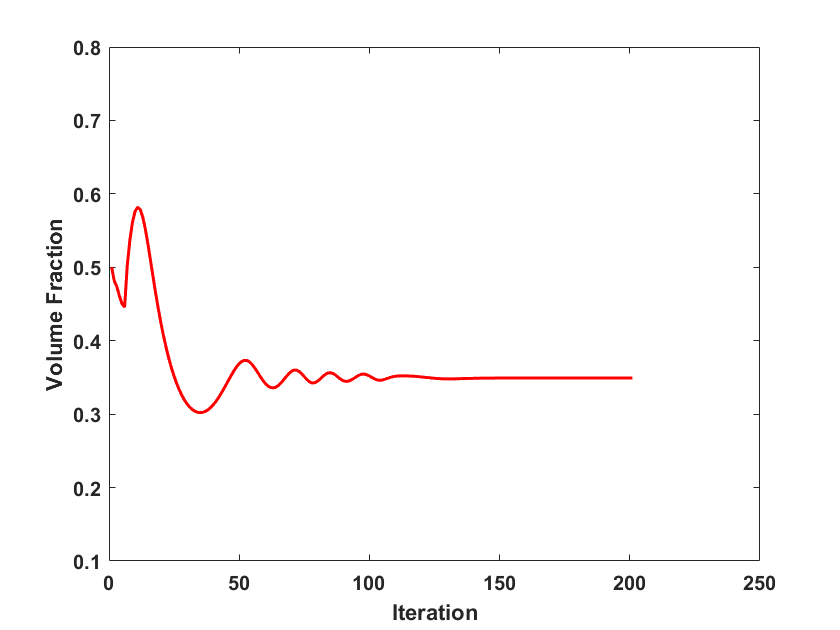} 
	\centerline{(f) Volume fraction}
\end{minipage}
\caption{Example \ref{Ex7}: problem setting and numerical results.}\label{fig10}
\end{figure}

\begin{exam}\label{Ex8}
\textbf{Bridge-like design in 3D.}  
{\rm Set $D = (0,2) \times (0,1) \times (0,1)$, which is configured as illustrated in Fig. \ref{fig11}. The region consists of four distinct types of surfaces: $\partial \Omega_D$ is the Dirichlet boundary, where prescribed displacements are enforced; $\partial \Omega_N$ is the surface subjected to an applied load force; $\partial \Omega_C$ is in contact with a rigid base, imposing constraints on displacement; and the remaining surfaces form free boundaries, which are not subject to any specific constraints. Fig. \ref{fig11} presents the optimized structural design obtained from an initially random configuration. As the optimization progresses, the objective function exhibits a steady decrease, indicating an improvement in the design. Additionally, the volume fraction successfully converges to the target value $V_f = 0.2$, demonstrating the effectiveness of the optimization process in achieving the desired material distribution.}
\end{exam}

\begin{figure}[htbp]
\begin{minipage}[h]{0.5\linewidth}
	\centering
	\includegraphics[height=2.7cm,width=4.5cm]{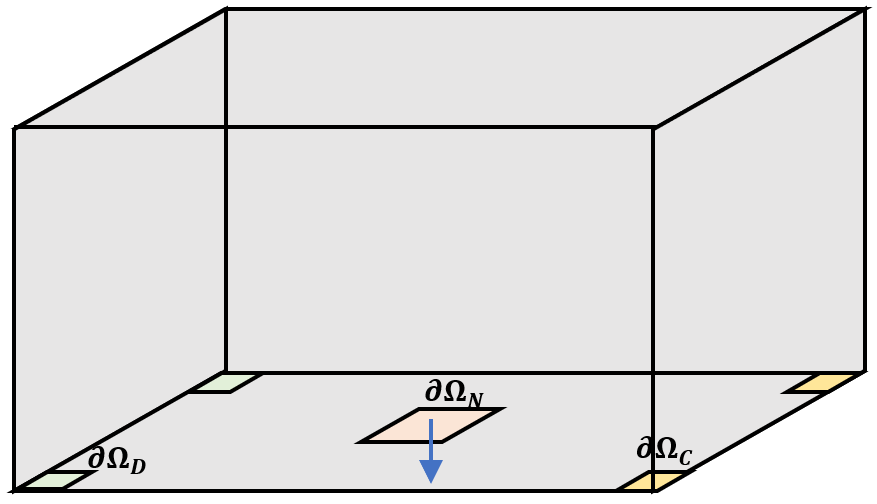} 
	\centerline{(a) Problem setting}
\end{minipage}
\hfill
\begin{minipage}[h]{0.5\linewidth}
	\centering
	\includegraphics[height=3.15cm,width=5.cm]{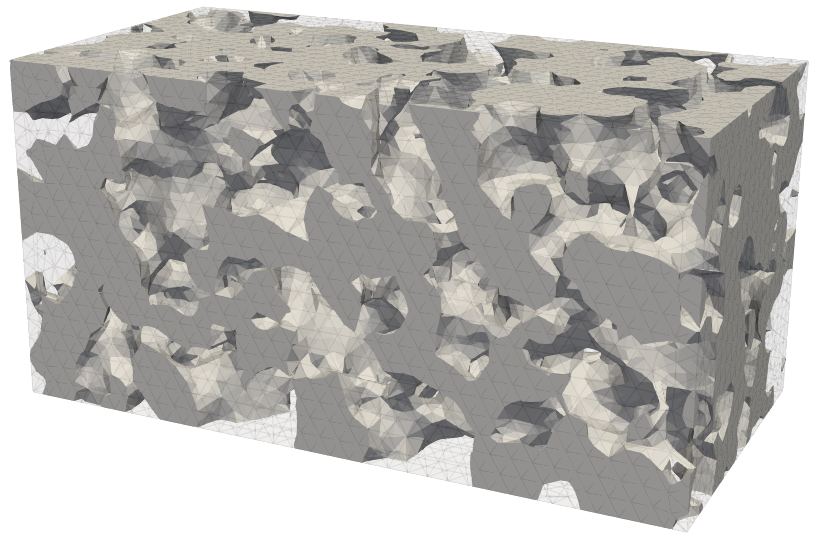} 
	\centerline{(b) Initial shape}
\end{minipage}
\hfill
\begin{minipage}[h]{0.3\linewidth}
	\centering
	\includegraphics[height=3.6cm,width=5.cm]{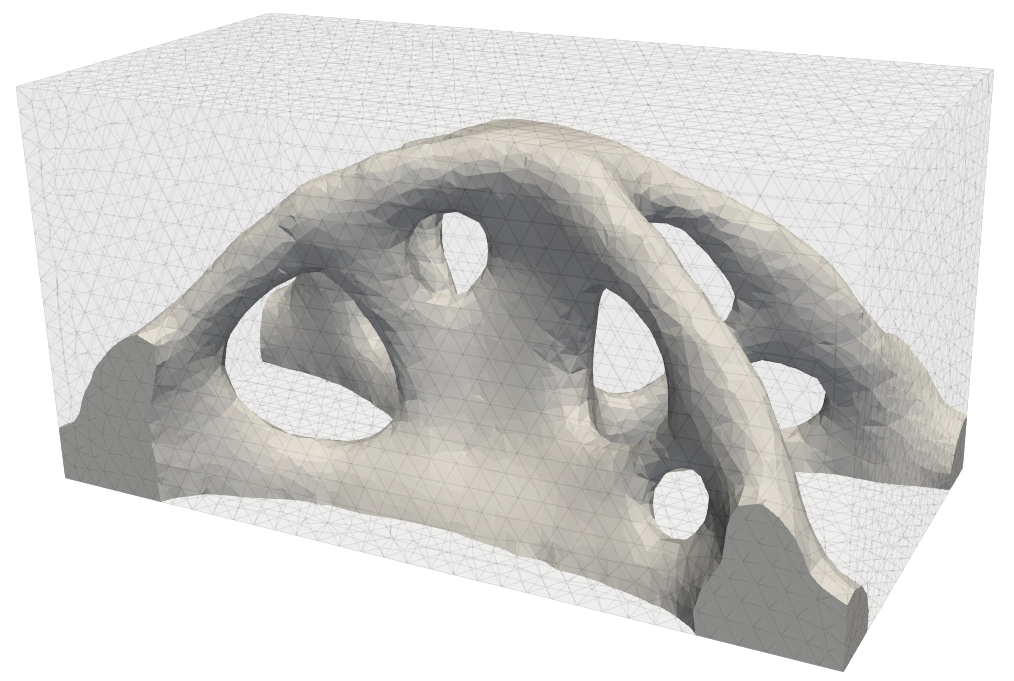} 
	\centerline{(c) Final shape}
\end{minipage}
\hfill
\begin{minipage}[h]{0.3\linewidth}
	\centering
	\includegraphics[height=4.cm,width=4.cm]{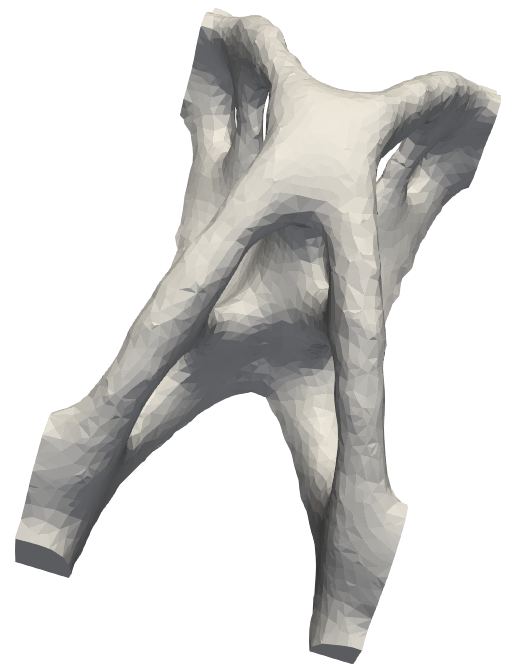} 
	\centerline{(d) Another view}
\end{minipage}
\hfill
\begin{minipage}[h]{0.32\linewidth}
	\centering
	\includegraphics[height=4.cm,width=5.6cm]{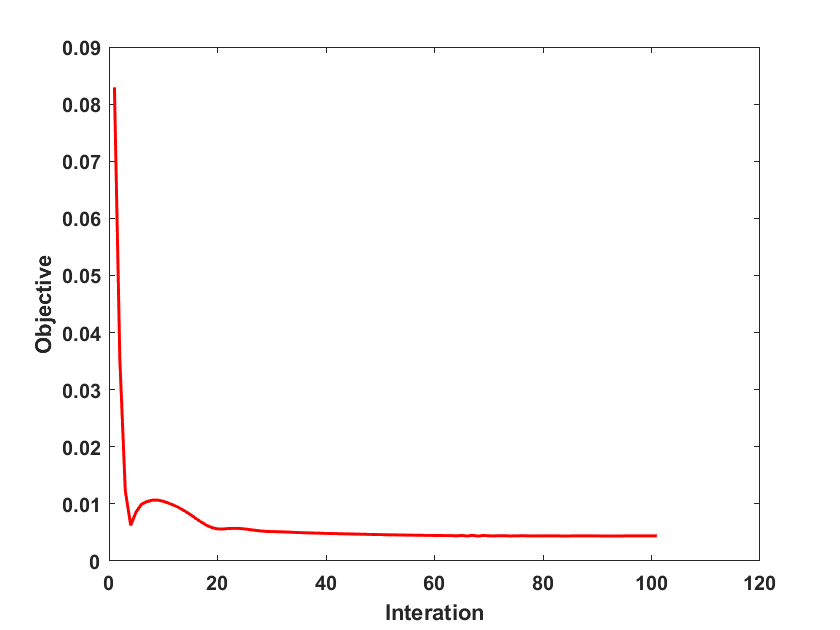} 
	\centerline{(e) Objective}
\end{minipage}
\caption{Example \ref{Ex8}: problem setting and numerical results.}\label{fig11}
\end{figure}

%\subsection{Numerical results with topological derivative}  

The model settings for \textbf{Example} \ref{Ex9}, \textbf{Example} \ref{Ex10}, and \textbf{Example} \ref{Ex11} are consistent with \textbf{Example} \ref{Ex3}, \textbf{Example} \ref{Ex4}, and \textbf{Example} \ref{Ex5}, respectively. The only difference lies in the objective \eqref{jtop} being optimized. %This modification aims to explore the effect of the topological derivative in guiding structural evolution.

\begin{exam}\label{Ex9}
{\rm 
	The optimization process under different initial designs is illustrated in Fig. \ref{fig6}, where the results highlight significant topological changes, including the formation of holes and redistribution of material, to show the effectiveness of the topological derivative in refining the structure towards an optimized configuration.  Fig. \ref{fig12} presents convergence histories for both the objective and volume fraction, corresponding to the two different initial designs. The trends indicate similar convergence behaviors, confirming the robustness of the optimization process regardless of the initial configuration.}
\end{exam}

\begin{exam}\label{Ex10}
{\rm 
	The optimized structural design obtained through the proposed topological optimization method is displayed in Fig. \ref{fig67} (left). %The result demonstrates the effectiveness of the approach in generating an improved material layout while adhering to the given constraints.
}
\end{exam}

\begin{exam}\label{Ex11}
{\rm 
	Fig. \ref{fig67} (right) presents the optimization result derived from the initial design shown in Fig. \ref{fig8} (c). %The evolution of the structure under the influence of the topological derivative highlights the ability of the method to guide the material distribution towards an optimized configuration.
}
\end{exam}

\begin{figure}[htbp]
%\begin{minipage}[!h]{0.3\linewidth}
%\centering
%\includegraphics[height=2.6cm,width=4.8cm]{exam4_Model.png} 
%\centerline{(a) Problem settings}
%\end{minipage}
%\hfill
%\begin{minipage}[h]{0.3\linewidth}
%\centering
%\includegraphics[height=2.6cm,width=4.8cm]{exam4_Intial.png} 
%\centerline{(b) Initial shape 1}
%\end{minipage}
%\hfill
%\begin{minipage}[h]{0.3\linewidth}
%\centering
%\includegraphics[height=2.6cm,width=4.8cm]{exam4_Opt_1.png} 
%\centerline{(c) Result shape 1}
%\end{minipage}
\begin{minipage}[h]{0.3\linewidth}
	\centering
	\includegraphics[height=2.25cm,width=4.5cm]{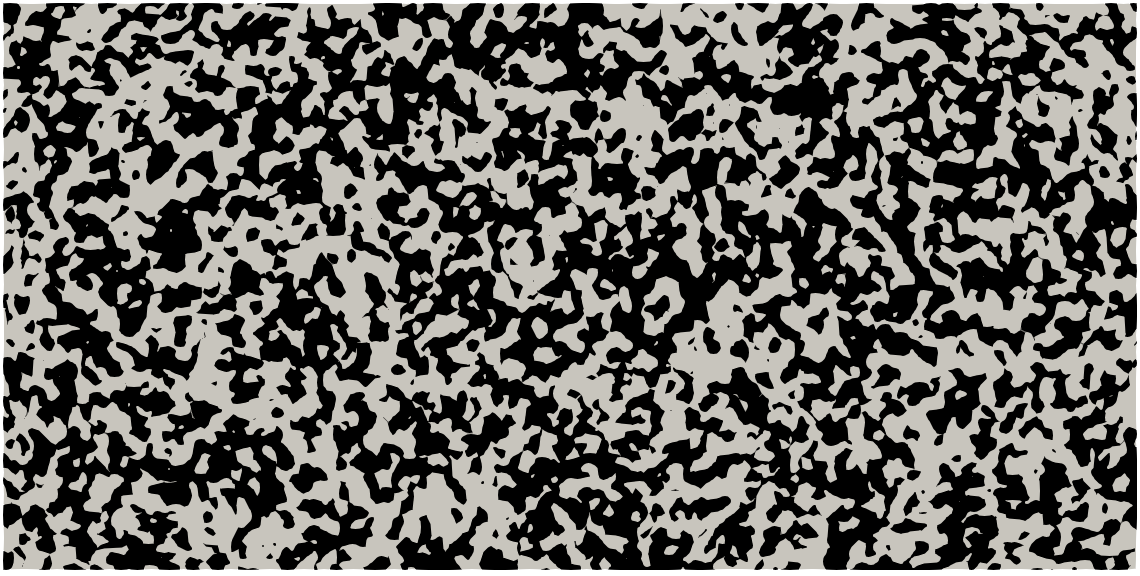} 
	\centerline{(a) Initial design $\phi_1$}
\end{minipage}
\hfill
\begin{minipage}[h]{0.3\linewidth}
	\centering
	\includegraphics[height=2.25cm,width=4.5cm]{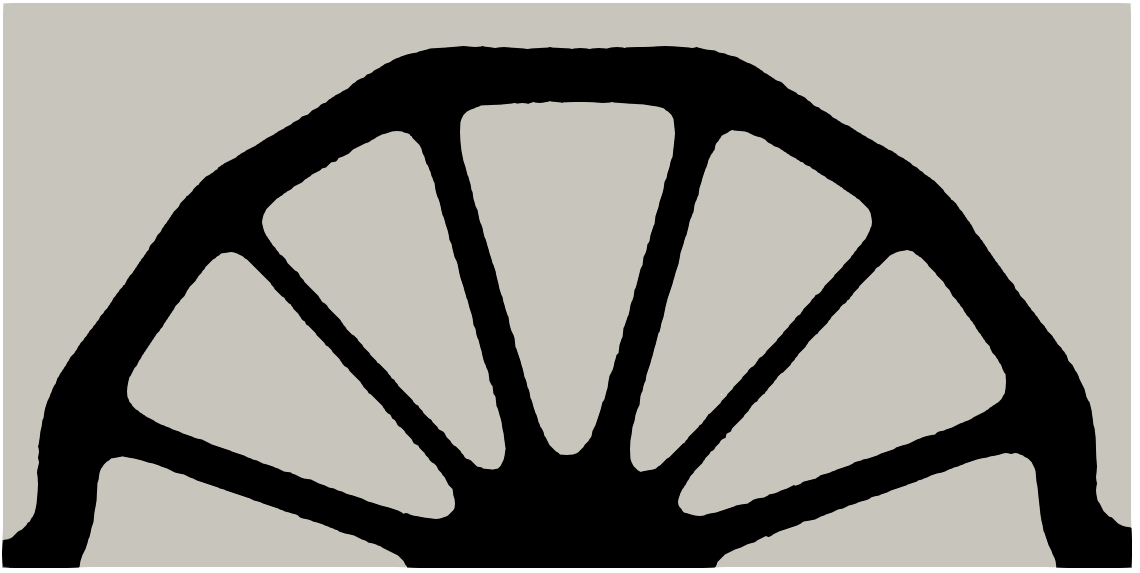} 
	\centerline{(b) Final design 1}
\end{minipage}
%\hfill
%\begin{minipage}[h]{0.32\linewidth}
%\centering
%\includegraphics[height=2.6cm,width=4.8cm]{exam4_Opt2.png} 
%\centerline{(f) Result Shape 2}
%\end{minipage}
%\hfill
%\begin{minipage}[h]{0.32\linewidth}
%\centering
%\includegraphics[height=2.6cm,width=4.8cm]{exam4_initial_tp.png} 
%\centerline{(g) Initial Shape 3}
%\end{minipage}
%\hfill
%\begin{minipage}[h]{0.32\linewidth}
%\centering
%\includegraphics[height=2.6cm,width=4.8cm]{exam4_Opt_TP_PF.png} 
%\centerline{(h) Result Shape 3}
%\end{minipage}
\hfill
\begin{minipage}[h]{0.3\linewidth}
	\centering
	\includegraphics[height=2.25cm,width=4.5cm]{exam4_initial_tp.png} 
	\centerline{(c) Initial design $\phi_2$}
\end{minipage}
\hfill
\begin{minipage}[h]{0.3\linewidth}
	\centering
	\includegraphics[height=2.25cm,width=4.5cm]{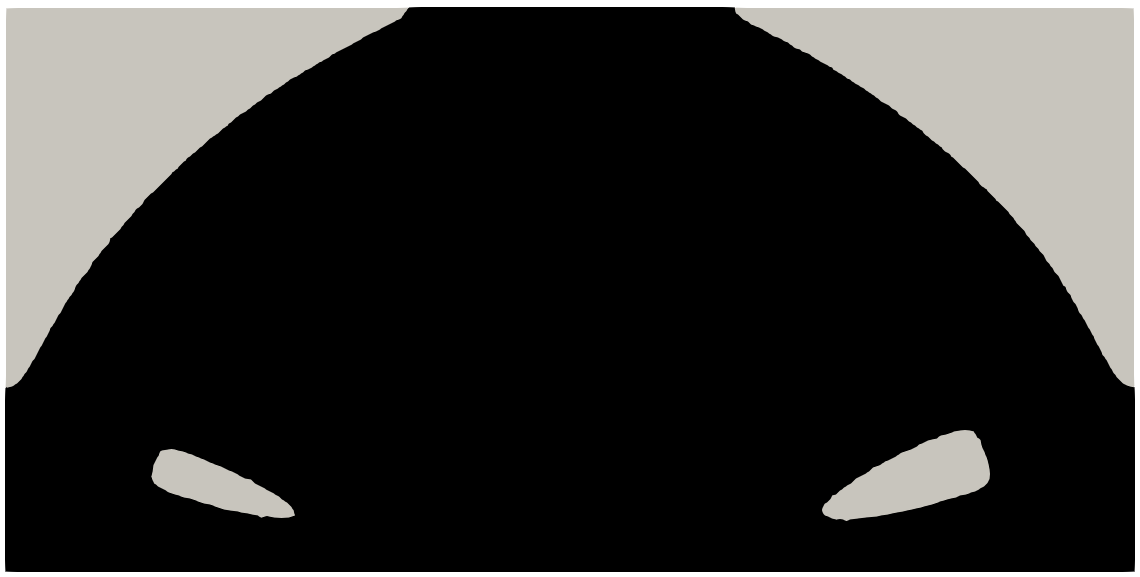} 
	\centerline{(d) Iteration 28 of $\phi_2$}
\end{minipage}
\hfill
\begin{minipage}[h]{0.3\linewidth}
	\centering
	\includegraphics[height=2.25cm,width=4.5cm]{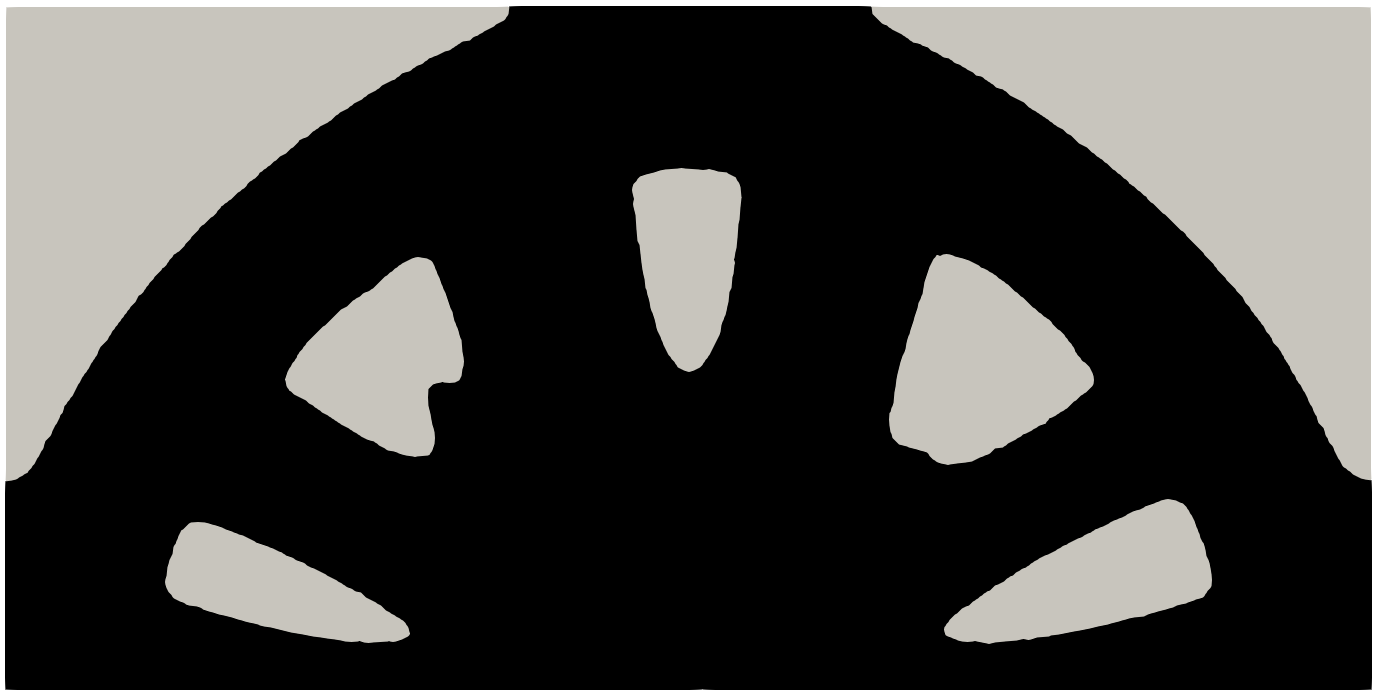} 
	\centerline{(e) Iteration 32 of $\phi_2$}
\end{minipage}
\hfill
\begin{minipage}[h]{0.3\linewidth}
	\centering
	\includegraphics[height=2.25cm,width=4.5cm]{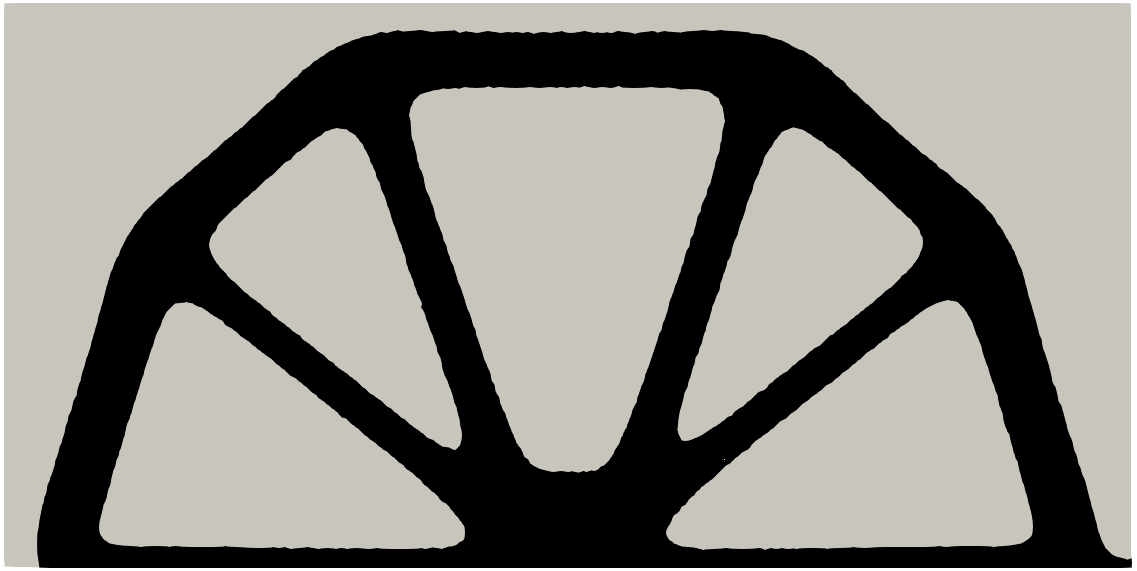} 
	\centerline{(f) Final design 2 }
\end{minipage}
%\hfill
%\begin{minipage}[h]{0.51\linewidth}
%\centering
%\includegraphics[height=4cm,width=7.5cm]{exam4_TP_Result.png} 
%\centerline{(j) Result Shape using topological derivative}
%\end{minipage}
%\hfill
%\begin{minipage}[h]{0.48\linewidth}
%\centering
%\includegraphics[height=4cm,width=7.5cm]{exam4_Compare_Obj.png} 
%\centerline{(l) Objective history}
%\end{minipage}
\caption{Shape design processes for Example \ref{Ex9} with topological derivative.}\label{fig6}
\end{figure}

\begin{figure}[htb]
\begin{minipage}[h]{0.5\linewidth}
	\centering
	\includegraphics[height=5.cm,width=7.cm]{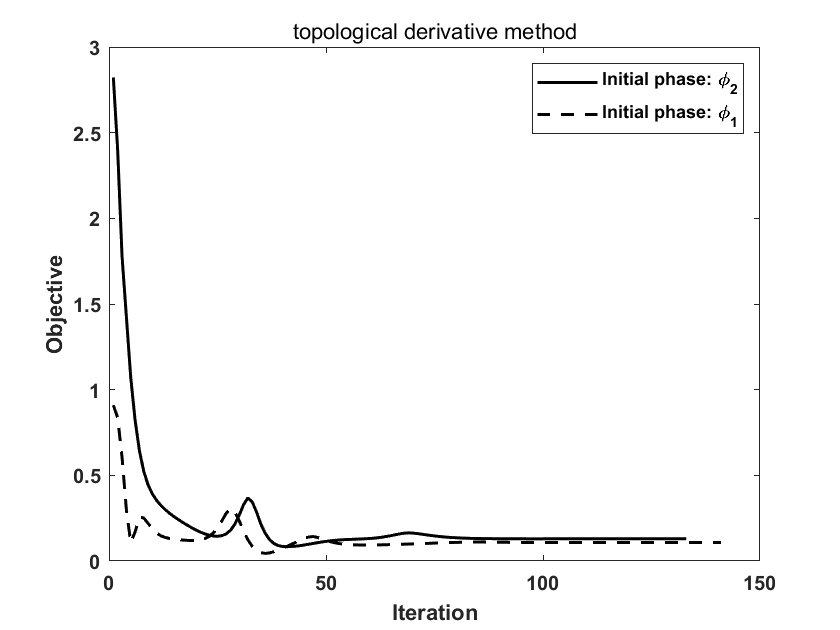} 
	%\centerline{(a) Convergence history of objective}
\end{minipage}
\hfill
\begin{minipage}[h]{0.5\linewidth}
	\centering
	\includegraphics[height=5.cm,width=7.cm]{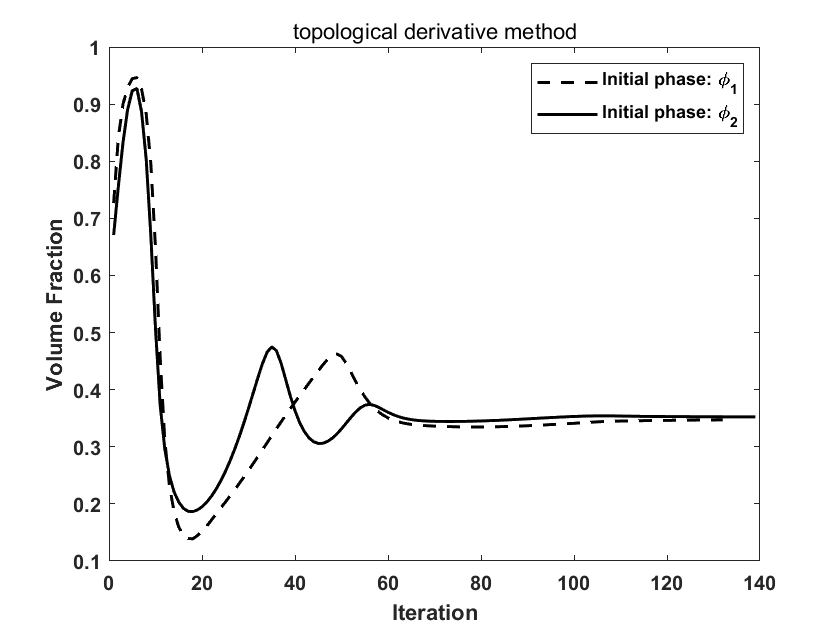} 
	%\centerline{(b) Convergence history of volume fraction}
\end{minipage}
\caption{Example \ref{Ex9}: convergence histories of objective (left) and volume fraction (right).}\label{fig12}
\end{figure}

\begin{figure}[htbp]
\centering
\begin{minipage}[h]{0.48\linewidth}
	\centering
	\includegraphics[height=2.75cm,width=5.5cm]{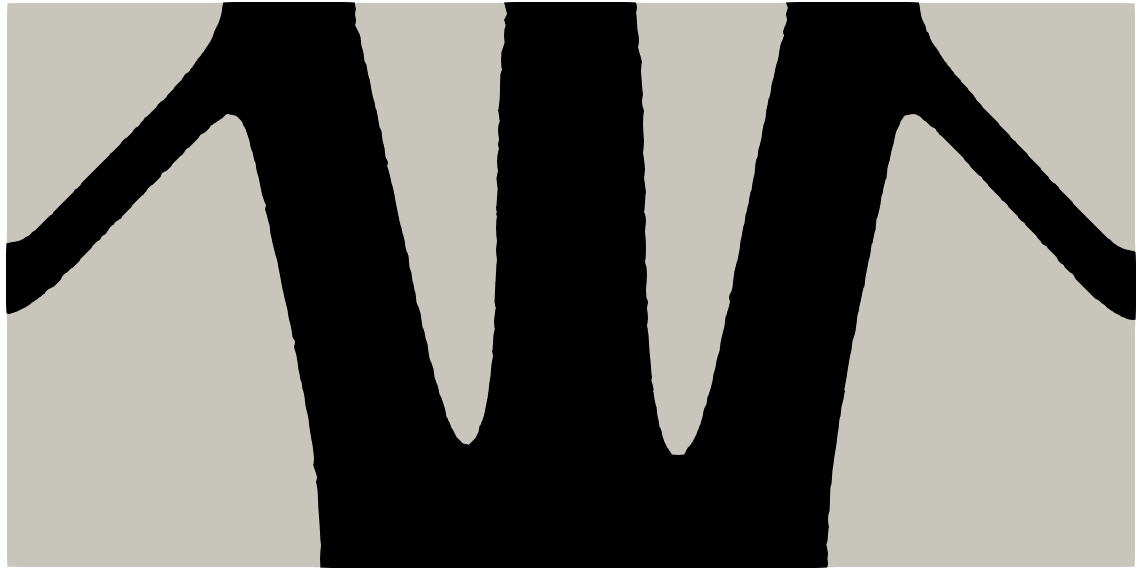} 
	%\centerline{(d) Result 1 with topological derivative}
\end{minipage}
\hfill
\begin{minipage}[h]{0.48\linewidth}
	\centering
	\includegraphics[height=3cm,width=3cm]{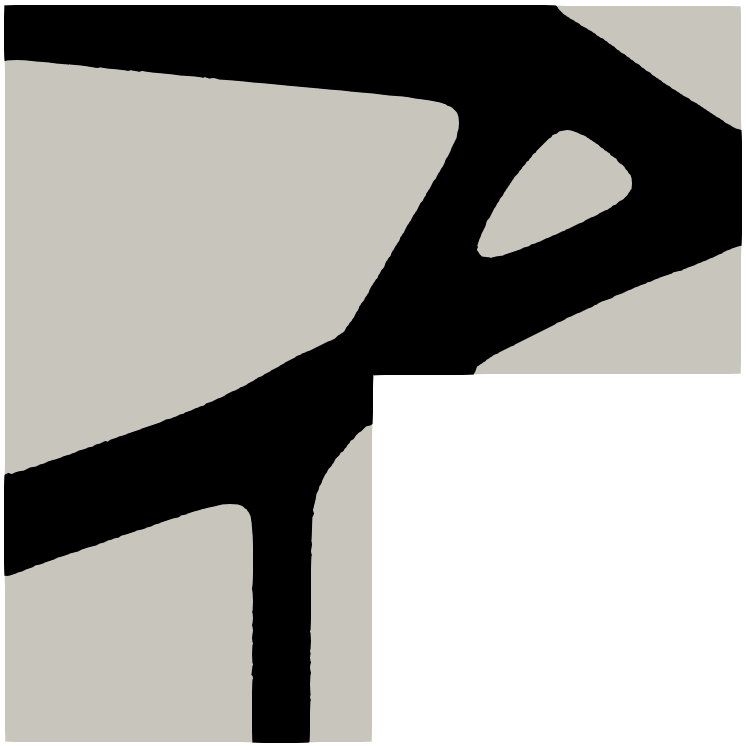} 
	%\centerline{(g) Result 2 with topological derivative}
\end{minipage}
\caption{Example \ref{Ex10} (left) and Example \ref{Ex11} (right): final designs.}\label{fig67}
\end{figure}

\section{Conclusions}\label{secfinal}
We have studied modeling and numerical methods for shape and topology optimization in structural contact friction between elastic and rigid bodies. Both geometry constraint and mechanical constraint of hemivariational inequality are considered. For the nonlinear, non-smooth, and non-convex structural hemivariational inequality, {a regularization method is used to handle the non-smoothness of a general shape functional in sensitivity analysis. The Eulerian derivative of the shape functional of energy type has been derived by strict shape sensitivity analysis. The rationality of a regularization method has been demonstrated through asymptotic analysis of the regularized Eulerian derivative. Then, a numerical boundary variational shape optimization method is proposed. Moreover, three topology optimization algorithms} have been proposed based on phase-field method and topological derivatives by topological asymptotic analysis. One phase-field method is the traditional Allen-Cahn type and the other type involving Willmore functional of mean curvature is designed for the first time to the best of our view for topology optimization. Its effectiveness is expected to be applied for other topology optimization problems, e.g., in linear elasticity (compliance, weight, eigenfrequency, etc) and fluid flows. A variety of numerical results have shown the effectiveness and robustness of the algorithms in shape and topological changes.

\section*{Data availability}
The code for reproduction is uploaded on Github with link: https://github.com/Tyiyiyi/HVI.git.

\appendix
\section{Topological asymptotic analysis} 
For numerical topology optimization with phase field method combined with topological derivative, we use the weak material approach to approximate voids and consider nonzero contrast cases to avoid singularities in numerical computations.
For the topological derivative \eqref{jtop}, we use the topological asymptotic analysis of the objective in the nucleation of small circular inclusions \cite{NSZ2019} for the present case of non-convex hemivariational inequalities. We refer to \cite[Chapter 9]{NSZ2019} for the derivation of the topological derivatives of the convex variational inequalities (see \cite{SZ}).

Let us consider 2D for simplicity. Similar derivation can be done for 3D.
Decomposing $\Omega$ into two parts, namely $\Omega=\Omega_R \cup \overline{B_R}$, where $\Omega_R:=$ $\Omega \backslash \overline{B_R}$ and $B_R$ with boundary $\Gamma_R$ denotes a ball of radius $R>0$ centered at an arbitrary point $\widehat{\bm{x}} \in \Omega$. See sketch in Fig. \ref{topological_derivative}. Consider the following system of linear elasticity in $B_R$: Given $\bm \psi \in H^{1 / 2}\left(\Gamma_R ; \mathbb{R}^2\right)$, find the displacement $\bm w$ : $B_R \mapsto \mathbb{R}^2$ such that
\begin{equation}\label{Eq_TP1}
\left\{\begin{array}{rlrl}
	-\operatorname{div}( \bm \sigma(\bm w)) & =0 & & \text { in } B_R, \\
	\bm \sigma(\bm w) & =\mathbb{C} \bm \varepsilon(\bm w) && \text { in } B_R, \\
	\bm w & =\bm \psi & & \text { on } \Gamma_R .
\end{array}\right.
\end{equation}
Define the Steklov-Poincaré pseudo-differential boundary operator:
\begin{equation}
\mathscr{A}:\bm \psi \in H^{1/2}\left(\Gamma_R ; \mathbb{R}^2\right) \mapsto \bm \sigma(\bm w) \bm n \in H^{-1 / 2}\left(\Gamma_R;\mathbb{R}^2\right),
\end{equation}
where $\bm n$ is the outward normal vector to the boundary $\Gamma_R$. Notice that by the definition of operator $\mathscr{A}$, the solution $\bm w$ of \eqref{Eq_TP1} satisfies:
\begin{equation}\label{Eq_tp3}
\int_{B_R} \bm \sigma(\bm w) : \bm \varepsilon(\bm w)=\int_{\Gamma_R} \mathscr{A}(\bm \psi) \cdot \bm \psi .
\end{equation}

Setting $\bm \psi=\bm u_{\left.\right|_R}$, we have $\bm w=\bm u_{\left.\right|_{B_R}}$ and $\bm u^R=\bm u_{\left.\right|_{\Omega_R}}$. Thus, we can get $J(\bm u)=J^R\left(\bm u^R\right)$, where the functional $J^R\left(\bm u^R\right)$ is defined as
\begin{equation}\label{Eq_tp5}
J^R\left(\bm u^R\right):=\frac{1}{2} \int_{\Omega_R} \bm \sigma\left(\bm u^R\right) : \bm \varepsilon( \bm u^R) -\int_{\Gamma_N}\bm  g_N \cdot \bm u^R + \int_{\Gamma_C} j_{\tau}(u_{\tau}^R)  +\frac{1}{2} \int_{\Gamma_R} \mathscr{A}\left(\bm u^R\right) \cdot \bm u^R,
\end{equation}
with the minimizer $u^R \in \mathbf{V}_1^R:= \mathbf{V}_1 \left(\Omega_R\right)$ being a solution to the following hemivariational inequality: $\forall \bm v\in \mathbf{V}_1^R$
\begin{equation}\label{Eq_tp6}
\begin{aligned}
	& \int_{\Omega_R} \bm \sigma\left(\bm u^R\right) : \bm \varepsilon( \bm v-\bm u^R) -\int_{\Gamma_N} \bm g_N \cdot\left(\bm v-\bm u^R\right) + \int_{\Gamma_C} (j_{\tau}(v_{\tau}^R)-j_{\tau}(u_{\tau}^R)) +\int_{\Gamma_R} \mathscr{A}\left(\bm u^R\right) \cdot\left(\bm v-\bm u^R\right) \geq 0,
\end{aligned}
\end{equation}
where $\mathbf{V}_1(\Omega_R)$'s definition is similar as $\bm{V}_1(\Omega)$ except $\Omega$ is replaced by $\Omega_R$.
% where 
% \[\mathbf{V}_1(\Omega_R)=\{\bm{v}\in H^1(\Omega_R;\mathbb{R}^d):\bm{v}|_{\Gamma_D}=\bzero,  v_{\nu}|_{\Gamma_C}=0,\partial_{\bm n}\bm{v}|_{\Gamma_R}=0\}.\]

Topological perturbation is characterized by the nucleation of a small circular inclusion $\omega_{\zeta}(\widehat{\bm{x}}):=B_{\zeta}$ of radius $0<\zeta<R$ and center at $\widehat{\bm{x}} \in \Omega$, which is assumed to be far enough from the potential contact region $\Gamma_C$. This inclusion is filled with different material properties represented by a piecewise constant function $r_{\zeta}$ defined as
\begin{equation}
r_{\zeta}=r_{\zeta}(\bm{x}):=\left\{\begin{array}{l}
	1, \text { if } \bm{x} \in \Omega \backslash \overline{B_{\zeta}}, \\
	r, \text { if } \bm{x} \in B_{\zeta},
\end{array}\right.
\end{equation}
with $r \in \mathbb{R}^{+}$ being used to represent the contrast of the two material properties. 

We consider the perturbed domain in $B_R$: Given $\bm \psi \in H^{1 / 2}\left(\Gamma_R ; \mathbb{R}^2\right)$, find the displacement $\bm w_{\zeta}: B_R \mapsto \mathbb{R}^2$, such that
\begin{equation}\label{Eq_tp9}
\left\{\begin{array}{rlrl}
	-\operatorname{div}\left(r_{\zeta} \bm \sigma\left(\bm w_{\zeta}\right)\right) & =0 & & \text { in } B_R, \\
	\bm \sigma\left(\bm w_{\zeta}\right) & =\mathbb{C} \bm \varepsilon( \bm w_{\zeta}) && \text { in } B_R, \\
	\bm w_{\zeta} & =\bm \psi & & \text { on } \Gamma_R, \\
	\llbracket \bm w_{\zeta} \rrbracket & =0 & & \text { on } \partial B_{\zeta}, \\
	\llbracket r_{\zeta} \bm \sigma\left(\bm w_{\zeta}\right) \rrbracket \bm n & =0 & & \text { on } \partial B_{\zeta} .
\end{array}\right.
\end{equation}
Then we know the solution $\bm w_{\zeta}$ of \eqref{Eq_tp9} satisfies
\begin{equation}\label{Eq_tp10}
\int_{B_R} r_{\zeta} \bm \sigma\left(\bm w_{\zeta}\right) : \bm \varepsilon (\bm w_{\zeta})=\int_{\Gamma_R} \mathscr{A}_{\zeta}(\bm \psi) \cdot \bm \psi.%\quad \forall \bm{\psi}\in H^1_0(B_R;\mathbb{R}^2).
\end{equation}
Setting $\bm \psi=\bm u_{\zeta \vert \Gamma_R}$ we have $\bm w_{\zeta}=\bm u_{\zeta \vert B_R}$ and $\bm u_{\zeta}^R=\bm u_{\zeta \vert {\Omega_R}}$, which implies the equality $J_{\zeta}\left(\bm u_{\zeta}\right)=J_{\zeta}^R\left(\bm u_{\zeta}^R\right)$, where the functional $J_{\zeta}^R\left(\bm u_{\zeta}^R\right)$ is written as
\begin{equation} \label{Eq_tp11}
J_{\zeta}^R\left(\bm u_{\zeta}^R\right):=\frac{1}{2} \int_{\Omega_R} \bm \sigma\left(\bm u_{\zeta}^R\right) : \bm \varepsilon (\bm u_{\zeta}^R)-\int_{\Gamma_N} \bm g_N \cdot \bm u_{\zeta}^R+ \int_{\Gamma_C} j_{\tau}(u_{\tau,\zeta}^R)+\frac{1}{2} \int_{\Gamma_R} \mathscr{A}_{\zeta}\left(\bm u_{\zeta}^R\right) \cdot \bm u_{\zeta}^R,
\end{equation}
with the minimizer $\bm u_{\zeta}^R \in \mathbf{V}_1^R$ given by the unique solution of the hemivariational inequality:
\begin{equation}\label{Eq_tp12}
\begin{aligned}
	& \int_{\Omega_R} \bm \sigma\left(\bm u_{\zeta}^R\right) : \bm \varepsilon \left(\bm v-\bm u_{\zeta}^R\right)-\int_{\Gamma_N} \bm g_N \cdot\left(\bm v-\bm u_{\zeta}^R\right)+ \int_{\Gamma_C} (j_{\tau}(v_{\tau})-j_{\tau}(u_{\tau,\zeta}^R)) +\int_{\Gamma_R} \mathscr{A}_{\zeta}\left(\bm u_{\zeta}^R\right) \cdot\left(\bm v-\bm u_{\zeta}^R\right) \geq 0.
\end{aligned}
\end{equation}
\begin{thm}\label{Theroem1}
Let $\bm u^R$ and $\bm u_{\zeta}^R$ be solution to \eqref{Eq_tp6} and \eqref{Eq_tp12}, respectively, and assume that
\begin{equation}\label{Eq_tp13}
	\mathscr{A}_{\zeta}=\mathscr{A}-\zeta^2 \mathscr{B}+\mathscr{R}_{\zeta},
\end{equation}
in the operator norm $\mathscr{L}\left(H^{1 / 2}\left(\Gamma_R ; \mathbb{R}^2\right) ; H^{-1 / 2}\left(\Gamma_R ; \mathbb{R}^2\right)\right)$, with $\mathscr{B}$ denotes a bounded linear operator and $\left\|\mathscr{R}_{\zeta}\right\|_{\mathscr{L}\left(H^{1 / 2}\left(\Gamma_R ; \mathbb{R}^2\right) ; H^{-1 / 2}\left(\Gamma_R ; \mathbb{R}^2\right)\right)}=o\left(\zeta^2 \right.)$. Then, the following estimate holds:
\begin{equation}\label{Eq_tp15}
	\left\|\bm u_{\zeta}^R-\bm u^R\right\|_{H^1\left(\Omega_R ; \mathbb{R}^2\right)} \leq C_0 \zeta^2 .
\end{equation}
\end{thm}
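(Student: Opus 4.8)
The plan is the standard energy‑comparison argument for a perturbed (hemi)variational inequality, the perturbation being the operator expansion \eqref{Eq_tp13}. As a preliminary step I would record a $\zeta$‑uniform a priori bound $\|\bm u_\zeta^R\|_{H^1(\Omega_R;\mathbb{R}^2)}\le C$ with $C$ independent of $\zeta\in(0,R)$. Testing \eqref{Eq_tp12} with the admissible choice $\bm v=\bm 0\in\mathbf{V}_1^R$ gives
\[
\int_{\Omega_R}\bm\sigma(\bm u_\zeta^R):\bm\varepsilon(\bm u_\zeta^R)+\int_{\Gamma_R}\mathscr{A}_\zeta(\bm u_\zeta^R)\cdot\bm u_\zeta^R\le\int_{\Gamma_N}\bm g_N\cdot\bm u_\zeta^R+\int_{\Gamma_C}\big(j_\tau(0)-j_\tau(u_{\tau,\zeta}^R)\big).
\]
On the left the first term is bounded below by $m_{\mathbb{C}}\|\bm\varepsilon(\bm u_\zeta^R)\|_{L^2(\Omega_R)}^2$ by \eqref{Eq:pofun0}(c) (note $\bm\sigma(\bm u_\zeta^R)=\mathbb{C}\bm\varepsilon(\bm u_\zeta^R)$ on $\Omega_R$, since $B_\zeta\subset B_R$), and the second is nonnegative because by \eqref{Eq_tp10} it equals $\int_{B_R}r_\zeta\bm\sigma(\bm w_\zeta):\bm\varepsilon(\bm w_\zeta)\ge0$ with $r_\zeta\ge\min\{1,r\}>0$. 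On the right, the trace theorem controls the $\Gamma_N$ load term by $C\|\bm u_\zeta^R\|_{H^1(\Omega_R)}$, and since $j_\tau$ is assumed regular with $|\partial j_\tau|$ bounded, $|j_\tau(0)-j_\tau(u_{\tau,\zeta}^R)|\le C|u_{\tau,\zeta}^R|$, so this term is also $\le C\|\bm u_\zeta^R\|_{H^1(\Omega_R)}$. Korn's inequality on the fixed domain $\Omega_R$ then yields the uniform bound.

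Next I would take $\bm v=\bm u_\zeta^R$ in \eqref{Eq_tp6} and $\bm v=\bm u^R$ in \eqref{Eq_tp12} and add the two inequalities. The $\Gamma_N$ load terms and the friction potential terms $\int_{\Gamma_C}\big(j_\tau(u_{\tau,\zeta}^R)-j_\tau(u_\tau^R)\big)$ cancel exactly, leaving, after rearrangement,
\[
\int_{\Omega_R}\bm\sigma(\bm u_\zeta^R-\bm u^R):\bm\varepsilon(\bm u_\zeta^R-\bm u^R)\le\int_{\Gamma_R}\big[\mathscr{A}(\bm u^R)-\mathscr{A}_\zeta(\bm u_\zeta^R)\big]\cdot(\bm u_\zeta^R-\bm u^R).
\]
Using linearity of $\mathscr{A}$ I would split $\mathscr{A}(\bm u^R)-\mathscr{A}_\zeta(\bm u_\zeta^R)=-\mathscr{A}(\bm u_\zeta^R-\bm u^R)+(\mathscr{A}-\mathscr{A}_\zeta)(\bm u_\zeta^R)$ and move the first piece to the left, obtaining
\[
\int_{\Omega_R}\bm\sigma(\bm u_\zeta^R-\bm u^R):\bm\varepsilon(\bm u_\zeta^R-\bm u^R)+\int_{\Gamma_R}\mathscr{A}(\bm u_\zeta^R-\bm u^R)\cdot(\bm u_\zeta^R-\bm u^R)\le\int_{\Gamma_R}(\mathscr{A}-\mathscr{A}_\zeta)(\bm u_\zeta^R)\cdot(\bm u_\zeta^R-\bm u^R).
\]
Both terms on the left are nonnegative: the first is $\ge m_{\mathbb{C}}\|\bm\varepsilon(\bm u_\zeta^R-\bm u^R)\|_{L^2(\Omega_R)}^2\ge c\|\bm u_\zeta^R-\bm u^R\|_{H^1(\Omega_R;\mathbb{R}^2)}^2$ by \eqref{Eq:pofun0}(c) and Korn, the second is $\ge0$ by positivity of $\mathscr{A}$ as seen from \eqref{Eq_tp3}.

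For the right‑hand side I would invoke the expansion \eqref{Eq_tp13}, $\mathscr{A}-\mathscr{A}_\zeta=\zeta^2\mathscr{B}-\mathscr{R}_\zeta$ with $\|\mathscr{B}\|_{\mathscr{L}(H^{1/2}(\Gamma_R);H^{-1/2}(\Gamma_R))}<\infty$ and $\|\mathscr{R}_\zeta\|_{\mathscr{L}(H^{1/2}(\Gamma_R);H^{-1/2}(\Gamma_R))}=o(\zeta^2)$, together with the $H^{-1/2}$–$H^{1/2}$ duality pairing, the trace estimate $\|\cdot\|_{H^{1/2}(\Gamma_R)}\le C\|\cdot\|_{H^1(\Omega_R)}$, and the $\zeta$‑uniform bound on $\bm u_\zeta^R$ from the first step, to get
\[
\Big|\int_{\Gamma_R}(\mathscr{A}-\mathscr{A}_\zeta)(\bm u_\zeta^R)\cdot(\bm u_\zeta^R-\bm u^R)\Big|\le C\big(\zeta^2+o(\zeta^2)\big)\,\|\bm u_\zeta^R-\bm u^R\|_{H^1(\Omega_R;\mathbb{R}^2)}.
\]
Chaining the last three displays and dividing by $\|\bm u_\zeta^R-\bm u^R\|_{H^1(\Omega_R;\mathbb{R}^2)}$ gives $\|\bm u_\zeta^R-\bm u^R\|_{H^1(\Omega_R;\mathbb{R}^2)}\le C(\zeta^2+o(\zeta^2))$, which is $\le C_0\zeta^2$ after possibly enlarging the constant and shrinking $\zeta$, i.e. \eqref{Eq_tp15}.

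The step I expect to be the main obstacle is the preliminary a priori bound on $\bm u_\zeta^R$: it must hold uniformly as the inclusion $B_\zeta$ degenerates, and this is precisely where the strictly positive contrast $r_\zeta\ge\min\{1,r\}$, the nonnegativity of the Steklov–Poincaré pairing for $\mathscr{A}_\zeta$, and the boundedness of $|\partial j_\tau|$ (hence at most linear growth of $j_\tau$) all enter. A secondary technical point: if one prefers to write \eqref{Eq_tp6} and \eqref{Eq_tp12} in genuine hemivariational form with $j_\tau^0(u_\tau;v_\tau-u_\tau)$ in place of $j_\tau(v_\tau)-j_\tau(u_\tau)$, the friction terms no longer cancel on adding, and one then additionally uses \eqref{Eq:pofun1}(d) together with a smallness condition in the spirit of \eqref{small} (with the trace constant for $\Omega_R$) to absorb the residual $\Gamma_C$ contribution into the coercive term; the remainder of the argument is unchanged.
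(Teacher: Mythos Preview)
Your proposal is correct and follows essentially the same route as the paper: test \eqref{Eq_tp6} with $\bm v=\bm u_\zeta^R$ and \eqref{Eq_tp12} with $\bm v=\bm u^R$, add, move the $\mathscr{A}$-diagonal term to the left, insert the expansion \eqref{Eq_tp13}, and conclude via coercivity, duality, and the trace theorem. Your explicit preliminary step establishing a $\zeta$-uniform bound on $\|\bm u_\zeta^R\|_{H^1(\Omega_R)}$ is in fact a point the paper glosses over (it is tacitly used when passing from $\|\mathscr{B}(\bm u_\zeta^R)\|_{H^{-1/2}(\Gamma_R)}$ to a constant), so your write-up is slightly more complete there.
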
 
\begin{proof}
By taking $\bm v=\bm u_{\zeta}^R$ in \eqref{Eq_tp6} and $\bm v=\bm u^R$ in \eqref{Eq_tp12} and adding them together, we can write
\begin{equation}
	\int_{\Omega_R} \bm\sigma\left(\bm u_{\zeta}^R-\bm u^R\right) : \bm \varepsilon \left(\bm u_{\zeta}^R-\bm u^R\right)+\int_{\Gamma_R}\left[\mathscr{A}_{\zeta}\left(\bm u_{\zeta}^R\right)-\mathscr{A}\left(\bm u^R\right)\right] \cdot\left(\bm u_{\zeta}^R-\bm u^R\right) \leq 0 .
\end{equation}
Assuming \eqref{Eq_tp13}, we have that
\begin{equation}\label{Eq_tp19}
	\begin{aligned}
		\int_{\Omega_R} \bm \sigma\left(\bm u_{\zeta}^R-\bm u^R\right) : \bm \varepsilon \left(\bm u_{\zeta}^R-\bm u^R\right)+\int_{\Gamma_R} \mathscr{A}\left(\bm u_{\zeta}^R-\bm u^R\right) \cdot\left(\bm u_{\zeta}^R-\bm u^R\right) \leq 
		\zeta^2 \int_{\Gamma_R} \mathscr{B}\left(\bm u_{\zeta}^R\right) \cdot\left(\bm u_{\zeta}^R-\bm u^R\right)+o\left(\zeta^2\right).
	\end{aligned}
\end{equation}
and by the coercivity of the bilinear form on the left-hand side of \eqref{Eq_tp19} it follows
\begin{equation}
	C_1\left\|\bm u_{\zeta}^R-\bm u^R\right\|_{H^1\left(\Omega_R ; \mathbb{R}^2\right)}^2 \leq \zeta^2 \int_{\Gamma_R} \mathscr{B}\left(\bm u_{\zeta}^R\right) \cdot\left(\bm u_{\zeta}^R-\bm u^R\right)+o\left(\zeta^2\right) .
\end{equation}
We can get by duality and the trace theorem
\begin{equation}
	\begin{aligned}
		\left\|\bm u_{\zeta}^R-\bm u^R\right\|_{H^1\left(\Omega_R ; \mathbb{R}^2\right)}^2 & \leq C_2 \zeta^2\left\|\mathscr{B}\left(\bm u_{\zeta}^R\right)\right\|_{H^{-1 / 2}\left(\Gamma_R ; \mathbb{R}^2\right)}\left\|\bm u_{\zeta}^R-\bm u^R\right\|_{H^{1 / 2}\left(\Gamma_R ; \mathbb{R}^2\right)} \\
		&\leq C_0 \zeta^2\left\|\bm u_{\zeta}^R-\bm u^R\right\|_{H^1\left(\Omega_R ; \mathbb{R}^2\right)} .
	\end{aligned}
\end{equation}
We obtain \eqref{Eq_tp15} with $C_0$ independent of the small parameter $\zeta$.
\end{proof}
\begin{lem} \label{lemma21}
The perturbed energy shape functional $J_{\zeta}^R\left(\bm u_{\zeta}^R\right)$ in \eqref{Eq_tp11} is differentiable w.r.t. $\zeta \rightarrow 0$. In particular, it admits the asymptotic expansion
\begin{equation}\label{Eq_tp20}
	J_{\zeta}^R\left(\bm u_{\zeta}^R\right)=J^R\left(\bm u^R\right)-\frac{\zeta^2}{2}\left\langle\mathscr{B}\left(\bm u^R\right), \bm u^R\right\rangle_{\Gamma_R}+o\left(\zeta^2\right).
\end{equation}
\end{lem}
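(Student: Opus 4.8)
The plan is to reduce the whole computation to the two hemivariational inequalities \eqref{Eq_tp6} and \eqref{Eq_tp12}, the operator expansion \eqref{Eq_tp13}, and the $H^1$ rate $\|\bm u_\zeta^R-\bm u^R\|_{H^1(\Omega_R;\mathbb R^2)}\le C_0\zeta^2$ from Theorem \ref{Theroem1}, so that the non-smooth term $\int_{\Gamma_C}j_\tau$ never has to be differentiated. I would first introduce the symmetric, nonnegative bilinear forms $a(\bm v,\bm w)=\int_{\Omega_R}\bm\sigma(\bm v):\bm\varepsilon(\bm w)+\langle\mathscr A\bm v,\bm w\rangle_{\Gamma_R}$ and $a_\zeta(\bm v,\bm w)=\int_{\Omega_R}\bm\sigma(\bm v):\bm\varepsilon(\bm w)+\langle\mathscr A_\zeta\bm v,\bm w\rangle_{\Gamma_R}$; symmetry follows from the symmetry of $\mathbb C$ via Green's formula in $B_R$, nonnegativity from $\langle\mathscr A_\zeta\bm\psi,\bm\psi\rangle_{\Gamma_R}=\int_{B_R}r_\zeta\bm\sigma(\bm w_\zeta):\bm\varepsilon(\bm w_\zeta)\ge0$ (cf.\ \eqref{Eq_tp3}, \eqref{Eq_tp10}), and \eqref{Eq_tp13} makes $\|a_\zeta\|$ bounded uniformly for small $\zeta$. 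With $L(\bm v)=\int_{\Gamma_N}\bm g_N\cdot\bm v$ and $\Phi(\bm v)=\int_{\Gamma_C}j_\tau(v_\tau)$, we have $J^R(\bm v)=\tfrac12 a(\bm v,\bm v)-L(\bm v)+\Phi(\bm v)$ and $J_\zeta^R(\bm v)=\tfrac12 a_\zeta(\bm v,\bm v)-L(\bm v)+\Phi(\bm v)$.

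Next I would split
\[
J_\zeta^R(\bm u_\zeta^R)-J^R(\bm u^R)=\underbrace{\big[J_\zeta^R(\bm u_\zeta^R)-J_\zeta^R(\bm u^R)\big]}_{=:\,S_1}+\underbrace{\big[J_\zeta^R(\bm u^R)-J^R(\bm u^R)\big]}_{=:\,S_2}.
\]
Here $S_2$ involves only the boundary operator: $S_2=\tfrac12\langle(\mathscr A_\zeta-\mathscr A)\bm u^R,\bm u^R\rangle_{\Gamma_R}=-\tfrac{\zeta^2}{2}\langle\mathscr B(\bm u^R),\bm u^R\rangle_{\Gamma_R}+\tfrac12\langle\mathscr R_\zeta\bm u^R,\bm u^R\rangle_{\Gamma_R}$, and the remainder contribution is $o(\zeta^2)$ since $\|\mathscr R_\zeta\|=o(\zeta^2)$ and the trace theorem gives $\|\bm u^R\|_{H^{1/2}(\Gamma_R)}\le C$. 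This already produces the leading term of \eqref{Eq_tp20}, so the heart of the argument is to show $S_1=o(\zeta^2)$.

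For $S_1$, using symmetry of $a_\zeta$ one has $-S_1=J_\zeta^R(\bm u^R)-J_\zeta^R(\bm u_\zeta^R)=\tfrac12 a_\zeta(\bm u^R-\bm u_\zeta^R,\bm u^R-\bm u_\zeta^R)+T$, where $T:=a_\zeta(\bm u_\zeta^R,\bm u^R-\bm u_\zeta^R)-L(\bm u^R-\bm u_\zeta^R)+\big(\Phi(\bm u^R)-\Phi(\bm u_\zeta^R)\big)$. Choosing $\bm v=\bm u^R$ in \eqref{Eq_tp12} gives at once $T\ge0$. Choosing $\bm v=\bm u_\zeta^R$ in \eqref{Eq_tp6} gives $\Phi(\bm u^R)-\Phi(\bm u_\zeta^R)\le -a(\bm u^R,\bm u^R-\bm u_\zeta^R)+L(\bm u^R-\bm u_\zeta^R)$; substituting, $T\le a_\zeta(\bm u_\zeta^R,\bm u^R-\bm u_\zeta^R)-a(\bm u^R,\bm u^R-\bm u_\zeta^R)=(a_\zeta-a)(\bm u_\zeta^R,\bm u^R-\bm u_\zeta^R)-a(\bm u^R-\bm u_\zeta^R,\bm u^R-\bm u_\zeta^R)$. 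The last term is $\le0$ by nonnegativity of $a$, while $(a_\zeta-a)(\bm v,\bm w)=\langle(-\zeta^2\mathscr B+\mathscr R_\zeta)\bm v,\bm w\rangle_{\Gamma_R}$, so by boundedness of $\mathscr B$, $\|\mathscr R_\zeta\|=o(\zeta^2)$, the trace theorem, the uniform bound $\|\bm u_\zeta^R\|_{H^1(\Omega_R)}\le C$, and Theorem \ref{Theroem1}, we get $|(a_\zeta-a)(\bm u_\zeta^R,\bm u^R-\bm u_\zeta^R)|\le C\zeta^2\|\bm u^R-\bm u_\zeta^R\|_{H^{1/2}(\Gamma_R)}=O(\zeta^4)$. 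Hence $0\le T\le O(\zeta^4)$, i.e.\ $T=o(\zeta^2)$; and $\tfrac12|a_\zeta(\bm u^R-\bm u_\zeta^R,\bm u^R-\bm u_\zeta^R)|\le C\|\bm u^R-\bm u_\zeta^R\|_{H^1(\Omega_R)}^2=O(\zeta^4)$ by the uniform boundedness of $a_\zeta$ and Theorem \ref{Theroem1}. Therefore $S_1=o(\zeta^2)$, and combining with $S_2$ yields \eqref{Eq_tp20}, which in particular exhibits the $\zeta^2$-differentiability claimed.

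I expect the main obstacle to be exactly the non-smoothness of $\Phi$: there is no chain rule to expand $\int_{\Gamma_C}j_\tau(v_\tau)$ along the family $\bm u_\zeta^R\to\bm u^R$. The device above — pinning $T$ between $0$ (from one hemivariational inequality) and a quantity of order $\zeta^2\,\|\bm u_\zeta^R-\bm u^R\|_{H^{1/2}(\Gamma_R)}$ (from the other) — circumvents this, but it is effective only because Theorem \ref{Theroem1} supplies the quadratic rate $\|\bm u_\zeta^R-\bm u^R\|_{H^1}=O(\zeta^2)$; a merely qualitative convergence would not reach the $o(\zeta^2)$ accuracy. A secondary technical point needing care is the uniform-in-$\zeta$ boundedness and nonnegativity of $a_\zeta$ together with the symmetry of $\mathscr A$ and $\mathscr A_\zeta$, which I would extract directly from \eqref{Eq_tp13} and the energy representations \eqref{Eq_tp3} and \eqref{Eq_tp10}.
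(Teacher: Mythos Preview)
Your argument is correct, but it follows a different path from the paper. The paper exploits the minimality of $\bm u^R$ for $J^R$ and of $\bm u_\zeta^R$ for $J_\zeta^R$ (which, as you implicitly note, is a consequence of the inequalities \eqref{Eq_tp6}, \eqref{Eq_tp12} together with nonnegativity of $a$, $a_\zeta$) to obtain the sandwich
\[
J_\zeta^R(\bm u_\zeta^R)-J^R(\bm u_\zeta^R)\;\le\;J_\zeta^R(\bm u_\zeta^R)-J^R(\bm u^R)\;\le\;J_\zeta^R(\bm u^R)-J^R(\bm u^R),
\]
and then observes that both outer terms equal $\tfrac12\langle(\mathscr A_\zeta-\mathscr A)\cdot,\cdot\rangle_{\Gamma_R}$ evaluated at $\bm u_\zeta^R$ and $\bm u^R$ respectively; dividing by $\zeta^2$ and using \eqref{Eq_tp13} together with the mere convergence $\bm u_\zeta^R\to\bm u^R$ (not the full rate) makes both ends tend to $-\tfrac12\langle\mathscr B\bm u^R,\bm u^R\rangle_{\Gamma_R}$. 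Your route instead isolates $S_2$ as the leading term and controls $S_1$ directly by pinning the residual $T$ between the two first-order conditions; this is a bit longer but buys a sharper quantitative bound $S_1=O(\zeta^4)$, at the price of genuinely needing the $O(\zeta^2)$ rate from Theorem~\ref{Theroem1}. The paper's squeeze is shorter and needs only qualitative convergence of $\bm u_\zeta^R$; your decomposition avoids invoking the minimizer interpretation and yields an explicit remainder order.
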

\begin{proof}
By taking into account that $\bm u_{\zeta}^R \in \mathbf{V}_1^R$ is the minimizer of \eqref{Eq_tp11} and $\bm u^R \in \mathbf{V}_1^R$ is the minimizer of \eqref{Eq_tp5}, the following inequalities hold true
\begin{equation}\label{Eq_tp22}
	J_{\zeta}^R\left(\bm u_{\zeta}^R\right)-J^R\left(\bm u_{\zeta}^R\right) \leq J_{\zeta}^R\left(\bm u_{\zeta}^R\right)-J^R\left(\bm u^R\right) \leq J_{\zeta}^R\left(\bm u^R\right)-J^R\left(\bm u^R\right) .
\end{equation}
Using the definitions of $J_{\zeta}^R$ and $J^R$ and considering the expansion \eqref{Eq_tp13}, we have
\begin{equation}
	\frac{J_{\zeta}^R\left(\bm u^R\right)-J^R\left(\bm u^R\right)}{\zeta^2}=-\frac{1}{2}\left\langle\mathscr{B}\left(\bm u^R\right), \bm u^R\right\rangle_{\Gamma_R}+\left\langle\mathscr{R}_{\zeta}\left(\bm u^R\right), \bm u^R\right\rangle_{\Gamma_R} .
\end{equation}
Thus, we can obtain
\begin{equation}
	\lim _{\zeta \rightarrow 0^+}\frac{J _\zeta^R\left(\bm u^R\right)-J^R\left(\bm u^R\right)}{\zeta^2}=-\frac{1}{2}\left\langle\mathscr{B}\left(\bm u^R\right), \bm u^R\right\rangle_{\Gamma_R}.
\end{equation}
Similarly, we can get the limit on the left of \eqref{Eq_tp22}. Then we conclude that
\begin{equation}
	\lim _{\zeta \rightarrow 0^+}\frac{J_{\zeta}^R\left(\bm u_{\zeta}^R\right)-J^R\left(\bm u^R\right)}{\zeta^2}=-\frac{1}{2}\left\langle\mathscr{B}\left(\bm u^R\right), \bm u^R\right\rangle_{\Gamma_R} .
\end{equation}
\end{proof}
%Let us state the following important result, whose proof can be found in \cite{AJ2013}.
\begin{lem}\cite{AJ2013}\label{Lemma2}
The energy inside $B_R$ admits the asymptotic expansion:
\begin{equation}
	\int_{B_R} \bm \sigma_{\zeta}\left(\bm w_{\zeta}\right) : \bm \varepsilon \left(\bm w_{\zeta}\right)=\int_{B_R} \bm \sigma(\bm w) : \bm \varepsilon (\bm w)-\zeta^2 \mathbb{P}_r \bm \sigma(\bm w) : \bm \varepsilon ( \bm w)+o\left(\zeta^2\right),
\end{equation}
where the fourth-order isotropic polarization tensor $\mathbb{P}_r$ is given by \eqref{jiojw}.
% by the following tensor
% \begin{equation}
	%     \mathbb{P}_r=\frac{\pi(1-r)}{1+r a_2}\left(\left(1+a_2\right) \mathbb{I}+\frac{1}{2}\left(a_1-a_2\right) \frac{1-r}{1+r a_1} \mathrm{I} \otimes \mathrm{I}\right),
	% \end{equation}
% with $0<r<\infty$ and the parameters $a_1, a_2$ given by \eqref{a1a2}.
%$a_1=\frac{\lambda+\mu}{\mu} \text { and } a_2=\frac{\lambda+3 \mu}{\lambda+\mu} .$
\end{lem}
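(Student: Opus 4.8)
The plan is to reduce the energy difference to an integral supported on the small inclusion and then to resolve that integral through an inner (boundary--layer) expansion. Write $a_\zeta(\bm u,\bm v)=\int_{B_R}r_\zeta\,\mathbb{C}\bm\varepsilon(\bm u):\bm\varepsilon(\bm v)$ and $a(\bm u,\bm v)=\int_{B_R}\mathbb{C}\bm\varepsilon(\bm u):\bm\varepsilon(\bm v)$, so that $\int_{B_R}\bm\sigma_\zeta(\bm w_\zeta):\bm\varepsilon(\bm w_\zeta)=a_\zeta(\bm w_\zeta,\bm w_\zeta)$ and $\int_{B_R}\bm\sigma(\bm w):\bm\varepsilon(\bm w)=a(\bm w,\bm w)$. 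Since $\bm f\equiv 0$, both $\bm w$ and $\bm w_\zeta$ solve the homogeneous elasticity system in $B_R$ and carry the same Dirichlet datum $\bm\psi$ on $\Gamma_R$, so $\bm h:=\bm w_\zeta-\bm w\in H^1_0(B_R;\mathbb{R}^2)$ and $a(\bm w,\bm h)=a_\zeta(\bm w_\zeta,\bm h)=0$. Decomposing $a_\zeta(\bm w_\zeta,\bm w_\zeta)=a_\zeta(\bm w+\bm h,\bm w+\bm h)$ and using these orthogonalities together with $a_\zeta(\bm u,\bm v)-a(\bm u,\bm v)=(r-1)\int_{B_\zeta}\mathbb{C}\bm\varepsilon(\bm u):\bm\varepsilon(\bm v)$ yields
\[
a_\zeta(\bm w_\zeta,\bm w_\zeta)-a(\bm w,\bm w)=(r-1)\int_{B_\zeta}\mathbb{C}\bm\varepsilon(\bm w):\bm\varepsilon(\bm w)+(r-1)\int_{B_\zeta}\mathbb{C}\bm\varepsilon(\bm w):\bm\varepsilon(\bm h).
\]
Because $\overline{B_R}\subset\Omega$ and $\widehat{\bm x}$ is taken far from the contact region $\Gamma_C$, the non-smooth frictional data do not enter here: this lemma is a purely linear elasticity statement and the whole argument localizes inside $B_R$.

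By interior elliptic regularity $\bm w$ is smooth near $\widehat{\bm x}$, so Taylor expansion of $\bm\varepsilon(\bm w)$ gives
\[
(r-1)\int_{B_\zeta}\mathbb{C}\bm\varepsilon(\bm w):\bm\varepsilon(\bm w)=(r-1)\pi\zeta^2\,\bm\sigma(\bm w)(\widehat{\bm x}):\bm\varepsilon(\bm w)(\widehat{\bm x})+o(\zeta^2).
\]
For the cross term I would introduce the stretched variable $\bm y=(\bm x-\widehat{\bm x})/\zeta$ and the canonical corrector $\bm\chi$, defined as the solution, decaying at infinity, of the transmission problem in $\mathbb{R}^2$ with a unit-disk inclusion of contrast $r$ driven by the uniform far-field strain $\bm E:=\bm\varepsilon(\bm w)(\widehat{\bm x})$. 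A matched-asymptotics argument, resting on the a priori estimate $\|\bm w_\zeta-\bm w\|_{H^1(B_R;\mathbb{R}^2)}=O(\zeta)$ (obtained exactly as in Theorem~\ref{Theroem1}), the smoothness of $\bm w$ near $\widehat{\bm x}$, and the $|\bm y|^{-2}$ decay of the strain of $\bm\chi$, then shows that inside $B_\zeta$ the strain $\bm\varepsilon(\bm h)$ is, to leading order, the rescaled interior strain of $\bm\chi$; hence $(r-1)\int_{B_\zeta}\mathbb{C}\bm\varepsilon(\bm w):\bm\varepsilon(\bm h)=\zeta^2\,q(\bm E)+o(\zeta^2)$ for a quadratic form $q$ determined solely by the disk transmission problem.

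It remains to evaluate $q$. For a circular inclusion in isotropic elasticity the transmission problem \eqref{Eq_tp9} admits a closed-form solution (via the Kolosov--Muskhelishvili complex potentials, or separation of variables in polar coordinates); this produces the classical Eshelby uniform interior strain and identifies the induced elastic dipole as $\mathbb{P}_r\,\bm\sigma(\bm w)(\widehat{\bm x})$, with $\mathbb{P}_r$ the isotropic fourth-order polarization tensor \eqref{jiojw} assembled from the parameters $a_1,a_2$ of \eqref{a1a2}. Regrouping the two $O(\zeta^2)$ contributions obtained above then gives precisely
\[
\int_{B_R}\bm\sigma_\zeta(\bm w_\zeta):\bm\varepsilon(\bm w_\zeta)=\int_{B_R}\bm\sigma(\bm w):\bm\varepsilon(\bm w)-\zeta^2\,\mathbb{P}_r\,\bm\sigma(\bm w):\bm\varepsilon(\bm w)+o(\zeta^2),
\]
all quantities being evaluated at $\widehat{\bm x}$, which is the asserted expansion.

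The main obstacle is the rigorous control of the inner expansion: one must show that the remainder $\bm w_\zeta-\bm w-\zeta\,\bm\chi\big((\cdot-\widehat{\bm x})/\zeta\big)$ contributes only $o(\zeta^2)$ to the cross-term integral. This couples (i) the global $H^1$ estimate of Theorem~\ref{Theroem1} type, (ii) a weighted decay estimate for $\bm\chi$, and (iii) elliptic regularity of $\bm w$ to replace $\bm\varepsilon(\bm w)$ by $\bm E$ up to an $O(\zeta)$ error on $B_\zeta$. The explicit computation of $\mathbb{P}_r$ is the heaviest in calculation but conceptually routine; the result then follows the classical analysis of \cite{AJ2013,NSZ2019}.
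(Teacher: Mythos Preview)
The paper does not give its own proof of this lemma: it is stated with a citation to \cite{AJ2013} (Novotny and Soko\l{}owski's monograph) and simply quoted as a known result. So there is no in-paper argument to compare against; the relevant benchmark is the classical derivation in that reference.

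Your sketch is exactly the standard route taken there. The energy-difference identity
\[
a_\zeta(\bm w_\zeta,\bm w_\zeta)-a(\bm w,\bm w)=(r-1)\int_{B_\zeta}\mathbb{C}\bm\varepsilon(\bm w):\bm\varepsilon(\bm w)+(r-1)\int_{B_\zeta}\mathbb{C}\bm\varepsilon(\bm w):\bm\varepsilon(\bm h)
\]
is correct (your use of $a(\bm w,\bm h)=a_\zeta(\bm w_\zeta,\bm h)=0$ to collapse the quadratic expansion is exactly how the reference proceeds), and the subsequent inner-expansion with the Eshelby corrector on the unit disk is the canonical way to extract $\mathbb{P}_r$. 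Your identification of the main technical point---justifying that the remainder after subtracting the rescaled corrector contributes only $o(\zeta^2)$---is accurate; in \cite{AJ2013} this is handled by the explicit Kolosov--Muskhelishvili solution for the circular inclusion, which gives both the exact interior strain and the exterior $|\bm y|^{-2}$ decay you invoke, so the matched-asymptotics step becomes a direct computation rather than an abstract estimate. In short, your proposal is a faithful outline of the cited proof, and nothing is missing at the level of a sketch.
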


By Theorem \ref{Theroem1}, we can write
\begin{equation}\label{Eq_tp25}
\left\langle\mathscr{A}_{\zeta}(\bm \phi), \bm \varphi \right\rangle=\langle\mathscr{A}(\bm \phi), \bm \varphi \rangle-\zeta^2\langle\mathscr{B}(\bm \phi), \bm \varphi \rangle+o\left(\zeta^2\right), \quad \forall \bm \phi, \bm \varphi .
\end{equation}
By Lemma \ref{Lemma2}, and \eqref{Eq_tp25}, \eqref{Eq_tp3}, \eqref{Eq_tp10}, it follows that
\begin{equation}\label{Eq_tp26}
\int_{\Gamma_R} \mathscr{B}(\bm \phi) \cdot \bm \varphi=\mathbb{P}_r \bm \sigma(\bm \phi) : \bm \varepsilon (\bm  \varphi), \quad \forall \bm \phi, \bm \varphi .
\end{equation}
Thus, by Lemma \ref{lemma21} together with Eq. \eqref{Eq_tp26} we have
\begin{equation}
J_{\zeta}\left(\bm u_{\zeta}\right)-J(\bm u)=-\frac{1}{2} \zeta^2 \mathbb{P}_r \bm \sigma(\bm u) : \bm \varepsilon  (\bm u)+o\left(\zeta^2\right).
\end{equation}
By choosing $f(\zeta)=\zeta^2$, the topological derivative of the shape functional is given by \eqref{Topological}.
% \begin{equation}
%     d_TJ(\bm{x})=-\frac{1}{2} \mathbb{P}_r \bm \sigma(\bm u(\bm{x})) : \bm \varepsilon ( \bm u(\bm{x})), \quad \forall \bm{x} \in \Omega .
% \end{equation}

\end{document}